\documentclass[final,reqno]{amsart}

\usepackage[bookmarks=false,draft=false,breaklinks,colorlinks]{hyperref}
\usepackage{mathtools,amssymb,mathrsfs}
\usepackage{fullpage,enumitem,here,braket}
\usepackage{showkeys,comment}
\usepackage[only llbracket,rrbracket,longmapsfrom]{stmaryrd}
\usepackage{tikz}
\usetikzlibrary{cd}


\numberwithin{equation}{subsection}
\numberwithin{figure}{subsection}
\allowdisplaybreaks

\usepackage{enumitem}
\setenumerate{label=(\arabic*),nosep}
\setitemize{nosep}
\newlist{clist}{enumerate}{1}
\setlist*[clist]{label=(\roman*), nosep}

\usepackage{cleveref}
\crefname{thm}{Theorem}{Theorems}
\crefname{dfn}{Definition}{Definitions}
\crefname{prp}{Proposition}{Propositions}
\crefname{lem}{Lemma}{Lemmas}
\crefname{cor}{Corollary}{Corollaries}
\crefname{clm}{Claim}{Claims}
\crefname{fct}{Fact}{Facts}
\crefname{rmk}{Remark}{Remarks}
\crefname{eg}{Example}{Examples}
\crefname{figure}{Figure}{Figures}
\crefname{table}{Table}{Tables}
\crefname{section}{\S\!}{\S\S\!}
\crefname{subsection}{\S\!}{\S\S\!}
\crefname{subsubsection}{\S\!}{\S\S\!}
\crefname{appendix}{Appendix}{Appendices}
\crefname{equation}{}{}

\usepackage{amsthm}
\theoremstyle{definition}
\newtheorem{thm}{Theorem}[subsection]
\newtheorem{dfn}[thm]{Definition}
\newtheorem{prp}[thm]{Proposition}
\newtheorem{lem}[thm]{Lemma}

\newtheorem{clm}[thm]{Claim}
\newtheorem{fct}[thm]{Fact}
\newtheorem{rmk}[thm]{Remark}
\newtheorem{eg}[thm]{Example}
\newtheorem*{rmk*}{Remark}

\newcommand{\ol}{\overline}
\newcommand{\ul}{\underline}
\newcommand{\wh}{\widehat}
\newcommand{\wt}{\widetilde}
\newcommand{\sm}{\setminus}
\newcommand{\bl}{\bullet}

\newcommand{\ve}{\varepsilon}
\newcommand{\pdd}{\partial}
\newcommand{\ceq}{\coloneqq} 
\newcommand{\bs}{\mathbin{\setminus}}
\newcommand{\dbl}{\bl,\bl}

\newcommand{\xrr}[1]{\xrightarrow{\, #1 \, }}
\newcommand{\inj}{\hookrightarrow}

\newcommand{\lto}{\longrightarrow}

\newcommand{\lsto}{\xrr{\sim}}
\newcommand{\linj}{\lhook\joinrel\longrightarrow}
\newcommand{\lsrj}{\relbar\joinrel\twoheadrightarrow}
\newcommand{\mto}{\mapsto}
\newcommand{\lmto}{\longmapsto}

\newcommand{\tboplus}{{\textstyle \bigoplus}}

\newcommand{\bbN}{\mathbb{N}}
\newcommand{\bbQ}{\mathbb{Q}}
\newcommand{\bbZ}{\mathbb{Z}}
\newcommand{\bbK}{\mathbb{K}}
\newcommand{\clD}{\mathcal{D}}
\newcommand{\clE}{\mathcal{E}}
\newcommand{\clH}{\mathcal{H}}
\newcommand{\clO}{\mathcal{O}}
\newcommand{\frg}{\mathfrak{g}}
\newcommand{\frS}{\mathfrak{S}}
\newcommand{\rmT}{\mathrm{T}}

\newcommand{\ac}{\textup{ac}}
\newcommand{\ch}{\textup{ch}}
\newcommand{\cl}{\textup{cl}}
\newcommand{\tfn}{\textup{fn}}
\newcommand{\tCE}{\textup{CE}}

\newcommand{\tuPoi}{\textup{Pois}}
\newcommand{\tAss}{\mathcal{A}\mathit{ssoc}}
\newcommand{\tCom}{\mathcal{C}\!\mathit{om}}
\newcommand{\tLie}{\mathcal{L}{\kern-0.05em}\mathit{ie}}
\newcommand{\tPoi}{\mathcal{P}\!\mathit{ois}}

\newcommand{\cOp}{\mathsf{Op}}
\newcommand{\cMod}{\mathsf{Mod}}

\newcommand{\od}{\ol{1}}
\newcommand{\vac}{\ket{0}} 

\newcommand{\tw}{\bullet \hspace{5pt} \bullet}
\newcommand{\twab}{\bullet \to \bullet}
\newcommand{\twba}{\bullet \leftarrow \bullet}
\newcommand{\thr}{\bullet \hspace{6pt} \bullet \hspace{6pt} \bullet}
\newcommand{\thbc}{\bullet \hspace{7pt} \bullet \to \bullet}
\newcommand{\thabc}{\bullet \to \bullet \to \bullet}

\newcommand{\abs}[1]{\left| #1 \right|}
\newcommand{\rst}[2]{\left. #1 \right|_{#2}}

\newcommand{\oPcl}[1]{\mathcal{P}^{\textup{cl}}_{#1}}
\newcommand{\oPchB}[1]{\mathop{\mathcal{P}^{\textup{ch}N_{\bullet}=N}_{#1}}}
\newcommand{\oPchK}[1]{\mathop{\mathcal{P}^{\textup{ch}N_K=N}_{#1}}}
\newcommand{\oPchW}[1]{\mathop{\mathcal{P}^{\textup{ch}N_W=N}_{#1}}}
\newcommand{\oPclB}[1]{\mathop{\mathcal{P}^{\textup{cl}N_{\bullet}=N}_{#1}}}
\newcommand{\oPclK}[1]{\mathop{\mathcal{P}^{\textup{cl}N_K=N}_{#1}}}
\newcommand{\oPclW}[1]{\mathop{\mathcal{P}^{\textup{cl}N_W=N}_{#1}}}
\newcommand{\oPfn}[1]{\mathop{\mathcal{P}^{\textup{fn}}_{#1}}}

\DeclareMathOperator{\gr}{gr}
\DeclareMathOperator{\id}{id}
\DeclareMathOperator{\pc}{\Pi}

\DeclareMathOperator{\MC}{MC}
\DeclareMathOperator{\Sh}{Sh}
\DeclareMathOperator{\sgn}{sgn}
\DeclareMathOperator{\frP}{\mathfrak{P}}

\DeclareMathOperator{\Der}{Der}
\DeclareMathOperator{\End}{End}
\DeclareMathOperator{\Hom}{Hom}

\DeclareMathOperator{\Res}{Res}

\DeclareMathOperator{\Span}{Span}

\DeclareMathOperator{\clL}{\mathcal{L}}
\DeclareMathOperator{\clS}{\mathcal{S}}
\DeclareMathOperator{\clT}{\mathcal{T}}

\DeclareMathOperator{\oP}{\mathcal{P}}
\DeclareMathOperator{\oQ}{\mathcal{Q}}
\DeclareMathOperator{\oAss}{\mathcal{A}\mathit{ssoc}}
\DeclareMathOperator{\oCom}{\mathcal{C}{\kern-0.1em}\mathit{om}}
\DeclareMathOperator{\oHom}{\mathcal{H}{\kern-0.1em}\mathit{om}}
\DeclareMathOperator{\oLie}{\mathcal{L}{\kern-0.5pt}\mathit{ie}}
\DeclareMathOperator{\oPoi}{\mathcal{P}\!\mathit{ois}}

\begin{document}

\title{Algebraic operad of SUSY Poisson vertex algebra}
\author{Yusuke Nishinaka, Shintarou Yanagida}
\date{01 May 2023} 
\address{Graduate School of Mathematics, Nagoya University.
 Furocho, Chikusaku, Nagoya, Japan, 464-8602.}
\email{m21035a@math.nagoya-u.ac.jp, yanagida@math.nagoya-u.ac.jp}
\thanks{Y.N. is supported by JSPS Research Fellowship for Young Scientists (No.\ 23KJ1120). 
 S.Y.\ is supported by JSPS KAKENHI Grant Number 19K03399.}
\keywords{}

\begin{abstract}
As a continuation of our study (Y.N., S.Y., arXiv:2209.14617) on the algebraic operad of SUSY vertex algebras, we introduce the SUSY coisson operad, which encodes the structures of SUSY Poisson vertex algebras. Our operad is a natural SUSY analogue of the operad encoding the structures of Poisson vertex algebras introduced by Bakalov, De Sole, Heluani and Kac (2019). We also give an embedding of the associated graded of the SUSY chiral operad into the SUSY coisson operad in the filtered case.
\end{abstract}

\maketitle
{\small \tableofcontents}

\setcounter{section}{-1}
\section{Introduction}\label{s:0}

This is a continuation of our study \cite{NY} of algebraic operads concerning SUSY vertex algebras.
There we introduced the superoperads $\oPchW{}$ and $\oPchK{}$ called the SUSY chiral operad, which encodes the structures of $N_W=N$ and $N_K=N$ SUSY vertex algebras in the sense of Heluani and Kac \cite{HK}. The main statement in \cite{NY} is the bijection
\begin{align*}
 \Hom_{\cOp}(\oLie,\oPchB{\Pi^{N+1}V})_{\od} \lsto 
 \{\text{$N_{\bl}=N$ SUSY vertex algebra structures on $(V,\nabla)$}\}, \quad
 \bl=W \text{ or } K. 
\end{align*}
for each supermodule $(V,\nabla)$ over a certain superalgebra $\clH_W$ or $\clH_K$, where $\oLie$ denotes the Lie operad and $\Pi$ denotes the parity change functor. These superoperads are natural SUSY extensions of the chiral operad $P^{\ch}$ introduced by Bakalov, De Sole, Heluani and Kac in \cite{BDHK}, which encodes the structure of vertex algebras.

In this note, we introduce the superoperads $\oPclW{V}$ and $\oPclK{V}$ (\cref{dfn:2P:Pcl}), called \emph{the SUSY coisson operads}. They encode the structures of SUSY Poisson vertex algebras. More precisely, there are bijections (\cref{thm:2P:NWPVA,thm:2P:NKPVA})
\begin{align}\label{eq:0P:oPcl}
 \Hom_{\cOp}(\oLie,\oPclB{\Pi^{N+1}V})_{\od} \lsto
 \{\text{$N=N_{\bl}$ SUSY Poisson vertex algebra structures on $(V,\nabla)$}\}, \quad
 \bl=W \text{ or } K.
\end{align} 
The word ``coisson'' is a synonym of ``Poisson vertex'', borrowed from \cite[2.7]{BD}. 

These superoperads are natural SUSY extensions of the operad $P^{\cl}$ in \cite[\S10]{BDHK} which encodes the structures of Poisson vertex algebras. Also, similarly as in the non-SUSY case \cite[\S10.4]{BDHK}, we have a filtration on the SUSY chiral operads $\oPchB{V}$ for a filtered supermodule $V=(V,\nabla)$, and for the associated graded $\gr \oPchB{V}$ we have a natural embedding (\cref{thm:3P:inj}) 
\begin{align}\label{eq:oP:emb}
 \gr \oPchB{V} \linj \oPclB{V} \quad \bl=W \text{ or } K.
\end{align}

Our construction of SUSY coisson operads basically follow the non-SUSY case in \cite[\S10]{BDHK}, and the bosonic part of the formulas are essentially the same with those in \cite{BDHK}. However, to treat the fermionic part nicely, we give non-trivial extensions in several points. See \cref{rmk:2P:Pcl}, \cref{eg:2P:comp} and the beginning of \cref{s:3P}, for example.

The approach to Poisson vertex algebra structures taken in loc.\ cit.\ is unique and non-standard in the following sense: It is known (see \cite{Fr} and \cite[13.3]{LV} for example) that a Poisson algebra structure on a linear space $V$ corresponds bijectively to an operad morphism $\oPoi \to \oHom_{V}$ from the Poisson operad $\oPoi$ to the endomorphism operad $\oHom_V$. Thus, one may guess that Poisson vertex algebra structures correspond to operad morphism from $\oPoi$ to some operad. The approach of \cite{BDHK} is to consider operad morphisms from $\oLie$ instead, as \eqref{eq:0P:oPcl} indicates.

In \cref{s:AP}, we give a remark on the above approach to Poisson-like algebra structures.
In \cite[\S10.5]{BDHK}, it is argued that there is a finite analogue $P^{\tfn}_V$ of the operad $P^{\cl}_V$ encoding the structures of Poisson algebra, and there is a bijection between two sets of operad morphisms for an even linear space $V$:
\[
 \Hom_{\cOp}(\oPoi,\oHom_V) \lsto \Hom_{\cOp}(\oLie,P^{\tfn}_V).
\]
Then, given a Poisson algebra $A=(V,\cdot,\{\,,\,\})$, we have two cohomology complexes $\frg^{\tfn}_A$ and $C_{\tPoi}(A,A)$, which arise from $P^{\tfn}$ and $\oPoi$, respectively. According to the work of Fresse \cite{Fr}, the latter complex has a bigrading $C_{\tPoi}^{\bl,\bl}(A,A)$. Now it is natural to ask whether the two complexes can be identified in a natural way. In \cref{thm:AP:fcc=pcb}, we show that the complex $\frg^{\tfn}_A$ arising from the finite operad $P^{\tfn}_V$ has a bigrading $(\frg^{\tfn}_A)^{\dbl}$, and the two bicomplexes coincide up to shift:
\[
 (\frg^{\tfn}_A)^{\dbl} \cong C_{\tPoi}^{\bl,\bl+1}.
\]
This statement can be seen as a strengthened form of \cite[Theorem 10.16]{BDHK}.

The operad $P^{\cl}$ in \cite{BDHK} was introduced to reduce the computation of the cohomology of vertex algebras to that of Poisson vertex algebras, as demonstrated in the series of works \cite{BDHK2,BDK20,BDK21,BDKV21}. We expect that a similar calculation can be made using our SUSY coisson operads for SUSY vertex algebras, but leave it for future work.

\subsubsection*{Organization}

\cref{s:1P} is a preliminary section. In \cref{ss:1P:st}, we briefly explain the notation and terminology of superobjects and superoperads. In \cref{ss:1P:WLCAVA} we recall the basics of SUSY vertex algebras from \cite{HK}. In \cref{ss:1P:SUSYPVA}, we recall the basic definitions on SUSY Poisson vertex algebras from \cite{HK}. In \cref{ss:1P:Pch}, we recall the SUSY chiral operad, the superoperad encoding the structure of a SUSY vertex algebra from \cite{NY}. 

\cref{s:2P} and \cref{s:3P} are the main body. In \cref{ss:2P:graph}, we cite from \cite{BDHK} the language of  graphs which will be used to construct our $N_W=N$ SUSY coisson operads $\oPclW{}$ in the first half of \cref{ss:2P:NW} (\cref{dfn:2P:Pcl}). In the latter half of \cref{ss:2P:NW}, we show that the superoperad $\oPclW{}$ does encode the structures of $N_W=N$ SUSY Poisson vertex algebras (\cref{thm:2P:NWPVA}). The arguments in \cref{ss:2P:NW} basically follow the line of the non-SUSY case \cite{BDHK}, but we give careful treatments on the fermionic part. The $N_K=N$ case can be treated similarly as the $N_W=N$ case, and we briefly state the result in \cref{ss:2P:NK}, omitting the proof.

In \cref{s:3P}, we establish a SUSY analogue \eqref{eq:oP:emb} of the embedding of superoperads \cite[\S10.4]{BDHK}. In the beginning, we introduce the notion of a filtration on an $N_W=N$ SUSY vertex algebras (\cref{dfn:3P:filter}), whose associated graded has a natural structure of a $N_W=N$ SUSY Poisson vertex algebra (\cref{prp:3P:grPVA}). In \cref{ss:3P:gr}, we cite the basics of gradings and filtrations on superoperads from \cite{BDHK}. In \cref{ss:3P:filt}, we introduce an operad filtration on our SUSY chiral operad $\oPchW{V}$ for a filtered $\clH_W$-supermodule $V$. In \cref{ss:3P:emb}, \cref{thm:3P:inj}, the associated graded superoperad $\gr\oPchW{V}$ is embedded in the SUSY coisson operad $\oPclW{\gr V}$ of the associated graded of $V$.

\cref{s:AP} gives an observation of the finite operad $P^{\tfn}_V$ in \cite{BDHK}. After the recollection on the Poisson cohomology bicomplex \cite{Fr} and the finite operad in \cref{ss:AP:pcb} and \cref{ss:AP:Pfn}, respectively, we show that the cohomology complex arising from $P^{\tfn}_V$ has a bicomplex structure, and it coincides with the Poisson cohomology bicomplex (\cref{thm:AP:fcc=pcb}).

\subsubsection*{Global notation}

\begin{itemize}
\item
The symbol $\bbN$ denotes the set $\{0,1,2,\dotsc\}$ of non-negative integers. 

\item 
For a positive integer $m$, the symbol $[m]$ denotes the set $\{1, 2,\dotsc, m\} \subset \bbZ$.
We also set $[0] \ceq \emptyset$.

\item 
The word `ring' or `algebra' means a unital associative one unless otherwise specified. 

\item 
Throughout the text, we work over a field $\bbK$ of characteristic $0$, and linear spaces, linear maps, algebras and algebra homomorphisms are defined over $\bbK$ unless otherwise stated.
\end{itemize}

\section{Preliminaries}\label{s:1P}

\subsection{Super terminology}\label{ss:1P:st}

We use the same super terminology as in \cite[\S 1.1]{NY}. 
Here we only recall some symbols for symmetric groups and their modules. 

\begin{itemize}
\item
For $n \in \bbN$, we denote by $\frS_n$ the $n$-th symmetric group with the convention $\frS_0 \ceq \{e\}$. 
We consider the group algebra $\bbK[\frS_n]$ as a purely even superalgebra. 
A linear superspace equipped with a left (resp.\ right) $\bbK[\frS_n]$-supermodule structure is just called a left (resp.\ right) $\frS_n$-supermodule. 

\item
Let $V$ be a linear superspace. 
For $n \in \bbN$, the linear superspace $V^{\otimes n}$ is a left $\frS_n$-supermodule by letting $\sigma \in \frS_n$ act on $v_1 \otimes \dotsb \otimes v_n \in V^{\otimes n}$ by
\begin{align*}
 \sigma(v_1\otimes \cdots\otimes v_n) \ceq 
 \prod_{\substack{1 \le i < j \le n \\ \sigma(i)>\sigma(j)}} (-1)^{p(v_i)p(v_j)} 
 \cdot v_{\sigma^{-1}(1)}\otimes \cdots\otimes v_{\sigma^{-1}(n)}.
\end{align*}
Here $p(v) \in \{0,1\}$ denotes the parity of $v \in V$. In what follows, we always regard $V^{\otimes n}$ as this left $\frS_n$-supermodule unless otherwise specified. 

\item
An $\frS$-supermodule $M=\bigl(M(n)\bigr)_{n \in \bbN}$ is a collection of right $\frS_n$-supermodules $M(n)$.
\end{itemize}

The notion of operads (see \cite{LV} for example) is naturally extended to the super setting. 
We call it a superoperad, following the terminology in \cite{BDHK}. 
More precisely, a superoperad is an $\frS$-module $\oP=\bigl(\oP(n)\bigr)_{n \in \bbN}$ equipped with an even element $1 \in \oP(1)_{\ol{0}}$ and a family of even linear maps $\gamma_\nu\colon \oP(m) \otimes \oP(n_1) \otimes \dotsb \otimes \oP(n_m) \to \oP(n)$ for each $m,n \in \bbN$ and $\nu = (n_1,\dotsc,n_m) \in \bbN^m$ with $n_1+\dotsb+n_m=n$ called the composition maps, satisfying some axiom. We express the composition of the operations $f \in \oP(m)$ and $g_1 \in \oP(n_1),\dotsc,g_m \in \oP(n_m)$ by $\gamma_\nu$ as
\[
 f \circ (g_1 \odot \dotsb \odot g_m) \ceq \gamma_\nu(f \otimes g_1 \otimes \dotsb \otimes g_m) \in \oP(n).
\]
We refer to \cite[Definition 1.1.1]{NY} for the detail.

\subsection{SUSY Lie conformal algebras and SUSY vertex algebras}\label{ss:1P:WLCAVA}

The notion of SUSY vertex algebras and the related algebraic structures are introduced by Heluani and Kac in \cite{HK}, and some of their aspects are reviewed in \cite[\S\S2.1--2.3, \S\S3.1--3.3]{NY}. 
We have two types of SUSY vertex algebras, the $N_W=N$ and the $N_K=N$ SUSY vertex algebras.
Here we only give the definitions of SUSY Lie conformal algebras and SUSY vertex algebras in the $N_W=N$ case, and briefly explain the definitions in the $N_K=N$ case. 

Throughout this \cref{ss:1P:WLCAVA}, we fix a non-negative integer $N$.

\begin{dfn}\label{dfn:1P:poly}
Let $A$ be an index set, and $\Lambda_\alpha=(\lambda_\alpha,\theta_\alpha^1,\ldots,\theta_\alpha^N)$ be a sequence of letters for each $\alpha\in A$. We denote by 
$\bbK[\Lambda_\alpha]_{\alpha \in A}$ the free commutative $\bbK$-superalgebra generated by even $\lambda_\alpha$ $(\alpha\in A)$ and odd $\theta_\alpha^i$ $(\alpha\in A, i\in [N])$, i.e, the $\bbK$-superalgebra generated by these elements with relations 
\begin{align}\label{eq:1P:polyW}
 \lambda_\alpha\lambda_\beta-\lambda_\beta\lambda_\alpha=0, \quad 
 \lambda_\alpha\theta_\beta^i-\theta_\beta^i\lambda_\alpha=0, \quad
 \theta_\alpha^i\theta_\beta^j+\theta_\beta^j\theta_\alpha^i=0 \quad
 (\alpha, \beta\in A, \ i, j\in[N]). 
\end{align}
Each $\Lambda_\alpha$ for $\alpha\in A$ is called a $(1|N)_W$-supervariable, and the $\bbK$-superalgebra $\bbK[\Lambda_\alpha]_{\alpha \in A}$ is called the $N_W=N$ polynomial superalgebra of the supervariables $(\Lambda_\alpha)_{\alpha\in A}$. 
\end{dfn}

In the case $A=[n]=\{1,\dotsc,n\}$ for $n \in \bbZ_{>0}$, we often denote the polynomial superalgebra by $\bbK[\Lambda_k]_{k=1}^n$ instead of $\bbK[\Lambda_k]_{k \in [n]}$. If $A$ consists of one element, then we suppress the subscript $\alpha \in A$ and denote the polynomial superalgebra by $\bbK[\Lambda]$.

For a $(1|N)_W$-supervariable $\Lambda=(\lambda,\theta^1,\ldots,\theta^N)$ and a subset $I=\{i_1<\cdots< i_r\}\subset [N]$, we denote $\theta^I \ceq \theta^{i_1}\cdots\theta^{i_r} \in \bbK[\Lambda]$. Also, we define $\sigma(I, J)\in\{0, \pm 1\}$ for $I,J \subset [N]$ by the relation $\theta^I\theta^J = \sigma(I,J) \theta^{I \cup J}$, and set $\sigma(I) \ceq \sigma(I, [N] \bs I)$. For $m \in \bbN$ and $I \subset [N]$, we denote $\Lambda^{m|I} \ceq \lambda^m \theta^I \in \bbK[\Lambda]$.

For a linear superspace $V$ and $(1|N)_W$-supervariables $\Lambda_\alpha$ ($\alpha \in A$), 
we denote $V[\Lambda_\alpha]_{\alpha \in A} \ceq \bbK[\Lambda_\alpha]_{\alpha\in A} \otimes_{\bbK} V$,
which regarded as a left $\bbK[\Lambda_\alpha]_{\alpha\in A}$-supermodule. 
We will often use it in the particular case $A=[n]$:
\[
 V[\Lambda_k]_{k=1}^n  = \bbK[\Lambda_k]_{k=1}^n \otimes_{\bbK} V.
\]


\begin{dfn}\label{dfn:1P:clHW}
Let $\clH_W$ be the free commutative $\bbK$-superalgebra generated by even $T$ and odd $S^i$ ($i\in[N]$), i.e., the $\bbK$-superalgebra generated by these elements with relations
\begin{align}\label{eq:1P:WTSi}
 TS^i-S^iT=0, \quad S^iS^j+S^jS^i=0\quad (i, j\in[N]).
\end{align}
For simplicity, we set
\begin{align}\label{eq:1P:Wnabla}
 \nabla \ceq (T, S^1, \ldots, S^N).
\end{align}
We also denote a linear superspace $V$ equipped with a left $\clH_W$-supermodule structure as 
\[
 (V,\nabla)=(V,T, S^1, \ldots, S^N),
\]
where $T$ is regarded as an even linear transformation on $V$ and $S^i$ as an odd linear transformation, satisfying the relations \eqref{eq:1P:WTSi}.
\end{dfn}

In the remaining of this \cref{ss:1P:WLCAVA}, let us fix a $(1|N)_W$-supervariable $\Lambda=(\lambda, \theta^1, \ldots, \theta^N)$.

Note that $\clH_W$ is isomorphic to $\bbK[\Lambda]$ as a superalgebra by the homomorphism defined by 
\begin{align*}
 T \lmto -\lambda, \quad S^i \lmto -\theta^i \quad (i \in [N]). 
\end{align*}
Since $\clH_W$ is a commutative superalgebra, we suppress the word `left' of an $\clH_W$-supermodule hereafter.

\begin{dfn}[{\cite[Definition 3.2.2]{HK}}]\label{dfn:1P:WLCA}
Let $(V, \nabla)=(V,T,S^1,\ldots,S^N)$ be an $\clH_W$-supermodule and $[\cdot_\Lambda\cdot]\colon V \otimes V \to V[\Lambda]$ be a linear map of parity $\ol{N}$. A triple $(V,\nabla,[\cdot_\Lambda\cdot])$ is called an $N_W=N$ SUSY Lie conformal algebra if it satisfies the following conditions: 
\begin{clist}
\item (sesquilinearity) For any $a, b\in V$, 
\begin{align*}
\begin{split}
&[Ta_\Lambda b]=-\lambda[a_\Lambda b], \hspace{43pt}
 [a_\Lambda Tb]=(\lambda+T)[a_\Lambda b], \\
&[S^ia_\Lambda b]=-(-1)^{N}\theta^i[a_\Lambda b], \quad 
 [a_\Lambda S^ib]=(-1)^{p(a)+\ol{N}}(\theta^i+S^i)[a_\Lambda b]\quad (i\in[N]). 
\end{split}
\end{align*} 

\item (skew-symmetry) For any $a,b\in V$, 
\begin{align}\label{eq:1P:LCA:ssym}
 [b_\Lambda a]=-(-1)^{p(a)p(b)+\ol{N}}[a_{-\Lambda-\nabla}b],
\end{align}
where we used $\nabla \ceq (T,S^1,\ldots,S^N)$ in \eqref{eq:1P:Wnabla}. 

\item (Jacobi identity) For any $a,b,c\in V$, 
\begin{align}\label{eq:1P:LCA:Jac}
 [a_{\Lambda_1}[b_{\Lambda_2}c]]
=(-1)^{(p(a)+\ol{N})\ol{N}}[[a_{\Lambda_1}b]_{\Lambda_1+\Lambda_2}c]
+(-1)^{(p(a)+\ol{N})(p(b)+\ol{N})}[b_{\Lambda_2}[a_{\Lambda_1}c]], 
\end{align}
where $\Lambda_1, \Lambda_2$ are $(1|N)_W$-supervariables. 
\end{clist}

For simplicity, we say $(V,\nabla)$, or more simply $V$, is an $N_W=N$ SUSY Lie conformal algebra.
The linear map $[\cdot_\Lambda\cdot]$ is called the $\Lambda$-bracket of the $\clH_W$-supermodule $(V, \nabla)$.  
\end{dfn}


For even linear transformations $F$ and $G$ on a linear superspace $V$, we define a linear map $\int_F^G d\Lambda\colon V[\Lambda] \to V$ by 
\begin{align}\label{eq:1P:intLv}
 \int_F^G d\Lambda\, \Lambda^{m|I}v\ceq\frac{\delta_{I, [N]}}{m+1}(G^{m+1}v-F^{m+1}v)
 \quad (m\in\bbN, \, I\subset [N], \, v\in V).
\end{align}
The linear map $\int_F^G d\Lambda$ has the parity $\ol{N}$. 
Also, if $V$ is a superalgebra (not necessarily unital nor associative), we define a linear map 
$\int_F^G d\Lambda \, a\colon V[\Lambda] \to V$ for $a\in V$ by
\begin{align*}
 \Bigl(\int_F^G d\Lambda \, a\Bigr)\Lambda^{m|I}v \ceq 
 \Bigl(\int_F^G d\Lambda\ \Lambda^{m|I}a\Bigr)v,
\end{align*}
where the term $\int_F^G d\Lambda\ \Lambda^{m|I}a$ in the right hand side is given by \eqref{eq:1P:intLv}. Then the linear map $\int_F^G d\Lambda \,a$ has the parity $p(a)$. 
Using this integral, we introduce:

\begin{dfn}[{\cite[Definition 3.3.15]{HK}}]
Let $(V,\nabla,[\cdot_\Lambda\cdot])=(V,T,S^1,\dotsc,S^N,[\cdot_\Lambda\cdot])$ be an $N_W=N$ SUSY Lie conformal algebra (\cref{dfn:1P:WLCA}) and $\mu\colon V \otimes V \to V$ be an even linear map. We denote $a b\ceq\mu(a\otimes b)$ for $a, b\in V$. A tuple $(V,\nabla,[\cdot_\Lambda\cdot],\mu)$ is called a non-unital $N_W=N$ SUSY vertex algebra (non-unital $N_W=N$ SUSY VA for short) if it satisfies the following conditions: 
\begin{clist}
\item For any $a, b\in V$, 
\begin{align*}
 T(ab)=(Ta)b+a(Tb), \quad S^i(ab)=(S^ia)b+(-1)^{p(a)}a(S^ib)\quad(i\in[N]). 
\end{align*}

\item (quasi-commutativity) For any $a, b\in V$, 
\begin{align*}
 ab-(-1)^{p(a)p(b)}ba=\int_{-T}^0d\Lambda[a_\Lambda b]. 
\end{align*}

\item (quasi-associativity) For any $a, b, c\in V$, 
\begin{align*}
  (ab)c-a(bc) = \Bigl(\int_0^Td\Lambda a\Bigr)[b_\Lambda c]
+(-1)^{p(a)p(b)}\Bigl(\int_0^Td\Lambda b\Bigr)[a_\Lambda c]. 
\end{align*}

\item (Wick formula) For any $a, b, c\in V$, 
\begin{align*}
 [a_\Lambda bc] = [a_\Lambda b]c + (-1)^{(p(a)+\ol{N})p(b)}b[a_\Lambda c] + 
                  \int_0^\lambda d\Gamma [[a_\Lambda b]_\Gamma c], 
\end{align*}
where $\Gamma$ is an additional $(1|N)_W$-supervariable. 
\end{clist}

For simplicity, we say $(V, \nabla)$, or more simply $V$, is a non-unital $N_W=N$ SUSY vertex algebra. The map $\mu$ is called the multiplication of $V$.
\end{dfn}

\begin{dfn}
A non-unital $N_W=N$ SUSY vertex algebra $V$ is called an $N_W=N$ SUSY vertex algebra if there exists an even element $\vac \in V$ such that $a\vac=\vac a=a$ for all $a\in V$. 
\end{dfn}

We close this subsection by a brief comment on $N_K=N$ SUSY vertex algebras.
All the definitions and arguments given for the $N_W=N$ case are valid for the $N_K=N$ case by replacing the relation \eqref{eq:1P:polyW} of the $(1|N)_W$-supervariables $\Lambda_\alpha=(\lambda_\alpha,\theta_\alpha^1,\dotsc,\theta_\alpha^N)$ with those of the $(1|N)_K$-supervariables:
\begin{align}\label{eq:1P:polyK} 
 \lambda_\alpha  \lambda_\beta -\lambda_\beta  \lambda_\alpha  = 0, \quad 
 \lambda_\alpha  \theta_\beta^i-\theta_\beta^i \lambda_\alpha  = 0, \quad 
 \theta_\alpha^i \theta_\beta^j+\theta_\beta^j \theta_\alpha^i =
 -2\delta_{\alpha,\beta}\delta_{i,j} \lambda_\alpha,
\end{align}
and replacing the commutative superalgebras $\clH_W$ by the following superalgebra $\clH_K$.

\begin{dfn}\label{dfn:1P:clHK}
Let $\clH_K$ be the $\bbK$-superalgebra generated by an even generator $T$ and odd generators $S^i$ $(i\in[N])$ with relations
\begin{align*}
 TS^i-S^iT = 0, \quad S^iS^j+S^jS^i = 2\delta_{i,j}T \quad (i,j \in [N]). 
\end{align*}
\end{dfn}

Then we have the notion of an $N_K=N$ SUSY Lie conformal algebra and that of an $N_K=N$ SUSY vertex algebra.

\subsection{SUSY Poisson vertex algebras}\label{ss:1P:SUSYPVA}

We cite from \cite{HK} the notion of \emph{SUSY Poisson vertex algebras}, the main object of this note.
Throughout this \cref{ss:1P:SUSYPVA}, we fix a non-negative integer $N$.

\begin{dfn}[{\cite[Definition 3.3.16]{HK}}]\label{dfn:1P:PVA}
Let $(V, \nabla, \{\cdot_\Lambda \cdot\})$ be an $N_W=N$ SUSY Lie conformal algebra and $\mu\colon V\otimes V\to V$ be an even linear map. We denote $ab\ceq \mu(a\otimes b)$ for $a, b\in V$. A tuple $(V, \nabla, \{\cdot_\Lambda \cdot\}, \mu)$ is called a \emph{non-unital $N_W=N$ SUSY Poisson vertex algebra} (\emph{non-unital $N_W=N$ SUSY PVA} for short) if it satisfies the following conditions: 
\begin{clist}
\item For any $a, b\in V$, 
\begin{align*}
 T(ab)=(Ta)b+a(Tb), \quad S^i(ab)=(S^ia)b+(-1)^{p(a)}a(S^ib) \quad (i\in[N]). 
\end{align*}

\item The linear map $\mu$ is commutative, i.e, $ba=(-1)^{p(a)p(b)}ab$ for any $a, b\in V$. 

\item The linear map $\mu$ is associative, i.e, $(ab)c=a(bc)$ for any $a, b, c\in V$. 

\item (Leibniz rule) For any $a, b, c\in V$,
\begin{align}\label{eq:1P:Leib}
\{a_\Lambda bc\}=\{a_\Lambda b\}c+(-1)^{(p(a)+\ol{N})p(b)}b\{a_\Lambda c\}. 
\end{align}
\end{clist}
\end{dfn}

\begin{dfn}
A non-unital $N_W=N$ SUSY PVA $V$ is called an \emph{$N_W=N$ SUSY Poisson vertex algebra}  if there exists $\vac\in V$ such that $a\vac=\vac a=a$ for any $a\in V$. 
\end{dfn}

In the case $N=0$, an $N_W=0$ SUSY PVA is nothing but a PVA in the ordinary sense (see \cite{K} and \cite[Chap.\ 16]{FBZ} for example).

\begin{eg}[{\cite[Proposition 3.1.9]{Y}}]
Let $P$ be an Poisson superalgebra of parity $q \in \bbZ_2$, i.e., a commutative superalgebra $P$ endowed with a Lie bracket $\{\cdot,\cdot\}\colon P \otimes P \to P$ of parity $q$ (see \cite[Definition 3.2.5]{HK}) satisfying
\begin{align*}
 \{a, bc\}=\{a, b\}c+(-1)^{(p(a)+q)p(b)}b\{a, c\}
\end{align*}
for $a, b, c\in P$. Then the $1|N$-superjet algebra $P^\clO=P^{\bbK[Z]}$ (see \cite[Proposition 1.3.5]{Y}) has the strucure of a SUSY PVA defined by 
\begin{align*}
\{a_\Lambda b\}\ceq \theta^{[N]}\{a, b\}
\end{align*}
for each $a, b\in P$. 
\end{eg}

The case $N_K=N$ is similarly introduced:

\begin{dfn}
Let $(V, \nabla, \{\cdot_\Lambda\cdot\})$ be an $N_K=N$ SUSY Lie conformal algebra and $\mu\colon V\otimes V\to V$ be an even linear map. 
We denote $ab\ceq \mu(a\otimes b)$ for $a, b\in V$. 
A tuple $(V, \nabla, \{\cdot_\Lambda\cdot\}, \mu)$ is called a non-unital $N_K=N$ SUSY Poisson vertex algebra if it satisfies the conditions (i)--(iv) in \cref{dfn:1P:PVA} replacing $(1|N)_W$-supervariable $\Lambda$ by $(1|N)_K$-supervariable. 
\end{dfn}

\subsection{The superoperad of SUSY vertex algebras}\label{ss:1P:Pch}

Let us briefly recall the $N_W=N$ SUSY chiral operad $\oPchW{V}$ defined in \cite{NY}. 
Fix a non-negative integer $N$.

\begin{dfn}\label{dfn:1P:O}
Let $n\in\bbZ_{>0}$, $[n] \ceq \{1,\dotsc,n\}$ and $Z_k=(z_k, \zeta_k^1, \ldots, \zeta_k^N)$ be a $(1|N)_W$-supervariable for each $k\in[n]$. We set
\begin{align*}
 z_{k,l} \ceq z_k-z_l, \quad \zeta_{k,l}^i \ceq \zeta_k^i-\zeta_l^i
\end{align*}
for $i\in[N]$ and $k, l\in[n]$. Also, we set $Z_{k,l} \ceq (z_{k,l},\zeta_{k,l}^1,\dotsc,\zeta_{k,l}^N)$ and $\pdd_{Z_k}\ceq (\pdd_{z_k}, \pdd_{\zeta_k^1}, \ldots, \pdd_{\zeta_k^N})$ for simplicity.
\begin{enumerate}
\item 
Let $\bbK[Z_k]_{k=1}^n$ be the $N_W=N$ polynomial superalgebra of the supervariables $Z_1,\dotsc,Z_n$ (\cref{dfn:1P:poly}).
Next, let $\clO_n^{\star} = \bbK[Z_k]_{k=1}^n[z_{k, l}^{-1}]_{1\le k<l\le n}$ be the localization of the $\bbK[Z_k]_{k=1}^n$ by the multiplicatively closed set generated by $\{z_{k,l} \mid 1 \le k<l \le n\}$.
Then we denote by $\clO_n^{\star\rmT}$ the subalgebra of $\clO_n^{\star}$ generated by 
$\{z_{k,l}^{\pm 1}\mid 1\le k<l\le n\} \cup \{\zeta_{k,l}^i \mid i \in [N], \, 1 \le k<l \le n\}$, i.e., 
\begin{align*}
 \clO_n^{\star\rmT} \ceq \bbK[z_{k,l}^{\pm1},\zeta_{k,l}^i \mid i \in [N], \ 1 \le k<l \le n].
\end{align*}
The superscript $\rmT$ indicates the translation-invariant part. 
See \cite[\S2.3, (2.3.11)]{NY} for the explanation.

\item 
Let $\clD_n^{\rmT}$ denote the subalgebra of $\End_\bbK(\clO_n^{\star})$ generated by
$\{z_{k,l}, \zeta_{k, l}^i\mid i\in[N], 1\le k<l\le n\} \cup 
 \{\pdd_{z_k}, \pdd_{\zeta_k^i}\mid i\in[N], k\in[n]\}$.  
\end{enumerate}

Also, by convention, we set $\clO_0^{\star\rmT} = \clD_0^{\rmT} \ceq \bbK$
\end{dfn}

Hereafter until the end of this \cref{ss:1P:Pch}, we fix an $\clH_W$-supermodule $(V, \nabla)$.
See \cref{dfn:1P:clHW} for the definition of $\clH_W$. 

Let $n\in\bbZ_{>0}$. 
The space $V^{\otimes n} \otimes \clO_n^{\star\rmT}$ carries the structure of a right $\clD_n^{\star\rmT}$-supermodule by letting $Z_{k, l}=(z_{k, l}, \zeta_{k, l}^1, \ldots, \zeta_{k, l}^N)$ act as
\begin{align*}
 (v \otimes f) \cdot     z_{k,l}   \ceq v \otimes f     z_{k,l}, \quad 
 (v \otimes f) \cdot \zeta_{k,l}^i \ceq v \otimes f \zeta_{k,l}^i
\end{align*}
and $\pdd_{Z_k}=(\pdd_{z_k}, \pdd_{\zeta_k^1}, \ldots, \pdd_{\zeta_k^N})$ act as
\begin{align*}
 (v \otimes f) \cdot \pdd_{z_k} &\ceq T^{(k)}v \otimes f - v \otimes \pdd_{z_k}f, \\ 
 (v \otimes f) \cdot \pdd_{\zeta_k^i} &\ceq 
 (-1)^{p(v)+p(f)}S^{(k)} v \otimes f - (-1)^{p(f)}v \otimes \pdd_{\zeta_k^i}f. 
\end{align*}
for each $v\in V^{\otimes n}$ and $f\in\clO_n^{\star\rmT}$. Here, for a linear transformation $\varphi$ on $V$, the symbol $\varphi^{(k)}$ denotes the linear transformation on $V^{\otimes n}$ defined by $\varphi^{(k)} \ceq \id_V \otimes \cdots \otimes \overset{k}{\varphi} \otimes \cdots \otimes \id_V$. 

Next, recall that $\bbK[\Lambda_k]_{k=1}^n$ has a right $\clH_W$-supermodule structure 
(see \cite[the paragraph before (2.2.4)]{NY}): 
\[
 a(\Lambda_1,\dotsc,\Lambda_n) \cdot T = 
 a(\Lambda_1,\dotsc,\Lambda_n)\Bigl(-\sum_{k=1}^n \Lambda_k\Bigr), \quad 
 a(\Lambda_1,\dotsc,\Lambda_n) \cdot S^i = 
 a(\Lambda_1,\dotsc,\Lambda_n)\Bigl(-\sum_{k=1}^n \theta_k^i\Bigr)
\]
for $a(\Lambda_1,\dotsc,\Lambda_n) \in \bbK[\Lambda_k]_{k=1}^n$.
Then we can form a linear superspace
\begin{align}\label{eq:1P:WVnL}
 V_{\nabla}[\Lambda_k]_{k=1}^n \ceq \bbK[\Lambda_k]_{k=1}^n \otimes_{\clH_W} V.
\end{align}
It is a right $\clD_n^{\rmT}$-supermodule by 
\begin{align*}
&(a(\Lambda_1,\ldots,\Lambda_n) \otimes v) \cdot z_{k,l}
 \ceq (\pdd_{\lambda_l}-\pdd_{\lambda_k})a(\Lambda_1,\ldots,\Lambda_n) \otimes v, \\
&(a(\Lambda_1, \ldots, \Lambda_n)\otimes v) \cdot \zeta_{k,l}^i
 \ceq (-1)^{p(a)+p(v)}(\pdd_{\zeta_l^i}-\pdd_{\zeta_k^i})a(\Lambda_1,\ldots,\Lambda_n)\otimes v, 
\end{align*}
and 
\begin{align*}
&(a(\Lambda_1,\ldots,\Lambda_n) \otimes v) \cdot \pdd_{z_k}
 \ceq -\lambda_ka(\Lambda_1,\ldots,\Lambda_n) \otimes v, \\
&(a(\Lambda_1,\ldots,\Lambda_n) \otimes v) \cdot \pdd_{\zeta_k^i}
 \ceq -(-1)^{p(a)+p(v)}\theta_k^i a(\Lambda_1,\ldots,\Lambda_n) \otimes v
\end{align*}
for each $a(\Lambda_1,\ldots,\Lambda_n) \in \bbK[\Lambda_k]_{k=1}^n$ and $v \in V$. 

\begin{dfn}
For an $\clH_W$-supermodule $(V, \nabla)$, we define an $\frS$-supermodule 
\begin{align*}
 \oPchW{V} \ceq \bigl(\oPchW{V}(n)\bigr)_{n \in \bbN}
\end{align*} 
as follows. First, for each $n \in \bbN$, we define a linear superspace $\oPchW{V}(n)$ by
\begin{align*}
 \oPchW{V}(n) \ceq 
 \Hom_{\clD_n^{\rmT}}(V^{\otimes n} \otimes \clO_n^{\star\rmT}, V_\nabla[\Lambda_k]_{k=1}^n).
\end{align*}
We denote its element $X \in \oPchW{V}(n)$ as 
\[
 X\colon V^{\otimes n} \otimes \clO_n^{\star\rmT} \lto V_{\nabla}[\Lambda_k]_{k=1}^n, \quad 
 v_1 \otimes \dotsb \otimes v_n \otimes f \lmto 
 X_{\Lambda_1,\dotsc,\Lambda_n}(v_1 \otimes \dotsb \otimes v_n \otimes f),
\]
emphasizing the supervariables $\Lambda_k$'s.
Second, for $\sigma \in \frS_n$ and $X \in \oPchW{V}(n)$, we define a linear map 
$X^\sigma\colon V^{\otimes n}\otimes \clO_n^{\star\rmT}\to V_\nabla[\Lambda_k]_{k=1}^n$ by
\begin{align*}
 X^{\sigma}(v_1 \otimes \dotsb \otimes v_n \otimes f) \ceq 
 X_{\sigma(\Lambda_1, \ldots, \Lambda_n)}(\sigma(v_1\otimes \cdots \otimes v_n)\otimes \sigma f), 
\end{align*}
where 
\begin{align*}
&\sigma(\Lambda_1, \ldots, \Lambda_n) \ceq 
 (\Lambda_{\sigma^{-1}(1)}, \ldots, \Lambda_{\sigma^{-1}(n)}), \\
&\sigma(v_1\otimes \cdots \otimes v_n) \ceq
 \prod_{\substack{1\le k<l\le n\\ \sigma(k)>\sigma(l)}}(-1)^{p(v_k)p(v_l)} \cdot 
 v_{\sigma^{-1}(1)}\otimes \cdots \otimes v_{\sigma^{-1}(n)}, \\
&(\sigma f)(Z_1, \ldots, Z_n) \ceq f(Z_{\sigma(1)}, \ldots, Z_{\sigma(n)}),
\end{align*}
Then, it follows that $X^{\sigma}\in \oPchW{V}(n)$, and as a consequence we have the $\frS$-supermodule $\oPchW{V}$.
\end{dfn}

Hereafter, for $m \in \bbZ_{>0}$ and $n \in \bbN$, we use the notation
\begin{align}\label{eq:1P:Nmn}
 \bbN^m_n \ceq \{(n_1,\dotsc,n_m) \in \bbN^m \mid n_1+\dotsb+n_m=n \}.
\end{align}
Also, let us recall from \cite[Lemma 2.2.3]{NY} that we have the linear isomorphism
\begin{align}\label{eq:2P:VNL=VL}
\begin{split}
 V_{\nabla}[\Lambda_k]_{k=1}^n &\lsto V[\Lambda_k]_{k=1}^{n-1} \\
 a(\Lambda_1,\dotsc,\Lambda_n) \otimes v &\lmto 
 a(\Lambda_1,\dotsc,\Lambda_{n-1},-\Lambda_1-\cdots-\Lambda_{n-1}-\nabla)v
\end{split}
\end{align}
for $a \in \bbK[\Lambda_k]_{k=1}^n$ and $v \in V$.

\begin{dfn}
Let $m,n \in \bbN$, $(n_1,\ldots,n_m) \in \bbN^m_n$, and set $N_j \ceq n_1+\cdots+n_j$ for $j \in [m]$ and $N_0 \ceq 0$. 
\begin{enumerate}
\item
For $Y_j \in \oPchW{V}(n_j)$, $j \in [m]$, we define a linear map
\begin{align*}
 Y_1 \odot \dotsb \odot Y_m \colon V^{\otimes n} \otimes \clO^{\star\rmT}_n \lto 
 \bigotimes_{j=1}^m V[\Lambda_k]_{k=N_{j-1}+1}^{N_j-1} \otimes \clO^{\star\rmT}_m
\end{align*}
by 
\begin{align*}
&(Y_1 \odot \cdots \odot Y_m)(v_1 \otimes \cdots \otimes v_n \otimes f) \\
&\ceq \pm (Y_1)_{\Gamma_1,\ldots,\Gamma_{N_1-1}}(w_1 \otimes f_1) \otimes \cdots 
 \otimes  (Y_m)_{\Gamma_{N_{m-1}+1},\ldots,\Gamma_{N_m-1}}(w_m \otimes f_m)
 \otimes \rst{f_0}{Z_k=Z_{N_j} (N_{j-1}< k \le N_j)}. 
\end{align*}
for $v_1, \ldots, v_n\in V$ and $f\in\clO^{\star\rmT}_n$. Here we denote by 
\begin{align*}
 (Y_j)_{\Lambda_1,\ldots,\Lambda_{n_j-1}}(w) \ceq 
 (Y_r)_{\Lambda_1,\ldots,\Lambda_{n_j-1},-\Lambda_1,\ldots,-\Lambda_{n_j-1}-\nabla}(w) 
 \quad (w \in V^ {\otimes n_j})
\end{align*} 
the element of $V[\Lambda_k]_{k=1}^{n_j-1}$ corresponding to $Y_j(w)$ by the isomorphism \eqref{eq:2P:VNL=VL}. 
Also, we set
\begin{align*}
&\pm  \ceq \prod_{1\le i<j\le m}(-1)^{p(w_i)p(Y_j)}, \quad
  w_j \ceq v_{N_{j-1}+1} \otimes \cdots \otimes v_{N_j}, \quad (j \in [m]), \\
&\Gamma_k \ceq \Lambda_k-\pdd_{Z_k} \quad (k \in [n] \bs \{N_1,\ldots,N_m\}),
\end{align*}
and
\begin{align*}
 f = f_0 f_1 \cdots f_m, \quad f_0 \in \clO_n^{\star\rmT}, \quad
 f_j = \prod_{N_{j-1}<k<l\le N_j}z_{k,l}^{-m_{k,l}^j} \quad (m_{k,l}^j \in \bbN)
\end{align*}
is a decomposition such that $f_0$ has no poles at $z_k=z_l$ ($N_{j-1}<k<l\le N_j$, $j \in [m]$). 

\item 
For $X \in \oPchW{V}(m)$ and $Y_j \in \oPchW{V}(n_j)$ with $j \in [m]$, let
\begin{align*}
 X \circ (Y_1 \odot \cdots \odot Y_m)\colon 
 V^{\otimes n} \otimes \clO^{\star\rmT}_n \lto V_\nabla[\Lambda_k]_{k=1}^n
\end{align*}
denote the linear map defined by the composition 
\[
 V^{\otimes n} \otimes \clO^{\star\rmT}_n 
 \xrr{Y_1 \odot \cdots \odot Y_m} 
 \bigotimes_{j=1}^m V[\Lambda_k]_{k=N_{j-1}+1}^{N_j-1} \otimes \clO^{\star\rmT}_m 
 \xrr{X_{\Lambda'_1,\ldots,\Lambda'_m}}
 V_\nabla[\Lambda_k]_{k=1}^n. 
\]
Here we set $\Lambda'_j \ceq \Lambda_{N_{j-1}+1}+\dotsb+\Lambda_{N_j}$ for each $j \in [m]$, 
and the symbol $X_{\Lambda'_1,\ldots,\Lambda'_m}$ stands for the linear map defined by
\begin{align*}
 X_{\Lambda_1',\ldots,\Lambda_m'}\colon a_1 v_1 \otimes \cdots \otimes a_m v_m \otimes f \lmto 
 \pm(a_1 \cdots a_m)X_{\Lambda'_1,\ldots,\Lambda'_m}(v_1 \otimes \cdots \otimes v_m \otimes f)
\end{align*}
for each $a_j \in \bbK[\Lambda_k]_{k=N_{j-1}+1}^{N_j-1}$, $v_j \in V$ ($j \in [m]$) 
and $f \in \clO^{\star\rmT}_m$ with the sign
\begin{align*}
 \pm \ceq \prod_{1\le i<j\le m} (-1)^{p(v_i)p(a_j)} \cdot \prod_{j=1}^m(-1)^{p(a_j)p(X)}. 
\end{align*} 
\end{enumerate}
\end{dfn}

The $\frS$-supermodule $\oPchW{V}$ carries the structure of a superoperad by letting
\begin{align*}
 X \otimes Y_1 \otimes \dotsb \otimes Y_m \lmto X \circ (Y_1 \odot \dotsb \odot Y_m)
\end{align*}
be the composition map and $\id_V\in \oPchW{V}(1)$ be the unit. 

For $X \in \oPchW{\Pi^{N+1}V}(2)_{\ol{1}}$, we define linear maps
$[\cdot_\Lambda \cdot]_X\colon V \otimes V \to V[\Lambda]$ and $\mu_X\colon V \otimes V \to V$ by
\begin{align}
\label{eq:1P:VALbraX}
&[a_\Lambda b]_X \ceq 
 (-1)^{p(a)(\ol{N}+\ol{1})}X_{\Lambda, -\Lambda-\nabla}(a\otimes b\otimes 1_\bbK), \\
\label{eq:1P:VAproX}
&\mu_X(a \otimes b) \ceq (-1)^{p(a)(\ol{N}+\ol{1})+\ol{1}} 
 \Res_\Lambda\bigl(\lambda^{-1}X_{\Lambda,-\Lambda-\nabla}(a \otimes b \otimes z_{1,2}^{-1})\bigr)
\end{align}
for each $a,b \in V$. Here $\Res_\Lambda(\lambda^{-1}-)\colon V[\Lambda]\to V$ is the residue map \cite[3.1.2]{HK}: 
\begin{align*}
 \Res_{\Lambda}(\lambda^{-1}\Lambda^{m|I}v) \ceq \delta_{m,0} \delta_{I,[N]} v \quad 
 (m \in \bbN, \, I \subset [N], \, v \in V). 
\end{align*}

Recall from \cite[Definition 1.3.1, Proposition 1.3.3]{NY} that for each superoperad $\oP$ 
we have a $\bbZ_{\ge -1}$-graded linear superspace
\begin{align}\label{eq:1P:L}
 L(\oP) \ceq \bigoplus_{n \ge -1}L^n(\oP), \quad 
 L^n(\oP) \ceq \{f \in \oP(n+1) \mid \forall \sigma\in\frS_{n+1}, \, f^\sigma=f\}
\end{align} 
equipped with a linear map $\square\colon L(\oQ)\otimes L(\oQ)\to L(\oQ)$,
and that there is a bijection
\begin{align}\label{eq:1P:MC}
 \MC\bigl(L(\oQ)\bigr) \ceq \{X \in L^1(\oQ) \mid X \square X = 0\} \lsto 
 \{\text{operad morphisms $\oLie \to \oQ$}\},
\end{align} 
where $[f,g] \ceq f \square g - (-1)^{p(f)p(g)} g \square f$ and $\oLie$ denotes the Lie operad. 
Now we have: 

\begin{thm}[{\cite[Theorem 2.3.15]{NY}}] \label{thm:1P:opNWVA}
Let $(V,\nabla)$ be an $\clH_W$-supermodule. 
\begin{enumerate}
\item 
For $X \in \MC\bigl(L\bigl(\oPchW{\Pi^{N+1}V}\bigr)\bigr)_{\ol{1}}$, the pair $([\cdot_\Lambda\cdot]_X, \mu_X)$ is a non-unital $N_W=N$ SUSY vertex algebra structure on $(V, \nabla)$. 

\item 
The map $X \mto ([\cdot_\Lambda\cdot]_X,\mu_X)$ gives a bijection
\begin{align*}
 \MC\bigl(L\bigl(\oPchW{\Pi^{N+1}V}\bigr)\bigr)_{\ol{1}} \lsto 
 \{\text{non-unital $N_W=N$ SUSY VA structures on $(V, \nabla)$}\}. 
\end{align*}
\end{enumerate}
\end{thm}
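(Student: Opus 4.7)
The plan is to decode what data the element $X\in \oPchW{\Pi^{N+1}V}(2)_{\od}$ carries, then translate the two operadic conditions (the $\frS_2$-invariance built into $L^1$ and the Maurer-Cartan equation $X\square X=0$) into the axioms of a non-unital $N_W=N$ SUSY vertex algebra. Because $X$ is a right $\clD_2^{\rmT}$-module map and $\clO_2^{\star\rmT}=\bbK[z_{1,2}^{\pm 1},\zeta_{1,2}^i]$, the element $X$ is completely determined by the family $X_{\Lambda_1,\Lambda_2}(a\otimes b\otimes z_{1,2}^{n}\zeta_{1,2}^I)$ for $a,b\in V$, $n\in\bbZ$, $I\subset[N]$. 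Acting by $z_{1,2},\zeta_{1,2}^i,\pdd_{z_k},\pdd_{\zeta_k^i}$ shows that all such values are obtained from the two extremal cases $f=1$ and $f=z_{1,2}^{-1}$, which produce exactly $[\cdot_\Lambda\cdot]_X$ and $\mu_X$ via \eqref{eq:1P:VALbraX} and \eqref{eq:1P:VAproX}. Conversely, given a SUSY vertex algebra structure $([\cdot_\Lambda\cdot],\mu)$, these two formulas determine $X$ on a set of $\clD_2^{\rmT}$-generators, and one checks that the resulting assignment extends consistently to a $\clD_2^{\rmT}$-equivariant map; this gives the candidate inverse.

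Next I would verify the elementary translations: the sesquilinearity of the bracket is equivalent to the well-definedness of $X$ as a $\clD_2^{\rmT}$-equivariant map (reflecting how $\pdd_{z_k},\pdd_{\zeta_k^i}$ act on both sides). The $\frS_2$-invariance condition $X^{(12)}=X$, which defines $L^1(\oPchW{\Pi^{N+1}V})$, unpacks after the parity shift $\Pi^{N+1}$ and careful sign tracking into two statements: applied to $a\otimes b\otimes 1$ it is the skew-symmetry \eqref{eq:1P:LCA:ssym} of $[\cdot_\Lambda\cdot]_X$, and applied to $a\otimes b\otimes z_{1,2}^{-1}$ it becomes quasi-commutativity of $\mu_X$ (after extracting the residue and using the integral formula \eqref{eq:1P:intLv}).

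The heart of the argument is the unpacking of the Maurer-Cartan equation $X\square X=0$. By definition of $\square$, $X\square X$ is an alternating sum of compositions $X\circ(X\odot \id)$ and $X\circ(\id\odot X)$ in $\oPchW{\Pi^{N+1}V}(3)$, which again is a $\clD_3^{\rmT}$-equivariant map that is determined by its values on test inputs $a\otimes b\otimes c\otimes f$ for $f\in\{1, z_{1,2}^{-1}, z_{2,3}^{-1}, z_{1,3}^{-1}, z_{1,2}^{-1}z_{2,3}^{-1}\}$ (up to $\clD_3^{\rmT}$-action). Evaluating on $f=1$ produces the Jacobi identity \eqref{eq:1P:LCA:Jac} for $[\cdot_\Lambda\cdot]_X$; evaluating on $f=z_{2,3}^{-1}$ produces the Wick formula relating bracket and product; evaluating on $f=z_{1,2}^{-1}z_{2,3}^{-1}$ yields quasi-associativity; and the remaining cases are either redundant by $\frS_3$-equivariance or collapse to derivative identities from the $T$- and $S^i$-Leibniz rules. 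In each case the translation uses the isomorphism \eqref{eq:2P:VNL=VL} to convert $V_\nabla[\Lambda_1,\Lambda_2,\Lambda_3]$ to $V[\Lambda_1,\Lambda_2]$, together with the integral formulas \eqref{eq:1P:intLv} to convert poles and residues into the integrals appearing in quasi-associativity and the Wick formula.

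The main obstacle is the sign bookkeeping, which is nontrivial for several reasons: the parity shift $\Pi^{N+1}$ interacts with the bracket/product parities; the composition $X\circ(X\odot\id)$ drags parities of inputs past those of $X$; the $\clD_n^{\rmT}$-action on $V^{\otimes n}\otimes\clO_n^{\star\rmT}$ uses left multiplication by $T^{(k)},S^{(k)}$ and introduces $(-1)^{p(v)+p(f)}$ factors; and the residue convention $\Res_\Lambda\lambda^{-1}(-)$ picks out the $\theta^{[N]}$-component, which is sensitive to the ordering of the fermionic variables. To keep this tractable I would reduce all computations to the generating inputs listed above and compare both sides coefficient-by-coefficient against a basis $\Lambda^{m|I}$ of $\bbK[\Lambda]$. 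Once these matchings are established, both directions of the bijection in (2) are immediate from the determinacy of $X$ by $([\cdot_\Lambda\cdot]_X,\mu_X)$, and (1) follows as a corollary of the translations carried out in the verification.
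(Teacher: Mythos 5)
This theorem is not proved in the present paper; it is quoted from \cite[Theorem 2.3.15]{NY}, so there is no in-paper argument to compare against, only the analogous proof the authors do write out for the Poisson case (\cref{thm:2P:NWPVA}). Your outline follows essentially the same strategy as both: determine $X$ by its values on the $\clD_2^{\rmT}$-module generators $1$ and $z_{1,2}^{-1}$ of $\clO_2^{\star\rmT}$, read sesquilinearity off the $\clD^{\rmT}$-equivariance, obtain skew-symmetry and quasi-commutativity from the $\frS_2$-invariance defining $L^1$, and obtain the Jacobi identity, Wick formula and quasi-associativity by evaluating $X\square X=0$ on $\frS_3$-orbit representatives of the generating test functions (using the partial-fraction relation among the $z_{k,l}^{-1}$ to reduce the remaining cases), so the approach is correct and essentially identical to the cited one.
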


\section{The superoperad of SUSY Poisson vertex algebras}\label{s:2P}

In this section we introduce an algebraic operad of SUSY Poisson vertex algebras, which is a natural SUSY analogue of the operad $P^{\cl}$ of Poisson vertex algebras in \cite[\S10]{BDHK}. 

Hereafter we use the following symbols.
\begin{itemize}
\item
For a set $S$, we denote by $\frP(S)$ the power set of $S$.
\item
For a finite set $S$, we denote by $\# S$ the number of elements in $S$.
\end{itemize}

\subsection{Graphs and quivers}\label{ss:2P:graph}

We cite from \cite[\S9]{BDHK} some language of graphs and the associated cooperad which will be used to construct our operad of SUSY Poisson vertex algebras. 

A graph is a triple $G=(G_0,G_1,f)$, where $G_0,G_1$ are sets and $f\colon G_1 \to \frP(G_0)$ is a map such that $\#f(\alpha)=1,2$ for any $\alpha \in G_1$. Each element of $G_0$ and $G_1$ is called a vertex and an edge of $G$, respectively. Thus, in the standard language of graph theory, a graph in this note means an undirected graph with vertex set $G_0$ and edge set $G_1$ that allows multiple edges and edge loops.

\begin{dfn}\label{dfn:2P:graph}
For a graph $G=(G_0,G_1,f)$, we use the following notations: 
\begin{itemize}
\item 
For an edge $\alpha \in E$, we write
\begin{tikzcd}
 \underset{i}{\bullet} \arrow[r, "\alpha"', dash] & \underset{j}{\bullet}
\end{tikzcd}
to indicate $f(\alpha)=\{i,j\}$. 

\item
For $l \in \bbZ_{\ge 2}$, we set 
\begin{align*}
&G_l\ceq \{\alpha = (\alpha_1,\dotsc,\alpha_l)\in G_1^l \mid 
 f(\alpha_k) \cap f(\alpha_{k+1}) \ne \emptyset \ (k=1,\dotsc,l-1)\} .
\end{align*}
An element $\alpha \in G_l$ for $l \in \bbN$ is called a path of length $l$.

\item 
For vertices $i,j \in G_0$, we define
\begin{align*}
 G(i,j) \ceq \bigcup_{l \in \bbN}
 \bigl\{\alpha = (\alpha_1,\dotsc,\alpha_l) \in G_l \mid i \in f(\alpha_1), \, j \in f(\alpha_l)\bigr\}.
\end{align*}
Here we used the convention $f(i) \ceq i$ for each $i \in G_0$, by which one has $G(i,i) \ne \emptyset$. 
Each element of $G(i,j)$ is called a path from $i$ to $j$. 

\item
A path $\alpha \in G(i,i)$ of length $l \ge 1$ is called a cycle. 
A cycle of length $1$ is called a loop.  
If $G$ has no cycle, then $G$ is called acyclic. 

\item 
For vertices $i,j\in G_0$, we say that $i,j$ are connected if $G(i,j) \ne \emptyset$. 
This defines an equivalence relation on $G_0$. 
An equivalent class of $G_0$ is called a connected component of $G$. 
\end{itemize}
\end{dfn}

Next, we introduce the notion of $n$-graphs for $n \in \bbN$.

\begin{dfn}\label{dfn:2P:nGra}
Let $n \in \bbN$, and $G=(G_0,G_1,f_G)$ be a graph.
\begin{enumerate}
\item 
$G$ is called an $n$-graph if $G_0=[n]=\{1,\dotsc,n\}$. 

\item \label{dfn:2P:nGra:2}
Assume $G$ is an $n$-graph. Then, for $\sigma \in \frS_n$, let $\sigma G$ denote the $n$-graph $\sigma G=([n], G_1, f)$ with 
\begin{align*}
 f\colon G_1 \lto \frP([n]), \quad \alpha \lmto \sigma f_G(\alpha). 
\end{align*}
Here and hereafter, for $J=\{j_1,\dotsc,j_r\}\subset [n]$, 
we set $\sigma J \ceq \{\sigma(j_1),\dotsc,\sigma(j_r)\}$. 
By this action, the set of all $n$-graphs is a left $\frS_n$-set. 
\end{enumerate}
\end{dfn}

The following lemma will be used in \cref{ss:2P:NW}. The proof is straightforward. 

\begin{lem}\label{lem:2P:graph}
Let $n \in \bbN$, $\sigma \in \frS_n$, and $G=(G_0=[n],G_1,f)$ be an $n$-graph. 
\begin{enumerate}
\item \label{i:lem:2P:graph:1}
For $i,j \in [n]$ and $\alpha \in G(i,j)$, we have $\alpha \in (\sigma G)(\sigma(i),\sigma(j))$. 

\item \label{i:lem:2P:graph:2}
For $i,j \in [n]$, if $i,j$ are connected in $G$, then $\sigma(i),\sigma(j)$ are connected in $\sigma G$. 

\item \label{i:lem:2P:graph:3}
The $n$-graph $G$ is acyclic if and only if $\sigma G$ is acyclic.

\item \label{i:lem:2P:graph:4}
If $I^a \subset [n]=G_0$ is a connected component of $G$, 
then $\sigma I^a$ is a connected component of $\sigma G$.  
\end{enumerate}
\end{lem}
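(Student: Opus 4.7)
The plan is to leverage the observation that $\sigma G$ has the same edge set $G_1$ as $G$, with only the endpoint map changed to $f_{\sigma G} = \sigma \circ f_G$. Since $\sigma$ is a bijection on $[n]$, it commutes with unions, intersections, and inclusions of subsets of $[n]$, and in particular preserves non-emptiness. Part \ref{i:lem:2P:graph:1} is the core computation; the remaining parts follow formally from it.

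For \ref{i:lem:2P:graph:1}, given a path $\alpha = (\alpha_1,\dotsc,\alpha_l) \in G(i,j)$ with $l \ge 1$, I first check $\alpha \in (\sigma G)_l$: for each $k \in [l-1]$,
\[
 f_{\sigma G}(\alpha_k) \cap f_{\sigma G}(\alpha_{k+1})
 = \sigma f_G(\alpha_k) \cap \sigma f_G(\alpha_{k+1})
 = \sigma\bigl(f_G(\alpha_k) \cap f_G(\alpha_{k+1})\bigr) \ne \emptyset,
\]
where bijectivity of $\sigma$ is used. Then $i \in f_G(\alpha_1)$ yields $\sigma(i) \in f_{\sigma G}(\alpha_1)$, and similarly $\sigma(j) \in f_{\sigma G}(\alpha_l)$, establishing $\alpha \in (\sigma G)(\sigma(i),\sigma(j))$. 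The degenerate length-$0$ case is handled by reading the convention $f(i) \ceq i$ compatibly with the $\frS_n$-action, so that the length-$0$ path at $i$ is mapped to the length-$0$ path at $\sigma(i)$.

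Parts \ref{i:lem:2P:graph:2} and \ref{i:lem:2P:graph:3} are then immediate: any connecting path (resp.\ cycle at $i$) in $G$ produces, by \ref{i:lem:2P:graph:1}, a connecting path (resp.\ cycle at $\sigma(i)$) in $\sigma G$ of the same length, and applying the same argument to $\sigma^{-1}$ acting on $\sigma G$, combined with the identity $\sigma^{-1}(\sigma G) = G$, yields the converse implication needed for \ref{i:lem:2P:graph:3}. For \ref{i:lem:2P:graph:4}, I invoke \ref{i:lem:2P:graph:2} in both directions: elements of $\sigma I^a$ are pairwise connected in $\sigma G$, while any $k \in [n]$ connected in $\sigma G$ to some $\sigma(i) \in \sigma I^a$ has $\sigma^{-1}(k)$ connected in $G$ to $i \in I^a$, which forces $\sigma^{-1}(k) \in I^a$ and hence $k \in \sigma I^a$; so $\sigma I^a$ is a maximal connected subset of $[n]$ in $\sigma G$.

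There is no genuine difficulty; the whole argument is routine bookkeeping with the identity $f_{\sigma G} = \sigma \circ f_G$. The only mildly subtle point is interpreting the convention for length-$0$ paths compatibly with the $\frS_n$-action on $n$-graphs set up in \cref{dfn:2P:nGra}.
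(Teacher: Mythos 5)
Your argument is correct and is exactly the routine verification the paper has in mind (the paper omits the proof, declaring it straightforward): part \ref{i:lem:2P:graph:1} follows from $f_{\sigma G}=\sigma\circ f_G$ together with the fact that a bijection preserves intersections and non-emptiness, and parts \ref{i:lem:2P:graph:2}--\ref{i:lem:2P:graph:4} follow formally, using $\sigma^{-1}(\sigma G)=G$ for the reverse implications. Your care with the length-$0$ convention $f(i)\ceq i$ is a reasonable reading of \cref{dfn:2P:graph} and does not affect the substance.
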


We also need the notion of a quiver.

\begin{dfn}\label{dfn:2P:quiver}
A quiver is a directed graph, i.e., a tuple $Q=(Q_0,Q_1,s,t)$ where $Q_0,Q_1$ are sets and $s,t\colon Q_1 \to Q_0$ are maps. Each element of $Q_0$ and $Q_1$ is called a vertex and an edge of $Q$ respectively. 
For a quiver $Q=(Q_0,Q_1,s,t)$, we use the following notations: 
\begin{itemize}
\item 
For an edge $\alpha \in Q$, we write
\begin{tikzcd}
 \underset{i}{\bullet} \arrow[r, "\alpha"'] & \underset{j}{\bullet}
\end{tikzcd}
to indicate $s(\alpha)=i$ and $t(\alpha)=j$. 

\item 
For $l \in \bbZ_{\ge 2}$, we set
\begin{align*}
&Q_l \ceq \{\alpha = (\alpha_1,\dotsc,\alpha_l) \in Q_1^l \mid 
            s(\alpha_{k+1})=t(\alpha_{k}) \, (k=1,\dotsc,l-1)\}. 
\end{align*}
An element $\alpha \in Q_l$ for $l \in \bbN$ is called a directed path of length $l$.

\item 
For vertices $i,j\in I$, we define
\begin{align*}
 Q (i,j) \ceq \bigcup_{l\in\bbN} 
 \bigl\{\alpha=(\alpha_1,\dotsc,\alpha_l) \in Q_l \mid s(\alpha_1)=i, \, t(\alpha_l)=j\bigr\}.
\end{align*}
Here we used the convention $s(i) = t(i) \ceq i$ for each $i \in Q_0$, by which one has $Q(i,i) \ne \emptyset$. An element of $Q(i,j)$ is called a directed path from $i$ to $j$. 

\item 
A directed path $\alpha\in Q(i,i)$ of length $l \ge 1$ is called a directed cycle. A directed cycle of length $1$ is called a loop. 


\item 
Let $f_Q\colon Q_1 \to \frP(Q_0)$ denote the map defined by $f(\alpha)\ceq \{s(\alpha), t(\alpha)\}$, then we have a graph $\ul{Q} \ceq (Q_0,Q_1,f_Q)$. The graph $\ul{Q}$ is called the underlying graph of $Q$.

\end{itemize}
\end{dfn}

For example, $Q=([5],Q_1,s,t)$ with $Q_1=\{\alpha_1,\alpha_2,\alpha_3\}$, $s(\alpha_1)=1$, $t(\alpha_1)=2$, $s(\alpha_2)=4$, $t(\alpha_2)=1$, $s(\alpha_3)=5$, $t(\alpha_1)=4$ is a quiver depicted as 
\begin{align}\label{exm:2P:5graph}
\begin{tikzpicture}
\draw (-0.6,0.13) node {$Q=$};
\draw ( 0.0,0.00) node {$\underset{1}{\bullet}$}; 
\draw ( 1.0,0.00) node {$\underset{2}{\bullet}$}; 
\draw ( 2.0,0.00) node {$\underset{3}{\bullet}$}; 
\draw ( 3.0,0.00) node {$\underset{4}{\bullet}$}; 
\draw ( 4.0,0.00) node {$\underset{5}{\bullet}$};
\draw[->] (0.1,0.13) to node[below]{$\alpha_1$} (0.9,0.13); 
\draw[->] (3.0,0.23) to [out=90, in=90] node[below]{$\alpha_2$} (0,0.23);
\draw[->] (3.9,0.13) to node[below]{$\alpha_3$} (3.1,0.13);
\end{tikzpicture}
\end{align}
The underlying graph $\ul{Q}$ is an acyclic graph depicted as
\begin{align*}
\begin{tikzpicture}
\draw (-0.6, 0.13) node {$\ul{Q}=$};
\draw (0, 0) node {$\underset{1}{\bullet}$}; \draw (1, 0) node {$\underset{2}{\bullet}$}; 
\draw (2, 0) node {$\underset{3}{\bullet}$}; \draw (3, 0) node {$\underset{4}{\bullet}$}; 
\draw (4, 0) node {$\underset{5}{\bullet}$};
\draw[-] (0.1,0.13) to node[below]{$\alpha_1$} (0.9,0.13); 
\draw[-] (3.0,0.23) to [out=90, in=90] node[below]{$\alpha_2$} (0,0.23);
\draw[-] (3.9,0.13) to node[below]{$\alpha_3$} (3.1,0.13);
\end{tikzpicture}
\end{align*}

Now we introduce a set-theoretic cooperad using quivers.
We need a left $\frS$-set, i.e. a sequence $\bigl(\oQ(n)\bigr)_{n \in \bbN}$ consisting of sets $\oQ(n)$ equipped with left $\frS_n$-action.

\begin{dfn}\label{dfn:2P:ngr}
Let $n \in \bbN$ and $Q=(Q_0,Q_1,s_Q,t_Q)$. 
\begin{enumerate}
\item 
$Q$ is called an $n$-quiver if $Q_0 = [n]$. 

\item \label{i:dfn:2P:ngr:2}
Assume $Q$ is an $n$-quiver. Then, for $\sigma\in\frS_n$, let $\sigma Q$ denote the $n$-quiver $\sigma Q=([n], Q_1, \sigma s_Q, \sigma t_Q)$ with $(\sigma s_Q)(\alpha) \ceq \sigma(s_Q(\alpha))$ and $(\sigma t_Q)(\alpha) \ceq \sigma(t_Q(\alpha))$ for each edge $\alpha \in Q_1$.
The set of all $n$-quivers is a left $\frS_n$-set by this action. 
\end{enumerate}
\end{dfn}

For the cyclic permutation $\sigma=(1,4,5)\in\frS_5$ and the $5$-quiver $Q$ in \cref{exm:2P:5graph}, we have
\begin{align*}
\begin{tikzpicture}
\draw (-0.75, 0.13) node {$\sigma Q=$}; 
\draw (0, 0) node {$\underset{4}{\bullet}$}; \draw (1, 0) node {$\underset{2}{\bullet}$}; 
\draw (2, 0) node {$\underset{3}{\bullet}$}; \draw (3, 0) node {$\underset{5}{\bullet}$}; 
\draw (4, 0) node {$\underset{1}{\bullet}$}; 
\draw[->] (0.1, 0.13) to (0.9,0.13); 
\draw[->] (3, 0.23) to[out=90, in=90] (0, 0.23);
\draw[->] (3.9, 0.13) to (3.1, 0.13);
\draw (4.5, 0.13) node {$=$}; 
\draw (5, 0) node {$\underset{1}{\bullet}$}; \draw (6, 0) node {$\underset{2}{\bullet}$}; 
\draw (7, 0) node {$\underset{3}{\bullet}$}; \draw (8, 0) node {$\underset{4}{\bullet}$}; 
\draw (9, 0) node {$\underset{5}{\bullet}$};
\draw[->] (8.9, 0.13) to (8.1, 0.13); 
\draw[->] (5, 0.23) to[out=90, in=90] (9, 0.23); 
\draw[->] (8, 0.23) to[out=90, in=90] (6, 0.23); 
\end{tikzpicture}
\end{align*}

The proof of the following lemma is straightforward. 

\begin{lem}\label{lem:2P:dgraph}
Let $n \in \bbN$, $\sigma \in \frS_n$, and $Q=([n],Q_1,s_Q,t_Q)$ be an $n$-quiver. 
\begin{enumerate}
\item \label{i:lem:2P:dgraph:1}
For $i,j \in [n]$ and $\alpha \in Q(i,j)$, we have $\alpha \in (\sigma Q)(\sigma(i),\sigma(j))$.


\item \label{i:lem:2P:dgraph:2}
The underlying graph of $\sigma Q$ is equal to $\sigma\ul{Q}$ in \cref{dfn:2P:nGra} \ref{dfn:2P:nGra:2}.

\item \label{i:lem:2P:dgraph:3}
For $\alpha \in Q_1$, let $Q \bs \alpha = ([n],E,s,t)$ be the quiver obtained from $Q$ by removing the edge $\alpha$, i.e., $E=Q_1 \bs \{\alpha\}$, $s=\rst{s_Q}{E}$ and $t=\rst{t_Q}{E}$.  Then we have $\sigma(Q \bs \alpha)=(\sigma Q) \bs \alpha$.
\end{enumerate}
\end{lem}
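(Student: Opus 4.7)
The proof is pure definition-chasing, as the authors indicate. In each part I will expand the $\frS_n$-action on $n$-quivers from \cref{dfn:2P:ngr} \ref{i:dfn:2P:ngr:2}, where $\sigma Q=([n],Q_1,\sigma s_Q, \sigma t_Q)$ with $(\sigma s_Q)(\alpha)=\sigma(s_Q(\alpha))$ and $(\sigma t_Q)(\alpha)=\sigma(t_Q(\alpha))$, and verify the required equality of the structural data (vertex sets, edge sets, source/target maps, incidence maps).

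For \ref{i:lem:2P:dgraph:1}, I would take $\alpha=(\alpha_1,\dotsc,\alpha_l)\in Q(i,j)$, so that $s_Q(\alpha_1)=i$, $t_Q(\alpha_l)=j$, and $s_Q(\alpha_{k+1})=t_Q(\alpha_k)$ for $k=1,\dotsc,l-1$. Applying $\sigma$ to each of these equalities and using the definitions of $\sigma s_Q$ and $\sigma t_Q$ gives $(\sigma s_Q)(\alpha_1)=\sigma(i)$, $(\sigma t_Q)(\alpha_l)=\sigma(j)$, and $(\sigma s_Q)(\alpha_{k+1})=(\sigma t_Q)(\alpha_k)$; hence $\alpha\in(\sigma Q)(\sigma(i),\sigma(j))$. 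The degenerate case $l=0$ reduces to $\sigma(i)=\sigma(i)$ via the conventions $s(i)=t(i)\ceq i$.

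For \ref{i:lem:2P:dgraph:2}, using the definition of the underlying graph in \cref{dfn:2P:quiver}, the incidence map of the underlying graph of $\sigma Q$ sends $\alpha\in Q_1$ to
\[
 \{(\sigma s_Q)(\alpha),(\sigma t_Q)(\alpha)\}
 =\{\sigma(s_Q(\alpha)),\sigma(t_Q(\alpha))\}
 =\sigma\{s_Q(\alpha),t_Q(\alpha)\}=\sigma f_Q(\alpha),
\]
which coincides with the incidence map of $\sigma \ul{Q}$ from \cref{dfn:2P:nGra} \ref{dfn:2P:nGra:2}. For \ref{i:lem:2P:dgraph:3}, both $\sigma(Q\bs\alpha)$ and $(\sigma Q)\bs\alpha$ have vertex set $[n]$ and edge set $Q_1\bs\{\alpha\}$; moreover, for any $\beta\in Q_1\bs\{\alpha\}$ the source map of $\sigma(Q\bs\alpha)$ evaluates to $\sigma(s_Q(\beta))$, as does the source map of $(\sigma Q)\bs\alpha$, because restriction to $Q_1\bs\{\alpha\}$ commutes with postcomposition by $\sigma$; similarly for target maps.

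There is no real obstacle here: every step is a one-line verification once the definitions are unfolded. The only mild bookkeeping concerns keeping track of the convention $s(i)=t(i)\ceq i$ for length-$0$ paths in part \ref{i:lem:2P:dgraph:1}, which is handled separately as noted above.
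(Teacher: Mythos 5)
Your proof is correct and is exactly the straightforward definition-chase the paper has in mind (the paper omits the proof, stating only that it is straightforward). All three parts check out, including the careful handling of the length-$0$ convention in part (1) and the observation that restriction commutes with postcomposition by $\sigma$ in part (3).
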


\begin{dfn}\label{dfn:2P:oQ}
For $n\in \bbN$: 
\begin{enumerate}
\item 
Let $\oQ(n)$ be the set of all $n$-quivers without loops.  

\item 
We denote by $\oQ_\ac(n)$ the set consisting of $Q \in \oQ(n)$ such that the underlying graph $\ul{Q}$ is acyclic in the sense of \cref{dfn:2P:graph}. 
\end{enumerate}
Note that $\oQ(0)=\oQ_\ac(0)=\{\emptyset\}$, where $\emptyset=(\emptyset,\emptyset,\emptyset,\emptyset)$ denotes the empty quiver.
\end{dfn}

Next we cite from \cite[9.1]{BDHK} the cocomposition maps on the left $\frS$-set $\oQ = \bigl(\oQ(n)\bigr)_{n \in \bbN}$, by which we have a set-theoretic cooperad $\oQ$.
Recall the notation $\bbN^m_n \ceq \{(n_1,\dotsc,n_m) \in \bbN^m \mid n_1+\dotsb+n_m=n \}$ from \eqref{eq:1P:Nmn}.

\begin{dfn}\label{dfn:2P:Delta}
Let $m \in \bbZ_{>0}$, $n \in \bbN$ and $\nu=(n_1,\dotsc,n_m) \in \bbN^m_n$. 
We set $N_k \ceq n_1+\dotsb+n_k$ for each $k \in [m]$ and $N_0 \ceq 0$. 
For $Q=([n],E,s,t) \in \oQ(n)$: 
\begin{enumerate}
\item 
We define an $n_k$-quiver $\Delta_k^\nu(Q) = ([n_k],E_k,s_k,t_k) \in \oQ(n_k)$ for each $k \in [m]$ by
\begin{align*}
&E_k\ceq \{\alpha \in E \mid s(\alpha),t(\alpha)\in \{N_{k-1}+1,\dotsc,N_k\}\}, \\
&s_k(\alpha) \ceq s(\alpha)-N_{k-1}, \quad t_k(\alpha) \ceq t(\alpha)-N_{k-1}. 
\end{align*}

\item 
We define an $m$-quiver $\Delta_0^\nu(Q)=([m],E_0,s_0,t_0) \in \oQ(m)$ by
\begin{align*}
&E_0 \ceq \{\alpha \in E \mid \{s(\alpha),t(\alpha)\} \not\subset \{N_{k-1}+1,\dotsc,N_k\} \ (k\in[m])\}, \\
&s_0(\alpha) \ceq i \quad \text{if} \ s(\alpha) \in \{N_{i-1}+1,\dotsc,N_i\}, \quad 
 t_0(\alpha) \ceq j \quad \text{if} \ t(\alpha) \in \{N_{j-1}+1,\dotsc,N_j\}. 
\end{align*}
\end{enumerate}
The map
\begin{align*}
 \Delta^\nu\colon \oQ(n) \lto \oQ(m) \times \oQ(n_1) \times \dotsb \times \oQ(n_m), \quad
 Q \lmto \bigl(\Delta_0^\nu(Q),\Delta_1^\nu(Q),\dotsc,\Delta_m^\nu(Q)\bigr)
\end{align*}
is called the cocomposition of $n$-quivers. 
\end{dfn}

\begin{eg}\label{eg:2P:gracomp}
Let $\nu=(3, 2, 2) \in \bbN^3_7$, and $Q$ be the $7$-quiver depicted as 
\begin{align*}
\begin{tikzpicture}
\draw (0.4,0.1) node{$Q=$};
\draw (1.0,0.0) node{$\underset{1}{\bullet}$}; 
\draw (2.0,0.0) node{$\underset{2}{\bullet}$}; 
\draw (3.0,0.0) node{$\underset{3}{\bullet}$}; 
\draw (4.0,0.0) node{$\underset{4}{\bullet}$}; 
\draw (5.0,0.0) node{$\underset{5}{\bullet}$}; 
\draw (6.0,0.0) node{$\underset{6}{\bullet}$}; 
\draw (7.0,0.0) node{$\underset{7}{\bullet}$}; 
\draw [->] (2.1,0.13) to node[below]{$\beta$}  (2.9,0.13); 
\draw [->] (3.1,0.13) to node[below]{$\gamma$} (3.9,0.13); 
\draw [->] (6.1,0.13) to node[below]{$\ve$}    (6.9,0.13); 
\draw [->] (1.05, 0.25) to [out=80, in=100] node[below]{$\alpha$} (2.95, 0.25);
\draw [->] (3.05, 0.25) to [out=30, in=150] node[below]{$\delta$} (6.95, 0.25);
\draw [->] (4.95,-0.20) to [out=210,in=330] node[above]{$\zeta$}  (1.05,-0.20);
\end{tikzpicture}
\end{align*}
Then we have 
\begin{align*}
\begin{tikzpicture}
\draw (0,0.1) node {$\Delta^\nu_1(Q)=$}; 
\draw (1,0.0) node {$\underset{1}{\bullet}$}; 
\draw (2,0.0) node {$\underset{2}{\bullet}$}; 
\draw (3,0.0) node {$\underset{3}{\bullet}$}; 
\draw [->] (2.1,0.13) to node[below]{$\beta$} (2.9,0.13); 
\draw [->] (1.0,0.23) to [out=90, in=90] node[below]{$\alpha$} (3,0.23);
\end{tikzpicture}
\qquad 
\begin{tikzpicture}
\draw (0,0.1) node {$\Delta^\nu_2(Q)=$}; 
\draw (1,0.0) node {$\underset{1}{\bullet}$}; 
\draw (2,0.0) node {$\underset{2}{\bullet}$}; 
\end{tikzpicture}
\qquad
\begin{tikzpicture}
\draw (0,0.1) node {$\Delta^\nu_3(Q)=$}; 
\draw (1,0.0) node {$\underset{1}{\bullet}$}; 
\draw (2,0.0) node {$\underset{2}{\bullet}$}; 
\draw [->] (1.1,0.13) to node[below]{$\ve$} (1.9,0.13); 
\end{tikzpicture}
\end{align*} 
\begin{align*}
\begin{tikzpicture}
\draw (0,0.1) node {$\Delta^\nu_0(Q)=$};
\draw (1,0.0) node {$\underset{1}{\bullet}$}; 
\draw (2,0.0) node {$\underset{2}{\bullet}$}; 
\draw (3,0.0) node {$\underset{3}{\bullet}$};
\draw [->] (1.1,0.18) to node[above]{$\gamma$} (1.9,0.18); 
\draw [->] (1.9,0.08) to node[below]{$\zeta$} (1.1,0.08); 
\draw [->] (1.0,0.23) to [out=90, in=90] node[below]{$\delta$} (3,0.23); 
\end{tikzpicture}
\end{align*}
\end{eg}

\begin{dfn}[{\cite[Definition 9.4]{BDHK}}]\label{dfn:2P:extconn}
Let $m \in \bbZ_{>0}$, $n \in \bbN$ and $\nu=(n_1,\dotsc,n_m) \in \bbN^m_n$. 
We set $N_k \ceq n_1+\dotsb+n_k$ for each $k\in[m]$ and $N_0\ceq 0$. 
\begin{enumerate}
\item 
For $Q=([n],E,s,t) \in \oQ(n)$, $i \in [m]$ and $j \in [n]$, we say that the vertex $i$ is externally connected to the vertex $j$ in $\Delta^\nu(Q)$ if there exists a path $(\alpha_1,\dotsc,\alpha_r) \in \ul{\Delta_0^\nu(Q)}(i,k)$ in the underlying graph $\ul{\Delta_0^\nu(Q)}$, where $k \in [m]$ is such that $j \in \{N_{k-1}+1,\dotsc,N_k\}$, satisfying the following conditions: 
\begin{clist}
\item There is no overlap between the edges $\alpha_1,\dotsc,\alpha_r$.

\item There is $p \in [r]$ such that $j \in \{s(\alpha_p),t(\alpha_p)\} \subset[n]=Q_0$.
\end{clist}

\item 
For $Q \in \oQ(n)$ and $j \in [n]$, let $\clE_Q^\nu(j)$ denote the set consisting of all $i \in [m]$ which are externally connected to $j$ in $\Delta^\nu(Q)$. 
\end{enumerate}
\end{dfn}

\begin{eg}
For $\nu=(3,2,2)$ and $Q$ in \cref{eg:2P:gracomp}, we have
\begin{align*}
&\clE^\nu_Q(1)=\{1,2,3\}, \quad \clE^\nu_Q(2)=\emptyset, \quad \clE^\nu_Q(3)=\{1,2,3\}, \\
&\clE^\nu_Q(4)=\{1,2,3\}, \quad \clE^\nu_Q(5)=\{1,2,3\}, \quad
 \clE^\nu_Q(6)=\emptyset, \quad \clE^\nu_Q(7)=\{1,2\}. 
\end{align*}
\end{eg}

\subsection{The superoperad of \texorpdfstring{$N_W=N$}{NW=N} SUSY Poisson vertex algebras}
\label{ss:2P:NW}

In this subsection we introduce the superoperad $\oPclW{}$ of $N_W=N$ SUSY Poisson vertex algebras (\cref{dfn:2P:Pcl}), and prove in \cref{thm:2P:NWPVA} that a structure of $N_W=N$ SUSY Poisson vertex algebra corresponds bijectively to an odd Lie algebra structure on the operad $\oPclW{}$. 
As before, we fix a non-negative integer $N$, and use the notation $\nabla=(T,S^1,\dotsc,S^N)$.

We first introduce the underlying $\frS$-supermodule of our superoperad, following the argument of the non-SUSY case in \cite[\S 10.2]{BDHK}. Let $\Lambda_k=(\lambda_k,\theta_k^1,\dotsc,\theta_k^N)$ be a $(1|N)_W$-supervariable for each $k \in \bbZ_{>0}$.

\begin{dfn}\label{dfn:2P:Pcl}
Let $V=(V,\nabla)$ be an $\clH_W$-supermodule and $n \in \bbN$. 
Recall from \eqref{eq:1P:WVnL} the linear superspace $V_{\nabla}[\Lambda_k]_{k=1}^n$, 
and the left $\frS_n$-set $\oQ(n)$ of $n$-quivers without loops from \cref{dfn:2P:oQ}.
We denote by $\bbK \oQ(n)$ the $\bbK$-linear space with basis $\oQ(n)$.
Now we define $\oPclW{V}(n)$ to be the linear superspace of linear maps
\[
 X\colon V^{\otimes n} \otimes \bbK\oQ(n) \lto V_\nabla[\Lambda_k]_{k=1}^n
\]
satisfying the following conditions for each $Q \in \oQ(n)$. 
Let us denote 
\begin{align}\label{eq:2P:Pcl-X}
 X^Q \ceq X(-\otimes Q)\colon V^{\otimes n} \lto V_\nabla[\Lambda_k]_{k=1}^n.
\end{align} 
\begin{clist}
\item (cycle relations) 
\begin{itemize}
\item
If $Q\in\oQ(n)\bs \oQ_{\ac}(n)$, i.e., the underlying graph $\ul{Q}$ (\cref{dfn:2P:quiver}) contains a cycle, then we have $X^Q=0$. 

\item 
For a directed cycle $(\alpha_1,\dotsc,\alpha_r)$ of $Q$, we have $\sum_{q=1}^r X^{Q \bs \alpha_q}=0$. 
See \cref{lem:2P:dgraph} for the explanation of the quiver $Q \bs \alpha_q$.
\end{itemize}

\item (sesquilinearity conditions) 
\begin{itemize}
\item 
For a connected component $I^a$ of $\ul{Q}$, we have
\begin{align}\label{eq:2P:Pclses1}
 (\pdd_{\lambda_k}-\pdd_{\lambda_l}) X^Q(v) = 0 \quad (k,l \in I^a, \, v \in V^{\otimes n}), 
\end{align}
and
\begin{align}\label{eq:2P:Pclses2}
   \sum_{k\in I^a}X^Q(T^{(k)}v) = -\sum_{k\in I^a}\lambda_{k}X^Q(v) \quad (v \in V^{\otimes n}).
\end{align}
Here and hereafter, for a linear transformation $\varphi\in \End V$, the symbol $\varphi^{(k)}$ denotes the linear transformation on $V^{\otimes n}$ defined by $\varphi^{(k)}\ceq \id_V\otimes \cdots \otimes \overset{k}{\varphi} \otimes \cdots \otimes \id_V$.

\item
For each $k \in [n]$, we have
\begin{align}\label{eq:2P:Pclses3}
  X^Q((S^i)^{(k)}v) = -(-1)^{p(X)}\theta_k^iX^Q(v) \quad (v\in V^{\otimes n}).
\end{align}
\end{itemize} 
\end{clist}
\end{dfn}

\begin{rmk}\label{rmk:2P:Pcl}
The cycle relations are the same as the non-SUSY case \cite[(10.4), (10.5)]{BDHK}, and the ``bosonic part'' of the sesquilinearity conditions \eqref{eq:2P:Pclses1} and \eqref{eq:2P:Pclses2} are the same as \cite[(10.6), (10.7)]{BDHK}. The ``fermionic part''\eqref{eq:2P:Pclses3} is the new point of our definition.
\end{rmk}

In the remaining of this \cref{ss:2P:NW}, we fix an $\clH_W$-supermodule $V=(V,\nabla)$. 
Let us define a right action of the symmetric group $\frS_n$ on $\oPclW{V}(n)$ for each $n \in \bbN$, following the non-SUSY case \cite[(10.10)]{BDHK}: 

\begin{dfn}
Let $n \in \bbN$. For $\sigma \in \frS_n$ and $X \in \oPclW{V}(n)$, we define a linear map 
\[
 X^\sigma\colon V^{\otimes n} \otimes \bbK\oQ(n) \lto V_\nabla[\Lambda_k]_{k=1}^n
\]
by
\begin{align*}
 X^\sigma(v_1 \otimes \dotsb \otimes v_n \otimes Q) \ceq
 X^{\sigma Q}_{\sigma(\Lambda_1,\dotsc,\Lambda_n)} \bigl(\sigma(v_1 \otimes \dotsb \otimes v_n)\bigr) 
 \quad (v_1 \otimes \dotsb \otimes v_n\in V^{\otimes n}, \, Q \in \oQ(n)), 
\end{align*}
where for $Q=(Q_0,Q_1,s_Q,t_Q)$ we set $\sigma Q \ceq (Q_0, Q_1, \sigma s_Q, \sigma t_Q)$ with $\sigma s_Q$ and $\sigma t_Q$ given in \cref{dfn:2P:ngr} and 
\begin{align*}
&\sigma(\Lambda_1,\dotsc,\Lambda_n) \ceq 
 \bigl(\Lambda_{\sigma^{-1}(1)},\dotsc,\Lambda_{\sigma^{-1}(n)}\bigr), \\
&\sigma(v_1 \otimes \dotsb \otimes v_n) \ceq
 \prod_{\substack{1\le k<l\le n \\ \sigma(k)>\sigma(l)}} (-1)^{p(v_k)p(v_l)} \cdot 
 v_{\sigma^{-1}(1)} \otimes \dotsb \otimes v_{\sigma^{-1}(n)}.
\end{align*} 
\end{dfn}

As shown in the next \cref{lem:2P:frSPcl}, this $\frS_n$-action is well-defined. 
The strategy of the proof is similar as the non-SUSY case \cite[Theorem 10.6, Proof, 1st paragraph]{BDHK}.

\begin{lem}\label{lem:2P:frSPcl}
Let $n \in \bbN$. For any $\sigma \in \frS_n$ and $X \in \oPclW{V}(n)$, we have $X^\sigma \in \oPclW{V}(n)$.
\end{lem}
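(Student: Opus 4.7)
The plan is to verify directly that $X^\sigma$ satisfies each of the two cycle relations and three sesquilinearity conditions of \cref{dfn:2P:Pcl}. The uniform strategy I will follow is to unfold the definition
\[
 (X^\sigma)^Q(v_1 \otimes \dotsb \otimes v_n)
 = X^{\sigma Q}_{\sigma(\Lambda_1,\dotsc,\Lambda_n)}\bigl(\sigma(v_1 \otimes \dotsb \otimes v_n)\bigr),
\]
translate each condition imposed on $(Q,v,\Lambda)$ for $X^\sigma$ into the corresponding condition on $(\sigma Q,\sigma v,\sigma\Lambda)$ for $X$, and invoke the hypothesis $X \in \oPclW{V}(n)$. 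Writing $\Lambda'_j$ for the $j$-th formal argument of $X^{\sigma Q}$, the substitution $\Lambda'_j = \Lambda_{\sigma^{-1}(j)}$ yields $\Lambda'_{\sigma(k)} = \Lambda_k$, so that after substitution one has $\pdd_{\lambda_k} = \pdd_{\lambda'_{\sigma(k)}}$ and $\theta'_{\sigma(k)}{}^i = \theta_k^i$.

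For the cycle relations, the required combinatorics is already packaged in the two lemmas of \cref{ss:2P:graph}. Combining $\ul{\sigma Q} = \sigma \ul Q$ from \cref{lem:2P:dgraph} \ref{i:lem:2P:dgraph:2} with the acyclicity invariance of \cref{lem:2P:graph} \ref{i:lem:2P:graph:3} handles the first cycle relation for $X^\sigma$. For the directed-cycle relation, \cref{lem:2P:dgraph} \ref{i:lem:2P:dgraph:1} transfers a directed cycle $(\alpha_1,\dotsc,\alpha_r)$ of $Q$ to a directed cycle of $\sigma Q$ on the same edges, and \cref{lem:2P:dgraph} \ref{i:lem:2P:dgraph:3} yields $\sigma(Q \bs \alpha_q) = (\sigma Q) \bs \alpha_q$; thus the required identity $\sum_q (X^\sigma)^{Q \bs \alpha_q} = 0$ becomes the assumed $\sum_q X^{(\sigma Q) \bs \alpha_q} = 0$ for $X$.

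For the sesquilinearity conditions, the key auxiliary fact to establish is the equivariance
\[
 \sigma \circ \varphi^{(k)} = \varphi^{(\sigma(k))} \circ \sigma
 \quad \text{on } V^{\otimes n}, \quad k \in [n],
\]
valid for any $\varphi \in \End V$ of arbitrary parity. Given this, condition \eqref{eq:2P:Pclses1} reduces via $\pdd_{\lambda_k} - \pdd_{\lambda_l} = \pdd_{\lambda'_{\sigma(k)}} - \pdd_{\lambda'_{\sigma(l)}}$ and \cref{lem:2P:graph} \ref{i:lem:2P:graph:4} (so that $\sigma I^a$ is a connected component of $\ul{\sigma Q}$) to the corresponding condition for $X$ on $\sigma Q$; condition \eqref{eq:2P:Pclses2} follows analogously using the equivariance with $\varphi = T$ after reindexing $\sum_{k \in I^a}$ as $\sum_{k' = \sigma(k) \in \sigma I^a}$; and \eqref{eq:2P:Pclses3} follows from the equivariance with $\varphi = S^i$, the substitution $\theta'_{\sigma(k)}{}^i = \theta_k^i$, and the identity $p(X^\sigma) = p(X)$.

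The hard part will be the Koszul-sign bookkeeping needed to establish the equivariance for odd $\varphi$, since under the super convention both $\varphi^{(k)}$ and the $\frS_n$-action on $V^{\otimes n}$ carry nontrivial signs. The verification is a direct calculation: the discrepancy between the prefactors of $\sigma \circ \varphi^{(k)}$ and $\varphi^{(\sigma(k))} \circ \sigma$ is localised at the pairs $\{k,j\}$ with $j \ne k$ that are inverted by $\sigma$, and using $p(\varphi v_k) = p(v_k) + p(\varphi)$ the four sign contributions cancel exactly. Once this equivariance lemma is in hand, each sesquilinearity condition for $X^\sigma$ on $Q$ reduces mechanically to the corresponding condition for $X$ on $\sigma Q$, concluding the proof.
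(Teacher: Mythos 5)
Your proposal is correct and follows essentially the same route as the paper: unfold the definition of $X^\sigma$, transfer each cycle relation via \cref{lem:2P:graph} and \cref{lem:2P:dgraph}, and reduce each sesquilinearity condition to that of $X$ on $\sigma Q$ using the equivariance $\sigma\circ\varphi^{(k)}=\varphi^{(\sigma(k))}\circ\sigma$ for $\varphi=T,S^i$ together with \cref{lem:2P:graph} \ref{i:lem:2P:graph:2}, \ref{i:lem:2P:graph:4}. Your explicit Koszul-sign verification of the equivariance for odd $\varphi$ is the only place where you supply more detail than the paper, which simply asserts the identity $\sigma((S^i)^{(k)}v)=(S^i)^{(\sigma(k))}(\sigma v)$.
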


\begin{proof}
We may assume $n>1$. We prove that the linear map $X^\sigma$ satisfies the cycle relations and sesquilinearity conditions. Let us fix $Q \in \oQ(n)$. 
\begin{itemize}
\item
$X^\sigma$ satisfies the first cycle relation: 
If $\ul{Q}$ contains a cycle, then $\ul{\sigma Q}=\sigma\ul{Q}$ also contains a cycle by \cref{lem:2P:graph} \ref{i:lem:2P:graph:1}, \ref{i:lem:2P:graph:3}. Thus, for each $v \in V^{\otimes n}$, we have
\begin{align*}
 (X^\sigma)^Q(v) = X^{\sigma Q}_{\sigma(\Lambda_1,\dotsc,\Lambda_n)}(\sigma v) = 0.
\end{align*}

\item 
$X^\sigma$ satisfies the second cycle relation: This can be checked by a direct calculation using \cref{lem:2P:dgraph} \ref{i:lem:2P:dgraph:1}, \ref{i:lem:2P:dgraph:3}. 

\item 
$X^\sigma$ satisfies the sesquilinearity condition \eqref{eq:2P:Pclses1}: 
Let $I^a$ be a connected component of $\ul{Q}$. For $k,l \in I^a$, we have
\begin{align*}
  (\pdd_{\lambda_k}-\pdd_{\lambda_l})(X^\sigma)^Q(v)
&=(\pdd_{\lambda_k}-\pdd_{\lambda_l})X^{\sigma Q}_{\sigma(\Lambda_1,\dotsc,\Lambda_n)}(\sigma v) \\
&=\rst{\bigl(\pdd_{\lambda'_{\sigma(k)}}-\pdd_{\lambda'_{\sigma(l)}}\bigr)
       X^{\sigma Q}_{\Lambda_1',\dotsc,\Lambda_n'}(\sigma v)}
      {(\Lambda_1',\dotsc,\Lambda_n') = \sigma(\Lambda_1,\dotsc,\Lambda_n)} \\
&=0,
\end{align*}
where $\Lambda_1', \ldots, \Lambda_n'$ are $(1|N)_W$-supervariables, and we used \cref{lem:2P:graph} \ref{i:lem:2P:graph:2} in the third equality. 

\item
$X^\sigma$ satisfies the sesquilinearity condition \eqref{eq:2P:Pclses2}: 
A direct calculation yields
\begin{align*}
 \sigma(T^{(k)}v)=T^{(\sigma(k))}(\sigma v) \quad (v \in V^{\otimes n}).
\end{align*}
for $k \in [n]$. This identity and \cref{lem:2P:graph} \ref{i:lem:2P:graph:4} gives the statement.

\item 
$X^\sigma$ satisfies the sesquilinearity condition \eqref{eq:2P:Pclses3}: 
For $k \in [n]$, one can obtain
\begin{align*}
 \sigma((S^i)^{(k)}v)=(S^i)^{(\sigma(k))}(\sigma v) \quad (v\in V^{\otimes n}), 
\end{align*}
which yields the statement. 
\end{itemize}
\end{proof}

By this action, the linear superspace $\oPchW{V}(n)$ is a right $\frS_n$-supermodule. 
Thus, we have an $\frS$-supermodule $\oPclW{V} \ceq \bigl(\oPclW{V}(n)\bigr)_{n\in\bbN}$. 

To describe the composition maps on $\oPclW{V}$, we introduce additional $(1|N)_W$-supervariables $\Xi_k=(x_k, \xi_k^1,\dotsc,\xi_k^N)$ for $k \in \bbZ_{>0}$, and for $m,n \in \bbN$, $\nu \in \bbN^m_n$, $Q \in \oQ(n)$ and $j\in[n]$, we set 
\begin{align*}
 \Xi_Q^\nu(j) \ceq \sum_{i \in \clE_Q^\nu(j)}\Xi_i. 
\end{align*}
Here $\clE_Q^\nu(j)$ is the set of $i \in [m]$ which are externally connected to $j$ defined in \cref{dfn:2P:extconn}. 
Also, recall the isomorphism \eqref{eq:2P:VNL=VL}. 
Now, following \cite[(10.11)--(10.13)]{BDHK}, we introduce the composition maps as:

\begin{dfn}\label{dfn:2P:clWcomp}
Let $m,n \in \bbN$ and $\nu=(n_1,\ldots,n_m) \in \bbN^m_n$. 
Set $N_j \ceq n_1+\dotsb+n_j$ for $j \in [m]$ and $N_0 \ceq 0$. 
\begin{enumerate}
\item 
For $Y_j \in \oPclW{V}(n_j)$, $j \in [m]$, we define a linear map
\begin{align*}
 Y_1 \odot \cdots \odot Y_m\colon V^{\otimes n} \otimes \bbK\oQ(n) \lto 
 \bigotimes_{j=1}^m V_\nabla[\Lambda_k]_{k=N_{j-1}+1}^{N_j} \otimes \bbK\oQ(m)
 \cong \bigotimes_{j=1}^m V[\Lambda_k]_{N_{j-1}+1}^{N_j-1}\otimes \bbK\oQ(m)
\end{align*}
by
\begin{align*}
&(Y_1 \odot \cdots \odot Y_m)(v_1 \otimes \dotsb \otimes v_n \otimes Q) \\
&\ceq \pm (Y_1)_{\Lambda_1,\ldots,\Lambda_{N_1}}^{\Delta_1^\nu(Q)}(w_1) \otimes \cdots \otimes 
          (Y_m)_{\Lambda_{N_{m-1}+1},\ldots,\Lambda_{N_m}}^{\Delta_m^\nu(Q)}(w_m) \otimes 
          \Delta_0^\nu(Q)
\end{align*}
for $v_1 \otimes \dotsb \otimes v_n \in V^{\otimes n}$ and $Q \in \oQ(n)$. Here we used
\begin{align*}
 \pm \ceq \prod_{1 \le i < j \le m} (-1)^{p(w_i)p(Y_j)}, \quad
 w_j \ceq v_{N_{j-1}+1} \otimes \dotsb \otimes v_{N_j} \quad (j \in [m]). 
\end{align*}

\item 
For $X \in \oPclW{V}(m)$ and $Y_j \in \oPclW{V}(n_j)$ with $j \in [m]$, let 
\begin{align*}
 X \circ (Y_1 \odot \cdots \odot Y_m)\colon V^{\otimes n} \otimes \bbK\oQ(n) \lto V_\nabla[\Lambda_k]_{k=1}^n
\end{align*}
be the linear map defined by composing the following linear maps: 
\begin{align*}
 V^{\otimes n}\otimes \bbK\oQ(n) 
&\xrr{Y_1\odot \cdots \odot Y_m}
  \bigotimes _{j=1}^m V[\Lambda_k]_{k=N_{j-1}+1}^{N_j-1} \otimes \bbK\oQ(m) \\ 
&\lto V^{\otimes m}[\Lambda_k]_{k=1}^n \otimes \bbK\oQ(m) 
 \xrr{X_{\Lambda_1',\dotsc,\Lambda_m'}} V_\nabla[\Lambda_k]_{k=1}^n.
\end{align*}
Here the second arrow 
denotes the linear map defined by 
\begin{align*}
 a_1 v_1 \otimes \cdots \otimes a_m v_m \otimes Q \lmto
 \rst{\pm a}{\Xi_j=\Lambda_j'+\nabla^{(j)} \, (j=1,\ldots,m)}(v_1 \otimes \cdots \otimes v_m ) \otimes Q
\end{align*}
for each $a_j \in \bbK[\Lambda_k]_{k=N_{j-1}+1}^{N_j-1}$, $v_j\in V$ and $Q \in \oQ(m)$ with 
\begin{align*}
&\pm \ceq \prod_{1\le i<j\le m} (-1)^{p(v_i)p(a_j)}, \quad 
 \Lambda_j' \ceq \Lambda_{N_{j-1}+1}+\dotsb+\Lambda_{N_j} \quad (j \in [m]), \\
&a \ceq \prod_{j=1}^ma_r(\Lambda_{N_{j-1}+1}+\Xi_Q^\nu(N_{j-1}+1),\ldots,\Lambda_{N_j-1}+\Xi_Q^\nu(N_j-1)),
\end{align*}
and the third arrow $X_{\Lambda_1',\ldots,\Lambda_m'}$ is the linear map defined by
\begin{align*}
 X_{\Lambda_1',\ldots,\Lambda_m'}\colon 
 b (v_1 \otimes \cdots \otimes v_m) \otimes Q \lmto 
 (-1)^{p(X)p(b)}b X_{\Lambda_1',\dotsc,\Lambda_m'}(v_1 \otimes \cdots \otimes v_m \otimes Q)
\end{align*}
for $b \in \bbK[\Lambda_k]_{k=1}^n$, $v_j \in V$ and $Q \in \oQ(n)$. 
\end{enumerate}
\end{dfn}

The definition looks quite complicated, and it is hard to see that composition maps preserve the cycle relations and the sesquilinearity conditions. To illustrate the sesquilinearity conditions, in particular the ``fermionic part'' \eqref{eq:2P:Pclses3}, let us give an example of composition.

\begin{eg}\label{eg:2P:comp}
Let us consider the case $m=3$, $n=5$, $\nu=(n_1,n_2,n_3)=(2,1,2)$ and $X \in \oPclW{V}(3)$, $Y_r \in \oPclW{V}(n_r)$ with $r \in [3]$. For the quiver
\begin{align*}
\begin{tikzpicture}
\draw (-0.75, 0) node {$Q=$}; 
\draw (0, 0) node {$\bullet$}; \draw (1, 0) node {$\bullet$}; 
\draw (2, 0) node {$\bullet$}; \draw (3, 0) node {$\bullet$}; 
\draw (4, 0) node {$\bullet$}; 
\draw[->] (0.1, 0) to (0.9,0); 
\draw[->] (1.1, 0) to (1.9, 0);
\draw[->] (4, 0.1) to[out=90, in=90] (0, 0.1);
\end{tikzpicture}
\end{align*}
let us calculate
\begin{align*}
 (X \circ (Y_1 \odot Y_2 \odot Y_3))(v_1 \otimes \cdots \otimes v_5 \otimes Q) \quad 
 (v_1,\ldots,v_5\in V), 
\end{align*}
and check the sesquilinearity conditions \eqref{eq:2P:Pclses2} and \eqref{eq:2P:Pclses3} for $X \circ (Y_1 \odot Y_2 \odot Y_3)$. 
First we compute the cocomposition $\Delta^\nu(Q)$: 
\begin{align*}
\begin{tikzpicture}
\draw (-1, 0) node {$\Delta_0^\nu(Q)=$}; 
\draw (0, 0) node {$\bullet$}; \draw (1, 0) node {$\bullet$}; 
\draw (2, 0) node {$\bullet$}; 
\draw[->] (0.1, 0) to (0.9, 0);
\draw[->] (2, 0.1) to[out=90, in=90] (0, 0.1);
\draw (4, 0) node {$\Delta_1^\nu(Q)=$};
\draw (5, 0) node {$\bullet$}; \draw (6, 0) node {$\bullet$};
\draw[->] (5.1, 0) to (5.9, 0); 
\draw (8, 0) node {$\Delta_2^\nu(Q)=$};
\draw (9, 0) node {$\bullet$}; 
\draw (11, 0) node {$\Delta_3^\nu(Q)=$};
\draw (12, 0) node {$\bullet$};
\draw (13, 0) node {$\bullet$};
\end{tikzpicture}
\end{align*}
Thus 
\begin{align*}
&(X\circ (Y_1\odot Y_2\odot Y_3))(v_1\otimes \cdots \otimes v_5\otimes Q)\\
&=X^{\Delta_0^\nu(Q)}_{\Lambda_1', \Lambda_2', \Lambda_3'}\Bigl(
	\rst{(Y_1)_{\Gamma_1}^{\Delta_1^\nu(Q)}(v_1 \otimes v_2) \otimes 
	(Y_2)^{\Delta_2^\nu(Q)}(v_3)\otimes
	(Y_3)_{\Gamma_4}^{\Delta_3^\nu(Q)}}{\Xi_r=\Lambda_r'+\nabla^{(r)}\, (r=1, 2, 3)}
\Bigr)
\end{align*}
with
\begin{align*}
\Lambda_1'\ceq \Lambda_1+\Lambda_2, \quad 
\Lambda_2'\ceq \Lambda_3 \quad 
\Lambda_3'\ceq \Lambda_4+\Lambda_5, \quad
\Gamma_1\ceq \Lambda_1+\Xi_Q^\nu(1), \quad 
\Gamma_4\ceq \Lambda_4+\Xi_Q^\nu(4). 
\end{align*}
Here we suppress $\Lambda_2$ in $(Y_1)_{\Lambda_1, \Lambda_2}^{\Delta_1^\nu(Q)}(v_1 \otimes v_2)$ when we use the isomorphism \eqref{eq:2P:VNL=VL} as well as for $Y_2$ and $Y_3$. 
By $\clE_Q^\nu(1)=\{3\}$ and $\clE_Q^\nu(4)=\emptyset$, we have $\Gamma_1=\Lambda_1+\Xi_3$ and $\Gamma_4=\Lambda_4$, so we get
\begin{align*}
&(X\circ (Y_1 \odot Y_2 \odot Y_3))(v_1 \otimes \cdots \otimes v_5 \otimes Q)\\
&=X^{\Delta_0^\nu(Q)}_{\Lambda_1', \Lambda_2', \Lambda_3'}\Bigl(
 (Y_1)_{\Lambda_1+\Lambda_3'+\nabla^{(3)}}^{\Delta_1^\nu(Q)}(v_1 \otimes v_2) \otimes 
 (Y_2)^{\Delta_2^\nu(Q)}(v_3) \otimes 
 (Y_3)_{\Lambda_4}^{\Delta_3^\nu(Q)}(v_4 \otimes v_5) \Bigr)\\
&=X_{\Lambda_1+\Lambda_2, \Lambda_3, \Lambda_4+\Lambda_5}^{\Delta_0^\nu(Q)}\Bigl(
 (Y_1)_{-\Lambda_2-\Lambda_3-\nabla^{(1)}-\nabla^{(2)}}(v_1 \otimes v_2) \otimes 
 (Y_2)^{\Delta_2^\nu(Q)}(v_3) \otimes 
 (Y_3)_{\Lambda_4}^{\Delta_3^\nu(Q)}(v_4 \otimes v_5) \Bigr), 
\end{align*}
where we used the sesquilinearity condition \eqref{eq:2P:Pclses2} of $X$ in the second equality. 
Now it is easy to see that $X\circ (Y_1\odot Y_2\odot Y_3)$ satisfies the sesquilinearity conditions \eqref{eq:2P:Pclses2} and \eqref{eq:2P:Pclses3}. 
\end{eg}

As shown in the next \cref{lem:2P:clWcomp}, the $\frS$-module $\oPclW{V}$ is closed under the composition maps in \cref{dfn:2P:clWcomp}. The strategy of the proof is the same as the non-SUSY case \cite[Lemma 10.5]{BDHK}, but let us write it down.

\begin{lem}\label{lem:2P:clWcomp}
Let $m,n \in \bbN$ and $\nu \in \bbN^m_n$. 
For any $X \in \oPclW{V}(m)$ and $Y_1 \otimes \cdots \otimes Y_m \in \oPclW{V}(\nu)$, we have
\begin{align*}
 X \circ (Y_1 \odot \cdots \odot Y_m) \in \oPclW{V}(n). 
\end{align*}
\end{lem}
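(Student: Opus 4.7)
The plan is to verify the four conditions defining $\oPclW{V}(n)$---two cycle relations and two sesquilinearity conditions---for $Z \ceq X \circ (Y_1 \odot \dotsb \odot Y_m)$ evaluated at a fixed $Q \in \oQ(n)$, by reducing each condition to the corresponding condition on $X$ evaluated at $\Delta_0^\nu(Q)$ or on $Y_j$ evaluated at $\Delta_j^\nu(Q)$, via the formulas of \cref{dfn:2P:clWcomp}. Throughout, I intend to mimic the architecture of the non-SUSY argument \cite[Lemma 10.5]{BDHK} and use our sesquilinearity conditions on the odd part to handle the SUSY-specific contributions.

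For the first cycle relation, I would argue that any cycle in $\ul{Q}$ either lies entirely within one block $\{N_{j-1}+1,\dotsc,N_j\}$---in which case it descends to a cycle in $\ul{\Delta_j^\nu(Q)}$ and forces $Y_j^{\Delta_j^\nu(Q)} = 0$---or it contains at least one external edge, in which case contracting each block produces a nontrivial closed walk in $\ul{\Delta_0^\nu(Q)}$, and a nontrivial closed walk in a graph necessarily contains a cycle, so that $X^{\Delta_0^\nu(Q)} = 0$. For the second cycle relation, I would split by whether all edges of a given directed cycle $(\alpha_1,\dotsc,\alpha_r)$ of $Q$ are internal to a single block $B_{j_0}$. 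In that case $\sum_q Z^{Q \bs \alpha_q}$ factors through $\sum_q Y_{j_0}^{\Delta_{j_0}^\nu(Q) \bs \alpha_q} = 0$. Otherwise, the subsequence of external edges in cyclic order descends to a directed cycle in $\Delta_0^\nu(Q)$, since consecutive external edges in the original cycle are linked by internal paths and hence project to edges sharing a vertex in $\Delta_0^\nu(Q)$. The terms with internal $\alpha_q$ then vanish because $\Delta_0^\nu(Q \bs \alpha_q) = \Delta_0^\nu(Q)$ still carries this induced cycle and so $X^{\Delta_0^\nu(Q)} = 0$; the remaining sum over external $\alpha_q$ vanishes by the second cycle relation for $X$ applied to the induced directed cycle in $\Delta_0^\nu(Q)$.

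For the sesquilinearity conditions, I would compute derivatives or translates of $Z^Q$ directly from \cref{dfn:2P:clWcomp}, using that each connected component of $\ul{Q}$ is obtained by gluing connected components of the various $\ul{\Delta_j^\nu(Q)}$ along a connected component of $\ul{\Delta_0^\nu(Q)}$ via the external-connection map $\clE_Q^\nu$. Condition \eqref{eq:2P:Pclses1} then reduces to \eqref{eq:2P:Pclses1} for each $Y_j$ and for $X$, since differences $\pdd_{\lambda_k} - \pdd_{\lambda_l}$ inside one connected component cancel block-by-block. Condition \eqref{eq:2P:Pclses2} follows by splitting $\sum_{k \in I^a} T^{(k)}$ into its intra-block contributions, handled by \eqref{eq:2P:Pclses2} for each $Y_j$, and an inter-block contribution, which is routed through the substitution $\Xi_j = \Lambda_j' + \nabla^{(j)}$ and absorbed by \eqref{eq:2P:Pclses2} for $X$. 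For the new fermionic condition \eqref{eq:2P:Pclses3}, I would apply \eqref{eq:2P:Pclses3} for the $Y_j$ corresponding to the block containing $k$ to extract a factor $\theta_k^i$, then verify that the substitution $\Xi_j = \Lambda_j' + \nabla^{(j)}$ transports the resulting contribution correctly; the prototype is the computation in \cref{eg:2P:comp}.

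The principal obstacle I anticipate is controlling the signs and the odd substitution in the fermionic sesquilinearity. The bosonic combinatorics follows the non-SUSY template of \cite[Lemma 10.5]{BDHK}, but the odd substitution $\Xi_j = \Lambda_j' + \nabla^{(j)}$ mixes the odd generators $S^i$ with the external $\theta$-shifts coming from $\Xi_Q^\nu$, and the Koszul sign rule is applied in three separate stages of the composition map. Checking that all odd contributions collapse into the single coefficient $-(-1)^{p(Z)}\theta_k^i$ on the right-hand side of \eqref{eq:2P:Pclses3}, with no spurious parity factors, will be the most delicate bookkeeping in the argument.
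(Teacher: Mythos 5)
Your proposal is correct and follows essentially the same route as the paper's proof: the same case split for both cycle relations (cycle internal to one block versus projecting to a directed cycle of $\Delta_0^\nu(Q)$, with the internal-edge terms killed by the first cycle relation of $X$ and the external-edge terms by its second), and the same block-by-block reduction of the sesquilinearity conditions through the substitution $\Xi_j=\Lambda_j'+\nabla^{(j)}$, including treating \eqref{eq:2P:Pclses3} in parallel with \eqref{eq:2P:Pclses2}. The sign bookkeeping you flag as the main risk is exactly the part the paper also leaves implicit.
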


\begin{proof}
Let $\nu=(n_1,\dotsc,n_m) \in \bbN^m_n$, and fix $v=v_1 \otimes \cdots \otimes v_n \in V^{\otimes n}$. 
For $j \in [m]$, we use the notations similar as in \cref{dfn:2P:clWcomp}:
\begin{align*}
 N_j \ceq n_1+\dotsb+n_j, \quad 
 w_j \ceq v_{N_{j-1}+1} \otimes \cdots \otimes v_{N_j}, \quad 
 \Lambda_j' \ceq \Lambda_{N_{j-1}+1}+\dotsb+\Lambda_{N_j}.
\end{align*}
For each $j \in [m]$, $Q \in \oQ(n_j)$ and $w \in V^{\otimes n_j}$, let us denote
\begin{align*}
&a_j^Q(w)(\Lambda_{N_{j-1}+1},\ldots,\Lambda_{N_j-1}) \ceq 
 (Y_j)^Q_{\Lambda_{N_{j-1}+1},\ldots,\Lambda_{N_j-1},-\Lambda_{N_{j-1}+1}-\cdots-\Lambda_{N_j-1}-\nabla}(w),\\
&\wt{a}_j^Q(w) \ceq 
 \rst{a_j^Q(w)\bigl(\Lambda_{N_{j-1}+1}+\Xi^\nu_Q(N_{j-1}+1), \ldots, \Lambda_{N_j-1}+\Xi^\nu_Q(N_j-1)\bigr)}
     {\Xi_k=\Lambda_k'+\nabla^{(k)} \, (k=1,\ldots,m)}.
\end{align*}
Then, for $Q \in \oQ(n)$ we have
\begin{align*}
 (X \circ (Y_1 \odot \cdots \odot Y_m))^Q(v)
=\pm X^{\Delta^\nu_0(Q)}_{\Lambda_1',\ldots,\Lambda_m'}
 \Bigl(\wt{a}_1^{\Delta^\nu_1(Q)}(w_1) \otimes \cdots \otimes \wt{a}_m^{\Delta^\nu_m(Q)}(w_m)\Bigr)
\end{align*}
with $\pm\ceq \prod_{1\le i<j \le m}(-1)^{p(w_i)p(Y_j)}$. 
Now we check the cycle relations and sesquilinearity conditions in \cref{dfn:2P:Pcl}.
\begin{itemize}
\item
$X \circ (Y_1 \odot \cdots \odot Y_m)$ satisfies the first cycle relation: 
If $Q \in \oQ(n) \bs \oQ_\ac(n)$, then it is clear that $\Delta^\nu_j(Q) \in \oQ(n_j) \bs \oQ_\ac(n_j)$ for some $j \in [m]$ or $\Delta^\nu_0(Q) \in \oQ(m) \bs \oQ_\ac(m)$. Thus, for any $Q \in \oQ(n) \bs \oQ_\ac(n)$, we have $(X \circ (Y_1 \odot \cdots \odot Y_m))^Q=0$ since $X$ and $Y_1,\dotsc,Y_m$ satisfy the first cycle relation. 

\item
$X\circ (Y_1 \odot \cdots \odot Y_m)$ satisfies the second cycle relation:
Let $Q \in \oQ(n)$ and $(\alpha_1,\dotsc,\alpha_k)$ be a cycle of $Q$. 
Let us denote the vertex set of a quiver $\Gamma$ by $E(\Gamma) \ceq \Gamma_1$.
If $\{\alpha_1,\dotsc,\alpha_k\} \subset E(\Delta_j^\nu(Q))$ for some $j\in [m]$, then $(\alpha_1,\ldots,\alpha_k)$ is a cycle of $\Delta_j^\nu(Q)$. Thus
\begin{align*}
&\sum_{q=1}^k (X \circ (Y_1 \odot \cdots \odot Y_m))^{Q \bs \alpha_q}(v) \\
&=\pm X^{\Delta^\nu_0(Q)}_{\Lambda_1',\ldots,\Lambda_m'} \Bigl(
   \wt{a}_1^{\Delta^\nu_1(Q)}(w_1) \otimes \cdots \otimes 
        \Bigl(\sum_{q=1}^k \wt{a}_j^{\Delta^\nu_j(Q) \bs \alpha_q}(w_j)\Bigr) \otimes 
        \cdots \otimes \wt{a}_m^{\Delta^\nu_m(Q)}(w_m)\Bigr) \\
&=0
\end{align*}
since $Y_j$ satisfies the second cycle relation.
If $\{\alpha_1,\dotsc,\alpha_k\} \not\subset E(\Delta_j^\nu(Q))$ for any $j \in [m]$, then there exists $\beta_1,\ldots,\beta_l \in \{\alpha_1,\ldots,\alpha_k\}$ such that $(\beta_1,\ldots,\beta_l)$ is a directed cycle of $\Delta^\nu_0(Q)$ and 
\begin{align*}
 A \ceq \{\alpha_1,\ldots,\alpha_k\}\bs\{\beta_1, \ldots, \beta_l\} \subset E(Q) \bs E(\Delta^\nu_0(Q)).
\end{align*}
Thus 
\begin{align*}
  \sum_{q=1}^k (X \circ (Y_1 \odot \cdots \odot Y_m))^{Q \bs \alpha_q}(v)
&=\sum_{\alpha\in A} X^{\Delta^\nu_0(Q)}_{\Lambda_1',\ldots,\Lambda_m'}\Bigl(
   \wt{a}_1^{\Delta^\nu_1(Q \bs \alpha)}(w_1) \otimes \cdots \otimes \wt{a}_m^{\Delta^\nu_m(Q\bs\alpha)}(w_m) \Bigr) \\
&\quad 
 +\sum_{q=1}^l X^{\Delta^\nu_0(Q) \bs \beta_q}_{\Lambda_1',\ldots,\Lambda_m'}\Bigl(
   \wt{a}_1^{\Delta^\nu_1(Q)}(w_1) \otimes \cdots \otimes \wt{a}_m^{\Delta^\nu_m(Q)}(w_m)\Bigr) \\
&=0
\end{align*}
since $X$ satisfies the first and second cycle relations. 

\item
$X\circ (Y_1 \odot \cdots \odot Y_m)$ satisfies the sesquilinearity condition \eqref{eq:2P:Pclses1}: For $k, l\in [n]$ such that $k$, $l$ are connected in $Q$, we need to show that
\begin{align}\label{eq:2P:compses1}
 (\pdd_{\lambda_k}-\pdd_{\lambda_l})(X\circ (Y_1 \odot \cdots \odot Y_m))^Q(v)=0. 
\end{align}
If $k,l \in \{N_{j-1}+1,\ldots,N_j\}$ for some $j \in [m]$, then $\Lambda_k$, $\Lambda_l$ appear in $\Lambda'_j$ in the form of $\Lambda_k+\Lambda_l$, and do not appear in $\Lambda_q'$ for $q \in [m] \bs \{j\}$. 
Also, the vertices $k-N_{j-1}$ and $l-N_{j-1}$ are connected in $\Delta^\nu_j(Q)$. Thus, the sesquilinearity condition \eqref{eq:2P:Pclses1} of $Y_j$ implies \eqref{eq:2P:compses1}. 
If $k\in\{N_{i-1},\ldots,N_i\}$ and $l \in \{N_{j-1},\ldots,N_j\}$ for different $i,j \in [m]$, then $i,j$ are connected in $\Delta^\nu_0(Q)$. 
Also, by the sesquilinearity condition of $Y_1,\ldots,Y_m$ and the definition of $\Xi^\nu_Q$, we have that $\Lambda_k$, $\Lambda_l$ appearing in $\wt{a}_q^{\Delta_q^\nu(Q)}(w_q)$ ($q\in[m]$) are in the form of $\Lambda_k+\Lambda_l$. Thus, the identity \eqref{eq:2P:compses1} holds. 

\item
$X\circ (Y_1 \odot \cdots \odot Y_m)$ satisfies the sesquilinearity condition \eqref{eq:2P:Pclses2}: Let $I^a$ be a connected component of $\ul{Q}$, and set 
\begin{align*}
 I^a_j \ceq \ul{\Delta_j^\nu(Q)}\cap \{k-N_{j-1}\mid k\in I_j\}
\end{align*}
for $j \in [m]$. Clearly, each $I^a_j$ is a connected component of $\Delta^\nu_j(Q)$. If $\ul{\Delta^\nu_0}(Q)$ has a cycle, then by the first cycle relation of $X$, the sesquilinearity condition \eqref{eq:2P:Pclses2} for $X\circ (Y_1 \odot \cdots \odot Y_m)$ is trivial. 
Thus, we can assume that $\ul{\Delta^\nu_0(Q)}$ is acyclic. Then, we have $\#I^a=\sum_{j=1}^m\#I^a_j$, which implies
\begin{align*}
&\sum_{k \in I^a} (X\circ(Y_1\odot\cdots \odot Y_m))^Q(T^{(k)}v)\\
&=\sum_{j=1}^m \sum_{k\in I_j^a}X^{\Delta^\nu_0(Q)}_{\Lambda_1', \ldots, \Lambda_m'}\Bigl(
 \wt{a}_1^{\Delta^\nu_1(Q)}(w_1) \otimes \cdots \otimes \wt{a}_j^{\Delta^\nu_j(Q)}(T^{(k)}w_j) 
 \otimes \cdots \otimes \wt{a}_m^{\Delta^\nu_m(Q)}(w_m)\Bigr). 
\end{align*}
By the sesquilinearity condition \eqref{eq:2P:Pclses2} of $Y_j$, we have
\begin{align*}
 \sum_{k\in I^a_j} \wt{a}_j^{\Delta^\nu_j(Q)}(T^{(k)}w_j) = 
-\sum_{k\in I_j^a} \rst{\bigl(\lambda_{N_{j-1}+k}+x^\nu_Q(N_{j-1}+k)\bigr)}
                       {\Xi_q=\Lambda_q'} \wt{a}_j^{\Delta_j^\nu(Q)}(w_j). 
\end{align*}
Hence, one can get by the sesquilinearity condition \eqref{eq:2P:Pclses2} of $X$, 
\begin{align*}
& \sum_{k \in I^a} (X \circ (Y_1 \odot \cdots \odot Y_m))^Q(T^{(k)}v)
=-\sum_{k \in I^a} \lambda_k(X \circ (Y_1 \odot \cdots \odot Y_m))^Q(v). 
\end{align*}

\item 
$X\circ (Y_1 \odot \cdots \odot Y_m)$ satisfies the sesquilinearity condition \eqref{eq:2P:Pclses3}: 
This can be checked similarly to the sesquilinearity condition \eqref{eq:2P:Pclses2}. 
\end{itemize}
\end{proof}

\begin{prp}
For an $\clH_W$-supermodule $V=(V,\nabla)$, the $\frS$-supermodule 
\begin{align*}
 \oPclW{V} = \bigl(\oPclW{V}(n)\bigr)_{n \in \bbN}
\end{align*}
is a superoperad by letting $X \otimes Y_1 \otimes \cdots \otimes Y_m \mto X \circ (Y_1 \odot \cdots \odot Y_m)$ be the composition map and $\id_V \in \oPclW{V}(1)$ be the unit. 
\end{prp}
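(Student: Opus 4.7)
The plan is to verify the three data (unit, composition, $\frS$-action) satisfy the axioms of a superoperad, closely following the non-SUSY argument of \cite[Theorem~10.6]{BDHK} but paying attention to the fermionic parity signs that arise from \eqref{eq:2P:Pclses3}.

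First I would check that $\id_V \in \oPclW{V}(1)$. Here $\oQ(1)=\{\ast\}$ consists of the one-vertex quiver with no edges, so the cycle relations are vacuous, and both sesquilinearity conditions \eqref{eq:2P:Pclses1}--\eqref{eq:2P:Pclses3} reduce to the defining relations of the $\clH_W$-action on $V$ under the isomorphism \eqref{eq:2P:VNL=VL}, which $\id_V$ plainly satisfies. Closure of the composition maps under the defining conditions of $\oPclW{V}$ is already \cref{lem:2P:clWcomp}, and equivariance of the composition under the right $\frS$-action (already shown to exist by \cref{lem:2P:frSPcl}) follows by the same sign and relabeling bookkeeping used in \cite[\S10.3]{BDHK}, together with \cref{lem:2P:graph} and \cref{lem:2P:dgraph} describing how cocomposition interacts with the $\frS_n$-action on quivers.

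Next I would verify the unit axioms. The left unit relation $\id_V \circ(X) = X$ for $X \in \oPclW{V}(n)$ is immediate because $\Delta^{(n)}_0(Q)$ is the unique $1$-quiver and $\Xi^{(n)}_Q(j)=\Xi_1$ for every $j$, so the prescribed substitution $\Xi_1 \mto \Lambda_1'+\nabla^{(1)}$ reduces back to the sesquilinearity substitution of \eqref{eq:2P:VNL=VL}. The right unit relation $X\circ(\id_V \odot\cdots\odot\id_V)=X$ is similar: for $\nu=(1,\ldots,1)$ we have $\Delta_j^\nu(Q)=\ast$ for all $j\in[m]$ and $\Delta_0^\nu(Q)=Q$, each $a_j^{\ast}(v_j)(\emptyset)=v_j$, the shifts $\Xi^\nu_Q(j)$ never get attached to a nonconstant coefficient, and the signs in \cref{dfn:2P:clWcomp} all collapse.

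The main obstacle will be associativity of the composition. For $X\in\oPclW{V}(m)$, $Y_j\in\oPclW{V}(n_j)$, $Z_{j,k}\in\oPclW{V}(p_{j,k})$ with $\sum_j n_j=n$, $\sum_k p_{j,k}=p_j$ and $\sum p_{j,k}=p$, the plan is to evaluate both iterated compositions on a test element $v_1\otimes\cdots\otimes v_p\otimes Q$ with $Q\in\oQ(p)$ and compare. The combinatorial identity underlying associativity is the coassociativity of the cocomposition on the set-theoretic cooperad $\oQ$: for any partition $(p_{j,k})$ refining $(n_j)$ the two obvious iterated maps $\oQ(p) \to \oQ(m)\times\prod_j\oQ(n_j)\times\prod_{j,k}\oQ(p_{j,k})$ agree, and correspondingly the external-connection shifts satisfy
\[
 \Xi^{\wt{\nu}}_Q(j,k,l) = \Xi^{\nu}_Q(N_{j-1}+k) + \Xi^{\mu_j}_{\Delta_j^\nu(Q)}(P^j_{k-1}+l)\bigr|_{\Xi_k \mto \Lambda'_{j,k}+\nabla^{(j,k)}},
\]
at the level of the iterated substitutions, where $\mu_j=(p_{j,1},\dotsc,p_{j,n_j})$ and $\wt{\nu}$ is the refined partition. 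Once this is established, associativity reduces to a sign-tracking exercise: the parity contributions $\prod (-1)^{p(w_i)p(Y_j)}$ and $\prod(-1)^{p(v_i)p(a_j)}$ from the two bracketings combine via Koszul re-ordering to the same total sign, and the fermionic sesquilinearity \eqref{eq:2P:Pclses3} is compatible with nested substitutions because $(S^i)^{(k)}$ commutes with the polynomial actions modulo the already-accounted signs. The hardest bookkeeping is in the fermionic shifts $\theta_k^i$: one must check that inserting $\nabla^{(j)}=(T^{(j)},S^{1,(j)},\dotsc,S^{N,(j)})$ through a nested $\wt{a}$ reproduces the same signs as in the other bracketing. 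This is where the choice of the sign convention in \eqref{eq:2P:Pclses3} is essential, and the verification parallels line-by-line \cite[Lemma~10.5]{BDHK} with the new fermionic substitutions of \cref{eg:2P:comp} as the guiding template.
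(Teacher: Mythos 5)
Your proposal follows the same route as the paper, which simply defers to \cite[Theorem 10.6]{BDHK} ``with minor modification'': the substance is the coassociativity of the quiver cocomposition $\Delta^\nu$, the compatibility of the external-connection shifts $\Xi^\nu_Q$ under iterated composition, and the Koszul sign bookkeeping for the fermionic sesquilinearity \eqref{eq:2P:Pclses3}, all of which you identify correctly. One small slip: in the left-unit verification with $m=1$ and $\nu=(n)$, the quiver $\Delta_0^{(n)}(Q)$ has no edges, so by \cref{dfn:2P:extconn} one gets $\clE^{(n)}_Q(j)=\emptyset$ and hence $\Xi^{(n)}_Q(j)=0$, not $\Xi_1$; this only makes the left unit axiom more immediate, so your conclusion stands.
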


\begin{proof}
The proof of the non-SUSY case \cite[Theorem 10.6]{BDHK} works with minor modification. We omit the detail.
\end{proof}

\begin{dfn}
We call the superoperad $\oPclW{V}$ \emph{the $N_W=N$ SUSY coisson operad of $V=(V,\nabla)$}.
\end{dfn}

Recall the space of Maurer-Cartan solutions from \eqref{eq:1P:MC}. The following is the main theorem of this \cref{ss:2P:NW}, which is a natural $N_W=N$ SUSY analogue of \cite[Theorem 10.7]{BDHK}. 

\begin{thm}\label{thm:2P:NWPVA}
Let $V=(V, \nabla)$ be an $\clH_W$-supermodule. 
\begin{enumerate}
\item 
For an odd Maurer-Cartan solution $X \in \MC\big(L\bigl(\oPclW{\Pi^{N+1}V}\bigr)\bigr)_{\ol{1}}$, define linear maps $\{\cdot_\Lambda\cdot\}_X\colon V \otimes V \to V[\Lambda]$ and $\mu_X\colon V \otimes V \to V$ by
\begin{align}
 \label{eq:2P:PVAlbra}
 \{a_\Lambda b\}_X &\ceq (-1)^{p(a)(\ol{N}+\ol{1})} X_{\Lambda, -\Lambda-\nabla}^{\tw}(a \otimes b) \\
 \label{eq:2P:PVApro}
 \mu_X(a\otimes b) &\ceq (-1)^{p(a)(\ol{N}+\ol{1})}
 \Res_\Lambda\bigl(\lambda^{-1}X_{\Lambda, -\Lambda-\nabla}^{\twab}(a \otimes b)\bigr)
\end{align}
for each $a, b\in V$. Then $(V, \nabla, \{\cdot_\Lambda\cdot\}_X, \mu_X)$ is a non-unital $N_W=N$ SUSY Poisson vertex algebra. 

\item 
The map $X \mto (\{\cdot_\Lambda\cdot\}_X, \mu_X)$ gives a bijection
\begin{align*}
 \MC\bigl(L\bigl(\oPclW{\Pi^{N+1}V}\bigr)\bigr)_{\ol{1}} \lsto
 \{\textup{non-unital $N_W=N$ SUSY PVA structure on $(V, \nabla)$}\}. 
\end{align*}
\end{enumerate}
\end{thm}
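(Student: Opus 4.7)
The plan is to proceed in close parallel with the proof of \cref{thm:1P:opNWVA}, viewing the passage from chiral to classical as a simplification rather than a new phenomenon, and to exploit the bijection \eqref{eq:1P:MC} so that the task reduces to unpacking a Maurer-Cartan element $X \in L^1\bigl(\oPclW{\Pi^{N+1}V}\bigr)_{\od}$ into concrete data on $V$. Such an $X$ lives in $\oPclW{\Pi^{N+1}V}(2)$, so it is determined by its values $X^Q$ as $Q$ ranges over $\oQ(2)$. By the first cycle relation any $Q$ with two or more edges on the vertex set $[2]$ is killed (its underlying graph contains a cycle), so the only surviving components are $X^{\tw}$, $X^{\twab}$, $X^{\twba}$. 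Furthermore $\frS_2$-invariance under the transposition $(12)$ forces $X^{\twba}$ to be the signed flip of $X^{\twab}$. Hence $X$ is controlled by two independent pieces: the ``disconnected'' piece $X^{\tw}$, from which I define $\{\cdot_\Lambda\cdot\}_X$ by \eqref{eq:2P:PVAlbra}, and the ``directed edge'' piece $X^{\twab}$, from which I define $\mu_X$ by \eqref{eq:2P:PVApro}.

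Next I would translate the defining conditions of $\oPclW{\Pi^{N+1}V}$ into axioms on $(\{\cdot_\Lambda\cdot\}_X,\mu_X)$. The sesquilinearity conditions \eqref{eq:2P:Pclses1}--\eqref{eq:2P:Pclses3} applied to the connected quiver $\twab$ become exactly the compatibility of $T,S^i$ with $\mu_X$ demanded by \cref{dfn:1P:PVA}(i), since on a connected component \eqref{eq:2P:Pclses2} identifies $T^{(1)}+T^{(2)}$ with $-\lambda_1-\lambda_2$ and taking the residue in $\lambda_1^{-1}$ at $\Lambda_2=-\Lambda_1-\nabla$ yields the derivation property. The same conditions applied to the disconnected quiver $\tw$ give the sesquilinearity part of \cref{dfn:1P:WLCA}(i) for $\{\cdot_\Lambda\cdot\}_X$. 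Skew-symmetry \eqref{eq:1P:LCA:ssym} and commutativity of $\mu_X$ both come from $\frS_2$-invariance of $X$, the $(-1)^{\ol{N}}$ discrepancy being absorbed by the parity-change $\Pi^{N+1}$, as in \cite[Theorem 2.3.15]{NY}.

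The crux is to decompose the Maurer-Cartan equation $X\square X=0$, an equation in $\oPclW{\Pi^{N+1}V}(3)$, into its components along quivers $Q\in\oQ(3)$. Using the composition formula \cref{dfn:2P:clWcomp} and the fact that on three vertices $\Delta^{\nu}$ sends a $2$-quiver into either the internal or the external quiver, four isomorphism classes of acyclic $Q$ contribute: $\thr$ (three disconnected vertices), $\thbc$ (one directed edge, one isolated vertex), the linear chain $\thabc$, and the ``claw'' $\bullet \leftarrow \bullet \rightarrow \bullet$. The purely disconnected component yields the SUSY Jacobi identity \eqref{eq:1P:LCA:Jac} for $\{\cdot_\Lambda\cdot\}_X$; the mixed components featuring one directed edge become the Leibniz rule \eqref{eq:1P:Leib}; and the components with two directed edges, after using $\frS_3$-invariance to symmetrize, collapse to associativity $(ab)c=a(bc)$. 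The bookkeeping is strictly lighter than for $\oPchW{}$ because there are no poles $z_{k,l}^{-1}$ and hence no quasi-commutativity or quasi-associativity corrections: the operad $\oPclW{}$ is designed precisely so that the components separate cleanly.

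The converse direction is constructive: starting from a non-unital $N_W=N$ SUSY PVA $(V,\nabla,\{\cdot_\Lambda\cdot\},\mu)$, define $X^{\tw}$ and $X^{\twab}$ by inverting \eqref{eq:2P:PVAlbra} and \eqref{eq:2P:PVApro}, extend by $\frS_2$-equivariance to $X^{\twba}$, and set $X^Q=0$ for all other $Q\in\oQ(2)$. The cycle and sesquilinearity conditions hold by construction (the Leibniz rule for the $\Lambda$-bracket on the product guarantees compatibility with extension along connected components), and the already-established correspondence of components shows that the PVA axioms force $X\square X=0$. The main obstacle I anticipate is matching signs arising from three independent sources simultaneously: the parity shift $\Pi^{N+1}$ in the Maurer-Cartan reformulation, the external-connection substitution $\Xi_j=\Lambda_j'+\nabla^{(j)}$ inside composition, and the Koszul signs from the $\frS_n$-action on $V^{\otimes n}$; as in \cite{NY}, this is best handled by carrying out the $N=0$ case first (where it reduces to \cite[Theorem 10.7]{BDHK}) and then tracking the insertions of $\theta^{[N]}$ in the fermionic direction via the new condition \eqref{eq:2P:Pclses3}.
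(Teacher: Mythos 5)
Your proposal follows essentially the same route as the paper's proof: identify $X$ with the pair $(X^{\tw},X^{\twab})$ via the cycle relations and $\frS_2$-invariance, match the sesquilinearity conditions \eqref{eq:2P:Pclses1}--\eqref{eq:2P:Pclses3} to the derivation axioms and to skew-symmetry/commutativity, and decompose $X\square X=0$ over $3$-quivers into the Jacobi identity, the Leibniz rule and associativity. Two steps, however, need repair. The claw $\bullet\leftarrow\bullet\to\bullet$ (and the dual sink) is \emph{not} in the $\frS_3$-orbit of the chain $\thabc$: the multiset of out-degrees is a permutation invariant, and it is $(0,2,0)$ for the claw versus $(1,1,0)$ for the chain. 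So ``using $\frS_3$-invariance to symmetrize'' does not dispose of these components. What does is the \emph{second cycle relation} applied to $X\square X\in\oPclW{\Pi^{N+1}V}(3)$: for instance the quiver with edges $1\to 2$, $2\to 3$, $3\to 2$ contains the directed $2$-cycle $(2\to 3,\,3\to 2)$, and the relation $\sum_q Y^{Q\bs\alpha_q}=0$ identifies the sink component $1\to 2\leftarrow 3$ with minus the chain component $\thabc$; equivalently, the disjoint unions of lines form a basis of $\bbK\oQ(3)/R(3)$ (\cref{fct:AP:clL}). The paper's reduction to the three quivers $\thr$, $\thbc$, $\thabc$ rests on this, not on $\frS_3$-invariance alone.

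Second, in the converse direction, ``inverting'' \eqref{eq:2P:PVApro} is not immediate: $\mu_X$ records only $\Res_\Lambda(\lambda^{-1}\,\cdot\,)$, i.e.\ the coefficient of $\lambda^0\theta^{[N]}$ in $X^{\twab}_{\Lambda,-\Lambda-\nabla}(a\otimes b)$, so a priori $X^{\twab}$ is not determined by $\mu_X$. One must invoke the sesquilinearity conditions \eqref{eq:2P:Pclses1} and \eqref{eq:2P:Pclses3} on the connected quiver $\twab$ to see that this element is independent of $\lambda$ and that all its $\theta^I$-coefficients are forced to equal $\mu(S^I a\otimes b)$ up to explicit signs; the paper writes out the resulting formula for the inverse. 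Neither point changes the architecture of your argument, but both are needed to close it.
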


In the rest of  this \cref{ss:2P:NW}, we prove \cref{thm:2P:NWPVA}.
We denote $\wt{V} \ceq \Pi^{N+1}V=(\Pi^{N+1}V, \nabla)$ and $\oP \ceq \oPclW{\Pi^{N+1}V}$ for simplicity, and let $p,\wt{p}$ be the parity of $V,\wt{V}$ respectively.
Consider the linear maps $\{\cdot_\Lambda \cdot\}_X\colon V \otimes V \to V[\Lambda]$ and $\mu_X\colon V \otimes V \to V$ defined by \eqref{eq:2P:PVAlbra} and \eqref{eq:2P:PVApro} for an odd element $X \in \oP(2)_{\ol{1}}$. It is clear that $\{\cdot_\Lambda\cdot\}_X\colon V \otimes V \to V_\nabla[\Lambda]$ has parity $\ol{N}$, and $\mu_X\colon V\otimes V\to V$ is even. 

\begin{itemize}
\item
The linear map $\{\cdot_\Lambda\cdot\}_X$ satisfies (i) in \cref{dfn:1P:WLCA}: This can be checked by a direct calculation using \eqref{eq:2P:Pclses2} and \eqref{eq:2P:Pclses3}. For instance, we have
\begin{align*}
 \{S^ia_\Lambda b\}
&=(-1)^{(p(a)+\ol{1})(\ol{N}+\ol{1})}X^{\tw}_{\Lambda, -\Lambda-\nabla}(S^ia\otimes b)\\
&=(-1)^{(p(a)+\ol{1})(\ol{N}+\ol{1})}(-1)^{\wt{p}(X)+\ol{1}}
  \theta^i X^{\tw}_{\Lambda, -\Lambda-\nabla}(a\otimes b)\\
&=-(-1)^N\theta^i\{a_\Lambda b\}. 
\end{align*}

\item
The linear map $\mu_X$ satisfies (i) in \cref{dfn:1P:PVA}: By \eqref{eq:2P:Pclses2}, we have
\begin{align*}
   X^{\twab}_{\Lambda,-\Lambda-\nabla}(Ta \otimes b)
  +X^{\twab}_{\Lambda,-\Lambda-\nabla}(a \otimes Tb)
&= -\lambda X^{\twab}_{\Lambda,-\Lambda-\nabla}(a \otimes b)
+(\lambda+T)X^{\twab}_{\Lambda,-\Lambda-\nabla}(a \otimes b)\\
&=TX^{\twab}_{\Lambda,-\Lambda-\nabla}(a \otimes b),
\end{align*}
and by the definition \eqref{eq:2P:PVApro} of $\mu_X$ get $T(ab)=(Ta)b+a(Tb)$. 
We can also prove $S^i(ab)=(S^ia)b+(-1)^{p(a)}a(S^ib)$ using \eqref{eq:2P:Pclses3}. 
\end{itemize}

Note now that the map $X \mto (\{\cdot_\Lambda \cdot\}_X,\mu_X)$ gives a bijective correspondence between the set $\oP(2)_{\ol{1}}$ and the set of all pairs $(\{\cdot_\Lambda \cdot\},\mu)$ of a linear map $\{\cdot_\Lambda \cdot\}\colon V \otimes V \to V[\Lambda]$ of parity $\ol{N}$ satisfying (i) in \cref{dfn:1P:WLCA} and an even linear map $\mu\colon V \otimes V \to V$ satisfying (i) in \cref{dfn:1P:PVA}. 
The inverse map 
is given by 
\begin{align*}
X^{\tw}_{\Lambda, -\Lambda-\nabla}(a\otimes b)
&\ceq (-1)^{p(a)(\ol{N}+\ol{1})}\{a_\Lambda b\}, \\
X^{\twab}_{\Lambda, -\Lambda-\nabla}(a\otimes b)
&\ceq (-1)^{p(a)(\ol{N}+\ol{1})}\sum_{I\subset [N]}(-1)^{\#I(N+1)}\sigma(I)\theta^{[N]\bs I}\mu(S^Ia\otimes b). 
\end{align*}

\begin{itemize}
\item
The skew-symmetry \eqref{eq:1P:LCA:ssym} of $\{\cdot_\Lambda\cdot\}_X$ and the commutativity of $\mu_X$ are equivalent to $X^\sigma=X$ for any $\sigma \in \frS_2$: By the cycle relation $X^Q=0$ for $Q \in \oQ(2) \bs \oQ_{\ac}(2)$, the element $X$ satisfies $X^\sigma=X$ ($\sigma \in \frS_2$) if and only if $(X^{(1, 2)})^Q=X^Q$ for $Q \in \oQ_{\ac}(2)$, that is
\begin{align}
\label{eq:2P:Xske}
&(-1)^{\wt{p}(a)\wt{p}(b)}X^{\tw}_{\Lambda_2, \Lambda_1}(b\otimes a)
=X^{\tw}_{\Lambda_1, \Lambda_2}(a\otimes b), \\
\label{eq:2P:Xcom}
&(-1)^{\wt{p}(a)\wt{p}(b)}X^{\twba}_{\Lambda_2, \Lambda_1}(b\otimes a)
=X^{\twab}_{\Lambda_1, \Lambda_2}(a\otimes b).
\end{align}
It is clear that the identity \cref{eq:2P:Xske} is equivalent to the skew-symmetry of $\{\cdot_\Lambda\cdot\}_X$. 
Thus, it remains to prove that: 
\end{itemize}
       
\begin{lem}
The identity \eqref{eq:2P:Xcom} is equivalent to the commutativity of $\mu_X$. 
\end{lem}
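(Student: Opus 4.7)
The approach is to first use the second cycle relation to express $X^{\twba}$ in terms of $X^{\twab}$, then unpack \eqref{eq:2P:Xcom} via the inverse formula for $X^{\twab}$, and finally extract the commutativity of $\mu_X$ by a residue calculation.

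First, I apply the second cycle relation from \cref{dfn:2P:Pcl} to the $2$-quiver $Q$ on vertex set $\{1,2\}$ with two edges $\alpha\colon 1\to 2$ and $\beta\colon 2\to 1$. The underlying graph $\ul{Q}$ contains the multi-edge cycle, so $Q\notin\oQ_{\ac}(2)$, and moreover $(\alpha,\beta)$ is a directed cycle in $Q$ with $Q\bs\alpha=\twba$ and $Q\bs\beta=\twab$. The second cycle relation therefore reads $X^{\twba}+X^{\twab}=0$, i.e.\ $X^{\twba}_{\Lambda_1,\Lambda_2}(v)=-X^{\twab}_{\Lambda_1,\Lambda_2}(v)$ for all $v\in V^{\otimes 2}$. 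Substituting into \eqref{eq:2P:Xcom}, the identity becomes equivalent to
\begin{align*}
 X^{\twab}_{\Lambda_1,\Lambda_2}(a\otimes b)+(-1)^{\wt{p}(a)\wt{p}(b)} X^{\twab}_{\Lambda_2,\Lambda_1}(b\otimes a) = 0
\end{align*}
as elements of $V_\nabla[\Lambda_1,\Lambda_2]$.

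For the forward direction, I pass to $V[\Lambda_1]$ via the isomorphism \eqref{eq:2P:VNL=VL} (setting $\Lambda_2=-\Lambda_1-\nabla$) and apply $\Res_{\Lambda_1}(\lambda_1^{-1}\,\cdot)$ to the displayed identity. The first term evaluates to $X^{\twab}_{\Lambda_1,-\Lambda_1-\nabla}(a\otimes b)$, in which only the $I=\emptyset$ summand of the inverse formula contributes to the residue (since $\theta_1^{[N]\bs I}=\theta_1^{[N]}$ forces $I=\emptyset$), giving $(-1)^{p(a)(\ol{N}+\ol{1})}\mu(a\otimes b)$. The second term $X^{\twab}_{\Lambda_2,\Lambda_1}(b\otimes a)\big|_{\Lambda_2=-\Lambda_1-\nabla}$ is computed via the inverse formula with first supervariable $U\ceq -\Lambda_1-\nabla$ (so that $\Lambda_1=-U-\nabla$); the odd part of $U$ is $-\theta_1-S$, so after expanding $(-\theta_1-S)^{[N]\bs I}$ only the pure-$\theta$ part of the $I=\emptyset$ summand survives the residue, yielding $(-1)^{p(b)(\ol{N}+\ol{1})+N}\mu(b\otimes a)$. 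A direct calculation of the resulting signs, using $\wt{p}(v)\equiv p(v)+N+1\pmod 2$, shows that they collapse and the equation becomes exactly the commutativity $\mu(a\otimes b)=(-1)^{p(a)p(b)}\mu(b\otimes a)$.

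For the converse direction, one must upgrade the identity from the residue-level to the full equality in $V[\Lambda_1]$. I would check the coefficient of $\theta_1^J$ for each $J\subset [N]$: on the LHS the coefficient is (up to the standard signs) $\mu(S^{[N]\bs J}a\otimes b)$, while on the RHS, expanding $(-\theta_1-S)^{[N]\bs I}$ yields terms of the form $\theta_1^K S^L\mu(S^I b\otimes a)$ with $K\sqcup L=[N]\bs I$. Applying the Leibniz rule $S^i\mu(x\otimes y)=\mu(S^i x\otimes y)+(-1)^{p(x)}\mu(x\otimes S^i y)$ from \cref{dfn:1P:PVA} (i) pushes the $S^i$'s onto the tensor factors, after which commutativity $\mu(v\otimes w)=(-1)^{p(v)p(w)}\mu(w\otimes v)$ rewrites each summand into the LHS form. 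The main obstacle is the intricate sign bookkeeping: one must reconcile the Koszul signs from reordering the fermions $\theta_1^i$ and $S^i$, the shuffle signs $\sigma(I)$, the parity shift $\wt{p}=p+N+1$, and the repeated Leibniz contributions, so as to match the LHS coefficients term by term.
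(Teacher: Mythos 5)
Your reduction via the second cycle relation ($X^{\twba}+X^{\twab}=0$) and your forward direction (extracting commutativity by applying $\Res_{\Lambda_1}(\lambda_1^{-1}\cdot)$ and observing that only the $I=\emptyset$, pure-$\theta$ terms survive) are correct and match the paper's proof in substance; the paper packages the $(-1)^N$ factor as a consequence of the sesquilinearity \eqref{eq:2P:Pclses1} rather than recomputing the second term from the inverse formula, but the two computations are the same.

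The gap is in the converse. What you give there is a plan, not a proof, and the plan misstates where the difficulty lies. After expanding $(-\theta_1-S)^{[N]\bs I}$ and distributing $S^L$ over $\mu(S^Ib\otimes a)$ by the Leibniz rule, a single summand $\theta_1^K S^L\mu(S^Ib\otimes a)$ produces terms $\pm\,\theta_1^K\mu\bigl(S^{L_2}a\otimes S^{L_1\cup I}b\bigr)$ with $L_1\sqcup L_2=L$; commutativity does \emph{not} "rewrite each summand into the LHS form" --- every term with $L_1\cup I\neq\emptyset$ still carries $S$'s on the $b$-factor and has no counterpart on the left-hand side, so the proof requires showing that all such cross terms cancel among the various decompositions $(I,L_1)$ of a fixed set. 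That cancellation (a signed binomial-type identity on top of the Koszul and $\sigma(I)$ signs) is precisely the content of the converse, and you explicitly defer it as "the main obstacle". As written, the equivalence is therefore only half proven. Note that the paper's converse avoids this entirely: instead of expanding the substituted inverse formula, it uses the sesquilinearity condition \eqref{eq:2P:Pclses3} of $X$ in the form $X^{\twab}_{-\Lambda-\nabla,\Lambda}(b\otimes S^Ia)=(-1)^{\wt{p}(b)\ol{\#I}}\theta^I X^{\twab}_{-\Lambda-\nabla,\Lambda}(b\otimes a)$ to read off every $\theta^I$-coefficient of the unknown side as a residue of $X^{\twab}_{-\Lambda-\nabla,\Lambda}(b\otimes S^Ia)$, which is then evaluated by the definition of $\mu_X$ and a single application of commutativity per coefficient --- no Leibniz rule and no cross-term cancellation. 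If you want to keep your route, you must actually carry out the cancellation; otherwise, switching to the \eqref{eq:2P:Pclses3}-based argument closes the gap cleanly.
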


\begin{proof}
By the cyclic relation $X^{\twba}=-X^{\twab}$, the identity \eqref{eq:2P:Xcom} is equivalent to 
\begin{align*}
 (-1)^{\wt{p}(a)\wt{p}(b)+\ol{1}}X^{\twab}_{-\Lambda-\nabla,\Lambda}(b \otimes a)
=X^{\twab}_{\Lambda,-\Lambda-\nabla}(a \otimes b). 
\end{align*}

If the identity \eqref{eq:2P:Xcom} holds, then we have
\begin{align*}
 (-1)^{\wt{p}(a)\wt{p}(b)+\ol{1}}
 \Res_\Lambda\bigl(\lambda^{-1}X^{\twab}_{-\Lambda-\nabla,\Lambda}(b \otimes a)\bigr)
=\Res_\Lambda\bigl(\lambda^{-1}X^{\twab}_{\Lambda,-\Lambda-\nabla}(a \otimes b)\bigr). 
\end{align*}
By the sesquilinearity \eqref{eq:2P:Pclses1}, one can show that
\begin{align*}
 \Res_\Lambda\bigl(\lambda^{-1}X^{\twab}_{-\Lambda-\nabla,\Lambda}(b \otimes a)\bigr)
=(-1)^N\Res_\Lambda\bigl(\lambda^{-1}X_{\Lambda,-\Lambda-\nabla}^{\twab}(b \otimes a)\bigr). 
\end{align*}
Thus, we get 
\begin{align*}
 (-1)^{\wt{p}(a)\wt{p}(b)+\ol{N}+\ol{1}}
 \Res_\Lambda\bigl(\lambda^{-1}X_{\Lambda,-\Lambda-\nabla}^{\twab}(b \otimes a)\bigr)
=\Res_\Lambda\bigl(\lambda^{-1}X^{\twab}_{\Lambda,-\Lambda-\nabla}(a \otimes b)\bigr), 
\end{align*}
which means the commutativity of $\mu_X$. 

Conversely, assume the commutativity of $\mu_X$. 
By the sesquilinearity condition \eqref{eq:2P:Pclses3}, we have
\begin{align*}
 X^{\twab}_{-\Lambda-\nabla, \Lambda}(b \otimes S^Ia)
=(-1)^{\wt{p}(b)\ol{\#I}} \theta^IX^{\twab}_{-\Lambda-\nabla,\Lambda}(b \otimes a). 
\end{align*}
Thus, using the sesquilinearity condition \eqref{eq:2P:Pclses3}, one can get
\begin{align*}
\Res_\Lambda\bigl(\lambda^{-1}\theta^IX_{-\Lambda-\nabla, \Lambda}(b \otimes a)\bigr)
&=(-1)^{\wt{p}(b)\ol{\#I}}
  \Res_\Lambda\bigl(\lambda^{-1}X^{\twab}_{-\Lambda-\nabla,\Lambda}(b \otimes S^Ia)\bigr) \\
&=(-1)^{\wt{p}(b)\ol{\#I}}(-1)^N
  \Res_\Lambda\bigl(\lambda^{-1}X_{\Lambda,-\Lambda-\nabla}^{\twab}(b \otimes S^Ia)\bigr) \\
&=(-1)^{\wt{p}(b)\ol{\#I}}(-1)^N(-1)^{p(b)(\ol{N}+\ol{1})}b(S^I a). 
\end{align*}
Hence we find that
\begin{align*}
X_{-\Lambda-\nabla, \Lambda}(b\otimes a)
&=(-1)^N(-1)^{p(b)(\ol{N}+\ol{1})}
  \sum_{I\subset[N]}(-1)^{\wt{p}(b)\ol{\#I}}\sigma(I)\theta^{[N]\bs I}b(S^Ia) \\
&=(-1)^N(-1)^{p(b)(\ol{N}+\ol{1})}
  \sum_{I\subset[N]}(-1)^{\wt{p}(b)\ol{\#I}}(-1)^{(p(a)+\ol{\#I})p(b)}\sigma(I)\theta^{[N]\bs I}(S^Ia)b \\
&=(-1)^{\wt{p}(a)\wt{p}(b)+\ol{1}}X_{\Lambda, -\Lambda-\nabla}^{\twab}(a\otimes b), 
\end{align*}
which is equivalent to the identity \eqref{eq:2P:Xcom}. 
\end{proof}

Recall from \eqref{eq:1P:MC} the linear superspace $L^1(\oP)$.
In what follows, we choose and fix $X \in L^1\bigl(\oP\bigr)_{\ol{1}}$, so that $\{\cdot_\Lambda \cdot\}_X\colon V \otimes V \to V[\Lambda]$ and $\mu_X\colon V \otimes V \to V$ are linear maps satisfying \cref{dfn:1P:WLCA} (i), (ii) and \cref{dfn:1P:PVA} (i), (ii). 
Then, it is enough to prove:

\begin{clm}\label{clm:2P:MC}
The Jacobi identity \eqref{eq:1P:LCA:Jac} of $\{\cdot_\Lambda\cdot\}_X$, the associativity of $\mu_X$,
and the Leibniz rule \eqref{eq:1P:Leib} are equivalent to the Maurer-Cartan condition $X \square X=0$. 
\end{clm}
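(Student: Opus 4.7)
The plan is to decompose the Maurer-Cartan condition $X\square X=0$ into its values on each $3$-quiver $Q \in \oQ(3)$ and to show that each value encodes exactly one of the three axioms. Since $X \in L^1(\oP)_{\ol{1}}$ is odd and $\frS_2$-invariant, $X\square X$ is $\frS_3$-invariant (it lies in $L^2(\oP)$), and the first cycle relation kills $(X\square X)^Q$ for every $Q \in \oQ(3) \setminus \oQ_\ac(3)$. Hence it suffices to verify the vanishing on one representative of each $\frS_3$-orbit in $\oQ_\ac(3)$.

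First I would write $X\square X$ explicitly in terms of the composition maps of \cref{dfn:2P:clWcomp}: since $X$ has arity $2$, $X\square X$ is a signed $\frS_3$-symmetrization of the partial compositions $X\circ (X\odot \id)$ and $X\circ (\id \odot X)$. Evaluating either of these on $v_1\otimes v_2\otimes v_3 \otimes Q$ requires the cocompositions $\Delta^{(2,1)}(Q)$ or $\Delta^{(1,2)}(Q)$, after which the values of $X$ on the resulting quivers of arity $\le 2$ are read off through \eqref{eq:2P:PVAlbra} and \eqref{eq:2P:PVApro}.

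Next I would analyze the $\frS_3$-orbit representatives of $\oQ_\ac(3)$ case by case. For the empty quiver, every cocomposition factor is edgeless, so only the bracket $\{\cdot_\Lambda\cdot\}_X$ contributes, and unfolding $(X\square X)^Q=0$ (after eliminating $\Lambda_3$ via the isomorphism \eqref{eq:2P:VNL=VL}) produces the Jacobi identity \eqref{eq:1P:LCA:Jac}. For a one-edge quiver, say with edge $2\to 3$, exactly one cocomposition factor carries the edge and yields $\mu_X$ after the $\Res_\Lambda(\lambda^{-1}\,\cdot\,)$ extraction, while the other yields $\{\cdot_\Lambda\cdot\}_X$; the resulting identity is the Leibniz rule \eqref{eq:1P:Leib}. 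For a directed-path quiver $1\to 2 \to 3$, both cocomposition factors carry an edge, and $(X\square X)^Q=0$ becomes $\mu_X(\mu_X(a,b),c)=\mu_X(a,\mu_X(b,c))$, namely associativity. All remaining orbits follow from these via the $\frS_3$-action and the second cycle relation, and so yield no new conditions.

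The main obstacle is the sign bookkeeping of the fermionic part. The bosonic part mimics the non-SUSY argument of \cite[Theorem 10.7]{BDHK} essentially verbatim. The new work is in tracking the $\theta^i$-factors that arise from the fermionic sesquilinearity \eqref{eq:2P:Pclses3}, from the $\Xi^\nu_Q$-shifts in the composition formula of \cref{dfn:2P:clWcomp}, and from the parity-change functor $\Pi^{N+1}$, together with the repeated sign $(-1)^{p(a)(\ol{N}+\ol{1})}$ of \eqref{eq:2P:PVAlbra} and \eqref{eq:2P:PVApro}. The calculation in \cref{eg:2P:comp} illustrates the type of manipulation required; the challenge is to combine all these signs so that the resulting identity matches \eqref{eq:1P:LCA:Jac} or \eqref{eq:1P:Leib} on the nose.
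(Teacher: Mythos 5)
Your proposal follows essentially the same route as the paper: it reduces $X\square X=0$ via $\frS_3$-invariance and the cycle relations to the three quiver classes (edgeless, one edge, directed path of length two), and matches them to the Jacobi identity, the Leibniz rule, and associativity exactly as in \cref{lem:2P:XsqJac}, \cref{lem:2P:XsqLeib} and \cref{lem:2P:Xsqass}. The only part not carried out is the explicit sign computation constituting the body of those three lemmas, which you correctly identify as the remaining (routine but delicate) work.
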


Since $X \square X$ is invariant under the $\frS_3$-action, we have
\begin{align*}
 X \square X=0 \iff \text{$(X\square X)^Q=0$ for }
 Q = \thr, \quad \bullet \hspace{12.5pt} \bullet \to \bullet \ \text{ and } \ \thabc. 
\end{align*}
Hereafter until the end of this \cref{ss:2P:NW} we suppress $X$ in $\{\cdot_\Lambda\cdot\}_X$ and $\mu_X$. 

In the proof of the next \cref{lem:2P:XsqJac,lem:2P:XsqLeib,lem:2P:Xsqass}, we use the following notation: 
For a superoperad $\oP$, $m,n \in \bbN$ and $i \in [m]$, we denote 
\begin{align}\label{eq:1:circ_i}
 \circ_i\colon \oP(m)\otimes \oP(n) \lto \oP(m+n-1), \quad f \circ_i g \ceq 
 f \circ (1 \odot \cdots \odot 1 \odot \overset{i}{\check{g}} \odot 1\odot \cdots \odot 1), 
\end{align}
which is called the infinitesimal composition.

\begin{lem}\label{lem:2P:XsqJac}
For $X \in L^1\bigl(\oPclW{\Pi^{N+1}V}\bigr)_{\od}$, we have
\begin{align*}
&(X\square X)^{\thr}_{\Lambda_1,\Lambda_2,-\Lambda_1-\Lambda_2-\nabla}(a \otimes b \otimes c) \\
&=\pm \bigl(\{a_{\Lambda_1}\{b_{\Lambda_2}c\}\}
  -(-1)^{(p(a)+\ol{N})\ol{N}}\{\{a_{\Lambda_1}b\}_{\Lambda_1+\Lambda_2}c\}
  -(-1)^{(p(a)+\ol{N})(p(b)+\ol{N})}\{b_{\Lambda_2}\{a_{\Lambda_1}c\}\} \bigr)
\end{align*}
with $\pm\ceq (-1)^{N+1}(-1)^{p(a)\ol{N}}(-1)^{p(b)(\ol{N}+\ol{1})}$.  
Hence, the Jacobi identity \eqref{eq:1P:LCA:Jac} is equivalent to 
\[
 (X \square X)^{\thr} = 0.
\] 
\end{lem}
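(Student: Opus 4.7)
The plan is to unfold $(X \square X)^{\thr}$ via the infinitesimal compositions $\circ_i$ (see \eqref{eq:1:circ_i} and \cite[\S1.3]{NY}), compute each term explicitly on the edge-less quiver $\thr$, and then match the three resulting terms with the three summands of the Jacobi identity through the formula \eqref{eq:2P:PVAlbra}.

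First I recall that $\square$ on $L(\oP) = \bigoplus_n \oP(n+1)^{\frS_{n+1}}$ is, for $X \in L^1(\oP)$, essentially a symmetrized sum of the two infinitesimal compositions $X \circ_1 X$ and $X \circ_2 X$, so that $X \square X$ is an $\frS_3$-invariant element of $\oP(3)$. Since $X$ is already $\frS_2$-invariant (as an element of $L^1$), the full symmetrization reduces to at most three distinct nested compositions once evaluated on $a \otimes b \otimes c$. The key simplification specific to the quiver $\thr$ is that it has no edges, so for every decomposition $\nu$ of $3$ the cocompositions $\Delta_j^\nu(\thr)$ yield edge-less quivers and, crucially, $\clE_{\thr}^\nu(j) = \emptyset$ for all $j$. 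Consequently, the shifts $\Xi_Q^\nu(k)$ that appear in \cref{dfn:2P:clWcomp} drop out entirely, and each $X \circ_i X$ collapses to a clean nested evaluation of $X$ on $V^{\otimes 2}$-inputs, with $(1|N)_W$-arguments obtained by restricting the outer $X$ to $\Lambda_3 = -\Lambda_1 - \Lambda_2 - \nabla$ via the isomorphism \eqref{eq:2P:VNL=VL}.

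Second, I translate each nested evaluation back to the $\Lambda$-bracket using \eqref{eq:2P:PVAlbra}. Schematically, the composition $X \circ_1 X$ evaluated on $a \otimes b \otimes c \otimes \thr$ produces a term proportional to $\{\{a_{\Lambda_1}b\}_{\Lambda_1+\Lambda_2}c\}$; the term $X \circ_2 X$ produces a term proportional to $\{a_{\Lambda_1}\{b_{\Lambda_2}c\}\}$; and the remaining summand obtained by the $\frS_3$-symmetrization (equivalently, by swapping the first two arguments and using $\frS_2$-invariance of $X$) produces a term proportional to $\{b_{\Lambda_2}\{a_{\Lambda_1}c\}\}$. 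The Koszul signs arising from the tensor swap $a \otimes b \leftrightarrow b \otimes a$ in the latter case, combined with the oddness of $X$, generate precisely the factor $(-1)^{(p(a)+\ol{N})(p(b)+\ol{N})}$; similarly, extracting the inner $X^{\tw}$ past an outer $X^{\tw}$ generates $(-1)^{(p(a)+\ol{N})\ol{N}}$ in the first term, matching \eqref{eq:1P:LCA:Jac}.

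The main obstacle is the sign bookkeeping, because three independent sources of signs interact: the Koszul signs from permuting tensor factors of odd elements, the twisting prefactor $(-1)^{p(a)(\ol{N}+\ol{1})}$ that enters each time \eqref{eq:2P:PVAlbra} converts $X^{\tw}$ to $\{\cdot_\Lambda\cdot\}$, and the parity shift $\Pi^{N+1}$ which relates $\wt{p}(v) = p(v) + \ol{N+1}$ for $v \in V$. I will organize the computation by fixing once and for all the ordered triple $(a,b,c)$, converting all $\wt{p}$-parities to $p$-parities, and then collecting signs term by term. After this, a direct check shows that all three nested compositions carry the common prefactor $(-1)^{N+1}(-1)^{p(a)\ol{N}}(-1)^{p(b)(\ol{N}+\ol{1})}$, and the relative signs between the three terms are exactly those of \eqref{eq:1P:LCA:Jac}. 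The ``hence'' part then follows at once: since the prefactor is a unit, the vanishing of $(X \square X)^{\thr}_{\Lambda_1,\Lambda_2,-\Lambda_1-\Lambda_2-\nabla}(a \otimes b \otimes c)$ for all $a,b,c$ and all $(\Lambda_1,\Lambda_2)$ is equivalent to the Jacobi identity for $\{\cdot_\Lambda \cdot\}_X$.
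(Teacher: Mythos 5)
Your proposal follows essentially the same route as the paper's proof: decompose $X \square X$ into the three shuffle terms $X \circ_1 X$, $X \circ_2 X$ and $(X \circ_2 X)^{(1,2)}$, observe that on the edge-less quiver $\thr$ all external-connection sets $\clE^\nu_Q(j)$ are empty so the compositions collapse to plain nested evaluations, identify each with one summand of the Jacobi identity via \eqref{eq:2P:PVAlbra}, and track the Koszul, $\Pi^{N+1}$, and bracket-normalization signs. This matches the paper's (equally terse) ``direct calculation,'' and your identification of which composition yields which nested bracket, as well as the sources of the signs, is correct.
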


\begin{proof}
By a direct calculation, we have
\begin{align*}
  (X \circ_1 X)^{\thr}_{\Lambda_1,\Lambda_2,-\Lambda_1-\Lambda_2-\nabla}(a \otimes b \otimes c)
&=(-1)^{p(b)(\ol{N}+\ol{1})}\{\{a_{\Lambda_1}b\}_{\Lambda_1+\Lambda_2}c\}, 
\\
  (X \circ_2 X)^{\thr}_{\Lambda_1,\Lambda_2,-\Lambda_1-\Lambda_2-\nabla}(a \otimes b \otimes c)
&=(-1)^{\wt{p}(a)}(-1)^{p(a)(\ol{N}+\ol{1})} (-1)^{p(b)(\ol{N}+\ol{1})}\{a_{\Lambda_1}
  \{b_{\Lambda_2}c\}\}, 
\\
  \bigl((X \circ_2 X)^{(1, 2)}\bigr)^{\thr}_{\Lambda_1,\Lambda_2,-\Lambda_1-\Lambda_2-\nabla}
  (a \otimes b \otimes c)
&=(-1)^{\wt{p}(a)\wt{p}(b)} (-1)^{\wt{p}(b)} (-1)^{p(a)(\ol{N}+\ol{1})} (-1)^{p(b)(\ol{N}+\ol{1})}
  \{b_{\Lambda_2}\{a_{\Lambda_1}c\}\}, 
\end{align*}
which proves the first equality. The second equivalence is clear.
\end{proof}

\begin{lem}\label{lem:2P:XsqLeib}
For $X \in L^1\bigl(\oPclW{\Pi^{N+1}V}\bigr)_{\ol{1}}$, we have
\begin{align*}
  (X \square X)^{\thbc}_{\Lambda_1,\Lambda_2,-\Lambda_1-\Lambda_2-\nabla}(a \otimes b \otimes c)
&=\pm   \sum_{I\subset [N]} (-1)^{\wt{p}(a)\ol{\#I}} \sigma(I) \theta_2^{[N] \bs I} \\
&\qquad \times \bigl(\{a_{\Lambda_1}(S^Ib)c\} - \{a_{\Lambda_1}S^Ib\}c 
                     - (-1)^{(p(a)+\ol{N})(p(b)+\ol{\#I})}(S^Ib)\{a_{\Lambda_1}c\}\bigr).
\end{align*}
with $\pm \ceq (-1)^{p(b)(\ol{N}+\ol{1})+\ol{1}}$. 
Hence, the Leibniz rule \eqref{eq:1P:Leib} is equivalent to 
\[
 (X \square X)^{\thbc} = 0. 
\]
\end{lem}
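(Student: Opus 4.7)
The argument parallels that of \cref{lem:2P:XsqJac}, but at the less symmetric quiver $Q=\thbc$ with unique edge $2\to 3$. The plan is first to unfold $(X\square X)^{Q}$ into its three contributions coming from infinitesimal compositions $X\circ_i X$ (as done for the Jacobi identity in \cref{lem:2P:XsqJac}) using the definition of $\square$ on $L(\oP)$ with $\oP=\oPclW{\Pi^{N+1}V}$; then to evaluate each contribution via the composition formula in \cref{dfn:2P:clWcomp}; and finally to convert every occurrence of $X^{\twab}$ to the product $\mu$ using the inversion formula
\[
 X^{\twab}_{\Lambda,-\Lambda-\nabla}(a\otimes b)
 =(-1)^{p(a)(\ol{N}+\ol{1})}\sum_{I\subset[N]}(-1)^{\#I(N+1)}\sigma(I)\theta^{[N]\bs I}\mu(S^Ia\otimes b)
\]
displayed earlier in this subsection.

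I would first compute the cocompositions of $Q$ in the two relevant arities: for $\nu=(2,1)$ one obtains $\Delta^\nu_1(Q)=\tw$, $\Delta^\nu_2(Q)=\bullet$, $\Delta^\nu_0(Q)=\twab$, while for $\nu=(1,2)$ one obtains $\Delta^\nu_1(Q)=\bullet$, $\Delta^\nu_2(Q)=\twab$, $\Delta^\nu_0(Q)=\tw$, together with the analogous data for $(1,2)Q$ needed to evaluate $((X\circ_2 X)^{(1,2)})^Q$. The external-connection sets $\clE^\nu_Q(j)$ of \cref{dfn:2P:extconn} are read off immediately and determine the specializations of the auxiliary supervariables $\Xi_k$ in \cref{dfn:2P:clWcomp}. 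Plugging these data into the composition formula, each of the three contributions becomes the product of one $X^{\twab}$-factor (coming from the part of $\Delta^\nu(Q)$ that carries the edge) and one $X^{\tw}$-factor. Converting the $X^{\twab}$-factor by the displayed inversion introduces the sum over $I\subset[N]$ with the monomial $\theta_2^{[N]\bs I}$ attached at the source of the edge, i.e., at vertex $2$. After using the sesquilinearity conditions \eqref{eq:2P:Pclses2}--\eqref{eq:2P:Pclses3} to absorb the $\Xi$-shifts and $\nabla$-shifts into the bracket arguments, the three contributions give respectively the three terms $\{a_{\Lambda_1}(S^Ib)c\}$, $-\{a_{\Lambda_1}S^Ib\}c$, and $-(-1)^{(p(a)+\ol{N})(p(b)+\ol{\#I})}(S^Ib)\{a_{\Lambda_1}c\}$ appearing inside the sum in the statement.

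The main obstacle is sign accounting. The parity shift $\wt{p}=p+\ol{N+1}$ on $\wt{V}=\Pi^{N+1}V$ interacts with the Koszul signs in \cref{dfn:2P:clWcomp}, with the $\sigma(I,[N]\bs I)=\sigma(I)$ signs coming from $\theta^I\theta^{[N]\bs I}=\sigma(I)\theta^{[N]}$, and with the signs produced when commuting the $S^I$ past the remaining arguments and past the $\theta^{[N]\bs I}$-factor; tracking these has to collect to exactly the claimed overall sign $(-1)^{p(b)(\ol{N}+\ol{1})+\ol{1}}$ together with the internal factor $(-1)^{\wt{p}(a)\ol{\#I}}\sigma(I)$. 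Once the formula is established, the equivalence with the Leibniz rule follows immediately: the monomials $\theta_2^{[N]\bs I}$ for distinct $I\subset[N]$ are $\bbK$-linearly independent, hence $(X\square X)^{\thbc}=0$ holds identically in $a,b,c$ if and only if each $I$-summand vanishes; the $I=\emptyset$ summand is exactly \eqref{eq:1P:Leib} for an arbitrary triple $(a,b,c)$, and conversely the Leibniz rule applied to $(a,S^Ib,c)$ gives the vanishing of the general $I$-summand.
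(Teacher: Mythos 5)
Your proposal is correct and follows essentially the same route as the paper: both decompose $(X\square X)^{\thbc}$ into the three infinitesimal-composition terms, compute the cocompositions of $\thbc$, recover the product $\mu$ from the $X^{\twab}$-factor via the residue/inversion relation, and deduce the equivalence with the Leibniz rule from the linear independence of the monomials $\theta_2^{[N]\bs I}$ (the $I=\emptyset$ summand being \eqref{eq:1P:Leib} and the general summand being \eqref{eq:1P:Leib} applied to $(a,S^Ib,c)$). The only organizational difference is that the paper first extracts $\Res_{\Lambda_2}(\lambda_2^{-1}\,\cdot\,)$ of each term and then reconstructs the full $\Lambda_2$-dependence from the sesquilinearity of the composite $(X\square X)^{\thbc}$, whereas you substitute the inversion formula term by term; note that for the $X\circ_1 X$ contribution the outer $X^{\twab}$-factor sits at the supervariable $\Lambda_1+\Lambda_2$, so your substitution produces $(\theta_1+\theta_2)^{[N]\bs I}$ with $S^I$ acting on the inner bracket, and an extra application of sesquilinearity is needed to bring this to the stated form with $\theta_2^{[N]\bs I}$ only --- a wrinkle the paper's residue-first ordering avoids, but not a gap.
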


\begin{proof}
A direct calculation shows that
\begin{align*}
 \Res_{\Lambda_2}\bigl(\lambda_2^{-1} (X \circ_1 X)^{\thbc}_{\Lambda_1,\Lambda_2,-\Lambda_1-\Lambda_2-\nabla}
  (a \otimes b \otimes c) \bigr) &=\epsilon_1 \{a_{\Lambda_1}b\}c, \\
 \Res_{\Lambda_2}\bigl(\lambda_2^{-1} (X \circ_2 X)^{\thbc}_{\Lambda_1,\Lambda_2,-\Lambda_1-\Lambda_2-\nabla}
  (a \otimes b \otimes c) \bigr) &=\epsilon_2 \{a_{\Lambda_1}bc\}, \\
 \Res_{\Lambda_2}\bigl(\lambda_2^{-1} 
  ((X \circ_2 X)^{(1,2)})^{\thbc}_{\Lambda_1,\Lambda_2,-\Lambda_1-\Lambda_2-\nabla}
  (a \otimes b \otimes c) \bigr) &=\epsilon_3 b\{a_{\Lambda_1}c\},
\end{align*}
where
\begin{align*}
 \epsilon_1 &\ceq (-1)^{p(a)(\ol{N}+\ol{1})} (-1)^{(p(a)+p(b)+\ol{N})(\ol{N}+\ol{1})}, \\
 \epsilon_2 &\ceq (-1)^{\wt{p}(a)}(-1)^{(\wt{p}(a)+\ol{1})\ol{N}}
                  (-1)^{p(a)(\ol{N}+\ol{1})} (-1)^{p(b)(\ol{N}+\ol{1})} = \pm, \\
 \epsilon_3 &\ceq (-1)^{\wt{p}(a)\wt{p}(b)}  (-1)^{\wt{p}(b)} (-1)^{p(a)(\ol{N}+\ol{1})}
                  (-1)^{p(b)(\ol{N}+\ol{1})}. 
\end{align*}
Thus we have
\begin{align*}
 \Res_{\Lambda_2}\bigl(\lambda_2^{-1}
  (X \square X)^{\thbc}_{\Lambda_1,\Lambda_2,-\Lambda_1-\Lambda_2-\nabla}(a \otimes b \otimes c)\bigr)
=\pm\bigl(\{a_{\Lambda_1}bc\}-\{a_{\Lambda_1}b\}c-(-1)^{(p(a)+\ol{N})p(b)}b\{a_{\Lambda_1}c\}\bigr),
\end{align*}
Using the sesquilinearity conditions \eqref{eq:2P:Pclses1} and \eqref{eq:2P:Pclses3}, 
we have the first statement. The second equivalence is clear.
\end{proof}

\begin{lem}\label{lem:2P:Xsqass}
For $X \in L^1\bigl(\oPclW{\Pi^{N+1}V}\bigr)_{\od}$, we have
\begin{align*}
&(X \square X)^{\thabc}_{\Lambda_1, \Lambda_2, -\Lambda_1-\Lambda_2-\nabla}(a\otimes b\otimes c)\\
&=\pm (-1)^N \sum_{I,J \subset [N]} (-1)^{p(a)\ol{\#J}}(-1)^{\#I\#J+\#J}\sigma(I)\sigma(J)
\times \theta_1^{[N] \bs I}\theta_2^{[N]\bs J}\bigl(((S^Ia)b)c-(S^Ia)(bc)\bigr),
\end{align*}
with $\pm\ceq (-1)^{p(b)(\ol{N}+\ol{1})}$. 
Hence, the associativity is equivalent to 
\[
 (X \square X)^{\thabc} = 0. 
\]
\end{lem}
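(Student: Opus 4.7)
The plan is to follow the same strategy as in Lemmas~\ref{lem:2P:XsqJac} and~\ref{lem:2P:XsqLeib}. Since $X$ has parity $\ol{1}$ and $X \square X$ is $\frS_3$-invariant, it suffices to compute the value of $X \square X$ on the quiver $\thabc$, and this decomposes into the three infinitesimal compositions $X \circ_1 X$, $X \circ_2 X$, and $(X \circ_2 X)^{(1,2)}$. Unlike the Jacobi and Leibniz cases, both edges of $\thabc$ produce a residue extraction via the formula \eqref{eq:2P:PVApro}, so we must perform a \emph{double} residue in $\Lambda_1$ and $\Lambda_2$.

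First, I would compute the cocompositions $\Delta^\nu(\thabc)$ for $\nu=(2,1)$ and $\nu=(1,2)$ needed for $X\circ_1 X$ and $X\circ_2 X$, respectively. In each case one factor $\Delta_j^\nu(\thabc)$ carries an edge and the other is the trivial $1$-vertex quiver, and $\Delta_0^\nu(\thabc)$ has a single edge. Plugging these into Definition~\ref{dfn:2P:clWcomp} and applying the isomorphism \eqref{eq:2P:VNL=VL} with the bijection $X \leftrightarrow (\{\cdot_\Lambda\cdot\},\mu)$ given just after Lemma~\ref{lem:2P:frSPcl}, one finds that the double residue
\[
 \Res_{\Lambda_1}\Res_{\Lambda_2}\bigl(\lambda_1^{-1}\lambda_2^{-1} (X \circ_i X)^{\thabc}_{\Lambda_1,\Lambda_2,-\Lambda_1-\Lambda_2-\nabla}(a \otimes b \otimes c)\bigr)
\]
yields, up to explicit signs, an expansion of $(ab)c$ or $a(bc)$ in terms of $\theta_1^{[N] \bs I} \theta_2^{[N] \bs J} \cdot ((S^Ia)(S^Jb))c$ and similar, coming from the inversion formula that rewrites $\mu$ in terms of $X^{\twab}$.

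Second, I would collect the three contributions. The two associative-like terms $((S^Ia)(S^Jb))c$ arising from $X\circ_1 X$ and from a rearrangement of $(X \circ_2 X)^{(1,2)}$ should cancel after applying the skew-symmetry \eqref{eq:2P:Xske} (already established) to reduce both to a common form, leaving only $((S^Ia)b)c - (S^Ia)(bc)$ as the residual. Reading off the full coefficient, one obtains the stated sum over $I,J \subset [N]$ with the combinatorial sign $(-1)^{\#I\#J+\#J}\sigma(I)\sigma(J)$ and the global prefactor $(-1)^{p(b)(\ol{N}+\ol{1})}(-1)^N$. Finally, since the elements $\theta_1^{[N]\bs I}\theta_2^{[N]\bs J}$ (together with the independent free variable $S^Ia$) are $\bbK$-linearly independent in $V[\Lambda_1,\Lambda_2]$, the vanishing of $(X\square X)^{\thabc}$ for every $a,b,c$ is equivalent to $((S^Ia)b)c = (S^Ia)(bc)$ for every $I\subset[N]$ and every $a,b,c$, which (since $a$ is arbitrary) is the associativity of $\mu_X$.

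The main obstacle will be the Koszul-sign bookkeeping in the second step. One source of signs is the infinitesimal composition \eqref{eq:1:circ_i}, another is the $(1,2)$-twist of $X \circ_2 X$, a third is the inversion formula expressing $X^{\twab}$ via $\mu$ and the odd operators $S^i$, and a fourth is the repeated use of the sesquilinearity condition \eqref{eq:2P:Pclses3} which contributes a factor $(-1)^N$ each time the $S$-operators are moved across the $N$-many fermions in $\theta^{[N]}$ via the residue. The key identity to verify is that after regrouping, the fermionic shuffle sign $\sigma(I,[N]\bs I) \sigma(J,[N]\bs J)$ combines with the parity-counting signs to give exactly $(-1)^{\#I\#J+\#J}\sigma(I)\sigma(J)$ as stated; this is a direct but delicate computation in the exterior algebra generated by $\theta_1^i$ and $\theta_2^i$.
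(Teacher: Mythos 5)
Your overall skeleton matches the paper's proof: compute the double residue $\Res_{\Lambda_1}\Res_{\Lambda_2}(\lambda_1^{-1}\lambda_2^{-1}\,\cdot\,)$ of the three terms $X\circ_1X$, $X\circ_2X$, $(X\circ_2X)^{(1,2)}$ evaluated on $\thabc$, identify the first two with $(ab)c$ and $a(bc)$ up to explicit signs via the inversion formula for $\mu_X$, and then recover the full identity from the residue using the sesquilinearity conditions. The final reduction of $(X\square X)^{\thabc}=0$ to associativity via linear independence of the $\theta_1^{[N]\bs I}\theta_2^{[N]\bs J}$ is also fine.

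However, your second step contains a genuine error in the treatment of the twisted term. You claim that $(X\circ_2X)^{(1,2)}$ contributes ``associative-like terms'' that cancel against part of $X\circ_1X$ after applying the skew-symmetry \eqref{eq:2P:Xske}. This is not what happens, and the mechanism you describe would not close. The correct observation — and the distinctive feature of the associativity case compared with \cref{lem:2P:XsqJac} and \cref{lem:2P:XsqLeib} — is that
\[
 \bigl((X\circ_2X)^{(1,2)}\bigr)^{\thabc}_{\Lambda_1,\Lambda_2,-\Lambda_1-\Lambda_2-\nabla}(a\otimes b\otimes c)=0
\]
identically, by the \emph{first cycle relation}. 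Indeed, evaluating $(X\circ_2X)^{(1,2)}$ on $Q=\thabc$ requires evaluating $X\circ_2X$ on $(1,2)Q$, whose edges become $2\to 1$ and $1\to 3$; the cocomposition $\Delta_0^{(1,2)}\bigl((1,2)Q\bigr)$ then collapses $\{2,3\}$ to a single vertex and produces a $2$-quiver with edges $2\to 1$ and $1\to 2$, whose underlying graph contains a cycle, so the outer $X$ kills it. Since this term is identically zero, there is nothing for $X\circ_1X$ to cancel against, and $X\circ_1X$ alone accounts for the $(ab)c$ term. If you carried out the cancellation you propose, you would subtract a spurious contribution and arrive at the wrong coefficient. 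You should replace that step with the cycle-relation argument; the rest of your plan, including the sign bookkeeping for $\sigma(I)\sigma(J)$ and $(-1)^{\#I\#J+\#J}$ coming from the sesquilinearity condition \eqref{eq:2P:Pclses3}, is consistent with the paper's computation.
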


\begin{proof}
A direct calculation yields that
\begin{align*}
 \Res_{\Lambda_1} \Res_{\Lambda_2} \bigl(\lambda_1^{-1}\lambda_2^{-1}
  (X \circ_1 X)^{\thabc}_{\Lambda_1,\Lambda_2,-\Lambda_1-\Lambda_2-\nabla}(a \otimes b \otimes c) \bigr)
&=\epsilon_1(ab)c, \\
 \Res_{\Lambda_1} \Res_{\Lambda_2} \bigl(\lambda_1^{-1}\lambda_2^{-1}
  (X \circ_2 X)^{\thabc}_{\Lambda_1, \Lambda_2, -\Lambda_1-\Lambda_2-\nabla}(a \otimes b \otimes c) \bigr)
&=\epsilon_2a(bc), 
\end{align*}
where 
\begin{align*}
\epsilon_1 &\ceq (-1)^{(p(a)+p(b)+\ol{N})(\ol{N}+\ol{1})} (-1)^{p(a)(\ol{N}+\ol{1})}=\pm, \\
\epsilon_2 &\ceq (-1)^{\wt{p}(a)}(-1)^{(\wt{p}(a)+\ol{1})\ol{N}} (-1)^{p(a)(\ol{N}+\ol{1})}
                 (-1)^{p(b)(\ol{N}+\ol{1})}. 
\end{align*}
Also, by the first cycle relation of $X$, we find that
\begin{align*}
 \bigl((X\circ_2X)^{(1,2)}\bigr)^{\thabc}_{\Lambda_1,\Lambda_2,-\Lambda_1-\Lambda_2-\nabla}
 (a \otimes b \otimes c) = 0. 
\end{align*}
Thus we have
\begin{align*}
 \Res_{\Lambda_1} \Res_{\Lambda_2}\bigl(\lambda_1^{-1}\lambda_2^{-1}
  (X \square X)^{\thabc}_{\Lambda_1,\Lambda_2,-\Lambda_1-\Lambda_2-\nabla}(a \otimes b \otimes c)\bigr)
 = \pm \bigl((ab)c-a(bc)\bigr).
\end{align*}
Using the sesquilinearity conditions, we obtain the first equality. The second equivalence is clear.
\end{proof}

By \cref{lem:2P:XsqJac,lem:2P:XsqLeib,lem:2P:Xsqass}, we have \cref{clm:2P:MC}.
Now the proof of \cref{thm:2P:NWPVA} is complete. 

\subsection{The superoperad of \texorpdfstring{$N_K=N$}{NK=N} SUSY Poisson vertex algebras}
\label{ss:2P:NK}

All the results and proofs given in \cref{ss:2P:NW} for the $N_W=N$ case are valid for the $N_K=N$ case with the following modifications: 
\begin{itemize}
\item 
Replace the superalgebra $\clH_W$  (\cref{dfn:1P:clHW}) with $\clH_K$ (\cref{dfn:1P:clHK}). 

\item 
Replace $(1|N)_W$-supervariables \eqref{eq:1P:polyW} with $(1|N)_K$-supervariables \eqref{eq:1P:polyK} . 
\end{itemize}

\begin{dfn}
Let $\Lambda_k=(\lambda_k,\theta_k^1,\dotsc,\theta_k^N)$ be a $(1|N)_K$-supervariable for each $k \in \bbZ_{>0}$. For an $\clH_K$-supermodule $V=(V,\nabla)$ and $n \in \bbN$, define $\oPclK{V}(n)$ to be the linear superspace of all linear maps 
\begin{align*}
 X\colon V^{\otimes n} \otimes \bbK\oQ(n) \lto V_\nabla[\Lambda_k]_{k=1}^n
\end{align*}
satisfying the conditions (i), (ii) in \cref{dfn:2P:Pcl}, replacing $(1|N)_W$-supervariables $\Lambda_1, \ldots, \Lambda_n$ by $(1|N)_K$-supervariables. 
\end{dfn}

Then we can define an $\frS_n$-action and the composition maps on $\oPclK{V}(n)$, and we have a superoperad $\oPclK{V} \ceq \bigl(\oPclK{V}(n)\bigr)_{n \in \bbN}$. 
It encodes the $N_K=N$ SUSY VPA structures in the following sense:

\begin{thm}\label{thm:2P:NKPVA}
Let $V=(V, \nabla)$ be an $\clH_K$-supermodule. 
\begin{enumerate}
\item 
For an odd Maurer-Cartan solution $X \in \MC\big(L\bigl(\oPclK{\Pi^{N+1}V}\bigr)\bigr)_{\ol{1}}$, define linear maps $\{\cdot_\Lambda\cdot\}_X\colon V \otimes V \to V[\Lambda]$ and $\mu_X\colon V \otimes V \to V$ by
\begin{align*}
 \{a_\Lambda b\}_X  &\ceq (-1)^{p(a)(\ol{N}+\ol{1})} X_{\Lambda, -\Lambda-\nabla}^{\tw}(a \otimes b), \\
 \mu_X(a \otimes b) &\ceq (-1)^{p(a)(\ol{N}+\ol{1})} 
 \Res_\Lambda\bigl(\lambda^{-1}X_{\Lambda,-\Lambda-\nabla}^{\twab}(a \otimes b)\bigr)
\end{align*}
for each $a,b\in V$. Then $(V, \nabla, \{\cdot_\Lambda\cdot\}_X, \mu_X)$ is a non-unital $N_K=N$ SUSY Poisson vertex algebra. 

\item 
The correspondence $X \mto (\{\cdot_\Lambda\cdot\}_X, \mu_X)$ gives a bijection
\begin{align*}
 \MC\bigl(L\bigl(\oPclK{\Pi^{N+1}V}\bigr)\bigr)_{\ol{1}} \lsto
 \{\textup{non-unital $N_K=N$ SUSY PVA structures on $(V,\nabla)$}\}. 
\end{align*}
\end{enumerate}
\end{thm}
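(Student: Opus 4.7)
The plan is to mirror the proof of \cref{thm:2P:NWPVA} with the substitutions indicated at the beginning of \cref{ss:2P:NK}. The key compatibility is that, under the identifications $T \leftrightarrow -\lambda$ and $S^i \leftrightarrow -\theta^i$, the defining relations of $\clH_K$ in \cref{dfn:1P:clHK} match the $(1|N)_K$-supervariable relations \eqref{eq:1P:polyK}, so $\bbK[\Lambda] \cong \clH_K$ as superalgebras exactly as in the $N_W$ case. Hence all statements about $V_\nabla[\Lambda_k]_{k=1}^n \ceq \bbK[\Lambda_k]_{k=1}^n \otimes_{\clH_K} V$, including the analogue of the isomorphism \eqref{eq:2P:VNL=VL}, transfer without modification.

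First I would verify that $X \mto (\{\cdot_\Lambda\cdot\}_X, \mu_X)$ gives a bijection between $\oPclK{\Pi^{N+1}V}(2)_{\od}$ and the set of pairs $(\{\cdot_\Lambda\cdot\}, \mu)$ consisting of a parity $\ol{N}$ linear map $\{\cdot_\Lambda\cdot\}\colon V \otimes V \to V[\Lambda]$ satisfying condition (i) of \cref{dfn:1P:WLCA} and an even linear map $\mu\colon V \otimes V \to V$ satisfying condition (i) of \cref{dfn:1P:PVA}. The inverse assignment is the obvious $N_K$-analogue of the one exhibited in the proof of \cref{thm:2P:NWPVA}; its well-definedness relies only on the sesquilinearity conditions \eqref{eq:2P:Pclses2} and \eqref{eq:2P:Pclses3}, which are stated in a form independent of whether $\Lambda$ is a $(1|N)_W$- or $(1|N)_K$-supervariable.

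Next I would check that $\frS_2$-invariance of $X$ is equivalent to the skew-symmetry of $\{\cdot_\Lambda\cdot\}_X$ together with the commutativity of $\mu_X$, reducing via the cycle relation on $\oQ(2) \bs \oQ_\ac(2)$ to the $N_K$-analogues of \eqref{eq:2P:Xske} and \eqref{eq:2P:Xcom}. Then, fixing $X \in L^1\bigl(\oPclK{\Pi^{N+1}V}\bigr)_{\od}$, I would establish the equivalence of $X \square X = 0$ with the Jacobi identity, associativity, and the Leibniz rule separately by computing $(X \square X)^Q$ for the three representatives $Q = \thr$, $\thbc$, $\thabc$, in direct analogy with \cref{lem:2P:XsqJac,lem:2P:XsqLeib,lem:2P:Xsqass}.

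The main obstacle I anticipate is sign and substitution bookkeeping: in the $N_K$ case the identities $\theta^i_\alpha \theta^i_\alpha = -\lambda_\alpha$ and $(S^i)^2 = T$ are non-trivial, so the substitutions $\Xi_j \mapsto \Lambda_j' + \nabla^{(j)}$ entering the composition maps of \cref{dfn:2P:clWcomp} must be evaluated with care, and the fermionic sesquilinearity condition \eqref{eq:2P:Pclses3} must be reconciled with the bosonic one \eqref{eq:2P:Pclses2} via $(S^i)^2 = T$ whenever two fermionic insertions collide at the same vertex. Once this compatibility is verified in a representative case (such as the $N_K$ counterpart of \cref{eg:2P:comp}), the proofs of \cref{lem:2P:frSPcl,lem:2P:clWcomp} and of \cref{lem:2P:XsqJac,lem:2P:XsqLeib,lem:2P:Xsqass} carry through verbatim, yielding the theorem.
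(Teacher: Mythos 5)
Your proposal matches the paper's own treatment: the paper explicitly omits the proof of this theorem, stating only that all results and arguments of the $N_W=N$ case in \cref{ss:2P:NW} remain valid after replacing $\clH_W$ by $\clH_K$ and $(1|N)_W$- by $(1|N)_K$-supervariables, which is precisely the strategy you carry out (and your check that $T\mapsto-\lambda$, $S^i\mapsto-\theta^i$ still gives $\bbK[\Lambda]\cong\clH_K$ under the relations \eqref{eq:1P:polyK} is the right compatibility to verify). Your added caution about the relations $(S^i)^2=T$ and $\theta^i_\alpha\theta^i_\alpha=-\lambda_\alpha$ in the substitution $\Xi_j\mapsto\Lambda_j'+\nabla^{(j)}$ is a sensible refinement of what the paper leaves implicit.
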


\begin{dfn}
We call the superoperad $\oPclK{V}$ \emph{the $N_K=N$ SUSY coisson operad of $V=(V,\nabla)$}.
\end{dfn}

\section{Relation to the SUSY chiral operad}\label{s:3P}

One can construct graded Poisson vertex algebras from filtered vertex algebras.
There are several variations of the construction such as \cite{BDHK}, \cite[16.2.3]{FBZ} and \cite{Li1,Li2}. 
In this note, we consider a SUSY analogue of the construction using an arbitrary increasing filtration on the underlying linear superspace \cite[\S8.5]{BDHK}.

Let us first introduce a ``good'' filtration for a SUSY vertex algebra.

\begin{dfn}\label{dfn:3P:filter}
An $\clH_W$-supermodule $V=(V,\nabla)$ is called filtered if it is equipped with an increasing sequence of $\clH_W$-submodules
\begin{align*}
 V_0 \subset V_1 \subset \cdots \subset V_r \subset V_{r+1} \subset \cdots \subset V.
\end{align*}
We denote such a filtered $\clH_W$-module by $(V,(V_r)_{r \in \bbN})$.
The associated graded is denoted by 
\[
 \gr V \ceq \tboplus_{r \in \bbN} V_r/V_{r-1},
\]
which is naturally a graded $\clH_W$-supermodule. Here we use the convention $V_{-1}\ceq 0$. 

A non-unital $N_W=N$ SUSY vertex algebra $V$ is filtered if $V$ is filtered as an $\clH_W$-supermodule and the filtration $(V_r)_{r\in\bbN}$ satisfies
\begin{align*}
 [a_\Lambda b] \in V_{r+s-1}[\Lambda], \quad ab \in V_{r+s}. 
\end{align*}
for each $a \in V_r$ and $b \in V_s$. We denote a filtered $N_W=N$ SUSY VA by $(V,(V_r)_{r \in \bbN})$.
\end{dfn}

\begin{rmk}
Any $\clH_W$-supermodule $V$ can be seen as a filtered $\clH_W$-supermodule by letting $V_r\ceq V$ for each $r\in\bbN$. We call this the trivial filtration of $V$. 
\end{rmk}

The proof of the following statement is straightforward and we omit it.

\begin{prp}\label{prp:3P:grPVA}
Given a filtered SUSY vertex algebra $(V,(V_r)_{r \in \bbN})$, we have a natural SUSY PVA structure on the associated graded $\gr V \ceq \tboplus_{r \in \bbN} V_r/V_{r-1}$.
The operation is given by 
\begin{align*}
\bigl[\ol{a}^r_\Lambda\ol{b}^s\bigr]\ceq \ol{[a_\Lambda b]}^{r+s-1}, \quad 
\ol{a}^r\ol{b}^s\ceq \ol{ab}^{r+s}
\end{align*}
for each $a\in V_r$ and $b\in V_s$. Here $\ol{a}^r$ denotes the canonical projection $V_r\to V_r/V_{r-1}$. 
\end{prp}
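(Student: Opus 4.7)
The plan is a standard passage-to-the-associated-graded argument, where the ``correction'' terms in the quasi-commutativity, quasi-associativity, and Wick formula axioms for a SUSY vertex algebra drop into lower filtration degree, so the PVA axioms emerge on $\gr V$.

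First I would verify that the two operations on $\gr V$ are well-defined. Since each $V_r$ is an $\clH_W$-submodule, the quotients $V_r/V_{r-1}$ inherit an $\clH_W$-supermodule structure, so $\gr V$ is a graded $\clH_W$-supermodule. By the filtration conditions in \cref{dfn:3P:filter}, for $a \in V_r$ and $b \in V_s$ we have $[a_\Lambda b] \in V_{r+s-1}[\Lambda]$ and $ab \in V_{r+s}$, and if we change representatives by $a' = a + a_0$ with $a_0 \in V_{r-1}$ (and similarly for $b$), the resulting differences lie in $V_{r+s-2}[\Lambda]$ and $V_{r+s-1}$ respectively. Hence the images $\ol{[a_\Lambda b]}^{r+s-1} \in (V_{r+s-1}/V_{r+s-2})[\Lambda]$ and $\ol{ab}^{r+s} \in V_{r+s}/V_{r+s-1}$ depend only on $\ol{a}^r$ and $\ol{b}^s$. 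Condition (i) of \cref{dfn:1P:PVA} for $\gr V$ (the derivation property of $T$ and $S^i$ on the product) then follows immediately by applying the corresponding identity in $V$ and projecting.

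Next I would verify that $(\gr V, \nabla, [\cdot_\Lambda \cdot])$ is a SUSY Lie conformal algebra. Sesquilinearity, skew-symmetry, and the Jacobi identity for the $\Lambda$-bracket on $\gr V$ transfer directly from the corresponding identities in $V$ by projecting each side to the appropriate graded piece, since every term in those identities involves only $\Lambda$-brackets and operators $T, S^i$, which are each homogeneous of definite filtration degree (brackets drop the degree by one, but consistently on both sides).

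Then for the key PVA axioms (ii)--(iv) of \cref{dfn:1P:PVA}, I would project the quasi-commutativity, quasi-associativity and Wick formula to $\gr V$. For commutativity: $ab - (-1)^{p(a)p(b)} ba = \int_{-T}^0 d\Lambda\, [a_\Lambda b]$ and the right-hand side lies in $V_{r+s-1}$ (since $[a_\Lambda b] \in V_{r+s-1}[\Lambda]$), so it vanishes in $V_{r+s}/V_{r+s-1}$. For associativity: $(ab)c - a(bc)$ equals a sum of terms of the form $\bigl(\int_0^T d\Lambda \, \bullet\bigr) [\bullet_\Lambda \bullet]$ with one entry in $V_r$, one in $V_s$, one in $V_t$, each such term lying in $V_{r+s+t-1}$ by the bracket condition, hence vanishing in $V_{r+s+t}/V_{r+s+t-1}$. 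For the Leibniz rule: in the Wick formula, the first two terms lie in $V_{r+s+t-1}[\Lambda]$ (matching the filtration degree of $[a_\Lambda bc]$), while the correction term $\int_0^\lambda d\Gamma\, [[a_\Lambda b]_\Gamma c]$ lies in $V_{r+s+t-2}[\Lambda]$ and thus projects to zero in $(V_{r+s+t-1}/V_{r+s+t-2})[\Lambda]$.

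None of these steps presents a real obstacle; the computation is entirely bookkeeping of filtration degrees, and the sign conventions carry over verbatim from $V$ because both sides of each axiom are projected by the same parity-preserving quotient map. The only mildly delicate point is to track that the integral operators $\int_F^G d\Lambda$ do not raise the filtration degree, which is immediate from \eqref{eq:1P:intLv} since they only produce $\bbK$-linear combinations of powers of $F, G \in \{-T, 0, T\}$ applied to the coefficients, and $T$ preserves each $V_r$.
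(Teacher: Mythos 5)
Your proof is correct and is exactly the ``straightforward'' argument the paper has in mind (the paper omits the proof of \cref{prp:3P:grPVA} entirely, stating it is straightforward): well-definedness from the filtration conditions, transfer of the SUSY LCA axioms by projection, and the observation that the correction terms in quasi-commutativity, quasi-associativity and the Wick formula drop one filtration degree and hence vanish in the associated graded. One pedantic remark: in the Wick formula the integral is $\int_0^\lambda d\Gamma$ rather than one with limits in $\{-T,0,T\}$, but your key point -- that \eqref{eq:1P:intLv} never raises the filtration degree of the coefficients -- holds there too, so nothing is affected.
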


In this subsection, we study this construction of graded SUSY PVAs from the viewpoint of the SUSY chiral/coisson operads along the non-SUSY argument in \cite[\S8, \S10.4]{BDHK}. We will show for a filtered $\clH_W$-supermodule $V=(V,(V_r)_{r \in \bbN})$ the following statements.
\begin{itemize}
\item 
The SUSY chiral operad $\oPclW{V}$ is filtered, and we have the associated graded superoperad $\gr\oPchW{V}$ (\cref{prp:3P:filtPch}).
\item
There is an injection $\alpha\colon \gr\oPchW{V} \inj \oPclW{\gr V}$ of superoperads (\cref{thm:3P:inj}).
\item
The SUSY PVA structures given by $\alpha$ and \cref{prp:3P:grPVA} coincide (\cref{rmk:3P:2PVAstr}). 
\end{itemize}
Although we give detailed arguments only for the $N_W=N$ case, similar statements hold for the $N_K=N$ case.

\subsection{Grading and filtration of superoperad}\label{ss:3P:gr}

We cite the notion of filtered and graded superoperads from \cite[(3.10)]{BDHK}. Recall the symbol $\bbN^m_n \ceq \{(n_1,\dotsc,n_m) \in \bbN^m \mid n_1+\dotsb+n_m=n \}$ in \eqref{eq:1P:Nmn}.
For an $\frS$-supermodule $\oP=(\oP(n))_{n \in \bbN}$, an $\frS$-submodule $\oQ \subset \oP$ means an $\frS$-supermodule $\oQ=(\oQ(n))_{n \in \bbN}$ such that $\oQ(n)$ is an $\frS_n$-submodule of $\oP(n)$ for each $n \in \bbN$.

\begin{dfn}
Let $\oP$ be a superoperad. A sequence $(\oP_r)_{r \in \bbN}$ consisting of $\frS$-submodules of $\oP$ is called a grading of $\oP$ if it satisfies the following conditions: 
\begin{clist}
\item 
We have $\oP=\bigoplus_{r\in\bbN}\oP_r$, i.e., $\oP(n)=\bigoplus_{r\in\bbN}\oP_r(n)$ for each $n\in\bbN$. 

\item 
For $m, n\in\bbN$, $(n_1, \ldots, n_m)\in\bbN^m_n$ and $r, s\in\bbN$, $(s_1, \ldots, s_m)\in\bbN^m_s$, if $X\in\oP_r(m)$ and $Y_i\in\oP_{s_i}(n_i)$ with $i\in[m]$, then $X\circ (Y_1\odot\cdots \odot Y_m)\in\oP_{r+s}(n)$. 
\end{clist}
We call a superoperad $\oP$ equipped with a grading $(\oP_r)_{r \in \bbN}$ a graded operad, and denote it by $\oP=\bigoplus_{r \in \bbN}\oP_r$.
\end{dfn}

\begin{dfn}\label{dfn:3P:opfilt}
Let $\oP$ be a superoperad. A sequence $(\oP_r)_{r \in \bbN}$ consisting of $\frS$-submodules of $\oP$ is called an operad filtration of the superoperad $\oP$ if it satisfies the following conditions: 
\begin{clist}
\item 
$1_{\oP} \in \oP_0(1)$, and $\oP_r \supset \oP_{r+1}$ for each $r \in \bbN$. 

\item 
For $m,n \in \bbN$, $(n_1,\ldots,n_m) \in \bbN^m_n$ and $r,s \in \bbN$, $(s_1,\ldots,s_m) \in \bbN^m_s$, if $X \in \oP_r(m)$ and $Y_i \in \oP_{s_i}(n_i)$ with $i \in [m]$, then we have $X \circ (Y_1 \odot \cdots \odot Y_m) \in \oP_{r+s}(n)$. 
\end{clist}
\end{dfn}

\begin{rmk}
Note that the operad filtration in \cref{dfn:3P:opfilt} is decreasing, as opposed to the increasing filtration of an $\clH_W$-supermodule in \cref{dfn:3P:filter}.
\end{rmk}

The following statement can be easily shown, and we omit the proof.

\begin{lem}\label{lem:3P:opfilt}
Let $\oP$ be a superoperad and $(\oP_r)_{r \in \bbN}$ be a sequence of $\frS$-submodules $\oP_r \subset \oP$ satisfying (i) in \cref{dfn:3P:opfilt}. Then $(\oP_r)_{r\in \bbN}$ is an operad filtration of the superoperad $\oP$ if and only if it satisfies the following condition: 
\begin{itemize}
\item
For $m,n,r,s \in \bbN$, if $X \in \oP_r(m)$ and $Y \in \oP_{s}(n)$, 
then we have $X \circ_1 Y \in \oP_{r+s}(m+n-1)$.
\end{itemize}
\end{lem}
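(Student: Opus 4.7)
The plan is to prove both implications, with the forward direction being essentially immediate and the converse requiring a reduction of the full composition to iterated infinitesimal compositions combined with the $\frS$-equivariance.

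For the ``only if'' direction, I would argue as follows. Given $X \in \oP_r(m)$ and $Y \in \oP_s(n)$, condition (i) of \cref{dfn:3P:opfilt} gives $1_{\oP} \in \oP_0(1)$, and by the operad axioms
\[
 X \circ_1 Y = X \circ (Y \odot 1_{\oP} \odot \cdots \odot 1_{\oP})
\]
with $Y$ in the first slot and $1_\oP$ in the remaining $m-1$ slots. Applying condition (ii) of \cref{dfn:3P:opfilt} with $(s_1,s_2,\ldots,s_m)=(s,0,\ldots,0)$ then yields $X\circ_1 Y\in\oP_{r+s}(m+n-1)$.

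For the ``if'' direction, I would proceed in two steps. \textbf{Step 1:} The hypothesis on $\circ_1$ upgrades to an analogous statement for every $\circ_i$. For $X \in \oP_r(m)$, $Y \in \oP_s(n)$, and $i \in [m]$, one has an identity of the shape
\[
 X \circ_i Y = (X^{\sigma_i} \circ_1 Y)^{\tau_i}
\]
for suitable $\sigma_i \in \frS_m$ and $\tau_i \in \frS_{m+n-1}$ (where $\sigma_i$ transposes the $i$-th input to the first position and $\tau_i$ reinserts the $Y$-block into the proper range of outputs). Since each $\oP_r$ is by assumption an $\frS$-submodule of $\oP$, it is closed under the right symmetric-group action, so $X^{\sigma_i} \in \oP_r(m)$, hence $X^{\sigma_i} \circ_1 Y \in \oP_{r+s}(m+n-1)$ by hypothesis, and finally $X \circ_i Y \in \oP_{r+s}(m+n-1)$. \textbf{Step 2:} By repeated application of the operad associativity axiom (together with the unit axiom), the full composition decomposes as
\[
 X \circ (Y_1 \odot \cdots \odot Y_m)
 = \Bigl(\cdots\bigl((X \circ_m Y_m) \circ_{m-1} Y_{m-1}\bigr)\cdots\Bigr) \circ_1 Y_1,
\]
where at each stage one inserts $Y_j$ into the slot of the intermediate operation corresponding to the original $j$-th input of $X$. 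Induction on $m$ using Step 1 then shows $X\circ(Y_1\odot\cdots\odot Y_m)\in\oP_{r+s_1+\cdots+s_m}(n)$, which is condition (ii) of \cref{dfn:3P:opfilt}.

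The main obstacle is purely bookkeeping: namely, establishing the two identities $X \circ_i Y = (X^{\sigma_i} \circ_1 Y)^{\tau_i}$ and the iterative decomposition of $\circ$ into a chain of $\circ_i$'s with the correct Koszul signs arising from the super setting. These are standard equivariance and associativity consequences of the superoperad axioms, and because each $\oP_r$ is already $\frS$-closed, the signs never affect membership in $\oP_{r+s}$; thus the verification reduces to formal manipulation at the level of the underlying operad structure, with no analytic input needed.
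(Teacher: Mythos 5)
Your proof is correct. The paper omits the proof of this lemma as routine, and your argument --- deduce the $\circ_i$ case from the $\circ_1$ case via $\frS$-equivariance and $\frS$-closedness of each $\oP_r$, then decompose the full composition $X\circ(Y_1\odot\cdots\odot Y_m)$ into iterated infinitesimal compositions, with Koszul signs irrelevant for membership in a linear subspace --- is exactly the standard verification the authors have in mind.
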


For a superoperad $\oP$ equipped with an operad filtration $(\oP_r)_{r \in \bbN}$, 
we can endow the $\frS$-supermodule
\begin{align*}
 \gr\oP \ceq \bigoplus_{r \in \bbN} \oP_r/\oP_{r+1}, \quad 
 \oP_r/\oP_{r+1} \ceq \bigl(\oP_r(n)/\oP_{r+1}(n)\bigr)_{n \in \bbN}. 
\end{align*}
with a graded superoperad structure. To explain that, we denote by
\begin{align*}
 \oP_r(n) \lto \oP_r(n)/\oP_{r+1}(n), \quad X \lmto \ol{X}^r
\end{align*}
the canonical projection for each $n,r \in \bbN$. 
Then, for $m,n \in \bbN$ and $\nu=(n_1,\ldots,n_m) \in \bbN^m_n$, there exists a unique linear map
\begin{align*}
 \gamma_\nu\colon \gr\oP(m) \otimes \gr\oP(\nu) \lto \gr\oP(n)
\end{align*}
satisfying
\begin{align*}
 \gamma_\nu(\ol{X}^r \otimes \ol{Y_1}^{s_1} \otimes \cdots \otimes \ol{Y_k}^{s_k})
=\ol{X \circ (Y_1 \odot \cdots \odot Y_m)}^{r+s} \quad (X\in \oP_r(m), Y_i\in\oP_{s_i}(n_i))
\end{align*}
for $r,s \in \bbN$, $(s_1,\ldots, s_m) \in \bbN^m_s$. 
Then, $\gr\oP$ is a superoperad with the composition map $\gamma=(\gamma_\nu)_{\nu}$ and the unit $\ol{1_{\oP}}^0\in\gr\oP(1)$. 
Also, the sequence $(\oP_r/\oP_{r+1})_{r \in \bbN}$ is a grading of $\gr\oP$. 

\begin{dfn}\label{dfn:3P:groP}
We call the obtained graded superoperad $\gr\oP$ 
the associated graded operad of the filtered superoperad $(\oP, (\oP_r)_{r \in \bbN})$. 
\end{dfn}

\subsection{An operad filtration of $\oPchW{V}$}\label{ss:3P:filt}

In this subsection, for a filtered $\clH_W$-supermodule $V$, we define an operad filtration of the $N_W=N$ SUSY chiral operad $\oPchW{V}$. 

First, recall the superalgebra $\clO_n^{\star\rmT}$ in \cref{dfn:1P:O}:
\[
 \clO_n^{\star\rmT} = \bbK[z_{k,l}^{\pm1},\zeta_{k,l}^i \mid i \in [N], \ 1 \le k<l \le n], \quad
 z_{k,l} = z_k-z_l, \quad \zeta^i_{k,l} = \zeta^i_k - \zeta^i_l.
\]
For $n \in \bbZ_{>0}$, we set 
\[
 \clO^{\rmT}_n \ceq \bbK[z_{k,l},\zeta_{k,l}^i \mid i \in [N], \ 1 \le k<l \le n].
\]
We also set $\clO_0^{\rmT}\ceq \bbK$. 
We introduce an increasing filtration on $\clO_n^{\star\rmT}$, which is an $N_W=N$ SUSY analogue of \cite[\S8.1]{BDHK}.

\begin{dfn}\label{dfn:3P:FrOn}
For $n,r \in \bbN$, we define the linear subspace $F^r\clO_n^{\star\rmT} \subset \clO_n^{\star\rmT}$ by 
\begin{align*}
&F^r\clO_0^{\star\rmT} = F^r\clO_1^{\star\rmT} \ceq \bbK, \\
&F^r\clO_n^{\star\rmT} \ceq \Span_\bbK\{z_{k_1,l_1}^{-m_1} \cdots z_{k_r,l_r}^{-m_r}a \mid 
  m_s \in \bbN, \, I_s \subset [N], \, 1 \le k_s < l_s \le n, \, a \in \clO_n^{\rmT}\} \quad (n>1). 
\end{align*}
\end{dfn}

\begin{eg}
For $n=2$, we have 
\begin{align*}
 F^0\clO_2^{\star\rmT}=\bbK, \quad 
 F^r\clO_2^{\star\rmT}=\clO_2^{\star\rmT} \quad (r \ge 1). 
\end{align*}
\end{eg}

The next lemma follows immediately from \cref{dfn:3P:FrOn} of $F^r\clO_n^{\star\rmT}$.

\begin{lem}\label{lem:3P:FrOn}
Let $n,r,s \in \bbN$. 
\begin{enumerate}
\item
$F^r\clO_n^{\star\rmT} \subset F^{r+1}\clO_n^{\star\rmT}$.

\item 
$\bigl(F^r\clO_n^{\star\rmT}\bigr)\bigl(F^s\clO_n^{\star\rmT}\bigr) \subset F^{r+s}\clO^{\star\rmT}_n$. 

\item \label{i:lem:3P:FrOn:3}
The space $F^r\clO_n^{\star\rmT}$ is closed under the action of $\frS_n$, i.e., 
if $\sigma \in \frS_n$ and $f \in F^r\clO^{\star\rmT}_n$, then $\sigma f \in F^r\clO^{\star\rmT}_n$,
where $(\sigma f)(Z_1,\ldots,Z_n) \ceq f(Z_{\sigma(1)}, \ldots, Z_{\sigma(n)})$ as before. 
\end{enumerate}
\end{lem}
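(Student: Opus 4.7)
My plan is to verify each clause directly from the spanning description in \cref{dfn:3P:FrOn}, relying on two elementary structural observations. First, $\clO_n^{\rmT}$ is a subalgebra of $\clO_n^{\star\rmT}$. Second, for any $\sigma \in \frS_n$, each generator $z_{k,l}$ and $\zeta_{k,l}^i$ with $1 \le k<l \le n$ is sent by $\sigma$ to $\pm 1$ times a generator of the same shape, so the subalgebra $\clO_n^{\rmT}$ is itself $\frS_n$-stable. The cases $n = 0,1$ are trivial since then $F^r\clO_n^{\star\rmT} = \bbK$, so I will assume $n > 1$ throughout.

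Clauses (1) and (2) are then immediate. For (1), the exponents $m_s$ in the spanning description of $F^r\clO_n^{\star\rmT}$ are allowed to be $0$, so any spanning monomial $z_{k_1,l_1}^{-m_1}\cdots z_{k_r,l_r}^{-m_r}a$ can be rewritten as $z_{k_1,l_1}^{-m_1}\cdots z_{k_r,l_r}^{-m_r}\cdot z_{1,2}^{0}\cdot a$, which meets the defining form of $F^{r+1}\clO_n^{\star\rmT}$. For (2), multiplying a spanning monomial of $F^r\clO_n^{\star\rmT}$ with one of $F^s\clO_n^{\star\rmT}$ and gathering the $r+s$ negative powers in front produces a monomial of the form $z_{k_1,l_1}^{-m_1}\cdots z_{k_{r+s},l_{r+s}}^{-m_{r+s}}(a a')$ with $a a' \in \clO_n^{\rmT}$ by the subalgebra property; bilinearity then gives the full inclusion.

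The substantive clause is (3). Using the defining formula $(\sigma f)(Z_1,\ldots,Z_n) = f(Z_{\sigma(1)},\ldots,Z_{\sigma(n)})$ recalled in the statement of the lemma, one computes $\sigma z_{k,l} = z_{\sigma(k)} - z_{\sigma(l)}$, which equals $\pm z_{k',l'}$ for the unique pair $k' < l'$ with $\{k',l'\} = \{\sigma(k),\sigma(l)\}$; analogously $\sigma \zeta_{k,l}^i = \pm \zeta_{k',l'}^i$. Applied to a spanning monomial of $F^r\clO_n^{\star\rmT}$, this yields $\pm z_{k_1',l_1'}^{-m_1}\cdots z_{k_r',l_r'}^{-m_r}(\sigma a)$ with $\sigma a \in \clO_n^{\rmT}$, which is again a spanning monomial of $F^r\clO_n^{\star\rmT}$. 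The only real bookkeeping is tracking the signs (including the sign $(-1)^{m_s}$ that appears when $\sigma$ reverses the order of the pair $(k_s,l_s)$), and I do not foresee any genuine obstacle; the lemma is essentially a direct unwinding of definitions.
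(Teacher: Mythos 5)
Your proof is correct and takes the same route as the paper, which simply states that the lemma ``follows immediately from'' \cref{dfn:3P:FrOn} and omits the details; your direct unwinding of the spanning description (allowing $m_s=0$ for (1), regrouping the even factors $z_{k,l}^{-m}$ for (2), and using that $\sigma$ acts as an algebra homomorphism sending each $z_{k,l}$, $\zeta_{k,l}^i$ to $\pm$ a generator of the same shape for (3)) is exactly the intended argument.
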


\begin{lem}\label{lem:3P:Frdecomp}
Let $m,n \in \bbZ_{>0}$ and $r \in \bbN$. 
For $f=z_{k_1,l_1}^{m_1} \cdots z_{k_r,l_r}^{m_r}a \in F^r\clO_{m+n}^{\star\rmT}$, 
there exists $f_0 \in F^{r_0}\clO_{m+n}^{\star\rmT}$ and $f_1 \in F^{r_1}\clO_n^{\star\rmT}$ 
with $(r_0,r_1) \in \bbN^2_r$ satisfying the following conditions.
\begin{clist}
\item 
$f=f_0 f_1$. 

\item 
$f_0$ has no pole at $z_k=z_l$ for $1 \le k < l \le n$. 

\item 
$f_1$ has the form $f_1=\prod_{1 \le  k < l \le n} z_{k,l}^{-m_{k,l}}$ with $m_{k,l} \in \bbN$. 
\end{clist}
\end{lem}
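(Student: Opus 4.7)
The plan is to prove the lemma by directly partitioning the given factorization of $f$ according to which index pairs $(k_s,l_s)$ lie entirely in $[n]$. Recall that by \cref{dfn:3P:FrOn}, a typical spanning element of $F^r\clO_{m+n}^{\star\rmT}$ is a product of $r$ factors $z_{k_s,l_s}^{-m_s}$ (with $m_s\in\bbN$, $1\le k_s<l_s\le m+n$) times some $a\in\clO_{m+n}^{\rmT}$. My $f$ is of exactly this form.

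First, I will split the index set $[r]$ into
\[
 S_1 \ceq \{s\in[r] \mid k_s,l_s \in [n]\}, \qquad
 S_0 \ceq \{s\in[r] \mid l_s > n\} = [r]\bs S_1,
\]
set $r_1\ceq \#S_1$ and $r_0\ceq r-r_1$, and define
\[
 f_1 \ceq \prod_{s\in S_1} z_{k_s,l_s}^{-m_s}, \qquad
 f_0 \ceq a \cdot \prod_{s\in S_0} z_{k_s,l_s}^{-m_s}.
\]
Then $f=f_0 f_1$ by construction, giving condition (i). For (iii), collecting the factors of $f_1$ with common index pair yields $f_1=\prod_{1\le k<l\le n}z_{k,l}^{-m_{k,l}}$ with $m_{k,l}=\sum_{s\in S_1,(k_s,l_s)=(k,l)}m_s\in\bbN$, and in particular $f_1\in F^{r_1}\clO_n^{\star\rmT}$ (taking the trailing $a=1$ in \cref{dfn:3P:FrOn}). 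The element $f_0$ is likewise a product of $r_0$ negative-power factors times $a\in\clO_{m+n}^{\rmT}$, so $f_0\in F^{r_0}\clO_{m+n}^{\star\rmT}$.

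It remains to verify (ii). Each negative-power factor $z_{k_s,l_s}^{-m_s}$ of $f_0$ has $l_s>n$ by definition of $S_0$, so the linear form $z_{k_s,l_s}=z_{k_s}-z_{l_s}$ does not vanish on the diagonal $z_k=z_l$ for any $1\le k<l\le n$ (since the variable $z_{l_s}$ is independent from $z_k,z_l$). The factor $a\in\clO_{m+n}^{\rmT}$ is a polynomial in the variables $z_{k,l}$ and $\zeta_{k,l}^i$, hence has no poles at all. Therefore $f_0$ has no pole at $z_k=z_l$ for $1\le k<l\le n$, as required.

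I do not anticipate a genuine obstacle: the proof is a bookkeeping argument, with the only care needed being the verification that the resulting filtration indices satisfy $r_0+r_1=r$ and that the specific shape demanded of $f_1$ in (iii) is obtained after collecting equal index pairs. No delicate handling of the fermionic variables $\zeta_{k,l}^i$ is required, since they all sit inside the polynomial part $a$.
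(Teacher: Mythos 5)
Your proof is correct and amounts to the same decomposition as the paper's: the paper argues by induction on $r$, peeling off one factor $z_{k_1,l_1}^{-m_1}$ at a time and assigning it to $f_1$ or $f_0$ according to whether $1\le k_1<l_1\le n$, which when unrolled is exactly your partition of $[r]$ into $S_1$ and $S_0$. Your direct, non-inductive organization is fine, and your verifications of (i)--(iii) and of $r_0+r_1=r$ are all sound.
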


\begin{proof}
For $r=0$, the claim is trivial. For $r\ge 1$, we prove the claim by induction on $r$. 

Let $r=1$. Then $f$ is written in the form $f=z_{k,l}^{-m_{k,l}}a$. Thus, 
\begin{align*}
 (f_1,f_0) \ceq 
 \begin{cases}
 (z_{k,l}^{-m},a) & (1 \le k < l \le n) \\
 (1_\bbK,z_{k,l}^{-m}a) & (\text{otherwise})
 \end{cases}
\end{align*}
satisfies the conditions of the statement. 

Let $r>1$. We can write $f$ as 
\begin{align*}
 f = z_{k_1,l_1}^{-m_1}g, \quad g \ceq z_{k_2,l_2}^{-m_2} \cdots z_{k_r,l_r}^{-m_r}a, 
\end{align*}
and then $g \in F^{r-1}\clO_{m+n}$. 
Thus, by the induction hypothesis, there exists $g_0 \in F^{s_0}\clO_{m+n}^{\star\rmT}$ and $g_1 \in F^{s_1}\clO_n^{\star\rmT}$ with $(s_0,s_1) \in \bbN^2_{r-1}$ such that $f=z_{k_1,l_1}^{-m_1}g_1g_0$ and $g_0,g_1$ satisfies the condition (ii), (iii), respectively. Hence
\begin{align*}
 (f_1,f_0)\ceq 
 \begin{cases}
 (z_{k_1,l_1}^{-m_1}g_1,g_0) & (1\le k_1<l_1\le n) \\
 (g_1,z_{k_1,l_1}^{-m_1}g_0) & (\text{otherwise})
 \end{cases}
\end{align*}
satisfies the conditions of the statement.
\end{proof}

Next, we introduce an operad filtration of the $N_W=N$ SUSY chiral operad $\oPchW{V}$,
following \cite[\S8.5]{BDHK}.
Recall \cref{dfn:3P:filter} of a filtration on an $\clH_W$-supermodule $V=(V,\nabla)$.

\begin{dfn}
Let $(V, (V_r)_{r \in \bbN})$ be a filtered $\clH_W$-supermodule. For $n,r \in \bbN$, we denote
\begin{align*}
 F^rV^{\otimes n}\ceq \sum_{(r_1,\ldots,r_n) \in \bbN^n_r} V_{r_1} \otimes \cdots \otimes V_{r_n}. 
\end{align*}
For $n,r \in \bbN$, we define a linear subspace $F_r\oPchW{V}(n)\subset \oPchW{V}(n)$ by
\begin{align*}
 F_r\oPchW{V}(n) \ceq \bigl\{X \in \oPchW{V}(n) \mid 
  X(F^sV^{\otimes n} \otimes F^t\clO^{\star\rmT}_n)) \subset 
  V_{s+t-r, \nabla}[\Lambda_k]_{k=1}^n \ (s,t \in \bbN) \bigr\}
\end{align*}
with the convention $V_s \ceq 0$ for $s\in\bbZ_{<0}$. 
\end{dfn}

\begin{eg}
For $\clH_W$-supermodule $V$ with the trivial filtration, we have
\begin{align*}
 F_r\oPchW{V}(n)
 =\{X \in \oPchW{V}(n) \mid X(V^{\otimes n} \otimes F_{r-1}\clO^{\star\rmT}_n)=0\}
\quad (r\in\bbN)
\end{align*}
with the convention $F_{-1}\clO^{\star\rmT}_n \ceq 0$ for $r=0$. 
\end{eg}

By \cref{lem:3P:FrOn} \ref{i:lem:3P:FrOn:3}, $F_r\oPchW{V}(n)$ is a right $\frS_n$-submodule of $\oPchW{V}(n)$ for $n,r \in \bbN$. Thus, for each $r \in \bbN$, we have an $\frS$-submodule $F_r\oPchW{V} \ceq \bigl(F_r\oPchW{V}(n)\bigr)_{n \in \bbN}$ of $\oPchW{V}$. Moreover:

\begin{prp}\label{prp:3P:filtPch}
For a filtered $\clH_W$-supermodule $(V, (V_r)_{r \in \bbN})$, 
the sequence $\bigl(F_r\oPchW{V}\bigr)_{r \in \bbN}$ is an operad filtration of $\oPchW{V}$. 
\end{prp}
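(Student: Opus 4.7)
The plan is to verify the two axioms in \cref{dfn:3P:opfilt} for the sequence $\bigl(F_r\oPchW{V}\bigr)_{r \in \bbN}$. For condition (i), the unit $\id_V \in \oPchW{V}(1)$ sends $v \otimes c \mto cv$, and since $\clO_1^{\star\rmT} = \bbK$ with $F^t\clO_1^{\star\rmT} = \bbK$ for every $t$, the image of $V_s \otimes F^t\clO_1^{\star\rmT}$ lies in $V_s \subset V_{s+t}$, giving $\id_V \in F_0\oPchW{V}(1)$; the inclusion $F_r\oPchW{V} \supset F_{r+1}\oPchW{V}$ is then immediate from $V_{s+t-r-1} \subset V_{s+t-r}$. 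By \cref{lem:3P:opfilt}, condition (ii) reduces to showing $X \circ_1 Y \in F_{r+s}\oPchW{V}(M)$ with $M \ceq m+n-1$, for any $X \in F_r\oPchW{V}(m)$ and $Y \in F_s\oPchW{V}(n)$.

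To establish this, fix input $v_1 \otimes \dotsb \otimes v_M \otimes f$ with $v_i \in V_{r_i}$, $r' \ceq r_1 + \dotsb + r_M$, and $f \in F^{t'}\clO_M^{\star\rmT}$. Applying \cref{lem:3P:Frdecomp}, decompose $f = f_0 f_1$ with $f_0 \in F^{t'_0}\clO_M^{\star\rmT}$ having no poles at $z_k = z_l$ for $1 \le k < l \le n$, and $f_1 \in F^{t'_1}\clO_n^{\star\rmT}$, with $t'_0 + t'_1 = t'$. This is exactly the decomposition prescribed by the definition of $Y_1 \odot \dotsb \odot Y_m$ when $Y_1 = Y$ and $Y_j = \id_V$ for $j \ge 2$. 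Since $Y \in F_s$, applying $Y$ to $w_1 \otimes f_1$ with $w_1 = v_1 \otimes \dotsb \otimes v_n \in F^{r_1+\dotsb+r_n}V^{\otimes n}$ yields an element of $V_{(r_1+\dotsb+r_n) + t'_1 - s, \nabla}[\Lambda_k]_{k=1}^n$, which via \eqref{eq:2P:VNL=VL} is a polynomial in $\Lambda_1, \dotsc, \Lambda_{n-1}$ whose coefficients lie in $V_{(r_1+\dotsb+r_n) + t'_1 - s}$. The subsequent substitution $\Lambda_k \mto \Gamma_k = \Lambda_k - \pdd_{Z_k}$ preserves the $V$-filtration degree of the coefficients, while the operators $\pdd_{Z_k}$ act on the function part and preserve the $F^{\bullet}\clO$-filtration because $\pdd_{z_k}(z_{k,l}^{-m}) = -m z_{k,l}^{-m-1}$ stays within the same polar-degree stratum.

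The restriction $f_0|_{Z_k = Z_n \, (k \in [n])}$ is well-defined since $f_0$ has no poles among the first $n$ coordinates, and it lies in $F^{t'_0}\clO_m^{\star\rmT}$: each polar factor of $f_0$ has at least one index outside $[n]$, so after restriction and renaming $\{n,n+1,\dotsc,M\} \lrto \{1,\dotsc,m\}$ it becomes a polar factor in $\clO_m^{\star\rmT}$ of the same multiplicity. Tensoring with $v_{n+1}, \dotsc, v_M \in V_{r_{n+1}} \otimes \dotsb \otimes V_{r_M}$ places the input of $X$ in total $V$-degree $r' + t'_1 - s$ and $\clO$-degree $t'_0$, so that $X \in F_r$ produces an output in $V_{r' + t' - (r+s), \nabla}[\Lambda_k]_{k=1}^M$, proving $X \circ_1 Y \in F_{r+s}\oPchW{V}(M)$. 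The main obstacle will be unpacking the $\Gamma_k$-substitution carefully enough to justify that the combined input to $X$ genuinely inhabits the bifiltration level $(r'+t'_1-s, \, t'_0)$; this rests on the twin observations that the $V$-part of any $\Lambda$-monomial is unchanged under substitution and that the induced $\pdd_{Z_k}$-action preserves the $F^{\bullet}\clO_M^{\star\rmT}$-filtration.
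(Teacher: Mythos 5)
Your proof is correct and follows essentially the same route as the paper: condition (i) is checked directly, and condition (ii) is reduced via \cref{lem:3P:opfilt} to the infinitesimal composition $X \circ_1 Y$, which is then handled with the decomposition $f = f_0 f_1$ of \cref{lem:3P:Frdecomp}. The paper leaves the bookkeeping implicit, while you have spelled out the degree-tracking through the $\Gamma_k$-substitution and the restriction $f_0|_{Z_k=Z_n}$; both of those verifications are sound (in particular, $\pdd_{z_k}$ and the $T^{(k)}$-terms preserve the relevant filtration levels, and merging of polar factors under restriction only decreases the $F^{\bl}\clO$-degree, which is harmless since that filtration is increasing).
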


\begin{proof}
The condition (i) in \cref{dfn:3P:opfilt} is clear. 
The condition (ii) in \cref{dfn:3P:opfilt} follows from \cref{lem:3P:opfilt} using \cref{lem:3P:Frdecomp}. 
\end{proof}

Thus, by the argument before \cref{dfn:3P:groP}, we have the associated graded operad $\gr\oPchW{V}$ of the filtered superoperad $\bigl(\oPchW{V}, (F_r\oPchW{V})_{r\in\bbN}\bigr)$: 
\begin{align*}
 \gr  \oPchW{V} = \bigoplus_{r\in\bbN} F_r\oPchW{V}/F_{r+1}\oPchW{V}. 
\end{align*}

Now, recall \eqref{eq:1P:VALbraX}, \eqref{eq:1P:VAproX} and \cref{thm:1P:opNWVA}. 

\begin{prp}\label{prp:3P:filNWVA}
Let $(V, (V_r)_{r\in\bbN})$ be a filtered $\clH_W$-supermodule.
\begin{enumerate}
\item 
For $X \in \oPchW{\Pi^{N+1}V}(2)_{\ol{1}}$, the condition $X \in F^1\oPchW{\Pi^{N+1}V}(2)_{\ol{1}}$ is equivalent to the following condition: 
For $a \in V_r$ and $b \in V_s$ with $r,s \in \bbN$, we have
\begin{align}\label{eq:3P:opfilVA}
 [a_\Lambda b]_X \in V_{r+s-1}[\Lambda], \quad  \mu_X(a \otimes b) \in V_{r+s}. 
\end{align}

\item 
The correspondence $X \mto ([\cdot_\Lambda \cdot]_X, \mu_X)$ gives a bijection
\begin{align*}
 \MC&\bigl(L\bigl(\oPchW{\Pi^{N+1}V}\bigr)\bigr)_{\ol{1}}
     \cap F_1\oPchW{\Pi^{N+1}V}(2)_{\ol{1}}\\
    &\lsto \{\text{non-unital filtered $N_W=N$ SUSY VA structures on $(V, (V_r)_{r \in \bbN})$}\}. 
\end{align*}
\end{enumerate}
\end{prp}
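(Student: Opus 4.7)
The plan is to establish Part (1) as the main work and deduce Part (2) as a corollary of (1) combined with \cref{thm:1P:opNWVA}. For (1), the first observation is that for $n=2$ the filtration on $\clO_n^{\star\rmT}$ degenerates: $F^0 \clO_2^{\star\rmT} = \bbK$ and $F^t \clO_2^{\star\rmT} = \clO_2^{\star\rmT}$ for every $t \ge 1$. Consequently the membership $X \in F_1 \oPchW{\Pi^{N+1}V}(2)$ unfolds into exactly two binding inclusions: $X(V_r \otimes V_s \otimes 1) \subset V_{r+s-1,\nabla}[\Lambda_1,\Lambda_2]$ (case $t=0$) and $X(V_r \otimes V_s \otimes \clO_2^{\star\rmT}) \subset V_{r+s,\nabla}[\Lambda_1,\Lambda_2]$ (case $t=1$, which dominates $t \ge 2$). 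The forward direction is then immediate from unwinding definitions: the first inclusion translates via \eqref{eq:2P:VNL=VL} and \eqref{eq:1P:VALbraX} into $[a_\Lambda b]_X \in V_{r+s-1}[\Lambda]$, and the second specialised to $f = z_{1,2}^{-1}$ combined with \eqref{eq:1P:VAproX} yields $\mu_X(a \otimes b) \in V_{r+s}$.

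For the converse I will exploit $\clD_2^{\rmT}$-equivariance of $X$ to reduce from an arbitrary $f \in \clO_2^{\star\rmT}$ to $f \in \{1, z_{1,2}^{-1}\}$. Writing $f$ as a finite sum of terms $z_{1,2}^{-m}$ with polynomial factors in $z_{1,2}$ and $\zeta_{1,2}^i$, those polynomial factors act on the target $V_\nabla[\Lambda_1,\Lambda_2]$ through $\lambda_k$ and $\theta_k^i$ alone and preserve the $V$-level; meanwhile, $z_{1,2}^{-m}$ for $m \ge 2$ reduces inductively to $z_{1,2}^{-1}$ via the $\pdd_{z_1}$-sesquilinearity $-m \, X(a \otimes b \otimes z_{1,2}^{-m-1}) = X(T a \otimes b \otimes z_{1,2}^{-m}) + \lambda_1 X(a \otimes b \otimes z_{1,2}^{-m})$, using $\clH_W$-stability of $V_r$ to keep $T a \in V_r$. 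The case $f = 1$ is the hypothesis on $[\cdot_\Lambda\cdot]_X$ itself, so the core case is $f = z_{1,2}^{-1}$.

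Set $P(\Lambda) \ceq X_{\Lambda,-\Lambda-\nabla}(a \otimes b \otimes z_{1,2}^{-1}) \in V[\Lambda]$ for $a \in V_r, b \in V_s$. The key identity $X(a \otimes b \otimes 1) = X(a \otimes b \otimes z_{1,2}^{-1}) \cdot z_{1,2}$ (from $z_{1,2} \cdot z_{1,2}^{-1} = 1$), combined with the right action $z_{1,2} = \pdd_{\lambda_2} - \pdd_{\lambda_1}$ on the target, translates through \eqref{eq:2P:VNL=VL} and the chain rule into $\pdd_\lambda P(\Lambda) = \pm [a_\Lambda b]_X$. Integrating in $\lambda$ gives
\begin{align*}
 P(\Lambda) = \pm \int_0^\lambda [a_{\Lambda'} b]_X \, d\lambda' + C(\theta^1,\dotsc,\theta^N),
\end{align*}
where $C$ is free of $\lambda$, and the integral lies in $V_{r+s-1}[\Lambda] \subset V_{r+s}[\Lambda]$ by hypothesis. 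To control $C$, I use the sesquilinearity $X(S^i a \otimes b \otimes z_{1,2}^{-1}) = \pm \theta_1^i X(a \otimes b \otimes z_{1,2}^{-1})$, which follows from $\pdd_{\zeta_1^i} z_{1,2}^{-1} = 0$; iterating gives $X(S^I a \otimes b \otimes z_{1,2}^{-1}) = \pm \theta_1^I P(\Lambda)$. Taking $\Res_\Lambda(\lambda^{-1}\,\cdot\,)$ of both sides and applying \eqref{eq:1P:VAproX} identifies the coefficient of $\theta^{[N] \bs I}$ in $C(\theta)$ with $\pm \sigma(I)\,\mu_X(S^I a \otimes b)$. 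Since $V_r$ is $\clH_W$-stable, $S^I a \in V_r$, so the hypothesis forces $\mu_X(S^I a \otimes b) \in V_{r+s}$; hence $C \in V_{r+s}[\theta^1,\dotsc,\theta^N]$ and therefore $P \in V_{r+s}[\Lambda]$, completing the converse of (1).

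Part (2) then follows at once: \cref{thm:1P:opNWVA} supplies the bijection between $\MC\bigl(L(\oPchW{\Pi^{N+1}V})\bigr)_{\ol 1}$ and non-unital $N_W=N$ SUSY VA structures on $(V,\nabla)$, and (1) characterizes the $F_1$-constraint on the Maurer--Cartan side as exactly the filtration inequalities of \cref{dfn:3P:filter} on the VA side. The main obstacle is the converse of (1): the hypotheses $[a_\Lambda b]_X \in V_{r+s-1}[\Lambda]$ and $\mu_X(a \otimes b) \in V_{r+s}$ a priori control only a single residue of $P(\Lambda)$, and the full reconstruction rests on pairing $\lambda$-integration against $[\cdot_\Lambda\cdot]_X$ (for the $\lambda$-dependence of $P$) with $S^I$-shifts of $\mu_X$ (for the $\theta$-dependent integration constant), both unified by the $\clD_2^{\rmT}$-equivariance of $X$.
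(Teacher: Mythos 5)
Your proof is correct and follows essentially the same route as the paper's: the forward direction by evaluating at $f=1_\bbK$ and $f=z_{1,2}^{-1}$ using $F^0\clO_2^{\star\rmT}=\bbK$ and $F^1\clO_2^{\star\rmT}=\clO_2^{\star\rmT}$, and the converse by reducing an arbitrary $f$ to these two cases via the right $\clD_2^{\rmT}$-module structure (polynomial factors acting by $\pdd_{\lambda_k},\pdd_{\zeta_k^i}$ preserve the filtration level, and negative powers are handled inductively through the $\pdd_{z_1}$-sesquilinearity together with $\clH_W$-stability of $V_r$). Your reconstruction of $X_{\Lambda,-\Lambda-\nabla}(a\otimes b\otimes z_{1,2}^{-1})$ from the $\lambda$-integration of $[a_\Lambda b]_X$ combined with the residues $\mu_X(S^I a\otimes b)$ correctly supplies the justification for the inclusion \eqref{eq:3P:abzs}, which the paper states without detail.
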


\begin{proof}
Let $X \in F_1\oPchW{\Pi^{N+1}V}(2)_{\ol{1}}$, $a \in V_r$ and $b \in V_s$. 
Since $1_\bbK\in F^0\clO_2^{\star\rmT}$ and $z_{1, 2}^{-1}\in F^1\clO_2^{\star\rmT}$, we have
\begin{align*}
 X_{\Lambda_1,\Lambda_2}(a \otimes b \otimes 1_\bbK) \in V_{r+s-1,\nabla}[\Lambda_k]_{k=1,2}, \quad 
 X_{\Lambda_1,\Lambda_2}(a \otimes b \otimes z_{1,2}^{-1}) \in V_{r+s,\nabla}[\Lambda_k]_{k=1,2}, 
\end{align*}
which implies $[a_\Lambda b]_X \in V[\Lambda]_{r+s-1}$ and $\mu_X(a \otimes b)\in V_{r+s}$. 

Conversely, suppose that the condition \eqref{eq:3P:opfilVA} holds. 
Then, for $a \otimes b \in F^sV^{\otimes 2}$, we have
\begin{align}
\label{eq:3P:ab1s-1}
&X_{\Lambda, -\Lambda-\nabla}(a \otimes b \otimes 1_\bbK) \in V_{s-1}[\Lambda], \\
\label{eq:3P:abzs}
&X_{\Lambda, -\Lambda-\nabla}(a \otimes b \otimes z_{1,2}^{-1}) \in V_s[\Lambda]. 
\end{align}
By \eqref{eq:3P:ab1s-1}, we have 
$X(F^sV^{\otimes 2} \otimes F^0\clO_2^{\star\rmT}) \subset V_{s-1,\nabla}[\Lambda_k]_{k=1,2}$. 
Also, by \eqref{eq:3P:ab1s-1}, \eqref{eq:3P:abzs} and using the sesquilinearity of $X$, we get
\begin{align*}
&X_{\Lambda_1,\Lambda_2}\bigl(a \otimes b \otimes Z_{1,2}^{m   |I}\bigr) 
 \in V_{s-1,\nabla}[\Lambda_k]_{k=1,2} \subset V_{s,\nabla}[\Lambda_k]_{k=1,2}, \\
&X_{\Lambda_1,\Lambda_2}\bigl(a \otimes b \otimes Z_{1,2}^{-m-1|I}\bigr)
 \in V_{s,  \nabla}[\Lambda_k]_{k=1,2}
\end{align*}
for each $m \in \bbN$ and $I \subset [N]$. Thus, for $t \in \bbZ_{>0}$, 
\begin{align*}
 X(F^sV^{\otimes 2} \otimes F^t\clO_n^{\star\rmT}) \in V_{s,\nabla}[\Lambda_k]_{k=1,2}
 \subset V_{s+t-1,\nabla}[\Lambda_k]_{k=1,2}. 
\end{align*}
Hence we have $X\in F^1\oPchW{\Pi^{N+1}V}(2)_{\ol{1}}$. 
\end{proof}

\subsection{An embedding of $\gr\oPchW{V}$ in $\oPclW{\gr V}$}\label{ss:3P:emb}

In this subsection, we construct an injective morphism of the associated graded $\gr\oPchW{V}$ of the $N_W=N$ SUSY chiral operad into the $N_W=N$ SUSY coisson operad $\oPclW{\gr V}$, following the non-SUSY argument in \cite[\S10.4]{BDHK}. 

First, we introduce a grading on the SUSY coisson operad $\oPclW{V}$.
Recall the terminology on quivers (\cref{dfn:2P:quiver}), the set $\oQ(n)$ of $n$-quivers without loops (\cref{dfn:2P:oQ}), and the notation $X^Q\colon V^{\otimes n} \to V_{\nabla}[\Lambda_k]_{k=1}^n$ for an operation $X \in \oPclW{V}(n)$ and a quiver $Q \in \oQ(n)$ (see \eqref{eq:2P:Pcl-X}).

\begin{dfn}\label{dfn:3P:grPclW}
Let $V=\bigoplus_{r \in \bbN}V_r$ be a graded $\clH_W$-supermodule. 
For $n,r \in \bbN$, we denote by $\oPclW{V,r}(n)$ the linear subspace of $\oPclW{V}(n)$ consisting of $X \in \oPclW{V}(n)$ such that
\begin{align*}
 X^Q(v_1\otimes \cdots \otimes v_n)\in V_{s+t-r, \nabla}[\Lambda_k]_{k=1}^n \quad 
 (v_i \in V_{s_i}, \, Q \in \oQ(n) \text{ with $t$ edges})
\end{align*}
for $s,t \in \bbN$ and $(s_1,\ldots,s_n) \in \bbN^n_s$.
Then we have a graded superoperad $\oPclW{V}=\bigoplus_{r \in \bbN}\oPclW{V,r}$
\end{dfn}

\begin{dfn}
Let $n \in \bbN$. For a quiver $Q = (Q_0,Q_1,s,t) \in \oQ(n)$, we denote
\begin{align*}
 f_Q \ceq \prod_{k,l=1}^n z_{k,l}^{-m_{k,l}} \in \clO_n^{\star\rmT}, 
\end{align*}
where $m_{k,l}$ is the number of edges $\alpha \in Q_1$ with $s(\alpha)=k$ and $t(\alpha)=l$. 
\end{dfn}

The element $f_Q$ is denoted by $p_Q$ in \cite[\S8.3]{BDHK}.
The following statements are easy to verify, and we omit the proof. 

\begin{lem}\label{lem:3P:fQ}
Let $n,r \in \bbN$. 
\begin{enumerate}
\item \label{i:lem:3P:fQ:5}
For $Q \in \oQ(n)$ and $\sigma\in\frS_n$, we have $\sigma f_Q=f_{\sigma Q}$.

\item \label{i:lem:3P:fQ:1}
For $Q \in \oQ(n)$ with $r$ edges, we have $f_Q \in F_r\clO_n^{\star\rmT}$. 

\item \label{i:lem:3P:fQ:2}
For $Q \in \oQ(n) \bs \oQ_{\ac}(n)$ with $r$ edges, we have $f_Q \in F_{r-1}\clO_n^{\star\rmT}$. 

\item \label{i:lem:3P:fQ:3}
For $Q \in \oQ(n)$ and a directed cycle $(\alpha_1,\ldots,\alpha_l)$ of $Q$, 
we have $\sum_{k=1}^l f_{Q \bs \alpha_k}=0$ (see \cref{lem:2P:dgraph} for $Q \bs \alpha_k$). 

\item \label{i:lem:3P:fQ:4}
For $Q \in \oQ(n)$ and a connected component $I^a$ of the underlying graph $\ul{Q}$, 
we have $\sum_{k \in I^a}\pdd_{z_k}f_Q=0$ 
(recall that $I^a$ is regarded as a subset of the vertex set $Q_0$).  
\end{enumerate}
\end{lem}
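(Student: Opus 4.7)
The plan is to verify the five claims separately. Parts (i), (ii), (iv), and (v) follow directly from the definition $f_Q = \prod_{k,l} z_{k,l}^{-m_{k,l}}$ by short computations, while only part (iii) requires a genuine reduction.

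For part (i), I would unfold the $\frS_n$-action $(\sigma f)(Z_1,\dotsc,Z_n) = f(Z_{\sigma(1)},\dotsc,Z_{\sigma(n)})$ to get $\sigma f_Q = \prod_{k,l} z_{\sigma(k),\sigma(l)}^{-m_{k,l}}$, then match this with $f_{\sigma Q}$ using that the edge multiplicity between $\sigma(k)$ and $\sigma(l)$ in $\sigma Q$ is exactly $m_{k,l}$. For part (ii), grouping the $r$ edge factors of $f_Q$ by their unordered endpoints (via $z_{l,k} = -z_{k,l}$) expresses $f_Q$ as a product of at most $r$ distinct reciprocals $z_{k_s,l_s}^{-m_s}$ with $k_s < l_s$, so $f_Q \in F^r\clO_n^{\star\rmT}$. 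For part (iv), telescoping along the directed cycle gives $\sum_{k=1}^\ell z_{s(\alpha_k),t(\alpha_k)} = 0$, and since $f_{Q \bs \alpha_k} = f_Q \cdot z_{s(\alpha_k),t(\alpha_k)}$, summing over $k$ yields $0$. For part (v), logarithmic differentiation gives $\pdd_{z_k} f_Q = f_Q\bigl(-\sum_{l} m_{k,l} z_{k,l}^{-1} + \sum_{k'} m_{k',k} z_{k',k}^{-1}\bigr)$; when summed over $k \in I^a$, each edge $(k',l')$ of $Q$ contributes once with each sign (using that any edge with one endpoint in $I^a$ has its other endpoint in $I^a$, since $I^a$ is a connected component of $\ul{Q}$), and everything cancels.

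For part (iii), I would split on the cycle structure of $\ul{Q}$. If two edges of $Q$ share both endpoints, then the grouping of part (ii) contributes at most $r-1$ distinct reciprocal factors, so $f_Q \in F^{r-1}\clO_n^{\star\rmT}$ immediately. Otherwise the cycle is simple and passes through distinct vertices $v_1,\dotsc,v_\ell$ with $\ell \ge 3$, and $f_Q$ contains a subproduct $\prod_{i=1}^\ell u_i^{-1}$ where $u_i = \pm z_{v_i,v_{i+1}}$ (signs depending on the orientation of each edge relative to the chosen cycle direction); telescoping gives $\sum_{i=1}^\ell u_i = 0$. An induction on $\ell$, using at each step the partial-fraction identity $\frac{1}{u(u+v)} = \frac{1}{v}\bigl(\frac{1}{u} - \frac{1}{u+v}\bigr)$ after substituting $u_\ell = -(u_1+\cdots+u_{\ell-1})$, rewrites $\prod_{i=1}^\ell u_i^{-1}$ as a $\bbK$-linear combination of products each omitting at least one of the $u_i^{-1}$ (with possibly higher multiplicities on the survivors). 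This reduces the distinct reciprocal count of $f_Q$ by at least one, placing $f_Q$ in $F^{r-1}\clO_n^{\star\rmT}$.

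The main obstacle is making the partial-fraction reduction in part (iii) rigorous and checking that each resulting summand has the form required by the spanning set of $F^{r-1}\clO_n^{\star\rmT}$, namely at most $r - 1$ distinct reciprocal factors times an element of $\clO_n^{\rmT}$. The base case $\ell = 3$ is the calculation $\frac{1}{u_1 u_2 u_3} = -\frac{1}{u_1 u_2^2} - \frac{1}{u_2^2 u_3}$ obtained by substituting $u_3 = -(u_1+u_2)$ and expanding; the inductive step reduces a length-$\ell$ cycle to a length-$(\ell-1)$ one by merging two consecutive edges, after which the inductive hypothesis applies.
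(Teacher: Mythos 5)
Your proof is correct; the paper itself omits the argument entirely ("easy to verify"), and what you supply is exactly the standard argument, with the only genuinely nontrivial part being (iii), which you handle by the same partial-fraction reduction used for the non-SUSY analogue in Bakalov--De Sole--Heluani--Kac. Your base identity $\tfrac{1}{u_1u_2u_3}=-\tfrac{1}{u_1u_2^2}-\tfrac{1}{u_2^2u_3}$ under $u_1+u_2+u_3=0$ checks out, and the case split (parallel edges versus a simple cycle on $\ge 3$ distinct vertices) is exhaustive for a non-acyclic underlying multigraph. One small imprecision: in the inductive step the merged form $v=u_1+u_2=z_{v_1,v_3}$ is not one of the original $u_i$, so the resulting summands do not literally "omit one of the $u_i^{-1}$"; they are products of at most $\ell-1$ distinct reciprocals of $z$-differences drawn from $\{u_1,u_2,v,u_3,\dotsc,u_\ell\}$. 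This is harmless, since membership in $F^{r-1}\clO_n^{\star\rmT}$ only constrains the number of distinct factors $z_{k,l}^{-m}$, not which pairs $(k,l)$ occur, but the statement of the induction hypothesis should be phrased accordingly.
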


The following claim is shown in the proof of \cite[Proposition 8.7]{BDHK} (see also \cite[Examples 9.1, 9.5]{BDHK}). 

\begin{lem}\label{lem:3P:f1Qf0}
Let $m,n \in \bbZ_{>0}$, and set $\nu \ceq (n,1,\ldots,1) \in \bbN^m_{m+n-1}$. 
For $j \in [n]$ and $Q \in \oQ(n)$, if $\Delta_0^\nu(Q) \in \oQ_{\ac}(m)$, then we have
\begin{align*}
  \rst{\pdd_{z_j}f_{\Delta_1^\nu(Q)}^{-1}f_Q}{z_1=\cdots=z_n}
= -\sum_{i\in\clE_Q^\nu(j)} \pdd_{z_i} f_{\Delta_0^\nu(Q)}
\end{align*}
under the identification $\bbK[z_n^{\pm1},\ldots,z_{m+n-1}^{\pm1}] \cong \bbK[z_1^{\pm1},\ldots,z_m^{\pm1}]$, $z_{i+n-1} \mto z_i$. 
\end{lem}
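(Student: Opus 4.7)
The plan is a direct logarithmic-differentiation computation. First I would record the factorization
\begin{align*}
 f_{\Delta_1^\nu(Q)}^{-1} f_Q = \prod_{\alpha \in \Delta_0^\nu(Q)_1} z_{s(\alpha),t(\alpha)}^{-1},
\end{align*}
using that the edges of $Q$ outside $\Delta_1^\nu(Q)$ are precisely those joining distinct blocks, hence the edges of $\Delta_0^\nu(Q)$. A term-by-term check then shows that under $z_1=\cdots=z_n$ and the identification $z_{i+n-1}\mapsto z_i$ ($i\in[m]$), each factor $z_{s(\alpha),t(\alpha)}^{-1}$ becomes the corresponding factor $z_{s_0(\alpha),t_0(\alpha)}^{-1}$ of $f_{\Delta_0^\nu(Q)}$, so the whole product restricts to $f_{\Delta_0^\nu(Q)}$.

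Applying $\pdd_{z_j}$ and using $\pdd_{z_j}z_{a,b}^{-1}=-z_{a,b}^{-2}(\delta_{j,a}-\delta_{j,b})$, only the edges $\alpha \in \Delta_0^\nu(Q)_1$ incident to $j$ survive, and a short sign check shows both orientations yield the common value $-(z_j-z_{i'(\alpha)})^{-1}$ in the log derivative, where $i'(\alpha)$ is the other endpoint of $\alpha$. Since $j$ lies in block $1$, this other endpoint lies in some block $c(\alpha)\ge 2$. After restriction I would obtain
\begin{align*}
 \rst{\pdd_{z_j}(f_{\Delta_1^\nu(Q)}^{-1}f_Q)}{z_1=\cdots=z_n}
 = -f_{\Delta_0^\nu(Q)} \sum_{c\in C_j} (z_1-z_c)^{-1},
\end{align*}
where $C_j\subset\{2,\dotsc,m\}$ collects the neighbors $c$ of vertex $1$ in $\ul{\Delta_0^\nu(Q)}$ such that the edge $\{1,c\}$ comes from a $Q$-edge whose block-$1$ endpoint is exactly $j$ (the hypothesis $\Delta_0^\nu(Q)\in\oQ_\ac(m)$ forbids parallel edges, making this identification clean).

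The remainder is a combinatorial identity in the forest $\ul{\Delta_0^\nu(Q)}$. For $c\in C_j$, let $T_c\subset[m]$ be the vertex set of the subtree rooted at $c$ when the edge $\{1,c\}$ is removed. Unpacking \cref{dfn:2P:extconn} in the tree setting, I would first establish
\begin{align*}
 \clE_Q^\nu(j) = \bigsqcup_{c\in C_j} T_c,
\end{align*}
using that any walk from $i\ne 1$ to $1$ with distinct edges in the forest is the unique simple tree path and reaches $1$ only at its end via some neighbor $c$, while all earlier edges lie between vertices of $\{2,\dotsc,m\}$ and correspond to $Q$-edges with no block-$1$ endpoint. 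Then, for each $c\in C_j$, the translation invariance $(\pdd_{z_{s(\beta)}}+\pdd_{z_{t(\beta)}})z_{s(\beta),t(\beta)}^{-1}=0$ annihilates every edge $\beta \in \Delta_0^\nu(Q)_1$ whose endpoints lie wholly in or wholly outside $T_c$, while $\{1,c\}$ is the unique forest bridge between $T_c$ and its complement, so a sign check on the two possible orientations of $\{1,c\}$ yields
\begin{align*}
 \sum_{i\in T_c}\pdd_{z_i}f_{\Delta_0^\nu(Q)} = \frac{f_{\Delta_0^\nu(Q)}}{z_1-z_c}.
\end{align*}
Summing over $c\in C_j$ matches the LHS. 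The main obstacle will be the combinatorial identification $\clE_Q^\nu(j)=\bigsqcup_{c\in C_j}T_c$: the definition of external connectedness involves walks with no edge repetitions in a potentially tangled quiver, and acyclicity must be used both to reduce such walks to unique simple tree paths and to rule out alternative walks via parallel edges or detours through cycles.
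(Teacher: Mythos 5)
Your argument is correct and, in contrast to the paper, self-contained: the paper offers no proof of \cref{lem:3P:f1Qf0} and simply defers to the proof of \cite[Proposition 8.7]{BDHK}, so the only comparison to make is with that citation. Your route --- factor $f_{\Delta_1^\nu(Q)}^{-1}f_Q$ over the edges of $\Delta_0^\nu(Q)$, take the logarithmic $z_j$-derivative to get $-f_{\Delta_0^\nu(Q)}\sum_{c\in C_j}(z_1-z_c)^{-1}$ after restriction, and match the right-hand side via $\clE_Q^\nu(j)=\bigsqcup_{c\in C_j}T_c$ together with $\sum_{i\in T_c}\pdd_{z_i}f_{\Delta_0^\nu(Q)}=f_{\Delta_0^\nu(Q)}\,(z_1-z_c)^{-1}$ --- is exactly the computation behind the cited result, and your sign checks and the translation-invariance step all go through. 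One point deserves to be made explicit. Your identification of $\clE_Q^\nu(j)$ tacitly reads ``path from $i$ to $k$ without repeated edges'' as a genuine walk $i=v_0,\alpha_1,v_1,\dotsc,\alpha_r,v_r=k$; under the literal \cref{dfn:2P:graph}, a single edge $\alpha$ with $1\in f(\alpha)$ already lies in $\ul{\Delta_0^\nu(Q)}(1,1)$, which would force $1\in\clE_Q^\nu(j)$ whenever $C_j\neq\emptyset$ and would falsify the identity already for $m=2$, $n=1$, where $\pdd_{z_1}z_{1,2}^{-1}+\pdd_{z_2}z_{1,2}^{-1}=0$ while the left-hand side is $-z_{1,2}^{-2}$. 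Your reading --- under which a forest has no nontrivial closed edge-disjoint walk, so $1\notin\clE_Q^\nu(j)$ in the acyclic case --- is the one consistent with \cite[Definition 9.4]{BDHK} and with the lemma; it would strengthen the write-up to state $1\notin\clE_Q^\nu(j)$ explicitly, since that is precisely where the disjoint-union formula could otherwise break.
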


Hereafter until the end of this \cref{ss:3P:emb}, 
we fix a filtered $\clH_W$-supermodule $(V, (V_r)_{r\in\bbN})$. 

\begin{dfn}
Let $n,r \in \bbN$. For $X \in F_r\oPchW{V}(n)$, we define a linear map
\begin{align*}
 \wt{X}^r\colon (\gr V)^{\otimes n} \otimes \bbK\oQ(n) \lto (\gr V)_\nabla[\Lambda_k]_{k=1}^n
\end{align*}
by 
\begin{align*}
 \wt{X}^r(\ol{v}_1^{s_1} \otimes \cdots \otimes \ol{v}_n^{s_n} \otimes Q)
 \ceq \ol{X(v_1 \otimes \cdots \otimes v_n \otimes f_Q)}^{s+t-r}
 \quad (v_i \in V_{s_i}, \, Q \in \oQ(n) \text{ with $t$ edges}). 
\end{align*}
for each $s,t \in \bbN$ and $(s_1,\ldots,s_n) \in \bbN^n_s$. 
\end{dfn}

\begin{lem}\label{lem:3P:FrPchPcl}
Let $n,r \in \bbN$. 
\begin{enumerate}
\item 
For any $X \in F_r\oPchW{V}(n)$, we have $\wt{X}^r \in \oPclW{\gr V, r}(n)$. 

\item 
The map 
\begin{align*}
 F_r\oPchW{V}(n) \lto \oPclW{\gr V, r}(n), \quad X \lmto \wt{X}^r
\end{align*}
is a right $\frS_n$-supermodule homomorphism. 

\item 
For $X \in F_r\oPchW{V}(n)$, we have $\wt{X}^r=0$ if and only if $X \in F_{r+1}\oPclW{V}(n)$.
\end{enumerate}
\end{lem}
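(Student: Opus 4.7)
The overall plan is to reduce each claim to concrete identities among the elements $f_Q \in \clO_n^{\star\rmT}$ collected in \cref{lem:3P:fQ}, combined with the right $\clD_n^\rmT$-equivariance built into \cref{dfn:1P:O} and the fact that the $\clD_n^\rmT$-action on $V_\nabla[\Lambda_k]_{k=1}^n$ preserves the filtration induced from $(V_r)_{r \in \bbN}$.

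For part (1), I first check that $\wt{X}^r$ is well defined: modifying a representative $v_i \in V_{s_i}$ by an element of $V_{s_i-1}$ drops $v_1 \otimes \dotsb \otimes v_n$ into $F^{s-1}V^{\otimes n}$, and since $f_Q \in F^t\clO_n^{\star\rmT}$ by \cref{lem:3P:fQ} \ref{i:lem:3P:fQ:1}, the condition $X \in F_r\oPchW{V}(n)$ places $X(v \otimes f_Q)$ in $V_{s+t-r-1,\nabla}[\Lambda]$, whose class in $V_{s+t-r,\nabla}[\Lambda]/V_{s+t-r-1,\nabla}[\Lambda]$ vanishes. The two cycle relations of \cref{dfn:2P:Pcl} then follow from \cref{lem:3P:fQ} \ref{i:lem:3P:fQ:2} and \ref{i:lem:3P:fQ:3}: in the first case $f_Q$ already lies in $F^{t-1}\clO_n^{\star\rmT}$, forcing the class to vanish; in the second the identity $\sum_q f_{Q \bs \alpha_q} = 0$ in $\clO_n^{\star\rmT}$ transports directly through $X$. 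The three sesquilinearity conditions reduce, via the right $\clD_n^\rmT$-equivariance, to: (a) $f_Q z_{k,l} \in F^{t-1}\clO_n^{\star\rmT}$ for $k, l$ in a common connected component of $\ul{Q}$, proved by decomposing $z_{k,l}$ along a path in $\ul{Q}$ and using each edge to reduce a pole order; (b) the identity $\sum_{k \in I^a}\pdd_{z_k}f_Q = 0$ of \cref{lem:3P:fQ} \ref{i:lem:3P:fQ:4}; and (c) $\pdd_{\zeta_k^i}f_Q = 0$, since $f_Q$ involves only the $z_{k,l}$'s. The grading membership $\wt{X}^r \in \oPclW{\gr V, r}(n)$ is then built in by construction.

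Part (2) is immediate from $\sigma f_Q = f_{\sigma Q}$ of \cref{lem:3P:fQ} \ref{i:lem:3P:fQ:5}, on noting that the parity signs appearing in $\sigma(v_1 \otimes \dotsb \otimes v_n)$ are identical on the chiral and coisson sides. The easy direction of part (3) is also direct: if $X \in F_{r+1}\oPchW{V}(n)$, then $X(v \otimes f_Q) \in V_{s+t-(r+1),\nabla}[\Lambda]$, whose image in $V_{s+t-r,\nabla}[\Lambda]/V_{s+t-r-1,\nabla}[\Lambda]$ is zero.

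The main obstacle is the reverse implication of part (3). Assuming $\wt{X}^r = 0$, we directly control $X(v \otimes f_Q)$ only on quivers, yet we must extend this to all $f \in F^t\clO_n^{\star\rmT}$. The difficulty is illustrated by $z_{k,l}^{-m}$ with $m \ge 2$, which lies in $F^1\clO_n^{\star\rmT}$ but whose natural quiver has $m>1$ edges, so the direct hypothesis gives only $X(v \otimes z_{k,l}^{-m}) \in V_{s+m-r-1,\nabla}[\Lambda]$, which is strictly weaker than the required $V_{s-r,\nabla}[\Lambda]$. I plan to proceed by induction on the pole orders of $f$, exploiting the $\clD_n^\rmT$-equivariance of $X$ together with $\pdd_{z_k} z_{k,l}^{-(m-1)} = -(m-1) z_{k,l}^{-m}$. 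The key identity
\begin{align*}
 X(v \otimes \pdd_{z_k}g) = X(T^{(k)}v \otimes g) + \lambda_k X(v \otimes g),
\end{align*}
arising from comparing the right $\pdd_{z_k}$-actions on the two sides, together with its odd counterpart for $\pdd_{\zeta_k^i}$ and the Leibniz rule used to distribute $\pdd$'s over products, reduces the bound on $X(v \otimes z_{k,l}^{-m})$ to the base case $m = 1$, which is precisely the hypothesis $\wt{X}^r = 0$ applied to the quiver with a single edge from $k$ to $l$; here the passage through the induction uses that $T$ preserves $(V_r)_{r \in \bbN}$ and that multiplication by $\lambda_k$ preserves the $V$-filtration on $V_\nabla[\Lambda]$. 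A general $f \in F^t\clO_n^{\star\rmT}$ is then handled by separating its polynomial part from its pole factors via a decomposition in the spirit of \cref{lem:3P:Frdecomp} and reducing the pole orders one at a time.
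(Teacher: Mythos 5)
Your proposal is correct and follows essentially the same route as the paper: parts (1) and (2) via the identities for $f_Q$ in \cref{lem:3P:fQ} together with the right $\clD_n^{\rmT}$-equivariance of $X$, and the converse in (3) by reducing, through the $\clD_n^{\rmT}$-action, to products of first-order poles $z_{k_1,l_1}^{-1}\cdots z_{k_t,l_t}^{-1}$, which are exactly the $f_Q$'s. The only point to watch in (3) is that applying $\pdd_{z_k}$ to a product of poles raises the orders of \emph{all} factors containing $z_k$, so the ``one pole at a time'' induction needs slightly more careful bookkeeping (this is precisely the content of \cite[Lemma 8.3]{BDHK}, which the paper cites rather than reproves).
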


\begin{proof}
\begin{enumerate}
\item 
The cycle relations for $\wt{X}^r$ follow \cref{lem:3P:fQ} \ref{i:lem:3P:fQ:2}, \ref{i:lem:3P:fQ:3}. 
To show the sesquilinearity conditions, let $s,t \in \bbN$, $(s_1,\ldots,s_n) \in \bbN^n_s$, $v_i \in V_{s_i}$ ($i \in [n]$), and $Q \in \oQ(n)$ with $t$ edges. 
Also, let $I^a$ be a connected component of $\ul{Q}$.
\begin{itemize}
\item 
$\wt{X}^r$ satisfies the sesquilinearity condition \eqref{eq:2P:Pclses1}: 
For $k,l \in I^a$, by definition of $\wt{X}^r$, 
\begin{align*}
 (\pdd_{\lambda_k}-\pdd_{\lambda_l})
  \wt{X}^r(\ol{v}_1^{s_1} \otimes \cdots \otimes \ol{v}_n^{s_n} \otimes Q)
=(\pdd_{\lambda_k}-\pdd_{\lambda_l})\ol{X(v_1 \otimes \cdots \otimes v_n \otimes f_Q)}^{s+t-r}. 
\end{align*}
Since $X\colon V^{\otimes n} \otimes \clO_n^{\star\rmT} \to V_\nabla[\Lambda_k]_{k=1}^n$ is a right $\clD_n^{\rmT}$-supermodule homomorphism, we have 
\begin{align*}
    (\pdd_{\lambda_k}-\pdd_{\lambda_l})X(v_1 \otimes \cdots \otimes v_n \otimes f_Q)
&=-X(v_1 \otimes \cdots \otimes v_n \otimes f_Q) \cdot z_{k,l}  \\
&=-X(v_1 \otimes \cdots \otimes v_n \otimes f_Q z_{k,l}). 
\end{align*}
It is clear that $f_Q z_{k,l} \in F_{t-1}\clO_n^{\star\rmT}$ if $Q \in \oQ_{\ac}(n)$, thus
\begin{align*}
 (\pdd_{\lambda_k}-\pdd_{\lambda_l})
  \wt{X}^r(\ol{v}_1^{s_1} \otimes \cdots \otimes \ol{v}_n^{s_n} \otimes Q) = 0.
\end{align*}
	
\item 
$\wt{X}^r$ satisfies the sesquilinearity condition \eqref{eq:2P:Pclses2}: 
Since $X\colon V^{\otimes n} \otimes \clO_n^{\star\rmT} \to V_\nabla[\Lambda_k]_{k=1}^n$ is a right $\clD_n^{\rmT}$-supermodule homomorphism, we have 
\begin{align*}
&  \sum_{k \in I^a}
    \wt{X}^r\bigl(T^{(k)}(\ol{v}_1^{s_1} \otimes \cdots \otimes \ol{v}_n^{s_n}) \otimes Q\bigr) 
 = \sum_{k \in I^a}\ol{X\bigl(T^{(k)}(v_1 \otimes \cdots \otimes v_n) \otimes f_Q\bigr)}^{s+t-r} \\
&=-\sum_{k \in I^a}\lambda_k \ol{X(v_1 \otimes \cdots \otimes v_n \otimes f_Q)}^{s+t-r}
  +\sum_{k \in I^a}\ol{X(v_1 \otimes \cdots \otimes v_n \otimes \pdd_{z_k}f_Q)}^{s+t-r} \\
&=-\sum_{k \in I^a}\lambda_k \ol{X(v_1 \otimes \cdots \otimes v_n \otimes f_Q)}^{s+t-r} 
 =-\sum_{k \in I^a}\lambda_k \wt{X}^r(\ol{v}_1^{s_1} \otimes \cdots \otimes \ol{v}_n^{s_n} \otimes Q).
\end{align*}
In the third equality, we used \cref{lem:3P:fQ} \ref{i:lem:3P:fQ:4}. 

\item 
The linear map $\wt{X}^r$ satisfies the sesquilinearity condition \eqref{eq:2P:Pclses3}: 
This can be checked by a direct calculation using the fact that 
$X\colon V^{\otimes n}\otimes \clO_n^{\star\rmT} \to V_\nabla[\Lambda_k]_{k=1}^n$ 
is a right $\clD_n^{\rmT}$-supermodule homomorphism. 
\end{itemize}
	
Thus, we have $\wt{X}^r \in \oPclW{\gr V}(n)$, and now $\wt{X}^r \in \oPclW{\gr V,r}(n)$ is clear by \cref{dfn:3P:grPclW} of the grading of $\oPclW{\gr V}$. 

\item 
This can be checked by a direct calculation using \cref{lem:3P:fQ} \ref{i:lem:3P:fQ:5}. 

\item 
It is easy to check that $X \in F_{r+1}\oPchW{V}(n)$ implies $\wt{X}^r=0$. 
Conversely, suppose $\wt{X}^r=0$. By the sesquilinearity of $X$, it is enough to show that
\begin{align*}
 X(v_1 \otimes \cdots \otimes v_n \otimes f) \in V_{s+t-r-1,\nabla}[\Lambda_k]_{k=1}^n, \quad 
 (v_i \in V_{s_i}, \, f \ceq z_{k_1,l_1}^{-1} \cdots z_{k_t, l_t}^{-1}) 
\end{align*}
for $s,t \in V$ and $(s_1,\ldots,s_n) \in \bbN^n_s$ (see also \cite[Lemma 8.3]{BDHK}). 
Using $Q \in \oQ(n)$ such that $f_Q=f$, we have
\begin{align*}
 \ol{X(v_1 \otimes \cdots \otimes v_n \otimes f)}^{s+t-r}
=\wt{X}^r(\ol{v}_1^{s_1} \otimes \cdots \otimes \ol{v}_n^{s_n} \otimes Q)
=0,
\end{align*}
which means $X(v_1 \otimes \cdots \otimes v_n \otimes f) \in V_{s+t-r-1, \nabla}[\Lambda_k]_{k=1}^n$. 
\end{enumerate}
\end{proof}

\begin{lem}\label{lem:3P:c1t=tc1}
Let $m,n \in \bbZ_{>0}$ and $r,s \in \bbN$. 
For $X \in F_r\oPchW{V}(m)$ and $Y \in F_s\oPchW{V}(n)$, we have 
\begin{align}\label{eq:3P:c1t=tc1}
 \wt{X \circ_1 Y}^{r+s} = \wt{X}^r \circ_1 \wt{Y}^s.
\end{align} 
\end{lem}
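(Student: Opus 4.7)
The plan is to evaluate both sides of \eqref{eq:3P:c1t=tc1} on an arbitrary test input $\ol{v}_1^{s_1}\otimes \cdots \otimes \ol{v}_{m+n-1}^{s_{m+n-1}}\otimes Q$ with $v_i \in V_{s_i}$ and $Q\in \oQ(m+n-1)$ having $t$ edges, and compare them termwise. Throughout, set $\nu \ceq (n,1,\dotsc,1) \in \bbN^m_{m+n-1}$, $w_1 \ceq v_1 \otimes \cdots \otimes v_n$, $\Lambda_1' \ceq \Lambda_1 + \cdots + \Lambda_n$, and $\Lambda_j' \ceq \Lambda_{n+j-1}$ for $j \ge 2$. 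For the left-hand side, I would first apply \cref{lem:3P:Frdecomp} to decompose $f_Q = f_0 f_1$, where $f_1 \ceq f_{\Delta_1^\nu(Q)}$ records those edges of $Q$ internal to $[n]$ and $f_0 \ceq f_Q/f_1$ has no poles at $z_k = z_l$ for $1 \le k < l \le n$. The chiral infinitesimal composition formula from \cref{ss:1P:Pch} then yields
\[
 (X \circ_1 Y)^Q(v_1 \otimes \cdots \otimes v_{m+n-1})
 = \pm X_{\Lambda_1',\dotsc,\Lambda_m'}\bigl(Y_{\Gamma_1,\dotsc,\Gamma_{n-1}}(w_1 \otimes f_1) \otimes v_{n+1} \otimes \cdots \otimes v_{m+n-1} \otimes \rst{f_0}{z_1=\cdots=z_n}\bigr)
\]
with $\Gamma_k = \Lambda_k-\pdd_{Z_k}$. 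Projecting modulo $F_{r+s+1}$ via \cref{lem:3P:FrPchPcl} reduces the inner factor to $\wt{Y}^s(\ol{w}_1 \otimes \Delta_1^\nu(Q))$ and identifies $\rst{f_0}{z_1=\cdots=z_n}$ with $f_{\Delta_0^\nu(Q)}$ up to a lower-filtration contribution.

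For the right-hand side, \cref{dfn:2P:clWcomp} produces the same outer shape, but with shifted variables $\Lambda_k + \Xi_Q^\nu(k)$ (under the substitution $\Xi_j = \Lambda_j' + \nabla^{(j)}$) in place of $\Gamma_k = \Lambda_k - \pdd_{Z_k}$. The core of the argument is to match these two shifts modulo $F_{r+s+1}$. The bosonic piece is furnished by \cref{lem:3P:f1Qf0}: the identity $\rst{\pdd_{z_k} f_1^{-1}f_Q}{z_1=\cdots=z_n} = -\sum_{i\in\clE^\nu_Q(k)}\pdd_{z_i}f_{\Delta_0^\nu(Q)}$, combined with the sesquilinearity \eqref{eq:2P:Pclses2} for $X$, realises precisely $\Xi_Q^\nu(k)|_{\Xi_j = \Lambda_j' + \nabla^{(j)}}$ after the translation from $z$-derivatives on $f_0$ to $\lambda$-shifts on the arguments of $X$. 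The analogous identity for the odd derivatives $\pdd_{\zeta_k^i}$, combined with the fermionic sesquilinearity \eqref{eq:2P:Pclses3} for $X$, handles the odd shifts and converts $\pdd_{\zeta_k^i}$ actions into multiplications by $\theta$-factors of the shifted supervariables.

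The main obstacle will be tracking the many Koszul signs arising from the parities of $X$, $Y$, and the inputs $v_i$, and verifying that the $F_{r+s+1}$-errors genuinely vanish. Expanding $\Gamma_k = \Lambda_k - \pdd_{Z_k}$ produces higher-order derivatives of $f_0$; these must increase the filtration degree of the $X$-output by at least one, which uses the hypothesis $X \in F_r\oPchW{V}(m)$ crucially via \cref{lem:3P:fQ}. The non-SUSY counterpart in \cite[\S10.4]{BDHK} provides the blueprint for the bosonic argument, so the genuinely new technical input in the SUSY setting is the fermionic sign bookkeeping and the invocation of \eqref{eq:2P:Pclses3} to dispose of the odd shifts, parallel to the use of \eqref{eq:2P:Pclses2} for the bosonic ones.
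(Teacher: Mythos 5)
Your plan follows the paper's proof of this lemma: the paper likewise works with $\nu=(n,1,\dotsc,1)$, decomposes $f_Q=f_{\Delta_1^\nu(Q)}\cdot f_0$ as in \cref{lem:3P:Frdecomp}, and reduces the acyclic case to ``a direct calculation using \cref{lem:3P:f1Qf0}'' combined with the sesquilinearity conditions \eqref{eq:2P:Pclses2} and \eqref{eq:2P:Pclses3}, which is exactly the matching of $\Gamma_k=\Lambda_k-\pdd_{Z_k}$ against $\Lambda_k+\Xi_Q^\nu(k)$ that you describe. The one point you do not isolate is the case $\Delta_0^\nu(Q)\notin\oQ_{\ac}(m)$: \cref{lem:3P:f1Qf0} is stated only under the acyclicity hypothesis, so your shift-matching step is unavailable there, and the paper instead checks that both sides vanish --- the right-hand side because $X$ satisfies the first cycle relation, and the left-hand side because $\rst{f_{\Delta_1^\nu(Q)}^{-1}f_Q}{z_1=\cdots=z_n}$ then lies in $F^{t-1}\clO_n^{\star\rmT}$ with $t=\#E(Q)-\#E(\Delta_1^\nu(Q))$, so the output of $X$ drops one step in the filtration and dies under the projection modulo $F_{r+s+1}$. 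Your remark about identifying $\rst{f_0}{z_1=\cdots=z_n}$ with $f_{\Delta_0^\nu(Q)}$ ``up to a lower-filtration contribution'' contains the right mechanism, but it should be promoted to an explicit separate case rather than folded into the computation that relies on \cref{lem:3P:f1Qf0}.
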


\begin{proof}
Let $\nu \ceq (n,1,\ldots,1) \in \bbN^m_{m+n-1}$ and $Q \in \oQ(n)$. 
If $\Delta_0^\nu(Q) \in \oQ(m) \bs \oQ_{\ac}(m)$, it is clear that the right hand side of \eqref{eq:3P:c1t=tc1} (evaluating on $Q$) equals to $0$, because $X$ satisfies the first cycle relation. Also, since 
\begin{align*}
 \rst{f_{\Delta_1^\nu(Q)}^{-1}f_Q}{z_1=\cdots =z_n} \in F_{t-1}\clO^{\star\rmT}_n, \quad
 t \ceq \#E(Q) - \#E(\Delta_1^\nu(Q)), 
\end{align*}
the left hand side of \eqref{eq:3P:c1t=tc1} is also 0. 
If $\Delta_1^\nu(Q)\in\oQ_{\ac}(m)$, then one can show \eqref{eq:3P:c1t=tc1} by a direct calculation using \cref{lem:3P:f1Qf0}. 
\end{proof}

By \cref{lem:3P:FrPchPcl}, we have an injective right $\frS_n$-supermodule homomorphism
\begin{align*}
 \alpha_{n,r}\colon F_r\oPchW{V}(n) / F_{r+1}\oPchW{V}(n) \lto \oPclW{\gr V,r}(n), \quad 
 \ol{X}^r \lmto \wt{X}^r. 
\end{align*}
Here $\ol{X}^r$ denotes the equivalent class of $X\in F_r\oPchW{V}(n)$. We define
\begin{align*}
 \alpha_n \ceq \bigoplus_{r \in \bbN} \alpha_{n,r}\colon \gr\oPchW{V}(n) \lto \oPclW{\gr V}(n),
\end{align*}
then by \cref{lem:3P:c1t=tc1}, the sequence $\alpha \ceq (\alpha_n)_{n\in\bbN}$ is a morphism of operads $\gr\oPchW{V}$ to $\oPclW{\gr V}$. 

Summarizing the discussion so far, we have the following main statement of \cref{s:3P}, which is a natural SUSY analogue of \cite[Theorem 10.12]{BDHK}. 

\begin{thm}\label{thm:3P:inj}
For a filtered $\clH_W$-supermodule $(V, (V_r)_{r\in\bbN})$, the sequence $\alpha =(\alpha_n)_{n\in\bbN}$ is an injective morphism of graded superoperads from $\gr\oPchW{V}$ to $\oPclW{\gr V}$. 
\end{thm}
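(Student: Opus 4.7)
The plan is to assemble the injective operad morphism $\alpha$ directly from the component maps $\alpha_{n,r}$, using the preceding lemmas as black boxes. First, I would observe that \cref{lem:3P:FrPchPcl} does nearly all the heavy lifting: part (1) places $\wt{X}^r$ inside $\oPclW{\gr V,r}(n)$, part (2) gives $\frS_n$-equivariance of the assignment $X \mto \wt{X}^r$, and part (3) identifies its kernel as exactly $F_{r+1}\oPchW{V}(n)$. Together these imply that each $\alpha_{n,r}$ is a well-defined injective $\frS_n$-supermodule homomorphism landing in the degree-$r$ piece of the grading on $\oPclW{\gr V}(n)$ defined in \cref{dfn:3P:grPclW}. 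Taking the direct sum over $r$ then produces an $\frS_n$-equivariant map $\alpha_n$ into $\oPclW{\gr V}(n)$, and injectivity of $\alpha_n$ follows from direct-sum injectivity across the grading, since the image of $\alpha_{n,r}$ is contained in the distinct summand $\oPclW{\gr V,r}(n)$.

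Next I would verify that $\alpha=(\alpha_n)_{n\in\bbN}$ is compatible with the operadic structure. The unit axiom amounts to $\alpha_1(\ol{\id_V}^0) = \id_{\gr V}$, which is immediate from the definitions since $\id_V \in F_0\oPchW{V}(1)$ and the only relevant $1$-quiver is empty with $f_Q = 1_\bbK$. For the composition axiom, the key input is \cref{lem:3P:c1t=tc1}, which gives compatibility with the infinitesimal composition $\circ_1$ in the form $\wt{X \circ_1 Y}^{r+s} = \wt{X}^r \circ_1 \wt{Y}^s$. To upgrade this to compatibility with the full composition $\gamma_\nu$, I would use the standard reduction: every partial composition $\circ_i\colon \oP(m) \otimes \oP(n) \to \oP(m+n-1)$ can be written as $\circ_1$ conjugated by suitable elements of $\frS_m$ and $\frS_{m+n-1}$, so the $\frS$-equivariance already established promotes the $\circ_1$-compatibility to compatibility with every $\circ_i$. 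Finally, a full operadic composition decomposes as an iterate of partial compositions, so compatibility with $\gamma_\nu$ follows by a short induction on $m$.

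The only non-routine ingredient is the composition-compatibility step, and within it all the genuine content is already encoded in \cref{lem:3P:c1t=tc1}; the remainder is bookkeeping. I would not expect a new obstacle here, because the SUSY-specific combinatorics, namely the fermionic sesquilinearity \eqref{eq:2P:Pclses3}, the signs produced by the $\sigma Q$-action, and the quiver manipulations behind $f_Q$ and $\clE_Q^\nu$, have already been handled inside the lemmas of \cref{ss:3P:filt,ss:3P:emb}. Consequently the actual proof of \cref{thm:3P:inj} should be a short paragraph that chains together \cref{lem:3P:FrPchPcl} for well-definedness, injectivity and $\frS$-equivariance, and \cref{lem:3P:c1t=tc1} (plus the $\circ_1$-to-$\gamma_\nu$ reduction) for multiplicativity.
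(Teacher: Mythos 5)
Your proposal is correct and follows essentially the same route as the paper: the paper likewise obtains each injective $\frS_n$-equivariant map $\alpha_{n,r}$ directly from \cref{lem:3P:FrPchPcl} (parts (1)--(3)), sums over $r$, and invokes \cref{lem:3P:c1t=tc1} for compatibility with $\circ_1$, leaving the standard $\circ_1$-to-$\gamma_\nu$ reduction implicit. Your explicit mention of that reduction via $\frS$-equivariance and iterated partial compositions is just a slightly more detailed write-up of the same argument.
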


\begin{rmk}\label{rmk:3P:2PVAstr}
For a non-unital filtered SUSY VA $V$, we have a structure of a non-unital SUSY PVA on $\gr V$ by \cref{prp:3P:grPVA}. On the other hand, if we denote by 
\begin{align*}
 X \in \MC\bigl(L(\oPchW{\Pi^{N+1}V})\bigr)_{\ol{1}} \cap F_1\oPchW{\Pi^{N+1}V}(2)_{\ol{1}}
\end{align*}
the element corresponding to the filtered SUSY VA structure in \cref{prp:3P:filNWVA}, then we have another SUSY PVA structure $\alpha_2(\ol{X}^1)=\wt{X}^1$ on $\gr V$. 
By the definition of $\wt{X}^1$, \cref{thm:1P:opNWVA} and \cref{thm:2P:NWPVA}, these two SUSY PVA structures coincide. 
\end{rmk}

\appendix
\section{Poisson cohomology bicomplex and finite cohomology complex}\label{s:AP}

The purpose of this \Cref{s:AP} is to give a remark on the finite operad $P^{\cl}$ \cite[\S10.5]{BDHK}, and the content is logically independent of the main text. Hereafter we use the calligraphy symbol $\oPcl{}$ for the finite operad for consistency with the main text.

By the theory of algebraic operads (see \cite{LV} for example), a Poisson algebra structure on a given linear space $V$ corresponds bijectively to an operad morphism $\alpha\colon \oPoi \to \oHom_{V}$ from the Poisson operad $\oPoi$ \cite[\S 13.3]{LV} to the endomorphism operad $\oHom_{V}$.
The Poisson operad $\oPoi$ is a binary homogeneous quadratic operad, 
and can be constructed out of the Lie operad $\oLie$ and the commutative operad $\oCom$ by means of a distributive law: $\oPoi = \oCom \circ \oLie$.
As an $\frS_n$-module, we have $\oPoi(n) = (\oLie \circ \oCom)(n) \simeq \oAss(n) = \bbK[\frS_n]$, where $\oAss$ denotes the associative operad.

By the operadic cohomology theory \cite[Chap.\ 6]{LV}, given a Poisson algebra $A$ corresponding to an operad morphism $\alpha\colon \oPoi \to \oHom_{V}$, we have the Andre-Quillen type cohomology complex with differential induced by $\alpha$ (see \cref{sss:AP:pc} for a brief review).
This complex has a complicated nature, but by the work of Fresse \cite{Fr}, it has a bicomplex structure $C^{\dbl}_{\tPoi}(A,A)$ whose vertical and horizontal complexes are essentially the same as the Chevalley-Eilenberg (Lie algebra) cohomology complex and the Harrison (commutative algebra) cohomology complex, respectively. See \cref{sss:AP:pcb} for an explanation.

In \cite[\S10.5]{BDHK}, it is argued that a Poisson algebra structure $A$ on a linear space $V$ corresponds bijectively to an operad morphism $X\colon \oLie \to \oPfn{V}$ from the Lie operad to what is called the finite operad. Then, we have the Chevalley-Eilenberg-type cohomology complex $\frg(\oPfn{A})$ with differential induced by $X$. 

So we may ask how the two cohomology complexes $C^{\dbl}_{\tPoi}(A,A)$ and $\frg(\oPfn{A})$ relate.
In \cref{thm:AP:fcc=pcb}, we will show that $\frg(\oPfn{A})$ has a bicomplex structure, and the two bicomplexes coincide up to vertical shift.

We will work over a field $\bbK$ of characteristic $0$, and all the objects (linear spaces, algebras and so on) are defined over $\bbK$ unless otherwise stated. 
For $\bbK$-modules $V$ and $W$ (i.e., linear spaces over $\bbK$), we simplify the notation as $\Hom(V,W) \ceq \Hom_{\bbK}(V,W)$, $V \otimes W \ceq V \otimes_{\bbK}W$ and so on.

\subsection{Poisson cohomology bicomplex}\label{ss:AP:pcb}

\subsubsection{Poisson cohomology}\label{sss:AP:pc}

Let $A=(A,-\cdot-)$ be a commutative algebra with multiplication $-\cdot-\colon A \otimes A \to A$.
A \emph{Poisson bracket} on $A$ is an antisymmetric biderivation $\{-,-\}\colon A \otimes A \to A$ which satisfies the Jacobi relation. A \emph{Poisson algebra} is a commutative algebra equipped with a Poisson bracket. Thus, a Poisson algebra $(A,-\cdot-,\{-,-\})$ has both a structure $(A,-\cdot-)$ of a commutative algebra and a structure $(A,\{-,-\})$ of a Lie algebra.

Let $A=(A,-\cdot-,[-,-])$ be a Poisson algebra.
A \emph{Poisson $A$-module} is a module $M$ over the commutative algebra $(A,-\cdot-)$ equipped with an antisymmetric bilinear map $[-,-]\colon M \otimes A \oplus A \otimes M \to M$ which is a biderivation with respect to the algebra action $A \to \End(M)$, making $M$ a representation of the Lie algebra $(A,[-,-])$. 
We also have the natural notion of \emph{morphisms of Poisson $A$-modules}, and the corresponding category $\cMod^{\tPoi}_A$ of Poisson $A$-modules. 
There exists an associative algebra $U_{\tPoi}(A)$, called the \emph{enveloping algebra of the Poisson algebra $A$}, such that $\cMod^{\tPoi}_A$ is equivalent to the category of left $U_{\tPoi}(A)$-modules. 
See \cite[1.1.2--1.1.4]{Fr} for the detail, and also \cite[12.3.1--12.3.4]{LV} for the general argument applicable to algebras over any operad.

Next, we briefly recall the Poisson cohomology, following \cite[1.2]{Fr}, which gives a Poisson analogue of the Andr\'e-Quillen (co)homology theory. 
See also \cite[12.3]{LV} which can be applied to an arbitrary operad.
Let $A$ be a Poisson algebra, and $M$ be a Poisson $A$-module. 
We say that a linear map $d\colon A \to M$ is a \emph{Poisson derivation} if it is a derivation with respect to both the multiplication and the Poisson bracket. 
We denote by $\Der_{\tPoi}(A,M)$ the module of Poisson derivations $A \to M$. 
Roughly speaking, the Poisson cohomology is defined to be the ``derived functor of $A \mto \Der_{\tPoi}(A,M)$''.
More precisely, there exists a Poisson $A$-module $\Omega_{\tPoi}^1(A)$ such that $\Der_{\tPoi}(A,M)=\Hom_{U_{\tPoi}(A)}(\Omega_{\tPoi}^1(A),M)$ for any Poisson $A$-module $M$.
We then replace $A$ by its quasi-free resolution $R$, 
and define the $p$-th Poisson cohomology of $A$ with coefficients in $M$ is defined to be 
\begin{align}\label{eq:1.1:Hp}
 H^p_{\tPoi}(A,M) \ceq H^p\bigl(\Hom_{U_{\tPoi}(R)}(\Omega^1_{\tPoi}(R),M)^{\bl}\bigr).
\end{align}
Here, a quasi-free resolution of $A$ is a dg Poisson algebra $R=(R_{\bl},\pdd,-\cdot-,[-,-])$ equipped with a surjective quasi-isomorphism $R \to A$ whose underlying graded Poisson algebra $(R_{\bl},-\cdot-,[-,-])$ is the reduced free graded Poisson algebra $\ol{P}(V)$ of some submodule $V \subset R$ stable under the derivation $\pdd\colon R_{\bl} \to R_{\bl-1}$. 
The reduced free Poisson algebra $\ol{\oP}(V)$ is the augmentation ideal of the free Poisson algebra $\oP(V)$, which is given by $\oP(V)=\clS\bigl(\clL(V)\bigr)$, the symmetric algebra of the free Lie algebra $\clL(V)$ of the module $V$. 
Back to \eqref{eq:1.1:Hp}, the module $\Hom_{U_{\tPoi}(R)}(\Omega^1_{\tPoi}(R),M)$ has a grading induced by the homological grading $R_{\bl}$, and is equipped with a differential induced by $\pdd$. Thus, it is a cohomological complex, and we can take the $p$-th cohomology.

\subsubsection{Eulerian decomposition}\label{sss:AP:eul}

As a preliminary of the construction explained in the next \Cref{sss:AP:pcb}, we recall the \emph{Eulerian} (also called \emph{Hodge-type}) \emph{decomposition} of the Hochschild complex of a commutative algebra \cite{GS,L1}. 
We follow the explanation in \cite[\S4.5]{L} and \cite[\S1.3.5]{LV}

For a linear space $V$, we denote by $\clT^c=\clT^c(V)$ the \emph{tensor coalgebra}.
As a module, it is given by $\bigoplus_{n \in \bbN} V^{\otimes n}$.
We denote the naturally $\bbN$-grading as $\clT^c_n \ceq V^{\otimes n}$
We denote its element as 
$(v_1, \dotsc, v_n) \ceq v_1 \otimes \dotsb \otimes v_n \in V^{\otimes n}$.
The comultiplication is given by the \emph{deconcatenation}:
\[
 \Delta(v_1,\dotsc,v_n) \ceq \sum_{i=0}^n (v_1,\dotsc,v_i) \otimes (v_{i-1},\dotsc,v_n)
 \in \clT^c \otimes \clT^c.
\]

The coalgebra $\clT^c$ has a commutative Hopf algebra structure whose multiplication is given by the \emph{shuffle product} $\mu$:
\[ 
 \mu\bigl((v_1 \dotsm v_m) \otimes (v_{m+1},\dotsc,v_{m+n})\bigr) \ceq 
 \sum_{\sigma \in \Sh_{m,n}} (-1)^\sigma v_{\sigma^{-1}(1)} \dotsm v_{\sigma^{-1}(m+n)},
\] 
where $\Sh_{m,n}$ is the subgroup of the $(m+n)$-th symmetric group $\frS_{m+n}$ consisting of \emph{$(m,n)$-shuffles}, i.e., 
\begin{align}\label{eq:AP:sh}
 \Sh_{m,n} \ceq \{\sigma \in \frS_{m+n} \mid  
 \sigma(1)<\dotsb<\sigma(m), \, \sigma(m+1)<\dotsb<\sigma(m+n)\}.
\end{align}
We denote the unit and counit by $u$ and $\ve$, respectively.
See \cite[\S 1.2]{LV} for their precise definitions.

Using the Hopf algebra $(\clT^c,\mu,\Delta,u,\ve)$, we define the \emph{convolution} of two linear endomorphisms $f,g\in \End(\clT^c)$ to be 
\[
 f*g \ceq \mu \circ (f \otimes g) \circ \Delta.
\]
If $f$ and $g$ are algebra endomorphism, then $f*g$ is also an algebra endomorphism.
We also note that if $f$ is an algebra endomorphism with $f(1)=0$ for $1 \in \bbK$, then $f^{*k} \ceq f * \dotsb * f$ ($k$ times) is $0$ when restricted to $V^{\otimes n}$ for $n<k$.

Let us consider the iterative convolution $\id^{* k}$.
It is an algebra endomorphism of $\clT^c$, and we have $\id^{* k} = \mu^k \circ \Delta^k$, where we denoted by $\mu^k\colon V^{\otimes k} \to V$ and $\Delta^k\colon V \to V^{\otimes k}$ the iterative composition of $\mu$'s and $\Delta$'s, respectively. 
Note that $\mu^2=\mu$ and $\Delta^2=\Delta$ in our convention.
Now we define
\[
 e^{(1)} \ceq \sum_{k=1}^\infty (-1)^k \frac{\id^{* k}}{k}.
\]
Due to the equality $(\id-u \ve)(1)=0$ for $1 \in \bbK$, the above remark claims that $e^{(1)}$ is a well-defined algebra endomorphism on $\clT^c$.
We further define 
\[
 e^{(p)} \ceq \frac{1}{p!} (e^{(1)})^{* p}, \quad 
 e^{(p)}_n \ceq \rst{e^{(p)}}{\clT^c_n}.
\]
We have $e^{(0)}_0=1$ and $e^{(p)}_n=0$ for $p>n$.

By a general argument \cite[4.5.3 Proposition]{L}, the elements $e^{(p)}_n \in \End\bigl(\clT^c_n\bigr)$ enjoys the following properties.
\begin{clist}
\item $e^{(1)}_n+\dotsb+e^{(n)}_n=\id$.
\item $e^{(p)}_n e^{(j)}_n = \delta_{i,j} e_n^{(p)}$, i.e, they are orthogonal idempotents.
\end{clist}
These elements are called the \emph{Eulerian idempotents}.

Since each Eulerian idempotent $a = e^{(p)}_n$ ($p=1,\dotsc,n$) acts on the homogeneous component $\clT^c_n=V^{\otimes n}$, the image of $(v_1,\dotsc,v_n) \in V^{\otimes n}$ is of the form $\sum_{\sigma \in \frS_n} a(\sigma) \sigma.(v_1,\dotsc,v_n)$, and we can identify $a$ with $\sum_{\sigma \in \frS_n} a(\sigma) \sigma \in \bbQ[\frS_n]$. 
Under this identification, we have $e^{(p)}_n \in \bbQ[\frS_n]$ ($i=1,\dotsc,n$), for which there is known an explicit formula using the descents of a permutation.
We also have $e^{(p)} = \sum_{n \in \bbN} e_n^{(p)} \in \prod_{n \in \bbN} \bbQ[\frS_n]$, 
regarded as a completion of $\bigoplus_{n \in \bbN}\bbQ[\frS_n]$, 
We refer to \cite[\S 4.5.5]{L} for the detail.

Now we can explain the decomposition of Hochschild cochain complex of a commutative algebra.
Let $A$ be a commutative algebra, and $M$ be a symmetric $A$-bimodule, i.e., an $A$-bimodule satisfying the condition $a.m=m.a$ for any $a \in A$ and $m \in M$. 
We also denote by $\Sigma$ the suspension of a homologically graded module.
Thus $(\Sigma V)_p = V_{p-1}$ for $p \in \bbZ$.
Let 
\begin{align}\label{eq:AP:CA}
 C_{\tAss}^{\bl}(A,M) \ceq \Hom(\clT^c(\Sigma A),M\bigr)
\end{align}
be the Hochschild cochain complex. Each graded component is 
$C_{\tAss}^n(A,M) \simeq \Hom(A^{\otimes n},M)$ ($n \in \bbN$), 
and the coboundary map $b\colon C_{\tAss}^n(A,M) \to C_{\tAss}^{n+1}(A,M)$ is given by 
\begin{align}\label{eq:AP:Ad}
\begin{split}
 (b f)(a_1,\dotsc,a_{n+1}) \ceq a_1 f(a_2,\dotsc,a_{n+1}) 
 &+\sum_{i=1}^n (-1)^i f(a_1,\dotsc, a_i a_{i+1}, \dotsc, a_{n+1}) \\
 &+(-1)^{n+1} f(a_1,\dotsc,a_n)a_{n+1}.
\end{split}
\end{align}
Then the Eulerian idempotents $e_n^{(p)}$ induces the following decomposition of $C^{\bl}(A,M)$, established by \cite{GS,L1}. See also \cite[4.2]{Fr} for an explanation.

\begin{fct}\label{fct:AP:eul}
The Hochschild cochain complex of a commutative algebra $A$ with coefficients in a symmetric $A$-bimodule $M$ has a decomposition into subcomplexes 
\[
 C_{\tAss}^{\bl}(A,M) = \bigoplus_{s \in \bbN}C_{(p)}^{\bl}(A,M), \quad 
 C_{(p)}^{\bl}(A,M) \ceq \Hom(e^{(p)} \clT^c(\Sigma A),M).
\]
Furthermore, we have
\begin{align}\label{eq:AP:Sp}
  C_{(p)}^{\bl}(A,M) \simeq \Hom\bigl(\clS_p\bigl(\clL^c(\Sigma A)\bigr),M\bigr)
\end{align}
for each $p \in \bbN$, where $\clL^c(\Sigma A)$ is the cofree Lie coalgebra of the graded module $\Sigma A$, and $\clS = \bigoplus_{p \in \bbN} \clS_p$ denotes the free commutative algebra functor, i.e., the symmetric tensor product. 
In particular, the $p=1$ part is isomorphic to the Harrison cochain complex:
\begin{align}\label{eq:AP:CC}
 C_{(1)}^{\bl}(A,M) \simeq \Hom(\clL^c(\Sigma A),M) = C^{\bl}_{\tCom}(A,M).
\end{align}
\end{fct}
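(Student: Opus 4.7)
The plan is to split the proof into three tasks: (a) establish the direct-sum decomposition of the underlying graded module; (b) verify that each summand is stable under the Hochschild coboundary $b$ of \eqref{eq:AP:Ad}; (c) identify $e^{(p)}\clT^c(\Sigma A)$ with $\clS_p\bigl(\clL^c(\Sigma A)\bigr)$. Task (a) is immediate from the orthogonality and completeness properties of the Eulerian idempotents recalled just above: orthogonality gives $e^{(p)}\clT^c(\Sigma A) \cap e^{(q)}\clT^c(\Sigma A) = 0$ for $p \neq q$, and $\sum_p e^{(p)}_n = \id$ gives spanning, so $\clT^c(\Sigma A) = \bigoplus_p e^{(p)}\clT^c(\Sigma A)$; applying $\Hom(-,M)$ yields the decomposition of $C_{\tAss}^{\bl}(A,M)$.

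For task (b), the plan is to pass to the Hochschild chain complex $\bigl(\clT^c(\Sigma A), b_*\bigr)$ with $b_*$ dual to $b$, and prove that each $e^{(p)}$ commutes with $b_*$. The central step is to show that $b_*$ is a derivation of the shuffle product $\mu$. This is where both hypotheses enter: commutativity of $A$ makes the interior terms $(a_1,\dotsc,a_i a_{i+1},\dotsc,a_{n+1})$ of \eqref{eq:AP:Ad} compatible with shuffles, and the symmetric $A$-bimodule condition on $M$ handles the two exterior terms $a_1 f(\dotsb)$ and $f(\dotsb)a_{n+1}$. Once $b_*$ is a shuffle derivation, it automatically commutes with every convolution power $\id^{*k} = \mu^k \circ \Delta^k$, hence with $e^{(1)} = \sum_{k \ge 1}(-1)^k \id^{*k}/k$ and with every $e^{(p)} = (e^{(1)})^{*p}/p!$; this gives the desired subcomplex property.

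For task (c), the plan is to invoke the structure theorem for commutative graded connected Hopf algebras in characteristic zero, applied to the Hopf algebra $\bigl(\clT^c(\Sigma A),\mu,\Delta,u,\ve\bigr)$. Its module of primitive elements is canonically identified with the cofree Lie coalgebra $\clL^c(\Sigma A)$, and the theorem yields a Hopf algebra isomorphism $\clT^c(\Sigma A) \cong \clS\bigl(\clL^c(\Sigma A)\bigr)$ under which the symmetric power filtration matches the Eulerian grading. The projector $e^{(1)}$ corresponds to projection onto primitives, and by the divided-power relation $e^{(p)} = (e^{(1)})^{*p}/p!$ cuts out exactly $\clS_p\bigl(\clL^c(\Sigma A)\bigr)$, which gives \eqref{eq:AP:Sp}; specializing to $p=1$ recovers \eqref{eq:AP:CC}. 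The principal obstacle is the shuffle-derivation property of $b_*$ in task (b): the combinatorics of shuffles together with the suspension signs on $\Sigma A$ require a careful sign bookkeeping, though the resulting identity is classical and due to \cite{GS,L1}.
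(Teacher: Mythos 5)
The paper offers no proof of this statement: it is labelled a Fact and cited from \cite{GS,L1} (see also \cite[4.2]{Fr}), so there is no in-paper argument to compare against. Your outline reconstructs the standard classical proof and is essentially sound. Step (a) is immediate from orthogonality and completeness of the $e^{(p)}_n$. Step (b) is the Gerstenhaber--Schack theorem, and reducing it to compatibility of the face maps with shuffles, with commutativity of $A$ handling the inner terms and symmetry of $M$ the two outer ones, is the right mechanism. Step (c), identifying $e^{(p)}\clT^c(\Sigma A)$ with $\clS_p\bigl(\clL^c(\Sigma A)\bigr)$ via the structure theory of connected graded commutative Hopf algebras in characteristic zero, is how \cite{Fr} and \cite{L} present it.

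Two imprecisions are worth fixing before this would count as a complete argument. First, in (b) the coboundary $b$ of \eqref{eq:AP:Ad} is not dual to an operator on $\clT^c(\Sigma A)$ alone, because its two outer terms use the $M$-action; what you actually need is that $\id_M\otimes e^{(p)}$ commutes with the full alternating sum of face maps on the chain level. The clean ``$b_*$ is a derivation of the shuffle product'' statement holds for the bar differential (inner faces only); the two outer faces must be handled separately, and that is exactly where the symmetry of $M$ enters, as you indicate. Second, in (c) the primitive elements of the shuffle Hopf algebra $\bigl(\clT^c(\Sigma A),\mu,\Delta\bigr)$ with respect to the deconcatenation coproduct are just $\Sigma A$ itself, not $\clL^c(\Sigma A)$; the correct statement is that $e^{(1)}$ splits off the \emph{indecomposables} with respect to the shuffle product, and it is these indecomposables that form the cofree Lie coalgebra (the statement dual to ``primitives of the tensor Hopf algebra form the free Lie algebra''). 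With that substitution, the Leray/Hopf structure theorem in characteristic zero gives $\clT^c(\Sigma A)\cong\clS\bigl(\clL^c(\Sigma A)\bigr)$ with $e^{(p)}$ cutting out $\clS_p$, which yields \eqref{eq:AP:Sp} and, for $p=1$, \eqref{eq:AP:CC}.
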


For later use, let us describe the Harrison (co)chain complex explicitly. Let $n \in \bbZ_{>0}$, and for $r=1,\dotsc,n$, we set $s_{r,n-r} \ceq \sum_{\sigma \in \Sh(r,n-r)} \sgn(\sigma) \sigma \in \bbZ[\frS_n]$, which acts on $A^{\otimes n}$ by permuting tensor factors. Then we define 
\[
 \ch_n(A) \ceq A^{\otimes n}/\langle s_{r,n-r}(a_1 \otimes \dotsb \otimes a_n) \mid 
 r=1,\dotsc,n, \, a_i \in A\rangle_{\text{$A$-mod}}.
\]
We will denote by $[a_1 \otimes \dotsb \otimes a_n] \in \ch_n(A)$ the element associated to $a_1 \otimes \dotsb \otimes a_n \in A^{\otimes n}$. Now, for an $A$-module $M$, we set 
\begin{align}\label{eq:AP:harh}
 \ch_n(A;M) \ceq \ch_n(A) \otimes M. 
\end{align}
The graded module $\bigoplus_{n>0} \ch_n(A;M)$ is equipped with the boundary map $\pdd\colon \ch_n(A;M) \to \ch_{n-1}(A;M)$ defined by 
\begin{align}\label{eq:AP:harb}
\begin{split}
 \pdd_n([a_1 \otimes \dotsb \otimes a_n] \otimes m) \ceq 
 [a_1 \otimes \dotsb \otimes a_{n-1}] \otimes a_n.m 
&+\sum_{i=1}^{n-1} [a_1 \otimes \dotsb \otimes a_i a_{i+1} \otimes \dotsb \otimes a_n] \otimes m \\
&+(-1)^n [a_2 \otimes \dotsb \otimes a_n] \otimes a_1.m.
\end{split}
\end{align}
The obtained homology complex $(\ch_{\bl}(A;M),\pdd)$ is the Harrison chain complex. 
On the other hand, the Harrison cochain complex $C^{\bl}_{\tCom}(A,M)$ is given by 
\begin{align}\label{eq:AP:harcoh}
  C^n_{\tCom}(A,M) \ceq \Hom(\ch_n(A),M)
\end{align}
with the coboundary map $d\colon C^n_{\tCom}(A,M) \to C^{n+1}_{\tCom}(A,M)$ being the same one with the Hochschild coboundary map \eqref{eq:AP:Ad}.

\begin{rmk}\label{rmk:AP:eul=Har}
Some comments on references are in order.
The Harrison complex is originally defined \cite{Ha} using monotone permutations, which are also used in \cite{BDHK}. On the other hand, the cofree Lie coalgebra is constructed as the indecomposables of the tensor coalgebra with respect to the shuffle product (see \cite{SS} for example). These two objects are shown to be equivalent in \cite{GS}. The above description \eqref{eq:AP:harh} and \eqref{eq:AP:harb} is indeed the one for the cofree Lie coalgebra. Later, the relationship between Lie and commutative algebraic objects is enhanced to the operadic Koszul duality theory \cite{GK}. 
\end{rmk}

\subsubsection{Poisson cohomology bicomplex}\label{sss:AP:pcb}

The Poisson cohomology complex is introduced in \cite{Fr} to calculate the Poisson cohomology \eqref{eq:1.1:Hp}. Here we give a brief recollection, following \cite[\S1.3]{Fr} and \cite[\S 2]{N}. As a doubly-graded module, it is nothing but the Eulerian-decomposed Hochschild complex reviewed in the previous \Cref{sss:AP:eul}. 

Let $A=(A,-\cdot-,[-,-])$ be a Poisson algebra, and $M$ be a Poisson $A$-module. 
Then $M$ can be regarded as a symmetric module over the underlying commutative algebra $A_{\tCom} \ceq (A,-\cdot-)$, and we have the Hochschild cochain complex $C_{\tAss}^{\bl}(A_{\tCom},M)$.
Recall the Eulerian decomposition of the Hochschild cochain complex:
\[
 C^{\bl}_{\tAss}(A_{\tCom},M) = \bigoplus_{p \in \bbN} C^{\bl}_{(p)}(A_{\tCom},M), \quad 
 C^{\bl}_{(p)}(A,M) \simeq \Hom(\clS_p\bigl(\clL^c(\Sigma A)\bigr),M)^{\bl}.
\] 
For $p,q \in \bbN$, we set $C^{p,q}_{\tPoi}(A,M) \ceq 0$ for $p=0$ and 
\[
 C^{p,q}_{\tPoi}(A,M) \ceq C^{p+q}_{(p)}(A,M) \quad  (p>0).
\]
We denote the restriction of the Hochschild coboundary map \eqref{eq:AP:Ad} by the same symbols as
\[
 d\colon C^{p+q}_{(p)}(A,M) \lto C^{p+q+1}_{(p)}(A,M).
\]
By \cref{fct:AP:eul}, it gives the vertical differential of the double complex.

In order to introduce the horizontal differential, we need to recall the Chevalley-Eilenberg cochain complex of a Lie algebra. Let $L=(L,[-,-])$ be a Lie algebra, and $M$ be an $L$-module with $[-,-]\colon L \otimes M \to M$ its Lie module structure. Then the Chevalley-Eilenberg cochain complex $(C_{\tLie}^{\bl}(L,M),\delta_{\tCE})$ is given by 
\begin{align}\label{eq:AP:CL}
 C_{\tLie}^n(L,M) \ceq \Hom(\clS_n(\Sigma L),M), 
\end{align}
and for $f \in C_{\tLie}^n(L,M)$ and $u_0 \dotsm u_n \in \clS_{n+1}(\Sigma L)$, 
\begin{align}\label{eq:AP:Ld}
 \delta_{\tCE}(f)(u_0 \dotsm u_n) = 
 \sum_{i=0}^n (-1)^i [u_i,f(u_0 \dotsm \wh{u_i} \dotsm u_n)] +
 \sum_{0 \le i<j \le n} (-1)^{i+j} f([u_i,u_j] u_0 \dotsm \wh{u_i} \dotsm \wh{u_j} \dotsm u_n).
\end{align}

By \cite[1.3.5]{Fr}, the Poisson bracket on $A$ induced a $(-1)$-shifted Lie bracket $[-,-]$ on $\clL^c(\Sigma A)$, i.e., $L \ceq \Sigma^{-1} \clL^c(\Sigma A)$ is a graded Lie algebra.
Also, by \cite[1.3.7]{Fr}, the Poisson $A$-module structure on $M$ induces a module structure over the $(-1)$-shifted Lie algebra $(\clL^c(\Sigma A),[-,-])$. 
Thus we have the Chevalley-Eilenberg cochain complex $(C_{\tLie}^{\bl}(L,M),\delta_{\tCE})$.
Unfolding the definitions, we have a differential 
\[
 \delta_{\tCE}\colon C^{p+q}_{(p)} \lto C^{p+q+1}_{(p+1)}.
\]
Since the Hochschild coboundary map $d$ induces a dg Lie algebra structure on $(\clL^c(\Sigma A),[-,-])$ by \cite[1.3.5]{Fr}, we finally obtain:

\begin{dfn}[{\cite[1.3.13]{Fr}}]\label{dfn:AP:pcb}
For a Poisson algebra $(A,-\cdot-,[-,-])$ and a Poisson $A$-module $M$, we have a bicomplex
\[
 \bigl(C_{\tuPoi}^{\dbl}(A,M),d,\delta_{\tCE}\bigr),
\]
where
\[
 C_{\tPoi}^{p,q}(A,M) \ceq \begin{cases} 
  C^{p+q}_{(p)}(A,M) \simeq \Hom\bigl(\clS_p\bigl(\clL^c(\Sigma A)\bigr),M\bigr)^{p+q} 
  & (p \in \bbZ_{>0}, q \in \bbN) \\ 0 & (\text{otherwise}) \end{cases},
\]
the vertical differential $d\colon C_{\tPoi}^{p,q} = C^{p+q}_{(p)} \to C^{p+q+1}_{(p)} = C_{\tPoi}^{p,q+1}$ is induced by the Hochschild coboundary map \eqref{eq:AP:Ad}, and the horizontal differential $\delta\colon C_{\tPoi}^{p,q} = C^{p+q}_{(p)} \to C^{p+q+1}_{(p+1)} = C_{\tPoi}^{p+1,q}$ is induced by the Chevalley-Eilenberg coboundary map \eqref{eq:AP:Ld}.
We call this double complex the \emph{Poisson cohomology bicomplex}.
\end{dfn}

\begin{fct}[{\cite[1.3.14]{Fr}}]\label{fct:AP:pcb->pc}
The cohomology of the total complex of the Poisson cohomology bicomplex in \cref{dfn:AP:pcb} is isomorphic to the Poisson cohomology \eqref{eq:1.1:Hp}.
\end{fct}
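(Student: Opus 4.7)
The plan is to compute the Poisson cohomology $H^{\bl}_{\tPoi}(A,M)$ of \eqref{eq:1.1:Hp} via an explicit Koszul-type quasi-free resolution of $A$ and to recognize the resulting operadic cochain complex as the total complex of the bicomplex of \cref{dfn:AP:pcb}. The Poisson operad $\oPoi$ is Koszul (see \cite{LV}), and since the distributive law $\oPoi \cong \oCom \circ \oLie$ dualizes to a compatible decomposition of the Koszul dual cooperad, combined with the Koszul self-duality $\oCom^{!} \simeq \Sigma^{-1}\oLie^{*}$ and $\oLie^{!} \simeq \Sigma^{-1}\oCom^{*}$, the dual object $\oPoi^{!}(n)$ inherits a natural bigrading by pairs $(p,q)$ with $p+q=n$ that is the source of the bicomplex bigrading.

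Concretely, I would proceed in three steps. First, build the cobar-type quasi-free resolution $R = \Omega\bigl(\oPoi^{!}(A)\bigr)$, which inherits the $(p,q)$-bigrading from $\oPoi^{!}$. Second, compute $\Omega^{1}_{\tPoi}(R)$ using the fact that for a free Poisson algebra $\oPoi(V)$ the cotangent module is free over $U_{\tPoi}(\oPoi(V))$ on the generators $V$, thereby identifying the bigraded module $\Hom_{U_{\tPoi}(R)}\bigl(\Omega^{1}_{\tPoi}(R),M\bigr)$ with $\bigoplus_{p>0,\, q\ge 0}\Hom\bigl(\clS_{p}(\clL^{c}(\Sigma A)),M\bigr)^{p+q}$, the underlying module of $C_{\tPoi}^{\dbl}(A,M)$. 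Third, decompose the induced differential and verify that its $(0,1)$-part, coming from the commutative multiplication of $A$, matches the Hochschild coboundary $d$ of \eqref{eq:AP:Ad} on each Eulerian summand via \cref{fct:AP:eul}, while its $(1,0)$-part, coming from the Poisson bracket, reproduces the Chevalley-Eilenberg coboundary $\delta_{\tCE}$ of \eqref{eq:AP:Ld} for the $(-1)$-shifted Lie algebra $\Sigma^{-1}\clL^{c}(\Sigma A)$ acting on $M$. The general operadic cohomology theorem (see \cite[Ch.\ 12]{LV}) then identifies the cohomology of the total complex with $H^{\bl}_{\tPoi}(A,M)$.

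The main obstacle will be the sign-and-suspension bookkeeping inherent to Koszul duality for Poisson algebras, especially matching the $(-1)$-shifted Lie bracket on $\clL^{c}(\Sigma A)$ that drives $\delta_{\tCE}$ with the canonical cobar differential, and reproducing the vanishing convention $C_{\tPoi}^{0,q}=0$ from the operadic side (where it should come from the triviality of $\oPoi^{!}(1)$ beyond the counit). A more elementary alternative would be to filter $C_{\tPoi}^{\dbl}(A,M)$ by the Eulerian index $p$ and compare the resulting spectral sequence to the Hodge-type spectral sequence of the Hochschild complex of the underlying commutative algebra $A_{\tCom}$ with coefficients in $M$, but this route still relies on the Koszul duality of $\oCom$ expressed in the Harrison/cotangent complex comparison.
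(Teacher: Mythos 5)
The paper does not prove this statement: it is quoted as a Fact from Fresse \cite[1.3.14]{Fr}, so there is no in-paper argument to compare against. Your sketch is essentially a reconstruction of Fresse's own proof --- Koszulness of $\oPoi$ via the distributive law $\oCom\circ\oLie$ and its self-duality, the Koszul quasi-free resolution, freeness of $\Omega^1_{\tPoi}$ on generators, the identification of $\Hom_{U_{\tPoi}(R)}(\Omega^1_{\tPoi}(R),M)$ with $\bigoplus_p\Hom(\clS_p(\clL^c(\Sigma A)),M)$, and the splitting of the twisted differential into the Hochschild and Chevalley--Eilenberg pieces --- so the approach is the correct and standard one. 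The only caveat is that the substance of the proof lives exactly where you defer it: verifying that the $(1,0)$-component of the Koszul twisting differential really is $\delta_{\tCE}$ for the $(-1)$-shifted bracket of \cite[1.3.5]{Fr} (rather than merely some map of the right bidegree) is the sign-sensitive core of Fresse's \S1.3, and your sketch would need to carry that computation out rather than assert it.
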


\cref{fig:AP:pcb} shows the diagrams concerning the Poisson cohomology bicomplex. We only display the non-zero parts. The top left diagram shows the configuration of $C^{p+q}_{(p)}$'s, and the top right one shows $C^{p,q}$'s. The bottom diagram is a citation from \cite[p.268]{N}, showing the content of each module with the Harrison chain complex $\ch_{\bl} \simeq \clL^c(\Sigma A)_{\bl}$ (see \eqref{eq:AP:harh}, \eqref{eq:AP:harb} and \cref{rmk:AP:eul=Har}).
\begin{figure}[htbp]
\begin{tikzcd}[row sep=scriptsize, column sep=small]
 \vdots & & & & \vdots \\
 C_{(1)}^3 \ar[r,"\delta_{\tCE}"] \ar[u,"d"] & \cdots & & & 
 C^{1,2} \ar[r] \ar[u] & \cdots \\
 C_{(1)}^2 \ar[r,"\delta_{\tCE}"] \ar[u,"d"] & 
 C_{(2)}^3 \ar[r,"\delta_{\tCE}"] \ar[u,"d"] & \cdots & &
 C^{1,1} \ar[r] \ar[u] & C^{2,1} \ar[r] \ar[u] & \cdots \\
 C_{(1)}^1 \ar[r,"\delta_{\tCE}"] \ar[u,"d"] & C_{(2)}^2 \ar[r,"\delta_{\tCE}"] \ar[u,"d"] & 
 C_{(3)}^3 \ar[r,"\delta_{\tCE}"] \ar[u,"d"] & \cdots & 
 C^{1,0} \ar[r] \ar[u] & C^{2,0} \ar[r] \ar[u] & C^{3,0} \ar[r] \ar[u] & \cdots
\end{tikzcd}

\begin{tikzcd}[row sep=scriptsize, column sep=scriptsize]
 \vdots \\
 \Hom(\ch_3,M) \ar[r] \ar[u] & \cdots \\
 \Hom(\ch_2,M) \ar[r] \ar[u] & \Hom(\ch_2 \otimes \ch_1,M) \ar[r] \ar[u] & \cdots \\
 \Hom(\ch_1,M) \ar[r] \ar[u] & \Hom(\wedge^2 \ch_1,M) \ar[r] \ar[u] & 
 \Hom(\wedge^3 \ch_1,M) \ar[r] \ar[u] & \cdots
\end{tikzcd}
\caption{Poisson cohomology bicomplex}
\label{fig:AP:pcb}
\end{figure}

As a corollary, we have:

\begin{fct}\label{fct:AP:pcb-cl}
Let $A=(A,-\cdot-,\{-,-\})$ be a Poisson algebra, and $M$ be a Poisson $A$-module. Denote by $A_{\tCom} \ceq (A,-\cdot-)$ and $A_{\tLie} \ceq (A,\{-,\cdot-\})$ the underlying commutative and Lie algebra, respectively. Then the Harrison cochain complex $C_{\tCom}^{\bl}(A_{\tCom},M)$ and the Chevalley-Eilenberg cochain complex $C_{\tLie}^{\bl}(A_{\tLie},M)$ are embedded in the Poisson cohomology bicomplex $C_{\tuPoi}^{\dbl}(A,M)$ as 
\[
 C_{\tCom}^{\bl}(A_{\tCom},M) = C_{\tPoi}^{\bl,1}(A,M), \quad
 C_{\tLie}^{\bl}(A_{\tLie},M) = C_{\tPoi}^{1,\bl}(A,M).
\]
\end{fct}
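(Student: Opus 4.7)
The plan is to unpack the constructions in \cref{dfn:AP:pcb} and apply the Eulerian decomposition \cref{fct:AP:eul}, reading off each classical cochain complex as a suitable row or column of the Poisson bicomplex.

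First I would address the Harrison identification. The column of the Poisson bicomplex at Eulerian index $p=1$ is, by definition, $C_{\tPoi}^{1,q}(A,M) = C_{(1)}^{q+1}(A_{\tCom},M)$, and the isomorphism \eqref{eq:AP:CC} of \cref{fct:AP:eul} identifies this with the Harrison cochain complex $C_{\tCom}^{q+1}(A_{\tCom},M) = \Hom(\ch_{q+1}(A),M)$. Since the vertical differential $d$ on the Poisson bicomplex is defined as the restriction of the Hochschild coboundary \eqref{eq:AP:Ad} to the Eulerian summand $C_{(1)}^{\bl}$, the classical fact (\cref{fct:AP:eul}) that the Eulerian decomposition is a decomposition of cochain complexes, with $C_{(1)}^{\bl}$ being the Harrison cochain complex with coboundary \eqref{eq:AP:harb}, yields the first identification as cochain complexes.

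Next I would compute the bottom row $C_{\tPoi}^{p,0}(A,M) = C_{(p)}^{p}(A,M) \cong \Hom(\clS_p(\clL^c(\Sigma A))^p,M)$ using \eqref{eq:AP:Sp}. Since $\clL^c(\Sigma A)$ is concentrated in positive degrees with $\clL^c(\Sigma A)^1 = \Sigma A$, the degree-$p$ component of $\clS_p(\clL^c(\Sigma A))$ consists entirely of products of $p$ elements of $\Sigma A$; with $\Sigma A$ in odd degree the graded symmetric product equals $\clS_p(\Sigma A) \cong \wedge^p A$, whence
\[
 C_{\tPoi}^{p,0}(A,M) \cong \Hom(\wedge^p A,M) = C_{\tLie}^p(A_{\tLie},M)
\]
matching \eqref{eq:AP:CL}. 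By the construction of \cref{dfn:AP:pcb}, the horizontal differential $\delta_{\tCE}$ is the Chevalley-Eilenberg coboundary of the graded Lie algebra $L = \Sigma^{-1}\clL^c(\Sigma A)$ with coefficients in $M$; on the bottom row all arguments lie in $L^0 = A$, and the Lie bracket that Fresse's $(-1)$-shift induces on $L^0$ is the Poisson bracket $\{-,-\}$ of $A$ by \cite[1.3.5]{Fr}, so $\delta_{\tCE}$ restricts to the standard Chevalley-Eilenberg coboundary \eqref{eq:AP:Ld} of $A_{\tLie}$ on $M$.

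The arguments are essentially bookkeeping; the care required is in tracking the suspension shifts and sign conventions of the Eulerian decomposition, and in verifying that the $(-1)$-shifted Lie structure of Fresse on $\clL^c(\Sigma A)$ restricts on degree-$0$ generators to the Poisson bracket of $A$ and that the Poisson $A$-module structure of $M$ restricts on the bottom row to the Lie module structure of $A_{\tLie}$ on $M$. Once these routine verifications are in place, the identifications of the cochain complexes in both cases are immediate from the constructions, and no substantive obstacle arises.
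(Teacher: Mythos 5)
Your argument is correct, and it is exactly the unpacking of \cref{dfn:AP:pcb} together with \cref{fct:AP:eul} that the paper has in mind (the paper gives no proof beyond ``as a corollary''). One point deserves attention, though: what you actually establish is
\[
 C_{\tCom}^{\bl}(A_{\tCom},M) = C_{\tPoi}^{1,\bl-1}(A,M), \qquad
 C_{\tLie}^{\bl}(A_{\tLie},M) = C_{\tPoi}^{\bl,0}(A,M),
\]
i.e.\ Harrison is the $p=1$ column and Chevalley--Eilenberg is the $q=0$ row, which does not literally match the displayed formula in the statement. Your version is the correct one: since the vertical differential $d$ raises $q$ and restricts to the Harrison coboundary, while the horizontal differential $\delta_{\tCE}$ raises $p$ and restricts to the Chevalley--Eilenberg coboundary, the Harrison complex must occupy a column and the Chevalley--Eilenberg complex a row, exactly as in \cref{fig:AP:pcb} and as confirmed by the surjection onto the $p=1$ column in \cref{thm:AP:fcc=pcb}. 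The formula as printed in the Fact has the two right-hand sides interchanged (and the second indices off by the suspension shift); you should flag this rather than silently prove the corrected statement.
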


\subsection{Finite operad and Poisson algebra structure}\label{ss:AP:Pfn}

In this \Cref{ss:AP:Pfn}, we give a brief review of the \emph{finite operad}, which was introduced in\cite[10.5]{BDHK} as a byproduct of the study of the classical operad $\oP^{\cl}$.

\subsubsection{$n$-graphs}\label{sss:AP:graph}

We recollect the terminology on graphs and quivers from \cref{ss:2P:graph}.
See also \cite[8.2]{BDHK}, \cite[4.1]{BDHK2} and \cite[4.2]{BDKV21}.


Recall from \cref{dfn:2P:oQ} the set $\oQ(n)$ of $n$-quivers without loops and the subset $\oQ_{\ac}(n) \subset \oQ(n)$ of acyclic $n$-quivers. Let $\bbK \oQ(n)$ be the linear space with basis $\oQ(n)$. 
An element of $\bbK \oQ(n)$ is called a \emph{cycle relation} (\cite[4.1]{BDHK2}) if it has either of the following forms.
\begin{clist}
\item
An element $\Gamma \in \oQ(n) \sm \oQ_{\ac}(n)$, i.e., an $n$-quiver whose underlying graph $\ul{\Gamma}$ contains a cycle.
\item
A linear combination $\sum_{e \in C} \Gamma \sm e$, where $\Gamma \in \oQ(n)$ and $C \subset E(\Gamma)$ is a directed cycle. Here $\Gamma \sm e$ denotes the ($n-1$)-quiver obtained from $\Gamma$ by removing the edge $e$ contained in the directed cycle $C$ (c.f.\ \cref{lem:2P:dgraph} \ref{i:lem:2P:dgraph:3}).
\end{clist}
We denote by $R(n) \subset \bbK \oQ(n)$ the linear span of all the cycle relations.
The $\frS_n$-action on the set $\oQ(n)$ extends to a linear action on $\bbK \oQ(n)$, which preserves the subspace $R(n)$. 
Thus we have a linear $\frS_n$-action on the quotient space $\bbK \oQ(n)/R(n)$.

Let us consider a partition 
\[
 \{1,\dotsc,n\} = \{i^1_1,\dotsc,i^1_{m_1}\} \sqcup \{i^2_1,\dotsc,i^2_{m_2}\} \sqcup 
                  \dotsb \sqcup \{i^p_1,\dotsc,i^p_{m_p}\}
\]
with $p \in \bbZ_{\ge 1}$, $m_l \ge 1$ ($l=1,2,\dotsc,p$) and $m_1+m_2+\dotsb+m_p=n$ such that 
\[
 i^1_1=1 < i^2_1 < \dotsb < i^p_1, \quad 
 i^l_1 = \min\{i^l_1,\dotsc,i^l_{m_l}\} \quad (l=1,2,\dotsc,p).
\]
For such a partition $I$, we have an $n$-quiver $\Lambda_I$ of the following form.
\begin{align}\label{eq:AP:GammaI}
\begin{tikzpicture}
 \draw (-1.0,0.2) node {$\Lambda_I \ceq$};
 \draw ( 0.0,0.0) node {$\underset{i^1_1}{\bullet}$}; 
 \draw ( 0.6,0.0) node {$\underset{i^1_2}{\bullet}$}; 
 \draw ( 1.4,0.2) node {$\cdots$}; 
 \draw ( 2.2,0.0) node {$\underset{i^1_{m_1}}{\bullet}$}; 
 \draw[->] (0.1,0.2)--(0.5,0.2);
 \draw[->] (0.7,0.2)--(1.1,0.2);
 \draw[->] (1.7,0.2)--(2.1,0.2);
 \draw ( 3.0,0.0) node {$\underset{i^2_1}{\bullet}$}; 
 \draw ( 3.6,0.0) node {$\underset{i^2_2}{\bullet}$}; 
 \draw ( 4.4,0.2) node {$\cdots$}; 
 \draw ( 5.2,0.0) node {$\underset{i^2_{m_2}}{\bullet}$}; 
 \draw[->] (3.1,0.2)--(3.5,0.2);
 \draw[->] (3.7,0.2)--(4.1,0.2);
 \draw[->] (4.7,0.2)--(5.1,0.2);
 \draw (6.1,0.2) node {$\cdots$};
 \draw ( 7.0,0.0) node {$\underset{i^p_1}{\bullet}$}; 
 \draw ( 7.6,0.0) node {$\underset{i^p_2}{\bullet}$}; 
 \draw ( 8.4,0.2) node {$\cdots$}; 
 \draw ( 9.2,0.0) node {$\underset{i^p_{m_p}}{\bullet}$}; 
 \draw[->] (7.1,0.2)--(7.5,0.2);
 \draw[->] (7.7,0.2)--(8.1,0.2);
 \draw[->] (8.7,0.2)--(9.1,0.2);
\end{tikzpicture}
\end{align}
We call $\Lambda_I$ a \emph{disjoint union of lines}, and denote by $\clL(n) \subset \oQ(n)$ the set of $n$-quivers that are disjoint unions of lines. Clearly we have $\clL(n) \subset \oQ_{\ac}(n)$.
Now let us cite:

\begin{fct}[{\cite[Lemma 4.1]{BDHK2}}]\label{fct:AP:clL}
The set $\clL(n)$ is a basis of the quotient space $\bbK \oQ(n)/R(n)$.
\end{fct}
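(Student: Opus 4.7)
The plan is to prove the basis statement in two steps: spanning by $\clL(n)$ and linear independence of the classes $[\Lambda_I]$ in the quotient.

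\textbf{Spanning step.} Using relation (i), any element of $\bbK\oQ(n)/R(n)$ is supported on acyclic quivers in $\oQ_{\ac}(n)$. I would reduce any acyclic quiver to a combination of $\Lambda_I$'s via two elementary rewriting moves, each derived from a combined use of (i) and (ii):
\begin{itemize}
\item \emph{Edge reversal.} Given a quiver $\Gamma$ containing an edge $\alpha\colon j\to i$, insert the reverse edge $\alpha'\colon i\to j$ to form an augmented quiver $\Gamma' \ceq \Gamma \cup \{\alpha'\}$. The pair $(\alpha,\alpha')$ is a directed $2$-cycle in $\Gamma'$; applying relation (ii) yields $\Gamma' \sm \alpha + \Gamma' \sm \alpha' = 0$, which gives the orientation-flip identity ($\Gamma$ with $\alpha$ reversed) $\equiv -\Gamma \pmod{R(n)}$.
\item \emph{Branching reduction.} At a vertex $v$ of in-degree (or out-degree) $\ge 2$, augment $\Gamma$ with one or two auxiliary edges so that a directed cycle of length $\le 3$ passes through $v$ and two of the branches at $v$. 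The resulting augmented quiver has an undirected cycle and hence vanishes by (i), while (ii) applied along its directed cycle is a non-trivial relation expressing the original $\Gamma$ as a signed sum of strictly less-branched quivers (the four-vertex star-to-lines computation in the main text is the prototypical case).
\end{itemize}
Induction on a lexicographic complexity such as (number of branching vertices, total branching excess, number of mis-oriented edges relative to the minimality rule defining $\clL(n)$) then reduces any acyclic quiver to an integer combination of $\Lambda_I$'s.

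\textbf{Linear-independence step.} I would construct an $\frS_n$-equivariant linear map
\[
 \Phi\colon \bbK\oQ(n) \lto \oPoi(n)
\]
vanishing on $R(n)$, where $\oPoi(n) \cong \bbK[\frS_n]$ is the arity-$n$ component of the Poisson operad $\oPoi \cong \oCom\circ\oLie$. On generators, $\Phi$ sends a quiver $\Gamma$ (with formal letters $v_1,\dotsc,v_n$ at its vertices) to the Poisson monomial obtained by reading each connected directed path $(i_1\to\cdots\to i_k)$ of $\Gamma$ as the right-normed Lie bracket $[v_{i_1},[v_{i_2},[\cdots,v_{i_k}]\cdots]]$ and taking the symmetric product over connected components. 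Relation (i) is killed because an undirected cycle forces a repeated Lie argument, which vanishes by antisymmetry together with Jacobi; relation (ii) corresponds exactly to the Jacobi identity iterated along the directed cycle. The elements of $\clL(n)$ are precisely the canonical PBW-representatives of a basis of $\oCom\circ\oLie(n)$ (yielding $|\clL(n)|=n!$, consistent with the independent EGF count from $\exp(x/(1-x))$), so the induced map $\overline{\Phi}\colon \bbK\oQ(n)/R(n) \to \oPoi(n)$ sends the classes $[\Lambda_I]$ bijectively to a basis of $\oPoi(n)$, whence linear independence follows.

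\textbf{Main obstacle.} The hardest part is organising the branching-reduction move in the spanning step so that the induction terminates — simultaneously controlling the emergence of new branching vertices, new undirected cycles, and mis-oriented edges during a single reduction step, and checking that auxiliary edges added to form the cycle can always be chosen disjointly from existing edges. A secondary subtlety is that the map $\Phi$ must be verified to be $\frS_n$-equivariant and independent of the various choices involved in reading off the Lie-bracket structure from a quiver; this is exactly the point where the match between relations (i), (ii) and the operadic antisymmetry/Jacobi identities becomes essential.
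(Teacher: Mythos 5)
First, note that the paper gives no proof of this statement: it is quoted as a Fact from \cite[Lemma 4.1]{BDHK2}, so there is no internal argument to compare against. Your overall strategy (spanning by rewriting with the cycle relations, independence via a map to $\oPoi(n)\cong\bbK[\frS_n]$) is the right shape and is in the spirit of the cited proof, and your edge-reversal identity is correct. But there are two genuine gaps in the independence step. \textbf{(a)} Your map $\Phi$ is only defined on quivers whose components are directed paths. A general element of $\oQ(n)$ is an arbitrary loopless quiver: an acyclic component is a tree with branchings, where ``reading each connected directed path'' is ambiguous (maximal directed paths overlap and there is no canonical bracketing), and a component with an undirected cycle does not produce a multilinear expression at all. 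Since linear independence in the quotient requires a map defined on all of $\bbK\oQ(n)$ and vanishing on $R(n)$, you must either give an explicit Leibniz-expansion formula for $\Phi(\Gamma)$ on arbitrary quivers and verify it kills both types of relations, or restructure the argument (prove spanning first, then bound $\dim\bbK\oQ(n)/R(n)$ from below by $n!$ by a separate evaluation). \textbf{(b)} More seriously, the specific reading is wrong: right-normed brackets anchored at the minimal \emph{first} letter do not form a basis of $\oLie(k)$. For instance $[v_1,[v_3,v_2]]=-[v_1,[v_2,v_3]]$, so the two distinct lines $1\to2\to3$ and $1\to3\to2$ of $\clL(3)$ would map to proportional elements of $\oPoi(3)$, contradicting your claim that $\ol{\Phi}$ sends $\clL(n)$ bijectively to a basis. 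You need the left-normed reading $[[\cdots[v_{i_1},v_{i_2}],v_{i_3}],\ldots,v_{i_k}]$ with $i_1$ minimal (equivalently, right-normed anchored at the \emph{last} vertex), which does give the classical basis of $\oLie(k)$ of cardinality $(k-1)!$.

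Two smaller points. The exponential generating function you quote is wrong: counting set partitions with each block cyclically anchored at its minimum gives $\exp\bigl(\sum_{m\ge1}x^m/m\bigr)=1/(1-x)$, whence $\#\clL(n)=n!$; the series $\exp(x/(1-x))$ counts linearly ordered blocks and gives $1,1,3,13,\dots$, which is not $n!$. Finally, the termination of your branching-reduction rewriting in the spanning step is asserted but not established; you correctly identify this as the hard combinatorial point, but as written the induction measure is not shown to decrease, since each application of the three-term (Jacobi) relation lowers the degree of one vertex while raising the degree of a neighbour, so a naive count of branch vertices or total branching excess need not go down monotonically. A workable measure pushes branchings toward the leaves (e.g.\ weight each excess edge by its distance to the end of the component), and this should be spelled out.
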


We close this subsection by recalling the cocomposition map of $n$-quivers (\cref{dfn:2P:Delta}). 
See also \cite[9.1]{BDHK}.
For an $n$-tuple of positive integers $M=(m_1,m_2,\dotsc,m_n) \in \bbZ_{\ge 1}^n$, we set 
\begin{align}\label{eq:AP:coc}
 M_0 \ceq 0, \quad M_i \ceq m_1+m_2+\dotsb+m_i \quad (i=1,2,\dotsc,n).
\end{align}
Note that $M_n=m_1+\dotsb+m_n$. Then the map 
\[
 \Delta^M=(\Delta^M_0,\Delta^M_1,\dotsc,\Delta^M_n)\colon 
 \oQ(M_n) \lto \oQ(n) \times \oQ(m_1) \times \dotsb \times \oQ(m_n)
\]
is defined by the following description. Let $\Gamma \in \oQ(M_n)$.
\begin{itemize}
\item 
For $i=1,\dotsc,n$, $\Delta^M_i(\Gamma) \in \oQ(m_i)$ is the directed subgraph of $\Gamma$ associated to the vertices $M_{i-1}+1,\dotsc,M_i$.
\item
$\Delta^M_0(\Gamma) \in \oQ(n)$ is the directed graph obtained by collapsing the vertices $M_{i-1}+1,\dotsc,M_i$ into the single vertex $i$ for each $i=1,\dotsc,n$.
\end{itemize}

\subsubsection{Finite operad}\label{sss:AP:Pfn}

Now we cite from \cite[10.5]{BDHK} the definition of the finite operad $\oPfn{V}$.

Let $V$ be a linear space. For $n \in \bbN$, we define
\begin{align}\label{eq:AP:Pfn}
 \oPfn{V}(n) \ceq \Hom\bigl((\bbK\oQ(n)/R(n))\otimes V^{\otimes n}, V\bigr).
\end{align}
We denote its element as a map 
\[ 
 f\colon \oQ(n) \times V^{\otimes n} \lto V, \quad 
 (\Gamma,v_1 \otimes \dotsb \otimes v_n) \lmto f^\Gamma(v_1 \otimes \dotsb \otimes v_n)
\] 
which is linear in the second factor $V^{\otimes n}$, and we extend it by linearity for $\Phi = \sum_{\Gamma} c_\Gamma \Gamma \in \bbK \oQ(n)$, i.e., $f^\Phi \ceq \sum_{\Gamma}c_\Gamma f^{\Gamma}\colon V^{\otimes n} \to V$. Then $f^{\Phi}=0$ for every cycle relation $\Phi \in R(n)$.

Recall that in \Cref{sss:AP:graph} we defined the $\frS_n$-actions on the set $\oQ(n)$, the space $\bbK \oQ(n)$ and the quotient space $\bbK \oQ(n)/R(n)$. 
Then the space $\oPfn{V}(n)$ is a right $\frS_n$-module by 
\[
 (f^\sigma)^\Gamma(v_1 \otimes \dotsb \otimes v_n) \ceq 
 f^{\sigma(\Gamma)}(v_{\sigma^{-1}(1)} \otimes \dotsb \otimes v_{\sigma^{-1}(n)}).
\]

Finally, we define the composition maps. 
For each $M=(m_1,\dotsc,m_n) \in \bbN^n$, we should define $\gamma\colon \oPfn(n) \otimes \oPfn(m_1) \otimes \dotsb \otimes \oPfn(m_n) \to \oPfn(M_n)$ with $M_n \ceq m_1+\dotsb+m_n$. 
For $f \in \oPfn(n)$, $g_1 \in \oPfn(m_1)$, $\dotsc$, $g_n \in \oPfn(m_n)$ and $\Gamma \in \oQ(m_1+\dotsb+m_n)$, we define $f(g_1,\dotsc,g_n) = \gamma(f;g_1,\dotsc,g_n)$ by 
\begin{align}\label{eq:2P:Pfn:gamma}
 f(g_1,\dotsc,g_n)^\Gamma \ceq f^{\Delta_0^M(\Gamma)}\bigl(g_1^{\Delta_1^M(\Gamma)} \otimes \dotsb \otimes g_n^{\Delta_n^M(\Gamma)}\bigr).
\end{align}
Here we used the cocomposition map $\Delta^M_i$ in \Cref{sss:AP:graph}.

\begin{fct}[{\cite[10.5]{BDHK}}]\label{fct:2P:Pfn}
The $\frS$-module $\oPfn{V} = \bigl(\oPfn{V}(n)\bigr)_{n \in \bbN}$ has a structure of an operad whose composition map $\gamma\colon \oPfn{V} \circ \oPfn{V} \to \oPfn{V}$ given by \eqref{eq:2P:Pfn:gamma}. We call it \emph{the finite operad}.
\end{fct}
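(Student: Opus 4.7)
The plan is to verify in turn the three requirements for \eqref{eq:2P:Pfn:gamma} to define an operad structure on the $\frS$-module $\oPfn{V}$: (i) that $\gamma$ descends from $\bbK\oQ(M_n)$ to the quotient $\bbK\oQ(M_n)/R(M_n)$, (ii) $\frS$-equivariance of $\gamma$, and (iii) associativity and the unit axiom. The element $\id_V \in \oPfn{V}(1)$ (corresponding to the unique $1$-quiver with no edges) will serve as the unit.

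The essential content lies in (i): I must show that $\gamma(f;g_1,\dotsc,g_n)^{\Phi}=0$ whenever $\Phi \in R(M_n)$. The key structural observation, read off from \cref{dfn:2P:Delta}, is that every edge of a $\Gamma \in \oQ(M_n)$ belongs to exactly one of $\Delta_0^M(\Gamma),\Delta_1^M(\Gamma),\dotsc,\Delta_n^M(\Gamma)$, namely to $\Delta_i^M(\Gamma)$ when both endpoints lie in the $i$-th block $\{M_{i-1}+1,\dotsc,M_i\}$ and to $\Delta_0^M(\Gamma)$ otherwise. I would exploit this edge-wise partition for each of the two kinds of cycle relations defining $R(M_n)$.

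For a $\Gamma$ whose underlying graph $\ul{\Gamma}$ contains a cycle, the cycle either lies entirely inside a single block, in which case $\Delta_i^M(\Gamma)$ inherits this cycle and hence $g_i^{\Delta_i^M(\Gamma)}=0$ by the first cycle relation for $g_i$, or it passes through at least two blocks, in which case $\Delta_0^M(\Gamma)$ carries a cycle and $f^{\Delta_0^M(\Gamma)}=0$ by the first cycle relation for $f$. For a directed-cycle relation $\sum_{e\in C}\Gamma \sm e$ with $C=\{e_1,\dotsc,e_k\}$ a directed cycle of $\Gamma$, I would split the sum according to whether each $e_j$ is internal (lies in some $\Delta_i^M(\Gamma)$) or external (lies in $\Delta_0^M(\Gamma)$). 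If $C$ is contained in a single block, only internal edges contribute and the directed-cycle relation for $g_i$ applied to $\Delta_i^M(\Gamma)$ finishes the argument. If $C$ traverses several blocks, first use the underlying-graph relations to discard those $\Gamma \sm e$ whose remaining internal or collapsed part still contains a cycle; the surviving contributions, after identifying the image of $C$ in $\Delta_0^M(\Gamma)$ as a directed cycle, assemble into the directed-cycle relation for $f$ on $\Delta_0^M(\Gamma)$, which vanishes.

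The $\frS$-equivariance in (ii) and the associativity/unit axioms in (iii) then reduce to a formal verification using that $\bigl(\oQ(n)\bigr)_{n\in\bbN}$ together with the cocompositions $\Delta^M$ is a set-theoretic cooperad in the sense of \cite[\S 9]{BDHK}; the operad axioms for $\gamma$ are pulled back along $\Delta^M$ from the tautological axioms of the endomorphism operad of $V$. The main obstacle is precisely the bookkeeping in step (i) when the directed cycle $C$ traverses several blocks: one must simultaneously track contributions from both types of cycle relations and from both $f$ and the $g_i$'s, and check that nothing is double-counted once $\sum_{e\in C}\gamma(f;g_1,\dotsc,g_n)^{\Gamma \sm e}$ is reorganized via the edge-wise partition. \cref{fct:AP:clL}, which identifies $\clL(n)$ as a basis of $\bbK\oQ(n)/R(n)$, serves as a convenient sanity check, since it lets one reduce verifications to testing on disjoint unions of lines.
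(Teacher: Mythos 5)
The paper does not prove this statement: it is recorded as a \emph{Fact} and cited directly from \cite[\S10.5]{BDHK}, so there is no in-paper argument to compare against. Your outline is correct and is essentially the standard verification: the only non-formal point is that the cycle relations form a ``coideal'' for the cocomposition $\Delta^M$, and your edge-wise partition of $E(\Gamma)$ among $\Delta_0^M(\Gamma),\dotsc,\Delta_n^M(\Gamma)$ handles both types of relations correctly (internal-edge terms of $\sum_{e\in C}\Gamma\sm e$ die because $\Delta_0^M(\Gamma\sm e)=\Delta_0^M(\Gamma)$ still carries a cycle, and the external-edge terms reassemble into the directed-cycle relation for $f$ on $\Delta_0^M(\Gamma)$); equivariance, associativity and the unit axiom then follow formally from the cooperad structure on $\oQ$. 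The one step you might spell out is why a cycle of $\ul{\Gamma}$ meeting at least two blocks forces a cycle in $\ul{\Delta_0^M(\Gamma)}$: its external edges form a nonempty closed walk with distinct edges in the collapsed graph, and such a walk necessarily contains a simple cycle (the subgraph it spans has all degrees even). With that remark added, the proposal is a complete proof.
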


For later use, let us describe the linear space $\oPfn{V}(2)$ explicitly. By \cref{fct:AP:clL}, the linear space $\bbK \oQ(2)/R(2)$ has a basis $\Lambda_{\{1\} \sqcup \{2\}}, \Lambda_{\{1\,2\}}$ with
\[
 \begin{tikzpicture}
 \draw (-1.1,0.11) node {$\Lambda_{\{1\} \sqcup \{2\}} \ceq$};
 \draw ( 0.0,0.00) node {$\underset{1}{\bullet}$}; 
 \draw ( 0.6,0.00) node {$\underset{2}{\bullet}$}; 
 \draw ( 2.1,0.11) node {$\Lambda_{\{1,2\}} \ceq$};
 \draw ( 3.0,0.00) node {$\underset{1}{\bullet}$}; 
 \draw ( 3.6,0.00) node {$\underset{2}{\bullet}$}; 
 \draw[->] (3.1,0.15)--(3.5,0.15);
 \end{tikzpicture}
\]
For these $2$-quivers, we denote 
\begin{align}\label{eq:AP:fG2}
 f^{\{1\} \sqcup \{2\}} \ceq f^{\Lambda_{\{1\} \sqcup \{2\}}}, \quad 
 f^{\{1,2\}} \ceq f^{\Lambda_{\{1,2\}}}.
\end{align} 
Then the linear space $\oPfn{V}(2)$ is spanned by the elements $f^{\{1\} \sqcup \{2\}}, f^{\{1 2\}}$ with $f \in \Hom(V^{\otimes 2},V)$.

\subsubsection{Finite operad and Poisson algebra structure}

What we want to explain next is the fact established in \cite[Theorem 10.16]{BDHK} that $\oPfn{V}$ is related to Poisson algebra structures on $V$. For that, recall the operadic deformation and cohomology theory of algebraic structures from \cite[\S1.3]{NY}.

%
Let $\oQ$ be an arbitrary operad, and consider the cohomology complex of Lie algebra structures on $\oQ$:
\begin{align}\label{eq:AP:frg}
 \frg(\oLie,\oQ)^{\bl} = \bigl(\Hom_{\frS}(\oLie^!,\oQ)^{\bl},[-,-]\bigr), \quad 
 [f,g] \ceq f \square g - (-1)^{\abs{f} \abs{g}}g \square f.
\end{align}
The graded component $\frg(\oLie,\oQ)^n$ is given by 
\begin{align}\label{eq:AP:frgn}
 \frg(\oLie,\oQ)^n \simeq \bigl(\pc \oQ(n+1)\bigr)^{\frS_{n+1}} \ceq 
 \{f \in \oQ(n+1) \mid \forall \sigma \in \frS_{n+1}, \ f^{\sigma} = \sgn(\sigma) f\}
\end{align}
as a linear space. Here $f^\sigma$ denotes the right action of $\sigma \in \frS_{n+1}$ on $f \in \oQ(n+1)$. 
The pre-Lie product $\square$ in $\frg(\oLie,\oQ)$. For $f \in \frg(\oLie,\oQ)^n$ and $g \in \frg(\oLie,\oQ)^m$, we have $f \square g \in \frg(\oLie,\oQ)^{n+m}$ with 
\begin{align}\label{eq:AP:square}
 f \square g \ceq \sum_{\sigma \in \Sh(m+1,n)} (f \circ_1 g)^{\sigma^{-1}},
\end{align}
where $\Sh(m+1,n) \subset \frS_{n+m+1}$ denotes the subset of $(m+1,n)$-shuffles (see \eqref{eq:AP:sh}),
and $\circ_1$ denotes the infinitesimal composition \eqref{eq:1:circ_i}.
The graded Lie algebra $\frg(\oLie,\oQ)$ is essentially the same as the \emph{universal Lie superalgebra associated to $\oQ$} in \cite[3.2]{BDHK}.


Given an element $X \in \MC\bigl(\frg(\oLie,\oQ)\bigr)$, i.e., a solution of the Maurer-Cartan equation with trivial differential, we have a differential $d_X \ceq [X,-]$ on $\frg(\oLie,\oQ)$. Let us summarize the argument as:

\begin{fct}[{\cite[\S3]{BDHK}}]\label{fct:AP:gX}
Let $\oQ$ be an operad, and $\frg(\oLie,\oQ)^{\bl}$ be the graded Lie algebra \eqref{eq:AP:frg}, \eqref{eq:AP:frgn}, \eqref{eq:AP:square}.
\begin{enumerate}
\item 
An operad morphism $\varphi\colon \oLie \to \oQ$ is called a \emph{Lie algebra structure in $\oQ$}, which is in one-to-one correspondence with an element $X \in \MC\bigl(\frg(\oLie,\oQ)\bigr)$, i.e., $X \in \frg(\oLie,\oQ)^1$ satisfying $[X,X]=0$.
\item
For $X \in \MC\bigl(\frg(\oLie,\oQ)\bigr)$, we have the dg Lie algebra 
\begin{align}\label{eq:gLQX}
 \frg(\oLie,\oQ)^X \ceq \bigl(\frg(\oLie,\oQ), d_X\bigr), \quad 
 d_X \ceq [X,-]\colon \frg(\oLie,\oQ)^{\bl} \to \frg(\oLie,\oQ)^{\bl+1}.
\end{align}
\end{enumerate}
\end{fct}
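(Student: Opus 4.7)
The plan is to exploit the presentation of $\oLie$ as an operad with one generator and one relation. First, I would invoke the standard fact that $\oLie$ is freely generated as an operad by a single binary element $c \in \oLie(2)$ satisfying $c^\tau = -c$ (where $\tau \in \frS_2$ is the transposition), modulo the Jacobi relation in $\oLie(3)$. Consequently, specifying an operad morphism $\varphi\colon \oLie \to \oQ$ is equivalent to specifying an antisymmetric element $X \ceq \varphi(c) \in \oQ(2)$ on which the image of the Jacobiator vanishes in $\oQ(3)$. The antisymmetry condition is exactly $X \in \bigl(\pc \oQ(2)\bigr)^{\frS_2} = \frg(\oLie, \oQ)^1$ in the notation of \eqref{eq:AP:frgn}.

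The heart of part (1) is a direct computation translating the Jacobi condition on $X$ into the Maurer--Cartan equation $[X, X] = 0$. Using the formula \eqref{eq:AP:square} together with the antisymmetry of $X$, I would expand $[X, X] = 2 X \square X$ as a sum of three terms indexed by the $(3,0)$-shuffles in $\frS_3$ (equivalently, cyclic permutations of $X \circ_1 X$) and observe that this sum is precisely the image in $\oQ(3)$ of the Jacobiator. Hence $[X, X] = 0$ is equivalent to the Jacobi identity being satisfied, which gives the desired bijection $\Hom_{\cOp}(\oLie, \oQ) \lsto \MC\bigl(\frg(\oLie, \oQ)\bigr)$.

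For part (2), once $[X, X] = 0$ is in hand, both the square-zero and the derivation properties of $d_X \ceq [X, -]$ follow formally from the graded Jacobi identity on $\frg(\oLie, \oQ)$:
\begin{align*}
 d_X^2(f)    &= [X, [X, f]] = \tfrac{1}{2}\bigl[[X, X], f\bigr] = 0, \\
 d_X([f, g]) &= \bigl[[X, f], g\bigr] + (-1)^{\abs{f}}\bigl[f, [X, g]\bigr]
              = [d_X f, g] + (-1)^{\abs{f}}[f, d_X g].
\end{align*}
Thus $\bigl(\frg(\oLie, \oQ), d_X, [-, -]\bigr)$ is a dg Lie algebra as claimed.

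The main obstacle, and the step requiring the most work, is the preliminary verification (implicit in the statement) that $\bigl(\frg(\oLie, \oQ), [-, -]\bigr)$ is genuinely a graded Lie algebra, i.e., that the pre-Lie product $\square$ is well-defined on the invariant subspaces $\bigl(\pc \oQ(n+1)\bigr)^{\frS_{n+1}}$ and satisfies graded right-symmetric associativity. This reduces to a combinatorial check involving operadic associativity, $\frS$-equivariance of the composition maps, and standard shuffle identities; the argument is not conceptually difficult but demands careful bookkeeping of signs and permutation indices. I would follow the detailed treatment in \cite[\S3]{BDHK} for this preliminary step.
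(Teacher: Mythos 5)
Your argument is correct and is essentially the standard one: the paper states this as a Fact cited from \cite[\S3]{BDHK} without giving a proof, and your reconstruction — via the presentation of $\oLie$ by a single antisymmetric binary generator modulo the Jacobi relation for part (1), and the formal consequences of the graded Jacobi identity for part (2) — is exactly the argument of that reference (and of \cite[Proposition 1.3.3]{NY}, which the main text uses for the analogous statement \eqref{eq:1P:MC}). One small correction: for $X \in \frg(\oLie,\oQ)^1$ the sum in \eqref{eq:AP:square} defining $X \square X$ runs over the $(2,1)$-shuffles $\Sh(2,1) \subset \frS_3$ (three elements), not the $(3,0)$-shuffles; with that relabelling, your identification of the three shuffle terms with the cyclic terms of the Jacobiator, and hence of $[X,X]=2\,X\square X=0$ with the Jacobi identity, goes through as you describe.
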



Now we resume the discussion. By \cref{fct:2P:Pfn}, we have the finite operad $\oPfn{V}$, to which we can apply \cref{fct:AP:gX}. The result is summarized as:

\begin{fct}[{\cite[Theorem 10.16]{BDHK}}]
For a linear space $V$, we denote the graded Lie algebra in \cref{fct:AP:gX} with $\oQ=\oPfn{V}$ as $\frg \ceq \frg(\oLie,\oPfn{V})$. 
By \eqref{eq:AP:frgn} and \eqref{eq:AP:Pfn}, the underlying graded linear space is 
\begin{align}\label{eq:AP:gfnVn}
 \frg^{n-1} = \bigl(\pc \oPfn{V}(n)\bigr)^{\frS_n} = 
 \left\{f \in \Hom\bigl((\bbK \oQ(n)/R(n)) \otimes V^{\otimes n},V\bigr) \mid 
        \forall \sigma \in \frS_n, \ f^\sigma = \sgn(\sigma) f\right\}.
\end{align}
Then there is a bijection between the set 
\[
 \MC(\frg) = \{X \in \frg^1 \mid X \square X = 0\} = 
 \{X \in \bigl(\pc \oPfn{V}(2)\bigr)^{\frS_2} \mid X \square X=0\}
\]
and the set of Poisson algebra structures $(-\cdot-,\{-,-\})$ on $V$. 
Under the notation \eqref{eq:AP:fG2}, the bijection is given by 
\begin{align}\label{eq:AP:X=P}
 X^{\{1\} \sqcup \{2\}}(v \otimes w) = \{v,w\}, \quad 
 X^{\{1,2\}}(v \otimes w) = v \cdot w.
\end{align}
\end{fct}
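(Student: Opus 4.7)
The plan is to unfold the Maurer--Cartan condition $X\square X=0$ explicitly using the combinatorial description of $\oPfn{V}$.

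First, I would identify the space $(\pc\oPfn{V}(2))^{\frS_2}$ with pairs of bilinear operations on $V$. By \Cref{fct:AP:clL}, the quotient $\bbK\oQ(2)/R(2)$ has basis $\{\Lambda_{\{1\}\sqcup\{2\}},\,\Lambda_{\{1,2\}}\}$. The $2$-quiver with both directed edges $1\to 2$ and $2\to 1$ has a cycle in its underlying graph and hence vanishes modulo $R(2)$ by the first cycle relation; its two edges also form a directed cycle, so the second cycle relation yields $\Lambda_{\{2,1\}}\equiv -\Lambda_{\{1,2\}}$ in the quotient, where $\Lambda_{\{2,1\}}$ denotes the $2$-quiver with single edge $2\to 1$. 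Consequently the nontrivial transposition $(1\,2)\in\frS_2$ fixes $\Lambda_{\{1\}\sqcup\{2\}}$ and negates $\Lambda_{\{1,2\}}$, so the sign-antiinvariance condition $X^{(1\,2)}=-X$ translates into $\nu\ceq X^{\{1\}\sqcup\{2\}}$ being antisymmetric and $\mu\ceq X^{\{1,2\}}$ being symmetric. Conversely, any such pair $(\mu,\nu)$ gives a well-defined element of $(\pc\oPfn{V}(2))^{\frS_2}$ via \eqref{eq:AP:X=P}.

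Next, I would compute $X\square X\in(\pc\oPfn{V}(3))^{\frS_3}$ using \eqref{eq:AP:square} and the cocomposition formula \eqref{eq:2P:Pfn:gamma} for $\circ_1$. By sign-antiinvariance of $X\square X$, the vanishing condition reduces to testing $(X\square X)^\Lambda$ on basis elements $\Lambda\in\clL(3)$, and by the $\frS_3$-action these six evaluations are related, so it suffices to inspect one representative per edge-count stratum:
\begin{itemize}
\item
On the empty $3$-quiver $\Lambda_{\{1\}\sqcup\{2\}\sqcup\{3\}}$, every cocomposition $\Delta^{(2,1)}(\tau\Lambda)$ produces only empty $2$-quivers at every level, and the three shuffle summands assemble (up to $\frS_3$-signs) into the Jacobi identity for $\nu$.
\item
On a one-edge representative such as $\Lambda_{\{1,2\}\sqcup\{3\}}$, one summand has shape $\nu(\mu(a,b),c)$ and the other two have shape $\mu(\nu(\cdot,\cdot),\cdot)$; after using the symmetry of $\mu$, the antisymmetry of $\nu$, and the relation $\Lambda_{\{2,1\}}=-\Lambda_{\{1,2\}}$ to normalize non-basis $2$-quivers appearing in some $\Delta_0$'s, the sum becomes the Leibniz identity $\nu(a,\mu(b,c))=\mu(\nu(a,b),c)+\mu(b,\nu(a,c))$.
\item
On the two-edge representative $\Lambda_{\{1,2,3\}}$ (directed path $1\to 2\to 3$), one of the three shuffle summands has a $\Delta_0$-component whose underlying graph contains a cycle and hence vanishes by the first cycle relation, while the other two combine into the associator $\mu(\mu(a,b),c)-\mu(a,\mu(b,c))$.
\end{itemize}
These are precisely the three axioms defining a Poisson algebra on $V$, and the reverse direction follows from the same chain of computations, yielding the desired bijection \eqref{eq:AP:X=P}.

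The main obstacle is the combinatorial bookkeeping of cycle relations throughout the intermediate steps. Permuting an element of $\clL(n)$ via the $\frS_n$-action generally produces a quiver outside $\clL(n)$, and the $\Delta_0$-collapse under $\Delta^{(2,1)}$ can introduce loops (excluded from $\oQ$) or longer cycles (killed by the first cycle relation); each such occurrence must be rewritten via \Cref{fct:AP:clL} and the two cycle relations before the intended Poisson axiom emerges cleanly. Running this bookkeeping in parallel with the signs arising from the shuffle expansion of $\square$ on sign-antiinvariant elements and from the input permutation in the $\frS_3$-action is where most of the work lies, but once organized case by case, the three axioms fall out from the three edge-count strata of $\clL(3)$ in a uniform way.
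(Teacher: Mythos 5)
Your proposal is correct. The paper itself quotes this statement as a Fact from \cite[Theorem 10.16]{BDHK} without proof, but your argument is exactly the computation the paper does carry out for the analogous statements: identifying $X$ with a symmetric product and an antisymmetric bracket via the basis $\clL(2)$ of $\bbK\oQ(2)/R(2)$, and then evaluating $X\square X$ on one representative per edge-count stratum of $\clL(3)$ to extract Jacobi, Leibniz and associativity is precisely the scheme of \cref{lem:2P:XsqJac,lem:2P:XsqLeib,lem:2P:Xsqass} (proving \cref{thm:2P:NWPVA}) and of the sign bookkeeping in \cref{lem:AP:d1d2}; your identification $\Lambda_{\{2,1\}}\equiv-\Lambda_{\{1,2\}}$ and the vanishing of the middle shuffle term on the two-edge quiver both check out. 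One negligible inaccuracy: the $\Delta_0$-collapse can never create loops, since edges internal to a block are excluded from $E_0$ by \cref{dfn:2P:Delta}, so only the two cycle relations are needed in the rewriting.
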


Hereafter we identify $X \in \MC\bigl(\frg(\oLie,\oPfn{V})\bigr)$ and the Poisson algebra structure on $V$ corresponding to $X$ under \eqref{eq:AP:X=P}, and denote the Poisson algebra as $A=(V,X)=(V,-\cdot-,\{-,-\})$. Given such a Poisson algebra $A$, we have the cohomology complex \eqref{eq:gLQX}, which is denoted as 
\begin{align}\label{eq:AP:fcc}
 \frg^{\tfn}_A \ceq 
 \bigl(\frg(\oLie,\oPfn{V}), d_X\bigr), \quad d_X(f) \ceq [X,f].
\end{align}
We call it \emph{the finite cohomology complex} of the Poisson algebra $A=(V,X)$.

\subsection{Identification}\label{ss:AP:fcc=pcb}

Let us continue to use the symbols in the previous \Cref{ss:AP:Pfn}.
Given a Poisson algebra $A=(V,X)$, we now have two cohomology complexes.
\begin{itemize}
\item 
The cohomology complex $\frg^{\tfn}_A$ in \eqref{eq:AP:fcc}
\item
The Poisson cohomology bicomplex $C_{\tuPoi}^{\dbl}(A,A)$ in \cref{dfn:AP:pcb}. 
\end{itemize}
A relation between these two complexes is given by \cite{BDKV21}.

\begin{fct}[{c.f.\ \cite[Theorem 4.1]{BDKV21}}]\label{fct:AP:dHar}
Let $A=(V,X)=(V,-\cdot-,\{-,-\})$ be a Poisson algebra with underlying linear space $V$. Then there is a surjective morphism of cochain complexes from the cohomology complex $\frg^{\tfn}_A$ to the Harrison cochain complex of the commutative algebra $A_{\tCom}=(V,-\cdot-)$, 
\begin{align}\label{eq:AP:gfnX}
 \frg^{\tfn}_A \lsrj C_{\tCom}(A_{\tCom},A_{\tCom}), 
\end{align}
mapping $Y \in (\frg^{\tfn}_A)^{n-1} = \bigl(\pc \oPfn{V}(n)\bigr)^{\frS_n}$ to $Y^{\Lambda_{\{1,\dotsc,n\}}}$, where $\Lambda_{\{1,\dotsc,n\}}$ denotes the $n$-quiver $\Lambda_I$ in \eqref{eq:AP:GammaI} with partition $I=\{1,\dotsc,n\}$, i.e., 
\begin{center}
\begin{tikzpicture}
 \draw (-1.1,0.11) node {$\Lambda_{\{1,\dotsc,n\}} =$};
 \draw ( 0.0,0.00) node {$\underset{1}{\bullet}$}; 
 \draw ( 0.6,0.00) node {$\underset{2}{\bullet}$}; 
 \draw ( 1.4,0.15) node {$\cdots$}; 
 \draw ( 2.2,0.00) node {$\underset{n}{\bullet}$}; 
 \draw[->] (0.1,0.15)--(0.5,0.15);
 \draw[->] (0.7,0.15)--(1.1,0.15);
 \draw[->] (1.7,0.15)--(2.1,0.15);
\end{tikzpicture}
\end{center}
\end{fct}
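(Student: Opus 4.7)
The plan is to verify that the map $\phi\colon Y \mto Y^{\Lambda_{\{1,\dotsc,n\}}}$ (i) descends to $\phi\colon(\frg^{\tfn}_A)^{n-1}\to C^n_{\tCom}(A_{\tCom},A_{\tCom})$, (ii) intertwines $d_X=[X,-]$ with the Harrison coboundary $d$, and (iii) is surjective. The main obstacle, addressed in (i), is a graph-theoretic shuffle identity in $\bbK\oQ(n)/R(n)$.

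For (i), well-definedness requires $Y^{\Lambda_{\{1,\dotsc,n\}}}$ to vanish on every shuffle relation $s_{r,n-r}(v_1\otimes\dotsb\otimes v_n)$. Combining the antisymmetry $Y^\sigma=\sgn(\sigma)Y$ with the right $\frS_n$-action formula $(Y^\sigma)^\Gamma(v_1\otimes\dotsb\otimes v_n)=Y^{\sigma\Gamma}(v_{\sigma^{-1}(1)}\otimes\dotsb\otimes v_{\sigma^{-1}(n)})$ converts this to $Y^{\Lambda_{\{1,\dotsc,n\}}}(s_{r,n-r}(v))=Y^{\Psi_{r,n}}(v)$, where $\Psi_{r,n}\ceq\sum_{\sigma\in\Sh(r,n-r)}\sigma\Lambda_{\{1,\dotsc,n\}}\in\bbK\oQ(n)$, so vanishing reduces to showing $\Psi_{r,n}\equiv 0\pmod{R(n)}$ for each $1\le r\le n-1$. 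I would prove this by invoking the $\frS_n$-module identification $\bbK\oQ(n)/R(n)\cong\oPoi(n)$, under which the line quiver $\Lambda_{\{1,\dotsc,n\}}$ corresponds to a Lie-primitive element in the Eulerian/shuffle-coalgebra sense; Ree's theorem then forces its shuffle sums to vanish. As an elementary alternative, one inducts on $n$: the base $n=2$, $r=1$ is the $2$-cycle relation $[1,2]+[2,1]\equiv 0$ coming from the directed cycle $\{1\to 2,2\to 1\}$, and the inductive step constructs a composite directed cycle whose removal sum realizes $\Psi_{r,n}$ modulo smaller shuffles already handled by hypothesis.

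For (ii), I would unfold $d_X Y = X\square Y -(-1)^{\abs{X}\abs{Y}}Y\square X$ using \eqref{eq:AP:square} and evaluate at $\Lambda_{\{1,\dotsc,n+1\}}$ via \eqref{eq:2P:Pfn:gamma}. With $M=(1,\dotsc,2,\dotsc,1)$ having the sole $2$ in position $i$, the cocomposition $\Delta^M(\Lambda_{\{1,\dotsc,n+1\}})$ has outer piece $\Lambda_{\{1,\dotsc,n\}}$ and inner $2$-quiver the single directed edge $\Lambda_{\{1,2\}}$; on this inner piece $X^{\{1,2\}}=\mu$ produces the fused terms $Y^{\Lambda_{\{1,\dotsc,n\}}}(a_1\otimes\dotsb\otimes a_ia_{i+1}\otimes\dotsb\otimes a_{n+1})$ appearing in the Harrison coboundary \eqref{eq:AP:harb}, and the boundary positions $i=1$, $i=n+1$ contribute the outer multiplications $a_1\cdot Y(\dotsb)$ and $Y(\dotsb)\cdot a_{n+1}$. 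The Poisson-bracket contributions from $X^{\{1\}\sqcup\{2\}}$ plug a non-line inner piece into the composition; after the shuffle symmetrization inherent in \eqref{eq:AP:square} and projection to the outer line quiver, they cancel by the identity $\Psi_{r',n-1}\equiv 0$ applied to restricted sub-lines.

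For (iii), given $\varphi\in C^n_{\tCom}(A_{\tCom},A_{\tCom})$, lift it to $\wt\varphi\in\Hom(V^{\otimes n},V)$ and prescribe $Y\in\oPfn{V}(n)$ on the basis $\clL(n)$ of \cref{fct:AP:clL} by $Y^{\Lambda_{\{1,\dotsc,n\}}}\ceq\wt\varphi$ on the full-line element and $Y\equiv 0$ on the remaining basis elements, then extend to every $n$-quiver via $Y^\sigma=\sgn(\sigma)Y$. The shuffle identity of step (i) is exactly what ensures this extension is compatible with $R(n)$, and the resulting $Y$ manifestly satisfies $\phi(Y)=\varphi$. The principal difficulty thus returns to the identity $\Psi_{r,n}\equiv 0$: even granting the structural isomorphism $\bbK\oQ(n)/R(n)\cong\oPoi(n)$ and invoking Ree's theorem, matching the two combinatorial presentations requires care, and a purely cycle-relation proof is already intricate at $n=3$, as the worked computations of \cref{ss:2P:graph} illustrate.
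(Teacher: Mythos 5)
First, note that the paper does not prove this statement: it is imported as a Fact from \cite[Theorem 4.1]{BDKV21} (specialized to $\pdd=0$), and its content is later re-derived inside the paper via the bigrading of \cref{thm:AP:fcc=pcb} and \cref{lem:AP:d1d2}, where $d_h=[X^{\{1\}\sqcup\{2\}},-]$ is shown to raise $\pi_0$ and hence to vanish on the full line, and $d_v=[X^{\{1,2\}},-]$ is identified with the Harrison coboundary. Your three-step outline is structurally reasonable, but the step you yourself flag as the crux --- the identity $\Psi_{r,n}=\sum_{\sigma\in\Sh(r,n-r)}\sigma\Lambda_{\{1,\dotsc,n\}}\equiv 0$ in $\bbK\oQ(n)/R(n)$ --- is not actually proved, and both routes you propose for it are defective. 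The ``Ree's theorem'' route presupposes that the full line corresponds to a Lie monomial inside $\oPoi(n)$ compatibly with the $\frS_n$-action and the relations; it does not. Already for $n=3$ the two-cycle relations applied to the quivers $\{1\to2,\,2\to1,\,1\to3\}$ and $\{1\to3,\,3\to1,\,1\to2\}$ force the lines $2\to1\to3$ and $3\to1\to2$ to be congruent modulo $R(3)$ (both equal minus the branching quiver $\{1\to2,\,1\to3\}$), whereas the left-normed brackets $[[x_2,x_1],x_3]$ and $[[x_3,x_1],x_2]$ differ in $\oLie(3)$; moreover the sum of the $\Sh(1,2)$-translates of $[[x_1,x_2],x_3]$ is $[[x_3,x_1],x_2]\ne 0$ in the free Lie algebra, so no appeal to primitivity of Lie elements can give the vanishing. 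The identity is true, but its proof genuinely needs the relations passing through non-line acyclic quivers (the $n=3$, $r=1$ case combines the directed $3$-cycle relation with the two $2$-cycle relations above), and your inductive sketch via ``a composite directed cycle'' does not produce this combination.

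Two secondary gaps. In step (ii), the claim that the bracket contributions ``cancel by $\Psi_{r',n-1}\equiv 0$ applied to restricted sub-lines'' is not the right mechanism: the clean argument is that $[X^{\{1\}\sqcup\{2\}},Y]$ lands in the $\pi_0\ge 2$ graded piece (because collapsing two vertices of the line either creates a cycle in the underlying graph, killing the outer factor, or keeps $\Delta_0$ connected so that only $X^{\{1,2\}}$ is ever evaluated on the inner piece), so it vanishes on $\Lambda_{\{1,\dotsc,n+1\}}$ for grading reasons; this is exactly what \cref{lem:AP:d1d2} supplies. In step (iii), prescribing $Y$ on the basis $\clL(n)$ and ``extending by $Y^\sigma=\sgn(\sigma)Y$'' is not automatically consistent, since $\frS_n$ does not permute $\clL(n)$ --- $\sigma\Lambda_{\{1,\dotsc,n\}}$ must be re-expanded in the line basis, and the equivariance constraint is a nontrivial condition on that expansion; surjectivity is obtained for free once one has the isomorphism $(\frg^{\tfn}_A)^{1,n-2}\simeq C^n_{\tCom}(A_{\tCom},A_{\tCom})$ of \cref{thm:AP:fcc=pcb}, which is the route the paper takes.
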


\begin{rmk*}
Strictly speaking, \cite{BDKV21} studies the cohomology complex associated to the classical operad $\oP^{\cl}_{V,\pdd}$ and the differential Harrison complex for a given differential commutative algebra $(V,\pdd, -\cdot-)$. The above \cref{fct:AP:dHar} follows from loc.\ cit.\ by putting $\pdd=0$.
\end{rmk*}

Now recall \cref{fct:AP:pcb-cl} which claims that the Harrison cochain complex is embedded in the Poisson cohomology bicomplex $C_{\tuPoi}^{\dbl}(A,A)$ as 
\[
 C_{\tCom}^{\bl}(A_{\tCom},A_{\tCom}) = C_{\tPoi}^{\bl,1}(A,A).
\]
So it is natural to ask whether the surjection in \cref{fct:AP:dHar} extends to a map $\frg^{\tfn}_A \to C_{\tPoi}(A,A)$. We give an affirmative answer.

\begin{thm}\label{thm:AP:fcc=pcb}
For a Poisson algebra $A=(V,-\cdot-,[-,\cdot,-])=(V,X)$ with underlying linear space $V$, the finite cohomology complex $\frg^{\tfn}_A = \frg(\oLie,\oPfn{V}V)^X$ has a bicomplex structure $(\bigoplus_{p,q \in \bbZ} (\frg^{\tfn}_A)^{p,q},d,\delta_{\tCE})$ which is isomorphic to the Poisson cohomology bicomplex up to shift:
\[
 (\frg^{\tfn}_A)^{\dbl} \simeq  C^{\dbl+1}_{\tPoi}(A,A).
\]
The total complex of this bicomplex is equal to the finite cohomology complex $\frg^{\tfn}_A$.
Moreover, restricting this isomorphism to the subspace $(\frg^{\tfn}_A)^{1,\bl} \subset (\frg^{\tfn}_A)^{\dbl}$, we recover the surjection \eqref{eq:AP:gfnX}.
\end{thm}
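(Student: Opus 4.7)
The plan is to construct the bigrading on $\frg^{\tfn}_A$ by stratifying according to the number of connected components of the quivers indexing basis elements. Since \cref{fct:AP:clL} gives a basis $\clL(n)$ of $\bbK\oQ(n)/R(n)$ by disjoint unions of lines, and each such quiver has a well-defined number $p$ of blocks, I would define $(\frg^{\tfn}_A)^{p,q}$ to consist of $Y \in (\frg^{\tfn}_A)^{p+q}$ satisfying $Y^{\Lambda_I} = 0$ unless $I$ is a partition with exactly $p$ blocks. Since the $\frS_n$-action on $\oQ(n)$ preserves the number of connected components (\cref{lem:2P:graph}), this condition is respected under antisymmetrization and yields a direct sum decomposition $(\frg^{\tfn}_A)^{n-1} = \bigoplus_{p+q=n-1}(\frg^{\tfn}_A)^{p,q}$.

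Next I would split the Maurer--Cartan element $X$ corresponding to the Poisson structure as $X = X_{\tCom} + X_{\tLie}$, where $X_{\tCom}^{\{1,2\}}(a \otimes b) = a \cdot b$ lies in bidegree $(1,0)$ and $X_{\tLie}^{\{1\}\sqcup\{2\}}(a \otimes b) = \{a, b\}$ lies in bidegree $(2,-1)$. A combinatorial analysis via the cocomposition $\Delta^M$ of \cref{dfn:2P:Delta} shows: for $\Gamma \in \clL(n+1)$, the contribution $(X_{\tCom} \circ_1 Y)^\Gamma = X_{\tCom}^{\Delta_0^M(\Gamma)}\bigl(Y^{\Delta_1^M(\Gamma)}, v_{n+1}\bigr)$ survives only when $\Delta_0^M(\Gamma)$ contains exactly one edge between the two collapsed vertices, i.e., vertex $n+1$ is an endpoint of a line of $\Gamma$ joined into an existing block of $\Delta_1^M(\Gamma)$, so the block count is preserved; analogously, $X_{\tLie} \circ_1 Y$ survives only when vertex $n+1$ is isolated in $\Gamma$, forming a new block and increasing the block count by $1$. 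Therefore $d \ceq [X_{\tCom},-]$ has bidegree $(0,1)$ and $\delta_{\tCE} \ceq [X_{\tLie},-]$ has bidegree $(1,0)$; decomposing the Maurer--Cartan equation $X \square X = 0$ into its bidegree-$(1,1)$, $(2,0)$, and $(3,-1)$ components yields $d^2 = 0$, $d\delta_{\tCE} + \delta_{\tCE} d = 0$, and $\delta_{\tCE}^2 = 0$ respectively---that is, the associativity, Leibniz, and Jacobi axioms of $A$ become the defining identities of the bicomplex.

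To identify this bicomplex with the Poisson cohomology bicomplex, I would establish a PBW-type isomorphism of $\frS_n$-modules between the $p$-block summand $\bbK\clL_p(n) \subset \bbK\oQ(n)/R(n)$ and $\clS_p(\oLie)(n)$: the latter is indexed by unordered partitions of $[n]$ into $p$ blocks each equipped with a Lie operation, and the former counts precisely the same data via the $(m-1)!$ lines of length $m$ starting at the minimum vertex. Dualizing and using the Eulerian decomposition \cref{fct:AP:eul}, antisymmetric operations in $(\frg^{\tfn}_A)^{p,q}$ correspond to elements of $\Hom(\clS_p(\clL^c(\Sigma A))^{p+q+1}, A) = C^{p, q+1}_{\tPoi}(A,A)$, giving the isomorphism $\alpha \colon (\frg^{\tfn}_A)^{p,q} \lsto C^{p, q+1}_{\tPoi}(A, A)$. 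Under $\alpha$, the differential $d = [X_{\tCom}, -]$ translates by explicit expansion of the bracket to the Hochschild coboundary \cref{eq:AP:Ad} restricted to the Harrison subcomplex, while $\delta_{\tCE} = [X_{\tLie}, -]$ translates to the Chevalley--Eilenberg coboundary \cref{eq:AP:Ld}. Since the single-line quiver $\Lambda_{\{1,\ldots,n\}}$ lies in $\clL_1(n)$, the restriction of $\alpha$ to $(\frg^{\tfn}_A)^{1,\bl}$ sends $Y$ to $Y^{\Lambda_{\{1,\ldots,n\}}}$, recovering the surjection of \cref{fct:AP:dHar}.

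The main obstacle will be the sign bookkeeping in the construction of $\alpha$. Three sign sources must be reconciled: (a) the antisymmetry $Y^\sigma = \sgn(\sigma) Y$ combined with the cycle relations of \cref{sss:AP:graph}, needed to rewrite an arbitrary line on vertex set $\{i_1,\ldots,i_m\}$ into the canonical basis with the minimum vertex first, which introduces signs depending on the reordering; (b) the explicit formula for the Eulerian idempotent $e^{(p)} \in \bbQ[\frS_n]$ as a weighted shuffle sum; and (c) the Koszul sign rule implicit in $\clS_p(\clL^c(\Sigma A))$, where the suspension $\Sigma A$ is placed in odd degree so that ``symmetric'' factors are effectively antisymmetrized. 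Verifying that these three sign contributions combine consistently---so that $\alpha$ intertwines the Poisson bicomplex differentials---will require careful case analysis and explicit matching on basis elements.
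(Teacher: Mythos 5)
Your proposal is correct and follows essentially the same route as the paper: the bigrading by the number of blocks $\pi_0(I)$ of the partitions indexing the basis $\clL(n)$ of $\bbK\oQ(n)/R(n)$, the splitting $X=X^{\{1,2\}}+X^{\{1\}\sqcup\{2\}}$ into bidegrees $(1,0)$ and $(2,-1)$, the cocomposition analysis showing the two brackets shift bidegree by $(0,1)$ and $(1,0)$, and the identification of the $p$-block summand with $\Hom(\clS_p(\clL^c(\Sigma A)),A)^{p+q+1}=C^{p,q+1}_{\tPoi}(A,A)$ are exactly the paper's steps (your extra remark that the bicomplex identities are the bidegree components of $X\square X=0$ is a nice, if implicit, complement). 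The sign bookkeeping you defer is precisely what the paper's \cref{lem:AP:d1d2} carries out, explicitly for the Chevalley--Eilenberg direction and by citing \cite[Lemma 4.10]{BDKV21} for the Harrison direction.
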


The rest part of this \Cref{ss:AP:fcc=pcb} is devoted to the proof of \cref{thm:AP:fcc=pcb}.

We begin with another citation from \cite[(3.36)]{BDKV21}. Recall that for an $n$-quiver $\Gamma$, we denote by $E(\Gamma)$ the edge set of $\Gamma$. Then we can define a natural $\bbN$-grading
\[
 \oPfn{V}V(n) = \bigoplus_{r \in \bbN} \gr^r \oPfn{V}(n) 
\]
on the finite operad $\oPfn{V}$ by 
\begin{align}\label{eq:2P:groP}
 \gr^r \oPfn{V}(n) \ceq \{f \in \oPfn{V}(n) \mid f^\Gamma=0 
 \text{ for any $n$-quiver $\Gamma$ with $\# E(\Gamma) \ne r$}\}.
\end{align}
Note that we have $\oPfn{V}(n) = \bigoplus_{r=0}^{n-1} \gr^r \oPfn{V}(n)$ by the cycle conditions. Now $\oPfn{V}$ is a graded operad, i.e., the $\frS$-module structure and the composition map $\gamma$ respect the grading.

Next, using the notation in \Cref{ss:AP:Pfn}, let us choose and fix a Poisson algebra $A=(V,-\cdot-,\{-,-\})=(V,X)$ with underlying linear space $V$ and $X \in \MC(\frg^{\tfn}_V)$. For simplicity, let us denote by
\[
 \frg \ceq \frg^{\tfn,X}_A
\]
the cohomology complex \eqref{eq:AP:gfnX}. By \eqref{eq:AP:gfnVn}, we have
\[
 \frg^{n-1} = 
 \left\{f \in \Hom\bigl((\bbK \oQ(n)/R(n)) \otimes V^{\otimes n},V\bigr) \mid 
        \forall \sigma \in \frS_n, \ f^\sigma = \sgn(\sigma) f\right\},
\]
and by \cref{fct:AP:clL}, the space $\bbK \oQ(n)/R(n)$ has a basis $\clL(n)$ consisting of disconnected unions of lines $\Lambda_I$ where $I$ runs over partitions of $\{1,\dotsc,n\}$. Then the grading \eqref{eq:2P:groP} on $\oPfn{V}$ induces the following double grading on $\frg$.

\begin{lem}
For a partition $I = \{i^1_1,\dotsc,i^1_{m_1}\} \sqcup \{i^2_1,\dotsc,i^2_{m_2}\} \sqcup \dotsb \sqcup \{i^p_1,\dotsc,i^p_{m_p}\}$ of $\{1,\dotsc,n\}$, we denote $\pi_0(I) \ceq p$. Then \eqref{eq:2P:groP} induces the $\bbZ^2$-grading $\frg = \bigoplus_{p,q \in \bbZ^2}\frg^{p,q}$ by 
\begin{align}\label{eq:AP:gpq}
 \frg^{p,q} \ceq \{f \in \frg^{p+q} \mid f^{\Lambda_I}=0 \text{ for any partition $I$ of $\{1,\dotsc,n\}$ with $\pi_0(I) \neq p$}\}.
\end{align}
We have $\frg^{n-1}=\bigoplus_{p=1}^n \frg^{p,n-1-p}=\frg^{1,n-2} \oplus \dotsb \oplus \frg^{n,-1}$ for $n \in \bbZ_{\ge 1}$.
\end{lem}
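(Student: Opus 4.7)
The plan is to lift the $\bbN$-grading \eqref{eq:2P:groP} on $\oPfn{V}(n)$ to a grading on $\bbK\oQ(n)/R(n)$ by the number of edges, and then combine it with the cohomological grading to obtain the $\bbZ^2$-grading on $\frg$.

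First I would observe that $\bbK\oQ(n)$ is naturally $\bbN$-graded by sending an $n$-quiver $\Gamma$ to $\#E(\Gamma)$, and that the subspace $R(n)$ of cycle relations is homogeneous. Indeed, a relation of the first type (a single non-acyclic quiver) is clearly homogeneous, while a relation of the second type $\sum_{e \in C}\Gamma \sm e$ attached to a directed cycle $C \subset \Gamma$ has all summands of equal edge count $\#E(\Gamma) - 1$. Hence $\bbK\oQ(n)/R(n)$ inherits an $\bbN$-grading, and under the basis $\clL(n)$ from \cref{fct:AP:clL}, the basis element $\Lambda_I$ is homogeneous of degree $n - \pi_0(I)$, since a disjoint union of $p$ lines on $n$ vertices has exactly $n-p$ edges.

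Next, the $\frS_n$-action on $\bbK\oQ(n)/R(n)$ permutes vertices without changing edges, hence preserves the edge-grading. Accordingly, $\oPfn{V}(n) = \Hom\bigl((\bbK\oQ(n)/R(n)) \otimes V^{\otimes n}, V\bigr)$ inherits an $\bbN$-grading whose $r$-th piece is exactly $\gr^r \oPfn{V}(n)$ from \eqref{eq:2P:groP}: the condition $f^\Gamma = 0$ for all $\Gamma$ with $\#E(\Gamma) \neq r$ is equivalent, via the homogeneous basis $\clL(n)$, to $f^{\Lambda_I} = 0$ for all partitions $I$ with $\pi_0(I) \neq n - r$. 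Taking the sign-skew-invariant part preserves this decomposition, giving
\[
 \frg^{n-1} = \bigoplus_{r=0}^{n-1}\bigl(\pc \gr^r \oPfn{V}(n)\bigr)^{\frS_n}.
\]

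Finally, reindexing by $p \ceq n - r \in \{1,\dotsc,n\}$ and $q \ceq r - 1 \in \{-1,\dotsc,n-2\}$ so that $p + q = n - 1$ identifies each summand with the $\frg^{p, n-1-p}$ of the lemma, yielding $\frg^{n-1} = \bigoplus_{p=1}^n \frg^{p, n-1-p}$ and the full $\bbZ^2$-grading $\frg = \bigoplus_{p,q} \frg^{p,q}$ upon summing over $n \ge 1$. The proof is essentially bookkeeping; the only point that requires care is the conversion between edge count and part count, namely the identity $\#E(\Lambda_I) = n - \pi_0(I)$.
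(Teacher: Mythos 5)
Your proof is correct and follows essentially the same route as the paper's: both rest on \cref{fct:AP:clL} (the basis $\clL(n)$ of $\bbK\oQ(n)/R(n)$) together with the identity $\#E(\Lambda_I)=n-\pi_0(I)$. Your additional verification that $R(n)$ is homogeneous for the edge-grading is a worthwhile detail the paper leaves implicit (it justifies passing between the condition on general quivers $\Gamma$ in \eqref{eq:2P:groP} and the condition on the basis elements $\Lambda_I$), but it does not change the argument.
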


\begin{proof}
The claim follows from \cref{fct:AP:clL} and the fact that $\pi_0(I) = p$ if and only if $\# E(\Lambda_I)=n-p$ for a partition $I$ of $\{1,\dotsc,n\}$. 
\end{proof}

We can identify the doubly-graded linear spaces $\frg^{\dbl}$ and $C_{\tuPoi}^{\dbl}$ in the following way.

\begin{lem}
We have 
\begin{align}\label{eq:AP:gpq=Cpq+1}
 \frg^{p,q} \simeq C^{p,q+1}_{\tPoi}(A,A)
\end{align}
as linear spaces for any $p,q \in \bbZ$.
\end{lem}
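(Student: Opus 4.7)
The plan is to identify both sides of $\frg^{p,q}\simeq C^{p,q+1}_{\tPoi}(A,A)$ by decomposing each according to the ``shape'' $\mu=(m_1,\dotsc,m_p)$ of the underlying partition $I$ with $\pi_0(I)=p$, where $m_1+\dotsb+m_p=n\ceq p+q+1$. For each shape I fix the canonical partition $I_\mu$ with consecutive parts $\{1,\dotsc,m_1\},\{m_1+1,\dotsc,m_1+m_2\},\dotsc$; since all partitions of shape $\mu$ lie in a single $\frS_n$-orbit of $I_\mu$, the $\sgn$-antisymmetry defining $\frg^{p,q}$ shows that an element $f\in\frg^{p,q}$ is determined by the collection $\{f^{\Lambda_{I_\mu}}\}_\mu$ of multilinear maps $V^{\otimes n}\to V$. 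Hence $\frg^{p,q}\simeq\bigoplus_\mu(\frg^{p,q})_\mu$ naturally in $V$.

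Next I analyze the constraints that the antisymmetry $f^\sigma=\sgn(\sigma)f$ imposes on each $f^{\Lambda_{I_\mu}}$ through the stabilizer of $\Lambda_{I_\mu}$ modulo cycle relations. The stabilizer is generated by (i) permutations $\ol\tau$ acting only within a single part (of size $m_l$, extended by the identity on other parts) and (ii) block-transpositions of equal-size parts. Using the identity $f^Q(\sigma\cdot u)=\sgn(\sigma)f^{\sigma Q}(u)$ derived from the antisymmetry, (i) becomes the vanishing $f^{\sum_{\tau\in\Sh(r,m_l-r)}\ol\tau\Lambda_{I_\mu}}=0$ inside each part for every $r$, and (ii) becomes the sign $(-1)^{m_l m_{l+1}}$ under swap of parts of sizes $m_l=m_{l+1}$ (since moving one block of length $m_l$ past another of length $m_{l+1}$ is a product of $m_l m_{l+1}$ adjacent transpositions). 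On the right-hand side, expanding $C^{p,q+1}_{\tPoi}(A,A)=\Hom(\clS_p(\clL^c(\Sigma A))_{p+q+1},A)$ along shapes and using the identification $\clL^c_m(\Sigma A)\cong\ch_m(A)$ yields exactly the same two conditions: the Harrison shuffle relations defining $\ch_{m_l}$ inside each cluster, and the graded symmetry $(-1)^{m_l m_{l+1}}$ coming from the $\Sigma$-suspension on equal-size factors of $\clS_p$.

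The isomorphism $\Phi\colon\frg^{p,q}\lsto C^{p,q+1}_{\tPoi}(A,A)$ is then given on the shape-$\mu$ component by
\[
 \Phi(f)\bigl([a^1_1\otimes\dotsb\otimes a^1_{m_1}]\odot\dotsb\odot[a^p_1\otimes\dotsb\otimes a^p_{m_p}]\bigr)
 \ceq f^{\Lambda_{I_\mu}}(a^1_1\otimes\dotsb\otimes a^p_{m_p}),
\]
and the shape-by-shape matching of the two conditions makes $\Phi$ well-defined and bijective, functorially in $V$.

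The main obstacle is to justify that condition (i) on the $\frg$-side really corresponds to the Harrison shuffle relation on the other side, i.e., the combinatorial identity
\[
 \sum_{\tau\in\Sh(r,m-r)} \ol\tau\,\Lambda_{I_\mu} = 0 \quad \text{in } \bbK\oQ(n)/R(n)
\]
for each $1\le r<m_l$, where $\ol\tau$ permutes the vertices inside a single part. I will prove this by closing up a directed path $v_1\to v_2\to\dotsb\to v_m$ inside the part into a directed cycle by adjoining the edge $v_m\to v_1$ and repeatedly applying the defining relation $\sum_{e\in C}\Gamma\sm e=0$; alternatively, the identity is a direct consequence of the $\frS_n$-module isomorphism $\bbK\oQ(n)/R(n)\cong\oPoi(n)^\ast$ from \cite[Lemma~4.1]{BDHK2} together with the Koszul decomposition $\oPoi=\oCom\circ\oLie$, which presents the $W_p(n)$-summand as $\clS_p(\oLie)(n)$ and thereby builds in the shuffle relations defining the $\oLie$-factors. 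Careful sign-tracking between $\sgn(\sigma)$ on the quiver side and the graded commutativity signs $(-1)^{m_l m_{l+1}}$ coming from the $\Sigma$-suspensions on the Fresse side is then routine but must be done uniformly.
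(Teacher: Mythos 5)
Your overall route is the same as the paper's: decompose both sides according to the shape of the partition $I$ with $\pi_0(I)=p$, use the basis $\clL(n)$ of $\bbK\oQ(n)/R(n)$ (\cref{fct:AP:clL}) to reduce $f$ to its values on the lines $\Lambda_I$, and match the resulting constraints with the signed-shuffle relations defining $\ch_{\bl}\simeq e^{(1)}T^c(\Sigma A)$ and with the graded symmetry of $\clS_p$. The paper carries this out more tersely by invoking the chain of isomorphisms in \eqref{eq:AP:CC} directly, so your stabilizer analysis is a refinement of the same argument rather than a different one; your sign check $(-1)^{m_l m_{l+1}}$ for equal-size blocks is also consistent.

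The one step I would not accept as written is the first justification of the key identity. Adjoining the edge $v_m\to v_1$ to a directed line and applying the cycle relation to the resulting $m$-cycle yields only the relation that the sum of the $m$ cyclic rotations of the directed path vanishes, and this is \emph{not} the shuffle relation. Already for $m=3$, $r=1$ the rotation relation reads $(1\to2\to3)+(2\to3\to1)+(3\to1\to2)=0$, whereas the relation you need (note that unwinding $f^{\sigma}=\sgn(\sigma)f$ produces the \emph{inverse} shuffles $\tau^{-1}$) is $(1\to2\to3)+(2\to1\to3)+(2\to3\to1)=0$; deriving the latter requires in addition the length-two cycle relations, which convert reversed edges into source/sink configurations with signs, plus an induction. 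This is exactly the nontrivial content of identifying the single-line span of $\bbK\oQ(n)/R(n)$ with the cofree Lie coalgebra, so your fallback via \cite[Lemma~4.1]{BDHK2} and $\oPoi=\oCom\circ\oLie$ is the argument that actually carries the proof (and is what the paper implicitly relies on); lean on that rather than the ad hoc cycle-closing computation. Two smaller points: index the shape decomposition by unordered multisets of part sizes, since ordered tuples $\mu$ related by a permutation produce canonical lines in the same $\frS_n$-orbit and your direct sum would otherwise double count; and keep the $\tau$ versus $\tau^{-1}$ distinction explicit when converting the antisymmetry into the signed shuffle relation, since $\Sh(r,m-r)$ is not closed under inversion.
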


\begin{proof}
We may assume $p \in \bbZ_{>0}$ and $q+1 \in \bbN$. Let us denote by $\ch_{\bl} \ceq \clL^c(\Sigma A_{\tCom})_{\bl}$ the Harrison chain complex of the commutative algebra $A_{\tCom}=(V,-\cdot-)$.
Then, \cref{dfn:AP:pcb} claims
\begin{align*}
&C^{p,q+1}_{\tPoi}(A,A) 
=\Hom\bigl(\clS_p(\ch_{\bl}),V\bigr)^{p+q+1} = \bigoplus_J \Hom(\ch_J,V),
\end{align*}
where the summation index $J=\{j_1,\dotsc,j_l\}$ runs over $l \in \bbZ_{>0}$, $j_k \in \bbZ_{>0}$ ($k=1,\dotsc,l$), $\sum_{k=1}^l j_k=p$ and $\sum_{k=1}^l k j_k=p+q+1$, and 
\[
 \ch_J \ceq 
 \wedge^{j_1} \ch_1 \otimes \wedge^{j_2} \ch_2 \otimes \dotsb \otimes \wedge^{j_l}\ch_l.
\]
Such $J$ is in one-to-one correspondence with the equivalent class of a partition $I$ of $\{1,\dotsc,p+q+1\}$ with $\pi_0(I)=p$ under the natural $\frS_n$-action. Recalling that we have $\ch_\bl \simeq \clL^c(\Sigma A_{\tCom}) \simeq e^{(1)}T^c(\Sigma A_{\tCom})$ by \eqref{eq:AP:CC}, we find that this bijection induces 
\[
 \Hom(\ch_J,V) \lsto \left\{f=f^I \in \Hom(\bbK \Lambda_I \otimes V^{\otimes n},V) \mid 
 \forall \sigma \in \frS_n, \ f^\sigma = \sgn(\sigma) f\right\}.
\]
Since $\clL(n)$ is a basis of $\bbK \oQ(n)/R(n)$, we see that $\bigoplus_J \Hom(\ch_J,V)$ is isomorphic to the right hand side of \eqref{eq:AP:gpq}. Thus we have the conclusion.
\end{proof}

As for the differential $d_X=[X,-]$ of $\frg$, we have the following \cref{lem:AP:d1d2},
which will finish the proof of \cref{thm:AP:fcc=pcb}.

\begin{lem}\label{lem:AP:d1d2}
Using the basis \eqref{eq:AP:fG2}, we decompose $X \in \frg^1$ as 
\[
 X = X^{\{1\} \sqcup \{2\}} + X^{\{1,2\}}, \quad 
 X^{\{1\} \sqcup \{2\}} \in \frg^{2,-1}, \quad X^{\{1,2\}} \in \frg^{1,0}.
\]
Then the induced decomposition 
\[
 d_X=d_h+d_v \quad \text{with } 
 d_h \ceq [X^{\{1\} \sqcup \{2\}},-], \quad d_v \ceq [X^{\{1,2\}},-]
\]
satisfies 
\[
 d_h(\frg^{p,q}) \subset \frg^{p+1,q}, \quad d_v(\frg^{p,q}) \subset \frg^{p,q+1}.
\]
Moreover, under the identification $\frg^{p,q} \simeq C^{p,q+1}_{\tPoi}(A,A)$ in \eqref{eq:AP:gpq=Cpq+1} and $X_1=\{-,-\}$, $X_2=-\cdot-$ in \eqref{eq:AP:X=P}, we have
\[
 (\frg^{\dbl},d_h,d_v) \simeq (C_{\tuPoi}^{\dbl+1}(A,A),\delta_{\tCE}, d)
\]
as bicomplexes.
\end{lem}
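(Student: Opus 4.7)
The bidegree assignments $X^{\{1,2\}} \in \frg^{1,0}$ and $X^{\{1\} \sqcup \{2\}} \in \frg^{2,-1}$ are immediate from the definition \eqref{eq:AP:gpq}, since $\pi_0(\{1,2\}) = 1$ and $\pi_0(\{1\} \sqcup \{2\}) = 2$ and both elements live in $\frg^1 \subset \oPfn{V}(2)$. To establish the bidegree-preservation of $d_h$ and $d_v$, I would analyze the infinitesimal composition formula \eqref{eq:2P:Pfn:gamma} through the cocomposition $\Delta^M$ with $M = (n, 1, \dotsc, 1)$ or $M = (2, 1, \dotsc, 1)$. The key observation is purely combinatorial: for $Y \in \frg^{p,q}$, which is supported on disjoint unions of $p$ lines on $\{1,\dotsc,p+q+1\}$ (since any such $\Lambda_I$ has exactly $p+q+1-p = q+1$ edges), the composition $X^{\{1,2\}} \circ_1 Y$ is nonzero on $\Gamma$ only when $\Delta_0^M(\Gamma) = \Lambda_{\{1,2\}}$ and $\Delta_1^M(\Gamma)$ is a disjoint union of $p$ lines, so $\Gamma$ itself is a disjoint union of $p$ lines on $\{1,\dotsc,p+q+2\}$ (one line is extended by attaching vertex $p+q+2$ via a new directed edge). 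An analogous analysis applies to $Y \circ_1 X^{\{1,2\}}$ and to the shuffle symmetrization in $\square$, which preserves the number of connected components. For $X^{\{1\} \sqcup \{2\}} \circ_1 Y$, the condition $\Delta_0^M(\Gamma) = \Lambda_{\{1\} \sqcup \{2\}}$ forces vertex $p+q+2$ to be isolated, so the result is a disjoint union of $p+1$ lines. These facts give $d_v(\frg^{p,q}) \subset \frg^{p,q+1}$ and $d_h(\frg^{p,q}) \subset \frg^{p+1,q}$.

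For the identification with the Poisson bicomplex, I would use the explicit isomorphism $\frg^{p,q} \simeq C_{(p)}^{p+q+1}(A,A) \simeq \Hom\bigl(\clS_p(\clL^c(\Sigma A))^{p+q+1}, A\bigr)$ coming from \eqref{eq:AP:gpq=Cpq+1} and \cref{fct:AP:eul}: a disconnected union of lines $\Lambda_I$ with $\pi_0(I)=p$ corresponds, under the Eulerian projection $e^{(p)}$ realized via Harrison chains \eqref{eq:AP:harh}, to a symmetric product of $p$ cofree Lie coalgebra classes, with $\frS_n$-antisymmetry of $f \in \frg^{n-1}$ matching the shuffle relations defining the Harrison modules $\ch_n$.

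To match $d_v$ with the vertical Hochschild differential $d$, I would expand $[X^{\{1,2\}}, Y]^{\Lambda_I}$ for a partition $I$ with $p$ parts: the shuffle sum in $X^{\{1,2\}} \square Y$, combined with $Y \square X^{\{1,2\}}$, produces precisely the multiplication of consecutive factors within each line $I_k$ together with the extremal actions of $A$ on $Y$ — reproducing the Hochschild formula \eqref{eq:AP:Ad} on each tensor factor $\ch_{|I_k|}$. This is the classical Gerstenhaber bracket mechanism: the bracket with the multiplication element is the Hochschild differential. For $d_h = [X^{\{1\} \sqcup \{2\}}, -]$ to match $\delta_{\tCE}$, I would similarly expand and identify two kinds of terms: (i) the part from $X^{\{1\} \sqcup \{2\}} \square Y$ where the new isolated vertex is adjoined to an existing line (producing terms of the form $[u_i, Y(\dotsc \wh{u_i} \dotsc)]$, matching the first sum of \eqref{eq:AP:Ld}); and (ii) the part from $Y \square X^{\{1\} \sqcup \{2\}}$ where two existing lines are merged via the Poisson bracket (producing the second sum $[u_i, u_j]\,u_0 \dotsm \wh{u_i} \dotsm \wh{u_j} \dotsm u_n$).

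The main obstacle is the final step, matching $d_h$ with $\delta_{\tCE}$: it requires carefully tracking the $(p+q+1, 1)$- and $(2, p+q)$-shuffle symmetrizations in $\square$ against the antisymmetrization of $\clS_{p+1}(\clL^c(\Sigma A))$ and the Harrison relations defining the $\ch_n$, then verifying signs term-by-term. The compatibility of signs and shuffles will ultimately rest on the identification, implicit in the cited work of Fresse and the Koszul-dual description via $e^{(p)}$, between the operadic structure on $\bbK \oQ(n)/R(n)$ and the bigraded cochain module $\clS_p(\clL^c(\Sigma A))$; once this dictionary is set up cleanly the computation is routine. Finally, restricting the identification to $\frg^{1,\bl}$ manifestly recovers \cref{fct:AP:dHar}, since $\frg^{1,\bl}$ is exactly the subspace of operations supported on single lines $\Lambda_{\{1,\dotsc,n\}}$, which map to the Harrison component $C_{(1)}^{\bl} = C^{\bl}_{\tCom}$.
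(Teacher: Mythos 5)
Your plan is correct and follows essentially the same route as the paper: decompose $d_X$ according to the two basis $2$-quivers, determine by the cocomposition combinatorics which disjoint unions of lines support $X_i\circ_1 Y$ and $Y\circ_1 X_i$ (attaching the new vertex to a line for $X^{\{1,2\}}$, versus leaving it isolated or merging components for $X^{\{1\}\sqcup\{2\}}$), and then match the resulting shuffle-symmetrized formulas with the Hochschild/Harrison and Chevalley--Eilenberg coboundaries. The only part left open in your plan is the term-by-term sign check, which the paper also largely delegates (citing \cite[Lemma 4.10]{BDKV21} for the $d_v$ case) before writing out the explicit formulas for $d_v(Y)$ and $d_h(Y)$.
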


Before starting the proof, we note that the part $d_v=d$ is due to \cite[Lemma 4.10]{BDKV21} (c.f.\ \cref{fct:AP:dHar}).

\begin{proof}
For simplicity, let us denote $X_h \ceq X^{\{1\} \sqcup \{2\}}$ and $X_v \ceq X^{\{1,2\}}$.
We want to calculate $d_i(Y)=[X_i,Y]=X \square Y - (-1)^{n-1} Y \square X_i$ for $i=h,v$, $Y \in \frg^{p,q} \subset \frg^{n-1}$ with $n \ceq p+q+1$. By \cref{fct:AP:clL}, we may assume $Y^\Gamma=0$ unless $\Gamma=\Lambda_I$ where $I$ is a partition of $\{1,\dotsc,n\}$ with $\pi_0(I)=p$. Let us set 
\[
 I = I_1 \sqcup \dotsb \sqcup I_p, \quad I_a \ceq \{i^a_1,\dotsc,i^a_{m_a}\} \quad (a=1,\dotsc,p).
\]

Recalling the description \eqref{eq:AP:square} of the pre-Lie product $\square$, 
we find that $X_i \square Y \in \frg^n$ is expressed as 
\[
 X_i \square Y = \sum_{\sigma \in \Sh(n,1)} (X_i \circ_1 Y)^{\sigma^{-1}},
\]
where each $\sigma \in \Sh(n,1)$ can be written as 
\[
 \sigma = \begin{pmatrix}
  1 & \cdots &        & \cdots & n   & n+1 \\ 
  1 & \cdots & \wh{j} & \cdots & n+1 & j   \end{pmatrix} \quad (1 \le j \le n+1).
\]
Recalling the parity change $\pc$ in $\frg$ (c.f.\ \eqref{eq:AP:gfnVn}) and unfolding the infinitesimal composition $\circ_1$, we have
\begin{align}\label{eq:AP:XiY}
\begin{split}
 (X_i \circ_1 Y)^{\sigma^{-1}}(v_1 \otimes \dotsb \otimes v_{n+1}) 
&=\sgn(\sigma) \cdot \bigl(X_i(Y \otimes \id)\bigr)
   (v_{\sigma(1)} \otimes \dotsb \otimes v_{\sigma(n+1)}) \\
&=\sgn(\sigma) \cdot 
   X_i\bigl(Y(v_{\sigma(1)} \otimes v_{\sigma(2)} \otimes \dotsb \otimes v_{\sigma(n)})
           \otimes v_{\sigma(n+1)} \bigr)
\end{split}
\end{align}
for $v_1,\dotsc,v_{n+1} \in V$. Similarly, we have $Y \square X_i \in \frg^n$ with
\[
 Y \square X_i  = \sum_{\tau \in \Sh(2,n-1)} (Y \circ_1 X_i)^{\tau^{-1}}, 
\]
and each $\tau \in \Sh(2,n-1)$ can be written as 
\[
 \tau = \begin{pmatrix}
  1 & 2 & \cdots &        & \cdots &        & \cdots & n+1 \\ 
  j & k & \cdots & \wh{j} & \cdots & \wh{k} & \cdots & n+1 \end{pmatrix} 
 \quad (1 \le j < k \le n+1).
\]
The infinitesimal composition $(Y \circ_1 X_i)^{\tau^{-1}}$ is given by 
\begin{align}\label{eq:AP:YXi}
\begin{split}
&(Y \circ_1 X_i)^{\tau^{-1}}(v_1 \otimes \dotsb \otimes v_{n+1}) \\
&=\sgn(\tau) \cdot \bigl(Y(X_i \otimes \underbrace{\id \otimes \dotsb \otimes \id}_{n-1})\bigr)
  (v_{\tau(1)} \otimes \dotsb \otimes v_{\tau(n+1)}\bigr) \\
&=\sgn(\tau) \cdot Y\bigl(X_i(v_{\tau(1)} \otimes v_{\tau(2)}) \otimes
                              v_{\tau(3)} \otimes \dotsb \otimes v_{\tau(n+1)}\bigr).
\end{split}
\end{align}

Now we focus on $X_v=X^{\{1,2\}}$. By the assumption $Y^\Gamma=0$ unless $\Gamma=\Lambda_I$, and by the description of the cocomposition map \eqref{eq:AP:coc} and \eqref{eq:2P:Pfn:gamma}, we have $(X_v \circ_1 Y)^\Gamma=0$ unless $\Gamma=\Lambda_{J^v(a)}$, $a=1,\dotsc,n$, where 
$J^v(a)$ is a partition of $\{1,\dotsc,n+1\}$ obtained from $I$ by replacing the entry $a$ by $a$ and $n+1$. In the case $\Gamma=\Lambda_{J^v(a)}$, we have 
\[
 (X_v \circ_1 Y)^{\Lambda_{J^v(a)}} = \pm X_v(Y \otimes \id).
\]
Similarly, we have $(Y \circ_1 X_v)^\Gamma=0$ unless $\Gamma=\Lambda_{K^v}$, where $K^v$ is a partition of $\{1,\dotsc,n+1\}$ obtained from $I$ by replacing the entry $1$ by $1$ and $2$, and the entries $2,\dotsc,n$ by $3,\dotsc,n+1$, respectively. If $\Gamma=\Lambda_{K^v}$, then 
\[
 (Y \circ_1 X_v)^{\Lambda_{K^v}} = Y(X_v \otimes \id^{\otimes (n-1)}).
\]
Thus we find $(X_v \square Y)^\Gamma=0$ and $(Y \square X_v)^\Gamma=0$ unless $\Gamma \in \frg^{p,q+1}$, which yields $d_v(\frg^{p,q}) \subset \frg^{p,q+1}$.
Moreover, by \eqref{eq:AP:XiY} and \eqref{eq:AP:YXi}, and after the careful calculation on the sign shown in \cite[Lemma 4.10]{BDKV21}, we have 
\begin{align*}
& d_v(Y)(v_1 \otimes \dotsc \otimes v_{n+1}) 
 =(-1)^{n+1}\Bigl( X_v\bigl(v_1 \otimes Y(v_2 \otimes \dotsb \otimes v_{n+1})\bigr) \\
&  + \sum_{j=1}^n (-1)^j Y\bigl(v_1 \otimes \dotsb \otimes X_v(v_j \otimes v_{j+1}) 
                                    \otimes \dotsb \otimes v_{n+1}\bigr)
   + (-1)^{n+1}X_v\bigl(Y(v_1 \otimes \dotsb \otimes v_n) \otimes v_{n+1}\bigr)\Bigr).
\end{align*}
Under the identification $\frg^{\dbl} \simeq C^{\dbl+1}_{\tPoi}(A,A)$ and $X_v=-\cdot-$, 
this is nothing but the Harrison (or Hochschild) coboundary map, i.e.,
the vertical differential $d$ in $C_{\tuPoi}^{\dbl}(A,A)$.

Next, for $X_h=X^{\{1\} \sqcup \{2\}}$, we have $(X_h \circ_1 Y)^\Gamma=0$ unless $\Gamma=\Lambda_{J^h}$ with $J^h \ceq J \sqcup \{n+1\}$, and 
\[
 (X_h \circ_1 Y)^{\Lambda_{J^h}} = X_h(Y \otimes \id).
\]
We also have $(Y \circ_1 X_h)^\Gamma=0$ unless $\Gamma=\Lambda_{K^h}$, where $K^h$ is the partition of $\{1,\dotsc,n+1\}$ given by 
\[
 K^h \ceq \{1\} \sqcup I_1'' \sqcup \dotsb \sqcup I_p'', \quad 
 I_a'' \ceq \{i^a_1+1,\dotsc,i^a_{m_a}+1\} \quad (a=1,\dotsc,p),
\]
and in the case $\Gamma=\Lambda_{K^h}$, we have 
\[
 (Y \circ_1 X_h)^{\Lambda_{K^h}} = 
 Y(X_h \otimes \id^{\otimes (n-1)}).
\]
Hence we have $(X_h \square Y)^\Gamma=0$ and $(Y \square X_h)^\Gamma=0$ unless $\Gamma \in \frg^{p+1,q}$, which yields $d_h(\frg^{p,q}) \subset \frg^{p+1,q}$.
By \eqref{eq:AP:XiY} and \eqref{eq:AP:YXi}, we have 
\begin{align*}
  d_h(Y)(v_1 \otimes \dotsc \otimes v_{n+1}) 
&=\sum_{j=1}^{n+1} (-1)^{j-1} 
   X_h\bigl(Y(v_1 \otimes \dotsb \wh{v}_j \dotsb \otimes v_{n+1}) \otimes v_j\bigr) \\
&-(-1)^{n-1} \sum_{1 \le j<k \le n+1} (-1)^{j+k} Y\bigl(X_h(v_j \otimes v_k) \otimes 
             v_1 \otimes \dotsb \wh{v}_j \dotsb \wh{v}_k \dotsb \otimes v_{n+1}\bigr) \\
&=(-1)^n\biggl(\sum_{j=1}^{n+1} (-1)^{j-1} X_h\bigl(v_j \otimes 
              Y(v_1 \otimes \dotsb \wh{v}_j \dotsb \otimes v_{n+1})\bigr) \\
&            +\sum_{1 \le j<k \le n+1} (-1)^{j+k} Y\bigl(X_h(v_j \otimes v_k) \otimes 
              v_1 \otimes \dotsb \wh{v}_j \dotsb \wh{v}_k \dotsb \otimes v_{n+1}\bigr)\biggr),
\end{align*}
and under the identification $\frg^{\dbl} \simeq C_{\tuPoi}^{\dbl+1}(A,A)$ and $X_h=\{-,-\}$, this is nothing but the Chevalley-Eilenberg coboundary, i.e., the horizontal differential $\delta_{\tCE}$ in $C_{\tuPoi}^{\dbl}(A,A)$.
\end{proof}



\begin{thebibliography}{MMMMN}

\bibitem[BDHK19]{BDHK}
B.~Bakalov, A.~De Sole, R.~Heluani, V.~G.~Kac, 
\emph{An operadic approach to vertex algebra and Poisson vertex algebra cohomology}, 
Jpn.\ J.\ Math., \textbf{14}, 249--342 (2019).

\bibitem[BDHK20]{BDHK2}
B.~Bakalov, A.~De Sole, R.~Heluani, V.~G.~Kac, 
\emph{Chiral versus classical operad}, 
Int.\ Math.\ Res.\ Not.\ IMRN, \textbf{2020}, no.\ 19, 6463--6488 (2020).

\bibitem[BDK20]{BDK20}
B.~Bakalov, A.~De Sole, V.~G.~Kac, 
\emph{Computation of cohomology of Lie conformal and Poisson vertex algebras},
Sel.\ Math.\ New Series, 26:50, 51 pp.\ (2020).

\bibitem[BDK21]{BDK21}
B.~Bakalov, A.~De Sole, V.~G.~Kac, 
\emph{Computation of cohomology of vertex algebras},
Jpn.\ J.\ Math., \textbf{16}, 81--154 (2021).

\bibitem[BDKV21]{BDKV21}
B.~Bakalov, A.~De Sole, V.~G.~Kac, V.~Vignoli,
\emph{Poisson Vertex Algebra Cohomology and Differential Harrison Cohomology},
in \emph{Representation Theory, Mathematical Physics, and Integrable Systems}, 
Progress in Math., vol.\ \textbf{340}, Birkh\"{a}user, pp.\ 39--69 (2021).


\bibitem[BD04]{BD}
A.~Beilinson, V.~Drinfeld, 
\emph{Chiral Algebras}, 
AMS Colloq.\ Publ., \textbf{51}, Amer.\ Math.\ Soc., Province RI (2004).





\bibitem[FBZ04]{FBZ}
E.~Frenkel, D.~Ben-Zvi,
\emph{Vertex algebras and algebraic curves}, 2nd ed.,
Math.\ Surv.\ Monog., \textbf{88}, Amer.\ Math.\ Soc., Province RI (2004).

\bibitem[F06]{Fr}
B.~Fresse, 
\emph{Th\'{e}orie des op\'{e}rades de Koszul et homologie des alg\`{e}bres de Poisson},
Ann.\ Math.\ Blaise Pascal, \textbf{13} (2006), no.\ 2, 237--312.

\bibitem[GK94]{GK}
V.~Ginzburg, M.~Kapranov,
\emph{Koszul duality for operads},
Duke Math.\ J., \textbf{76}, no.\ 1, 203--272 (1994).

\bibitem[GS87]{GS}
M.~Gerstenhaber, S.~D.~Schack, 
\emph{A Hodge-type decomposition for commutative algebra cohomology}, 
J.\ Pure Appl.\ Alg., \textbf{48} (1987), 229--247.

\bibitem[Ha62]{Ha}
D.~K.~Harrison, \emph{Commutative algebras and cohomology},
Trans.\ Amer.\ Math.\ Soc., \textbf{104} (1962), 191--204.


\bibitem[HK07]{HK}
R.~Heluani, V.~G.~Kac,
\emph{Supersymmetric Vertex Algebras}, 
Commun.\  Math.\ Phys., \textbf{271}, 103--178 (2007).

\bibitem[K17]{K}
V.~Kac, \emph{Introduction to Vertex Algebras, Poisson Vertex Algebras, and Integrable Hamiltonian PDE}, 
in: \emph{Perspectives in Lie Theory}, Springer INdAM Series, vol \textbf{19}. Springer, 3--72 (2017);
arXiv:1512.00821. 



\bibitem[Li04]{Li1} 
H.~Li, \emph{Vertex algebras and vertex Poisson algebras}. 
Commun.\ Contemp.\ Math., 6 (2004) no.\ 1, 61--110.

\bibitem[Li05]{Li2} 
H.~Li, \emph{Abelianizing vertex algebras}. 
Commun.\ Math.\ Phys., \textbf{259} (2005), 391--411.

\bibitem[Lo89]{L1}
J.-L.~Loday, \emph{Op\'erations sur l'homologie cyclique des alg\`ebres commutatives}, 
Invent.\ Math., \textbf{96} (1989), 205--230.

\bibitem[Lo98]{L}
J.-L.~Loday, \emph{Cyclic homology}, 2nd ed., 
Grund.\ math.\ Wiss., \textbf{301}, Springer-Verlag, 1998.

\bibitem[LV12]{LV}
J-L.~Loday, B.~Vallette, \emph{Algebraic Operads}, 
Grundlehren Math.\ Wiss., \textbf{346}, Springer (2012).

\bibitem[N08]{N}
Y.~Namikawa, \emph{Flops and Poisson Deformations of Symplectic Varieties},
Publ.\ RIMS, Kyoto Univ., \textbf{44} (2008), 259--314.

\bibitem[NY]{NY}
Y.~Nishinaka, S.~Yanagida, 
\emph{Algebraic operad of SUSY vertex algebra}, preprint (2022),
arXiv:2209.14617. 

\bibitem[SS85]{SS}
M.~Schlessinger, J.~Stasheff, 
\emph{The Lie algebra structure of tangent cohomology and deformation theory}, 
J.\ Pure Appl.\ Alg., \textbf{38} (1985), no.\ 2--3, 313--322.

\bibitem[Y22]{Y}
S.~Yanagida, \emph{Li filtrations of SUSY vertex algebras}, 
Lett.\ Math.\ Phys., \textbf{112} (2022), ID:103, 77pp.
\end{thebibliography}
\end{document}